\documentclass[10pt,reqno]{amsart}

\usepackage[margin=1in]{geometry}   
\usepackage[utf8]{inputenc}  
\usepackage[T1]{fontenc}
\usepackage{upgreek}

\usepackage{amsmath,amsthm,amsfonts,amssymb,amscd}
\usepackage{mathtools}
\usepackage[mathscr]{euscript}
\usepackage{bm}
\usepackage{bbm}
\usepackage{stmaryrd}

\usepackage[dvipsnames]{xcolor}
\usepackage{soul}

\usepackage[square,numbers]{natbib}

\usepackage{etoolbox,xstring}
\makeatletter
\patchcmd{\NAT@test}{\else\NAT@nm}{\else\NAT@nmfmt{\NAT@nm}}{}{}
\let\NAT@up\scshape
\makeatother

\makeatletter
\renewcommand{\NAT@nmfmt}{\expandafter\aliNAT@nmfmt\expandafter}
\newcommand\aliNAT@nmfmt[1]{{%
  \noexpandarg
  \def~{}%
  \edef\temp#1\edef\temp{\detokenize\expandafter{\temp}}%
  \begingroup\edef\x{\endgroup
    \noexpand\StrSubstitute{\temp}{\detokenize{etal}}}\x
    {\textnormal{et\nobreakspace al}}[\tempa]%
  \textsc{\tempa}}}
\makeatother

\usepackage{graphicx}
\graphicspath{{graphics/}}

\usepackage{wrapfig}
\usepackage{subcaption}
\usepackage{tikz}
\usetikzlibrary{arrows.meta,calc}
\usetikzlibrary{decorations.pathreplacing,angles,quotes,patterns}
\usepackage{pgfplots}
\pgfplotsset{compat=1.18}
\usepgfplotslibrary{fillbetween}

\usepackage{array}
\newcolumntype{P}[1]{>{\centering\arraybackslash}p{#1}}		
\newcolumntype{M}[1]{>{\centering\arraybackslash}m{#1}}
\usepackage{multirow}
\usepackage{booktabs}

\usepackage{float}
\usepackage{enumitem}
\usepackage{algorithm}
\usepackage{verbatim}
\usepackage{cases}
\usepackage{needspace}

\usepackage{siunitx}
\usepackage{hyperref}
\hypersetup{colorlinks=True, linkcolor=blue, urlcolor=blue, citecolor=red}
\usepackage{cleveref}

\numberwithin{equation}{section}

\newtheorem{theorem}{Theorem}[section]
\newtheorem{lemma}[theorem]{Lemma}
\newtheorem{proposition}[theorem]{Proposition}

\newtheorem{definition}[theorem]{Definition}

\newtheorem{remark}[theorem]{Remark}

\def\tmax{{t}_{\mathsf{max}}}
\def\dd#1{{\, \mathrm{d} #1}}

\def\p{\partial}

\newcommand{\thetainv}{\tilde\tau} 

\newcommand{\Lop}{\mathcal{L}} 

\newcommand{\amp}{\mathcal{A}} 

\newcommand{\Gop}{\mathcal{G}}

\newcommand{\Gopo}{\mathcal{G}_0}
\newcommand{\Kopo}{\mathcal{K}_0}

\newcommand{\Sop}{\mathcal{S}} 
\newcommand{\SopSL}{\mathcal{S}_{\mathsf{SL}}}
\newcommand{\Dop}{\mathcal{D}} 
\newcommand{\DopDL}{\mathcal{D}_{\mathsf{DL}}}
\newcommand{\Doppr}{\overline{\mathcal{D}}} 
\newcommand{\DopDLpr}{\overline{\mathcal{D}}_{\mathsf{DL}}}

\newcommand{\NN}{\nu} 
\newcommand{\gammach}{\gamma_{\mathsf{ch}}}

\newcommand{\sltw}{\Uptheta} 
\newcommand{\slstw}{\mathcal{V}} 
\newcommand{\dltw}{\Uplambda} 
\newcommand{\dlstw}{\mathcal{W}} 

\newcommand{\Pbb}{\mathbb{P}}

\newcommand{\phiinc}{\phi_{\mathsf{inc}}}

\newcommand{\thit}{\tau_{\mathsf{rfl}}}
\newcommand{\Tarr}{T}
\newcommand{\Trfl}{T_{\mathsf{rfl}}}

\newcommand{\loc}{\mathsf{loc}} 
\newcommand{\reg}{\mathsf{reg}}

\title[Scattering in heterogeneous media with moving obstacles]{A boundary integral method for wave scattering in a heterogeneous medium with a moving obstacle}
\author{Raaghav Ramani}
\address{Center for Nonlinear Studies, Los Alamos National Laboratory, Los Alamos, NM 87545}
\email{\href{rramani@lanl.gov}{rramani@lanl.gov}}

\begin{document}

\begin{abstract}
We propose a time-domain boundary integral method to model linear 
wave propagation with refractive, focusing, and Doppler effects 
arising from medium heterogeneities and moving obstacles. 
In contrast to existing techniques, our method avoids volumetric discretization 
and yields a formulation posed only on the boundary of the obstacle.
We combine two classical ingredients: a \emph{geometric--optics parametrix} to capture 
leading-order behavior at propagating wavefronts, and a \emph{ray-based characterization} 
of the distorted causal geometry. 
The former provides a framework for defining layer potentials and deriving the associated 
boundary integral equations, while the latter enables a pure boundary-only formulation.
Taken together, these ingredients extend existing numerical techniques for the 
homogeneous, fixed-boundary case to the heterogeneous, moving-boundary setting, with appropriate modifications to capture the 
discrete causal structure arising from the intersection of distorted light cones with the worldsheet of the moving boundary.
Numerical experiments demonstrate the ability of the method to resolve Doppler 
effects from moving obstacles, including a rotating 
turbine configuration, with stable performance up to Mach $0.9$.
In heterogeneous media, the method captures strong refractive effects 
from spherical inclusions: wave propagation wrapping around a gas 
bubble in water, and defocusing from a hot fireball rising through a stratified atmosphere.
\end{abstract}

\maketitle

\setcounter{tocdepth}{1}
{\small
\tableofcontents}

\section{Introduction}
\label{sec:intro}

Let $\Gamma(t)$ denote a moving interface enclosing an obstacle 
$\Omega^-(t)$ in a heterogeneous acoustic medium 
with sound speed $c(x,t)$. Our primary interest is the scattered wave field in the exterior domain
$\Omega^+(t) = \mathbb{R}^3 \setminus \Omega^-(t)$. 
Let $\Lop$ denote the \emph{exterior wave operator} 
\begin{equation}\label{L-general-pde}
  \Lop \phi (x,t)
   \coloneqq 
  \tfrac{1}{c(x,t)^2}
  \partial_t^2 \phi (x,t)
  -
  \Delta_x \phi(x,t), \qquad (x,t) \in \Omega^+ (t) \times \mathbb{R}^+, 
\end{equation}
where the space-time--dependent sound speed field $c(x,t)$ is assumed to be smooth, positive, and uniformly bounded,
\begin{equation}\label{c-bounds}
0 < c_{\min} \le c(x,t) \le c_{\max}  < \infty, \qquad x \in \mathbb{R}^3 \ \text{and} \ t \ge 0. 
\end{equation}
The operator \eqref{L-general-pde} is linear and hyperbolic. It  
is supplemented with the initial conditions
\begin{subequations}\label{L-bcs}
\begin{equation}
\phi(x,0) = 0
\qquad \text{and} \qquad
\partial_t \phi(x,0) = 0, 
\qquad x \in \Omega^+ (0). 
\end{equation}
In this work, we consider the \emph{sound-soft} Dirichlet conditions
\begin{equation}
\phi(x,t) = f(x,t), 
\qquad (x,t) \in \Gamma(t) \times \mathbb{R}^+ , 
\end{equation}
\end{subequations}
where $f : \Gamma(t) \times \mathbb{R}^+ \to \mathbb{R}$ is the prescribed boundary data. 
The system \eqref{L-general-pde}--\eqref{L-bcs} arises in numerous applications 
with wave propagation in heterogeneous media (see, e.g., \cite{Ostashev1997} and the references therein). 

A model case is a smooth sound-speed profile
encoding an interior--exterior contrast across the interface. 
For $\delta>0$,
define the tubular neighborhood
\begin{subequations}\label{c-piecewise-smooth}
\begin{equation}\label{tubular-neighborhood}
  \Omega_\delta (t)
  \coloneqq
  \{\,x\in\mathbb{R}^3 : \operatorname{dist}(x,\Gamma)<\delta\,\}, 
\end{equation}
and an associated smooth sound speed field $c\in C^\infty(\mathbb{R}^3\times\mathbb{R}^+)$ by
\begin{equation}\label{c-smooth-two-phase}
  c(x,t)=c^-(x,t) \quad \text{for } x\in \Omega^-(t) \setminus \Omega_\delta(t),
  \qquad
  c(x,t)=c^+(x,t) \quad \text{for } x\in \Omega^+(t) \setminus \Omega_\delta(t),
\end{equation}
\end{subequations}
where $c^\pm(x,t)>0$ are smooth functions. 
In the tube $\Omega_\delta(t)$, the sound speed varies smoothly across the interface in the normal direction, in the sense that
$c(x,t) = c_\Gamma(d(x,t),x,t)$, where $d(x,t)$ denotes the signed distance to 
$\Gamma(t)$, and for each $(x,t)$ the map $s \mapsto c_\Gamma(s;x,t)$
is monotone on $(-\delta,\delta)$ and satisfies $c_\Gamma(-\delta;x,t)=c^-(x,t)$ and 
$c_\Gamma(\delta;x,t)=c^+(x,t)$. 
Such profiles arise, for example, in atmospheric acoustics, where localized 
temperature or density variations---such as thermal plumes or inversion layers---give rise to moving interfaces across 
which the effective sound speed varies smoothly but rapidly.
We will refer to the case $\min c^- > \max c^+$ as a \emph{fast inclusion}, and to the
case $\max c^- < \min c^+$ as a \emph{slow inclusion}.

\begin{figure}[ht]
    \centering
    \begin{subfigure}[t]{0.45\linewidth}
      \centering
      \includegraphics[width=0.77\textwidth]{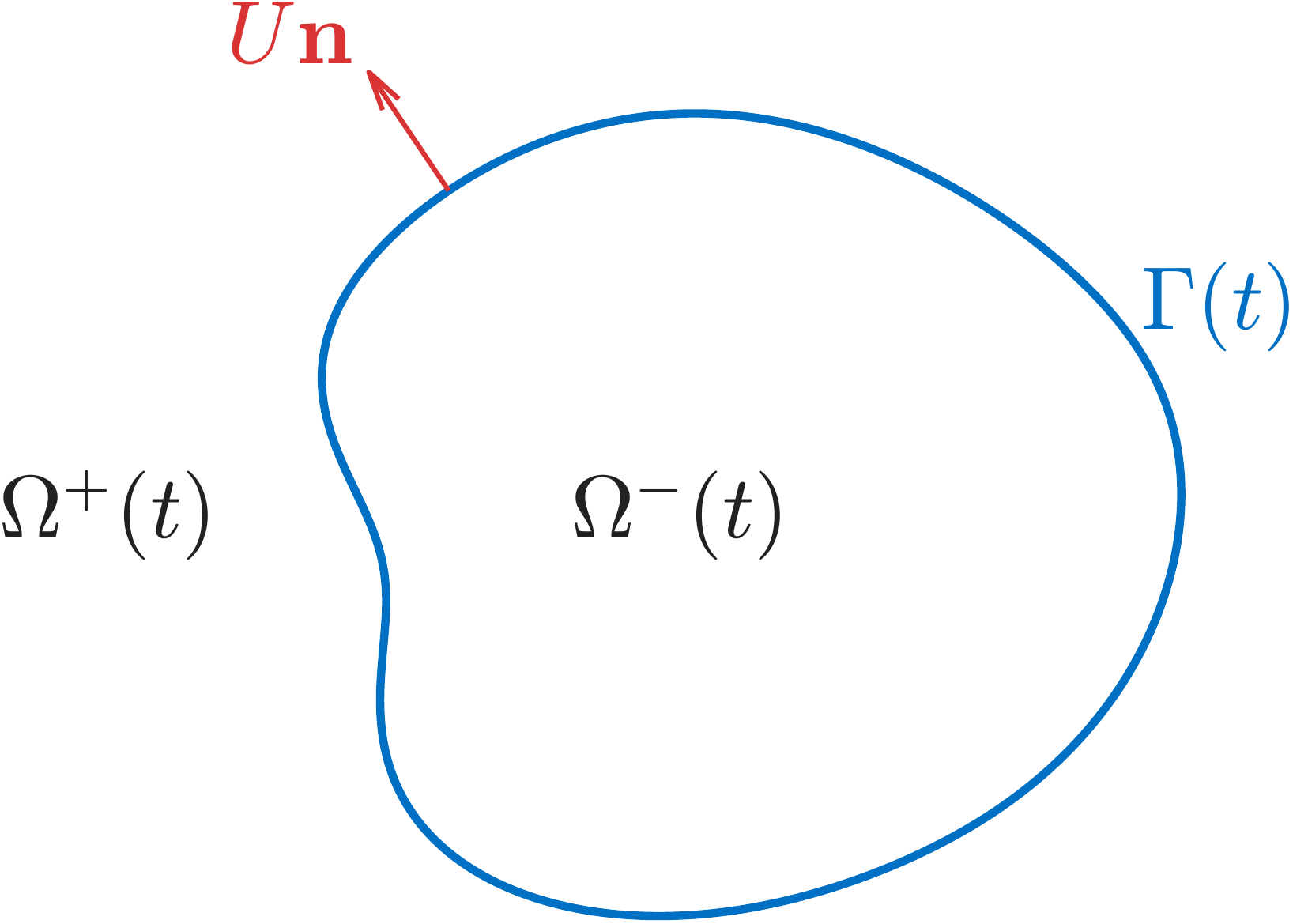}
      \caption{}
      \label{fig:basic-schematic}
    \end{subfigure}
    \hspace{1em}
    \begin{subfigure}[t]{0.45\linewidth}
      \centering
      \includegraphics[width=0.77\textwidth]{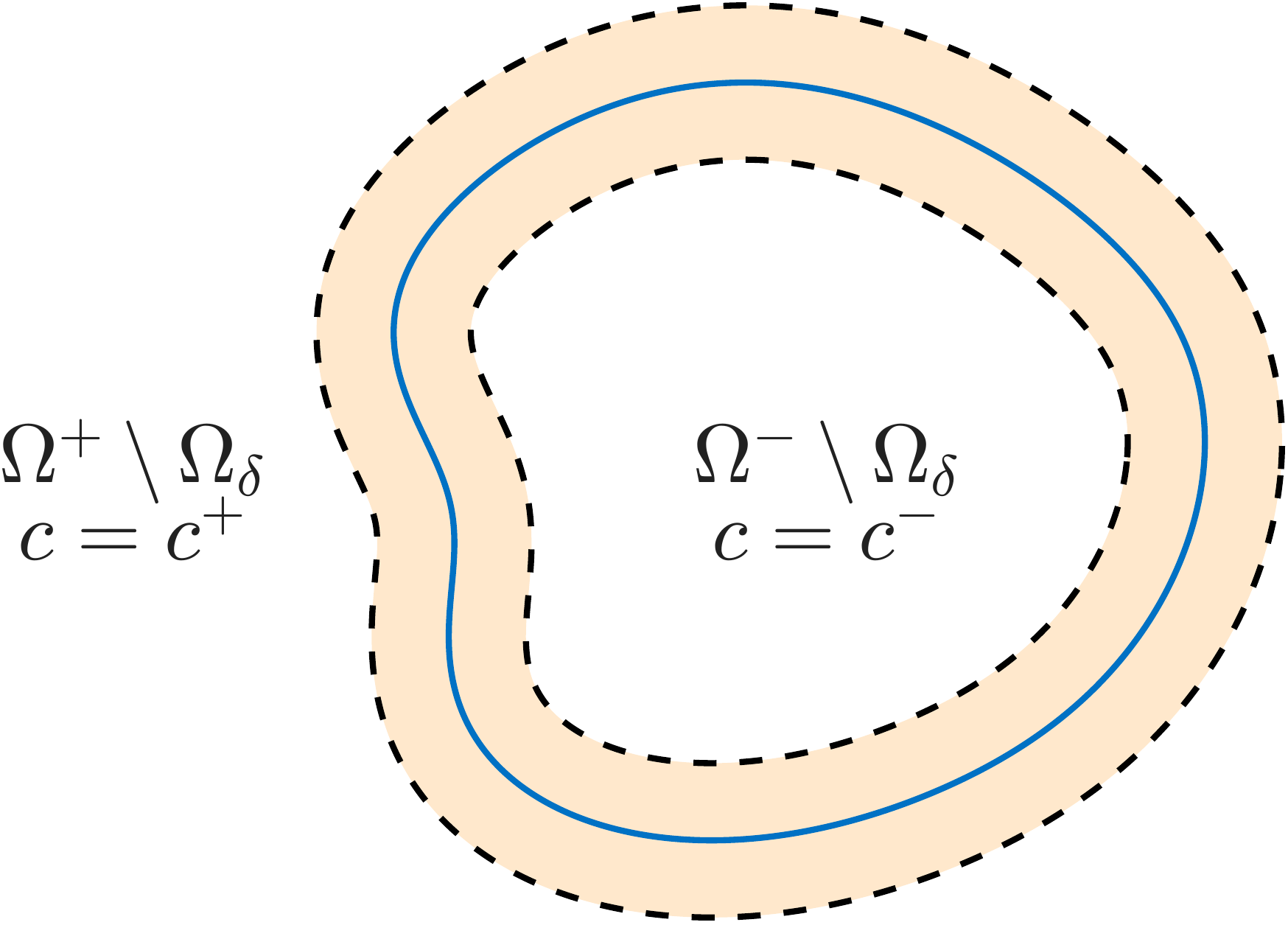}
      \caption{}
      \label{fig:sound-spd-schematic}
    \end{subfigure}
    \caption{
    \textbf{Left:}
    Two-dimensional schematic of the moving interface $\Gamma(t)$ separating the interior domain 
    $\Omega^-(t)$ from the exterior domain $\Omega^+(t)$, with outward normal velocity $U_n$.
    \textbf{Right:}
    Smooth interior--exterior sound-speed contrast, in which the sound speed 
    transitions across a thin interfacial layer $\Omega_\delta$ between the 
    interior and exterior fields $c^-(x,t)$ and $c^+(x,t)$ in $\Omega^-(t)$ and $\Omega^+(t)$, respectively.
    }
    \label{fig:sound-speed-schematic}
\end{figure}

Numerical methods for \eqref{L-general-pde}--\eqref{L-bcs} include well-established
volume-based discretizations, such as the 
finite difference \cite{Taflove1988,Taflove2005,ZhLe1997,LoPi2004,SeoMit2011} and
finite element \cite{FrPe1996,LeeLeeCan1997,GrScSc2006} methods,
typically combined with non-reflecting
boundary conditions \cite{EnMa1977,Be1994} imposed at the outer boundary of the artificially truncated exterior domain.
In the special case of a homogeneous medium with constant sound speed
$c(x,t) \equiv c_0$ and a fixed interface $\Gamma(t) \equiv \Gamma$,
the problem \eqref{L-general-pde}--\eqref{L-bcs} reduces to the classical
exterior Dirichlet scattering problem for a fixed sound-soft obstacle.
In this setting, the fundamental solution (Green's function) $\Gopo$ of the wave operator is known
explicitly, and the solution $\phi(x,t)$ may be represented by retarded
layer potentials obtained by convolving $\Gopo$ with an
unknown boundary density defined on $\Gamma$. 
Within this \emph{boundary integral} framework \cite{Sayas2013,Costabel2004,Ha2003},
the equation for the unknown density is posed on the surface $\Gamma$,
leading to a co-dimension--1 formulation of the problem. 
Boundary integral methods automatically enforce the correct 
far-field behavior of $\phi$, and offer geometric
flexibility and computational efficiency by avoiding volumetric meshing of
the exterior domain.\footnote{At the same time,
time-domain boundary integral formulations involve convolution in time,
and straightforward implementations require storage of the full temporal
history. Efficient implementations rely on fast convolution and
acceleration techniques \cite{BaSa2009,ScLoLu2006,BrKu2001,Rokhlin1990,ErShMi1998}.}
More generally, boundary integral methods naturally arise as components in hybrid 
Eulerian--Lagrangian schemes, where interface dynamics are treated in a Lagrangian 
framework while the bulk is  
handled by a complementary Eulerian discretization \cite{Peskin1972,Steinbach2008,AbJoRoTe2011,BaLuSa2015,RaSh2020}.

In heterogeneous media, an explicit retarded Green's function for the wave 
operator is not available, so classical boundary integral 
methods cannot be applied directly. 
The light cone, which is radially symmetric and straight 
in the homogeneous case, becomes distorted in variable media (see \Cref{fig:cone-x}).
In certain simplified settings---such as piecewise-constant media 
\cite{CoEr1985,LaSa2009,LuLuQi2018,DoGaSa2020,RiSaMe2022,XiYuWaCa2025} 
or contrast-based formulations built upon a reference Green's function 
\cite{CoKr1998,Vainikko2000,Martin2003,Bruno2005}---modified boundary integral methods can be applied.
Even in these settings, volumetric discretization is often required, sacrificing the  
co-dimension--1 feature of the classical boundary integral methods. 

In this work, we develop a boundary integral method for wave scattering in 
space-time heterogeneous media with subsonic moving obstacles
that does \emph{not} require any volumetric discretization.
Our method is based on a \emph{geometric--optics parametrix}
\cite{EnRu2003,AyDeMi2024} for the variable-coefficient wave operator. 
In this approach, the retarded fundamental solution $\Gop$ is approximated by 
the reparameterized kernel
\[
\Gop \approx \amp \, \Gop_0(\eta,\theta),
\]
where $(\eta,\theta)$ encode the \emph{travel time} 
geometry of propagating waves and $\amp$ is a smooth amplitude.   
The function $\eta$ solves an associated space-time 
eikonal equation, and serves locally as a nonlinear change of variables 
that rectifies the deformed light cone in travel-time coordinates (see \Cref{fig:cone-x}). 
The pair $(\eta,\amp)$ may be computed using numerical ray tracing methods that remain
valid in heterogeneous media and beyond caustics, where multiple propagation paths 
arise \cite{UmTh1987,SaKe1990,ViIvGj1993}.
In this work, motivated by the interior--exterior contrast \eqref{c-piecewise-smooth}, 
we adopt a simpler approximation: $\eta$ is obtained via a 
chord-based approximation---optionally refined by a Newton iteration---for the geometric rays,
while the amplitude $\amp$ is modeled using a simple two-point approximation that captures
the leading sound-speed dependence while neglecting higher-order geometric spreading effects.

\begin{figure}[ht]
\centering
\begin{subfigure}[t]{0.31\linewidth}
  \centering
  \includegraphics[width=0.85\linewidth]{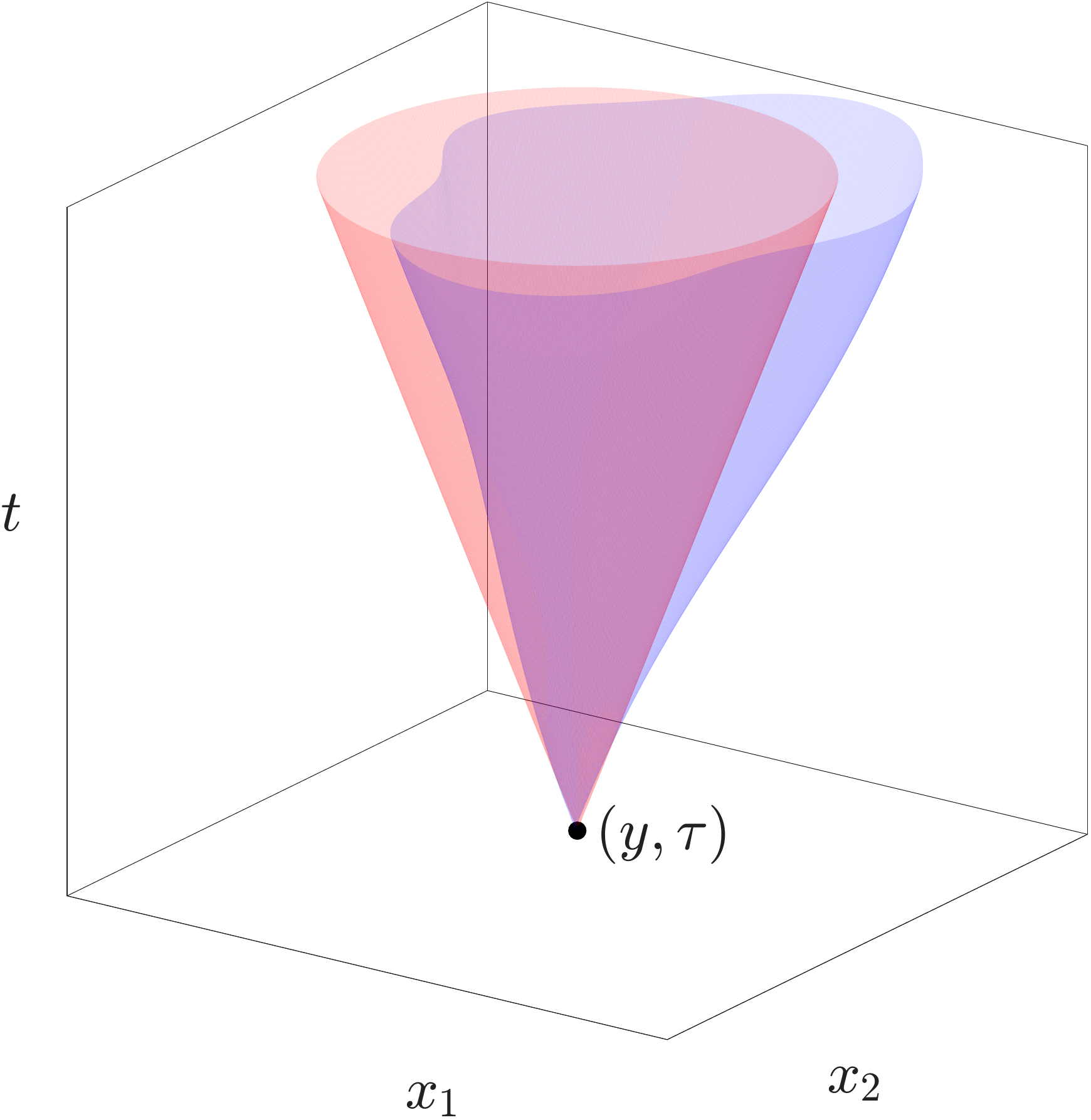}
  \caption{True and source-frozen cones}
  \label{fig:cone-x-3d}
\end{subfigure}
\hspace{1em}
\begin{subfigure}[t]{0.31\linewidth}
  \centering
  \includegraphics[width=0.8\linewidth]{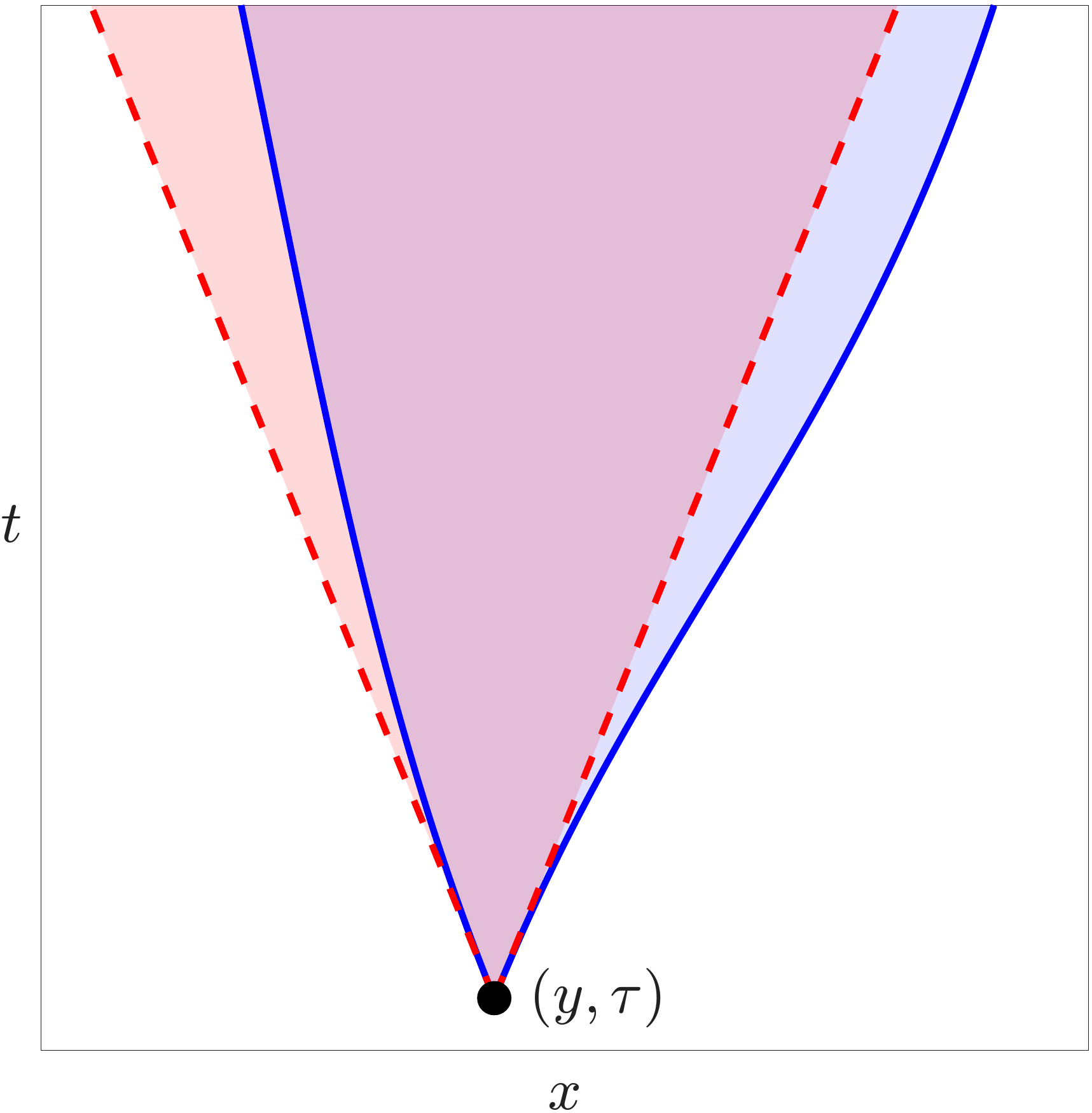}
  \caption{One-dimensional slice}
\end{subfigure}
\hspace{1em}
\begin{subfigure}[t]{0.31\linewidth}
  \centering
  \includegraphics[width=0.8\linewidth]{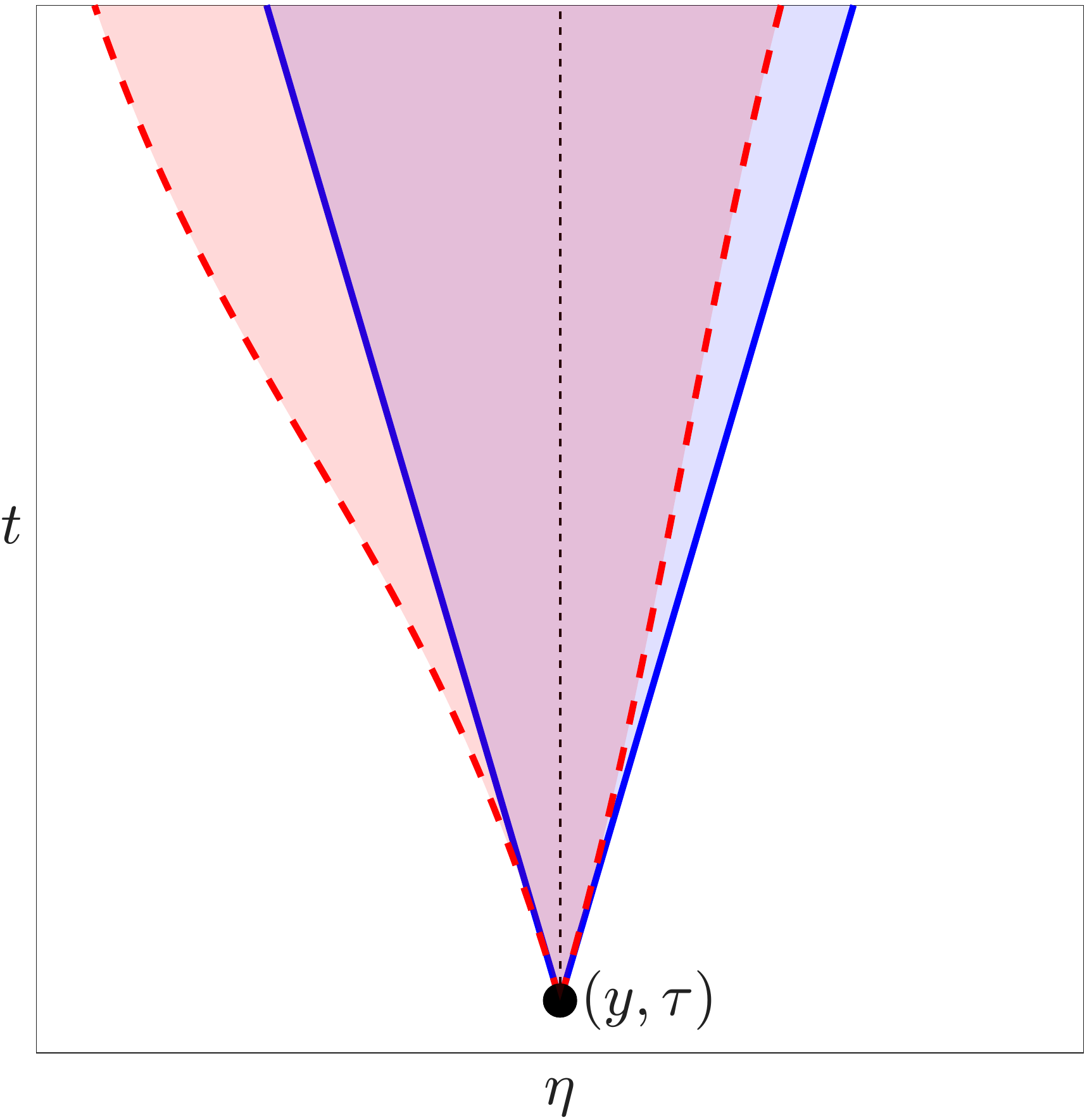}
  \caption{Rectified cone}
\end{subfigure}
\caption{
Geometry of wave propagation in a heterogeneous medium.
The true cone consists of all space-time points reachable from the source $(y,\tau)$
by signals propagating with local speed $c(x,t)$, while the source-frozen cone
assumes the constant emission speed $c(y,\tau)$.
\textbf{Left:} True (blue) and source-frozen (red) cones in $2\!+\!1$--space-time.  
The true cone is given by $t-\tau=\eta(x,t;y,\tau)$, where $\eta$ denotes the 
travel-time function, whereas the source-frozen cone corresponds to 
$|x-y|=c(y,\tau)(t-\tau)$. 
The two cones are tangent at $(y,\tau)$ but diverge as 
time increases due to spatial refraction.
\textbf{Middle:} One-dimensional spatial slice of the cones 
in physical coordinates $(x,t)$.
The true cone (blue) bends, whereas the source-frozen cone (red) remains linear.
\textbf{Right:} The same slice expressed in travel-time coordinates $(\eta,t)$. 
Under the local nonlinear change of variables defined by $\eta$, 
the distorted true cone is rectified into straight lines $\eta=\pm(t-\tau)$.
}
\label{fig:cone-x}
\end{figure}

While geometric--optics approximations for wave propagation are classical 
\cite{Keller1962,Lax1957,Hormander,Friedlander1975,Kravtsov1990}, their systematic 
development within a \emph{time-domain} boundary integral framework for heterogeneous 
media remains relatively unexplored (but see \cite{EnRu2003,ChSiGrLaSp2012} 
and the references therein for related approaches).
Likewise, relatively few boundary integral formulations have been 
developed for wave scattering by moving interfaces. 
Although such problems arise frequently in aeroacoustics, existing approaches 
typically treat the complementary problem of radiation from 
prescribed boundary sources \cite{FWH1969,BrFa1998}. 
A mathematical theory for wave scattering in a homogeneous medium with 
moving obstacles was established in \cite{Strauss1979,Cooper1979,CoSt1979,CoSt1984}. 
Boundary integral methods have been developed in the setting of 
subsonic uniform flows, where the convective effects are 
incorporated through modified Green's functions \cite{WuLe1994,LaWeMa1996,HuPiNa2017}, and, recently, several methods with deformable interfaces have been 
proposed for the forced acoustic wave equation \cite{JSV2024,de2025sound}. 
For moving obstacles, the support of the retarded kernel is determined by the intersection of the light cone with the worldsheet of the boundary, a geometric relation encoded directly in our framework through the travel-time function $\eta$.

Using the explicit parametrix form, we define single- and double-layer potential operators, 
and derive the corresponding boundary integral equations.
Since scattering in heterogeneous media is typically posed as a transmission problem, and each 
layer-potential representation implicitly induces a corresponding interior continuation, 
the residual of the geometric--optics approximation may be viewed, at least in part, as 
reflecting the interior modeling implicit in the chosen representation. 
We prove a trace formula for the double-layer operator on a moving 
boundary, obtaining an explicit jump term whose coefficient 
depends on the space-time geometry of the worldsheet. 
In contrast to the stationary case, where the jump coefficient 
is the constant $\pm \tfrac12$, this space-time--dependent coefficient 
encodes Doppler effects through its dependence on the normal velocity 
of the interface.

To solve the boundary integral equations arising from the parametrix formulation, we 
develop and implement a simple time-marching collocation method\footnote{%
Galerkin methods \cite{HaBaNe1986,YuMaCa2000,DuLuTe2003}, which 
offer enhanced stability, could also be employed; however, they require 
the evaluation of four-dimensional space-time integrals and more 
elaborate implementations, and are therefore not pursued here.}, following 
the ideas introduced in 
\cite{Rao1982,mansur1982,Mansur1983,ErShMi1998,Rynne1990,DaDu1997}. 
The resulting scheme takes the form of a marching-on-time recursion 
with coefficients defined by integrals over each space-time slab.
We introduce a \emph{slab-frozen approximation} that enables 
analytic evaluation of the time integral, leaving only spatial integrals 
to be computed by standard Gauss quadrature.

We then apply the proposed scheme to a collection of 
benchmark and physically motivated test problems to 
illustrate its accuracy, stability, and performance in the 
presence of moving obstacles and space-time heterogeneities.  
Two Doppler scattering examples involving moving obstacles
demonstrate complementary aspects of the method: the first confirms 
stability up to Mach 0.9, while the second illustrates its 
capability to simulate the complex geometry of a rotating turbine.
We then consider a gas-bubble scattering problem to validate the 
ray-based travel-time formulation in a strongly refractive 
medium, demonstrating the refraction of wavefronts around 
a slow spherical inclusion.
Finally, an atmospheric fireball example confirms that the method captures 
refraction by a localized high-speed spherical inclusion, manifested 
through earlier arrival times and reduced peak amplitudes in the scattered field.

\subsubsection*{Outline of the paper}
The remainder of the paper is organized as follows.
In \Cref{sec:universal}, we introduce the general geometric--optics parametrix framework and
summarize its fundamental properties. 
\Cref{sec:BIE} develops the associated boundary integral formulation,
including the definition of single- and double-layer potentials and the derivation of the corresponding boundary integral equations. 
In \Cref{sec:collocation}, we present the space-time collocation scheme used for their numerical solution.
Sections \ref{sec:const}--\ref{sec:space-time-dependent} contain numerical experiments in progressively more complex settings: motion-induced scattering in a homogeneous medium, wave scattering in spatially heterogeneous media, and wave scattering in space-time varying media.
Finally, \Cref{sec:conclusion} summarizes the main findings and outlines directions for future work.
Technical details concerning the trace of the double-layer potential and the Newton method for geometric ray computation are provided in \Cref{app:traceDL} and \Cref{subsec:newton-ray-continuous}, respectively.


\section{Geometric--optics parametrix framework}
\label{sec:universal}

In this section, we outline the basic geometric--optics parametric framework 
for the wave operator \eqref{L-general-pde}.  
A rigorous mathematical treatment can be 
found in, e.g., \cite{Lax1957,Hormander,Eskin2011}. 
Here, the aim is to outline the minimal structure required to formulate the parametrix.
We begin by recalling the description of the causal geometry in both the 
homogeneous and heterogeneous cases, the latter described via travel-time coordinates $(\theta,\eta)$.

\subsection{The unit-speed reference operator and Green's function}

The free-space unit-speed wave operator in $3\!+\!1$ dimensional
space-time coordinates is denoted by
\begin{equation}
\label{L0}
  \Lop_0 \phi(x,t)
  \coloneqq
  \partial_t^2 \phi(x,t)
  -
  \Delta_x \phi(x,t),
  \qquad (x,t)\in\mathbb{R}^3\times\mathbb{R}.
\end{equation}
Let $\Gopo(x,t)$ denote the causal free-space Green's function for the
unit-speed wave operator $\Lop_0$, i.e., the distributional solution of
\begin{equation}
\label{G0-def}
  \Lop_0 \Gopo(x,t)
  =
  \delta(x)\,\delta(t),
  \qquad (x,t)\in\mathbb{R}^3\times\mathbb{R},
\end{equation}
supplemented with the causality condition
\begin{equation}
\label{G0-causal}
  \Gopo(x,t)=0
  \qquad \text{for } t<0.
\end{equation}
Explicitly, writing $r = |x|$, one has
\begin{equation}
\label{G0-explicit}
  \Gopo(r,t)
  =
  \frac{\delta(t-r)}{4\pi r},
  \qquad (r,t)\in\mathbb{R}^+ \times\mathbb{R},
\end{equation}
where $\delta$ denotes the Dirac delta distribution.

In three spatial dimensions, the \emph{strong Huygens principle} is satisfied: the 
Green's function $\Gopo$ is supported on the boundary of the
unit-speed forward causal region
\begin{equation}
\label{G0-cone}
   \Sigma_0 \coloneqq \{\, (x,t)\in\mathbb{R}^3\times\mathbb{R}^+ : t \ge |x| \,\}. 
\end{equation}
The boundary of the causal region is the light cone 
\begin{equation}
\label{G0-cone-bdy}
  \partial \Sigma_0 \coloneqq \{\, (x,t)\in\mathbb{R}^3\times\mathbb{R}^+ : t = |x| \,\}, 
\end{equation}
which corresponds to the unit-speed wavefront
emanating from the source point $(0,0)$.

\subsection{Parametrization of the obstacle and its worldsheet}

\begin{subequations}\label{physical-geometry}
For each $t \ge 0$, let $\Omega^-(t) \subset \mathbb{R}^3$ be a bounded domain
with smooth boundary $\Gamma(t)$, and define the exterior domain
\begin{equation}
\Omega^+(t) = \mathbb{R}^3 \setminus \overline{\Omega^-(t)} \,.
\end{equation}
We assume that for each $t \ge 0$ the interface $\Gamma(t)$ is smooth and
diffeomorphic to the unit sphere $\mathbb{S}^2$.
Let
\begin{equation}
z : \mathbb{S}^2 \times \mathbb{R}^+ \to \mathbb{R}^3, \qquad z(\alpha,t) = z^i (\alpha,t) \, \mathbf{e}_i, 
\end{equation}
be a smooth parametrization such that for each fixed $t$, the map
$z(\cdot,t)$
is a smooth embedding whose image equals $\Gamma(t)$.
The associated \emph{worldsheet} of the interface is denoted by
\begin{equation}\label{worldsheet-def}
\mathbf{\Gamma}
\coloneqq
\{ (z(\alpha,t),t) : \alpha \in \mathbb{S}^2,\ t \ge 0 \}
\subset \mathbb{R}^3 \times \mathbb{R}.
\end{equation}

For $\beta \in \mathbb{S}^2$ and $\tau \ge 0$, define the surface Jacobian
\begin{equation}\label{Jsurf-def}
J(\beta,\tau)
\coloneqq
\left|
\partial_{\beta_1} z(\beta,\tau)
\times
\partial_{\beta_2} z(\beta,\tau)
\right|,
\end{equation}
where $(\beta_1,\beta_2)$ are local coordinates on $\mathbb{S}^2$.
We denote the surface element on $\Gamma(\tau)$ by
\begin{equation}
\dd S_y(\beta,\tau)
=
J(\beta,\tau)\,
\dd \beta_1 \dd \beta_2. 
\end{equation}
\end{subequations}
\begin{subequations}\label{normal}
The unit normal to $\Gamma(\tau)$ at $y=z(\beta,\tau)\in\Gamma(\tau)$, 
oriented from $\Omega^-(\tau)$ into $\Omega^+(\tau)$,  is denoted by $n(y,\tau)$.
In local coordinates this is given by
\begin{equation}
\NN(\beta,\tau)
\coloneqq
n(z(\beta,\tau),\tau)
=
\frac{
\partial_{\beta_1} z(\beta,\tau)
\times
\partial_{\beta_2} z(\beta,\tau)
}{
\left|
\partial_{\beta_1} z(\beta,\tau)
\times
\partial_{\beta_2} z(\beta,\tau)
\right|
}.
\end{equation}
The normal derivative of a sufficiently smooth function
$f:\mathbb{R}^3\times\mathbb{R}^+ \to \mathbb{R}$ is denoted by
\begin{equation}
\partial_{\NN} f(y,\tau)
\coloneqq
n(y,\tau)\cdot \nabla_y f(y,\tau).
\end{equation}
\end{subequations}

\subsection{Travel-time coordinates}

To describe the causal geometry in the heterogeneous case, we introduce
the \emph{travel-time coordinates}
\begin{equation}\label{travel-time}
  \theta=\theta(t;\tau)
  \qquad \text{and} \qquad 
  \eta=\eta(x,t;y,\tau),
\end{equation}
where $\theta$ denotes the temporal coordinate and $\eta$ the travel-time function. 
Throughout, $(x,t)\in\mathbb{R}^3\times\mathbb{R}$ will 
typically denote an \emph{observation} (or \emph{target})
space-time event, with $t \ge \tau$, and 
$(y,\tau)\in\Gamma(\tau)\times\mathbb{R}$ will typically 
denote a \emph{source} space-time event which lies on the worldsheet of the interface. 
A natural choice for $\theta$ is the \emph{canonical gauge}
\begin{equation} \label{canonical-gauge}
  \theta(t;\tau)=t-\tau,
\end{equation}
so that the causal geometry is determined
entirely by the travel-time function $\eta(x,t;y,\tau)$.
We begin by identifying some structural properties that this travel-time
function must satisfy.

\begin{definition}[Travel-time function]
\label{def:characteristic-coordinates}

A function
\begin{equation}\label{eta}
  \eta=\eta(x,t;y,\tau)
\end{equation}
is called a \emph{travel-time function} if the following
conditions hold:

\begin{enumerate}[label=(\alph*)]

  \item \textbf{Positivity and regularity.}
  $\eta$ is nonnegative and
  $C^2$ in $(x,t)$ and in $(y,\tau)$ away from the diagonal $x=y$,
  and satisfies
  \begin{subequations}\label{eta-struct}
  \begin{equation}
    \eta(x,t;y,\tau)\ge 0,
    \qquad
    \eta(x,t;y,\tau)=0 \iff x=y.
  \end{equation}
  Near the space-time diagonal, $\eta$ is governed to leading order\footnote{%
  Geometrically, this means that the true light 
  cone (see \Cref{fig:cone-x} and Definition \ref{def:causal-geometry}) at $(y,\tau)$ is tangent 
  to the source-frozen spherical cone obtained by freezing the medium at $(y,\tau)$.} 
  by the instantaneous \emph{slowness}
  \begin{equation}\label{slowness}
    \kappa(y,\tau)\coloneqq \frac{1}{c(y,\tau)}.
  \end{equation}
  That is, as $(x,t)\to (y,\tau)$,
  \begin{equation}\label{eta-local-diag-expansion}
    \eta(x,t;y,\tau)
    =
    \kappa(y,\tau)\,|x-y|
    +
    \mathcal O\!\left(
      |x-y|^2 + |t-\tau|\,|x-y|
    \right).
  \end{equation}
  Moreover, writing
  \[
    x-y = r\,\omega,
    \qquad
    r = |x-y|,
    \qquad
    \omega \in \mathbb{S}^2,
  \]
  then for fixed direction $\omega$, 
  \begin{align}
    \nabla_x \eta(x,t;y,\tau)
    &=
    \phantom{-}\kappa(y,\tau)\,\omega
    +
    \mathcal O\!\left(r+|t-\tau|\right),
    \label{eta-local-diag-gradx}
    \\
    \nabla_y \eta(x,t;y,\tau)
    &=
    -\kappa(y,\tau)\,\omega
    +
    \mathcal O\!\left(r+|t-\tau|\right).
    \label{eta-local-diag-grady}
  \end{align}
  In particular, if $y=z(\beta,\tau)\in\Gamma(\tau)$ and $x$ approaches $y$
  along the unit normal direction $\NN(\beta,\tau)$, $x = y \pm \varepsilon \NN(\beta,\tau)$ 
  with $\varepsilon \downarrow 0$, then
  \begin{align}
    \partial_{\NN_x}\eta(x,t;y,\tau)
    &\to
    \pm \kappa(y,\tau),
    \label{eta-local-diag-normal-x}
    \\
    \partial_{\NN_y}\eta(x,t;y,\tau)
    &\to
    \mp \kappa(y,\tau),
    \label{eta-local-diag-normal-y}
  \end{align}
  On the equal-time slice $t=\tau$, the Laplacian has the local form
  \begin{equation}\label{eta-local-diag-laplacian}
    \Delta_x \eta(x,\tau;y,\tau)
    =
    \frac{2\kappa(y,\tau)}{|x-y|}
    +
    \Delta_x\eta_{\mathsf{reg}}(x,\tau;y,\tau),
    \qquad x\neq y,
  \end{equation}
  where $\Delta_x \eta_{\mathsf{reg}}(x,\tau;y,\tau)=\mathcal O(1)$ near $x=y$.
  Finally, $\partial_\tau \eta(x,t;y,\tau)$ is continuous for $x\neq y$, and
  \begin{equation}\label{eta-local-diag-partial-tau-trace}
    \partial_\tau \eta(x,t;y,\tau)\to 0
    \qquad
    \text{as }(x,t)\to (y,\tau),\ t>\tau.
  \end{equation}
  \end{subequations}

  \item \textbf{Non-degeneracy.}
  For each fixed observation point $(x,t)$ and each boundary parameter $\beta$,
  the function
  \[
    \tau \longmapsto \theta(t;\tau) - \eta(x,t;z(\beta,\tau),\tau)
  \]
  is strictly decreasing, where $\theta$ is defined by \eqref{canonical-gauge}. Equivalently,
  \begin{equation}\label{monotone-intersection}
    1 + \frac{\mathrm d}{\mathrm d\tau}\eta(x,t;z(\beta,\tau),\tau) > 0,
  \end{equation}
  where the total $\tau$-derivative is
  \[
    \frac{\mathrm d}{\mathrm d\tau}\eta(x,t;z(\beta,\tau),\tau)
    =
    \partial_\tau \eta(x,t;y,\tau)\big|_{y=z(\beta,\tau)}
    +
    \partial_\tau z(\beta,\tau)\cdot
    \nabla_y \eta(x,t;y,\tau)\big|_{y=z(\beta,\tau)}.
  \]
  Consequently, the wavefront equation
  \begin{equation}\label{wavefront-canonical}
    \theta(t;\tau)=\eta(x,t;z(\beta,\tau),\tau)
  \end{equation}
  has at most one solution in $\tau$ for each fixed $(x,t,\beta)$, and any such
  solution is simple.

\end{enumerate}

\end{definition}

The travel-time coordinates naturally determine forward 
and backward causal regions, which encode the propagation 
of wavefronts in space-time.
\begin{definition}[Causal geometry]
\label{def:causal-geometry}

Let $(\theta,\eta)$ be travel-time coordinates in the sense of
Definition \ref{def:characteristic-coordinates}.

\begin{enumerate}[label=(\alph*)]

  \item \textbf{Forward causal geometry.}
  For each source point $(y,\tau)$, define 
  \begin{subequations}\label{Sigma-plus}
  \begin{align}
    \Sigma^{+}(y,\tau)
    &\coloneqq
    \left\{
      (x,t)\in\mathbb{R}^3\times\mathbb{R}
      :
      \theta(t;\tau)\ge \eta(x,t;y,\tau)
    \right\}, \label{Sigma-plus-def}
    \\
    \partial\Sigma^{+}(y,\tau)
    &\coloneqq
    \left\{
      (x,t)
      :
      \theta(t;\tau)=\eta(x,t;y,\tau)
    \right\}. \label{Sigma-plus-bdy}
  \end{align}
  \end{subequations}
  The volume $\Sigma^{+}(y,\tau)$ is called the \emph{forward causal region}, and 
  its boundary $\partial\Sigma^{+}(y,\tau)$ is called the
  \emph{forward light cone}
  emitted from $(y,\tau)$. 

  \item \textbf{Backward causal geometry.}
  For each observation point $(x,t)$, define
  \begin{subequations}\label{Sigma-minus}
  \begin{align}
    \Sigma^{-}(x,t)
    &\coloneqq
    \left\{
      (y,\tau)\in\mathbb{R}^3\times\mathbb{R}
      :
      \theta(t;\tau)\ge \eta(x,t;y,\tau)
    \right\},
    \\
    \partial\Sigma^{-}(x,t)
    &\coloneqq
    \left\{
      (y,\tau)
      :
      \theta(t;\tau)=\eta(x,t;y,\tau)
    \right\}.
  \end{align}
  \end{subequations}
  The volume $\Sigma^{-}(y,\tau)$ is called the \emph{backward causal region}, and 
  its boundary $\partial\Sigma^{-}(x,t)$ is called the
  \emph{backward light cone} associated with $(x,t)$.

\end{enumerate}

\end{definition}

The forward and backward light cones can equivalently be characterized 
via the \emph{arrival time}. 

\begin{definition}[Arrival time]
\label{def:arrival-time}
Fix a source point $(y,\tau)$ and an observation location $x$.
The \emph{arrival time} of the signal emitted from $(y,\tau)$ at $x$
is defined as the unique time $ \Tarr (x; y,\tau)$ 
satisfying the wavefront condition
\begin{equation}\label{arrival-time-def}
  \theta ( \Tarr (x;y,\tau);\tau )
  =
  \eta (x,\Tarr(x;y,\tau);y, \tau).
\end{equation}
\end{definition}

In other words, $\Tarr (x;y,\tau)$ is the smallest time $t\ge\tau$
such that $(x,t)$ lies on the forward light cone $\partial \Sigma^+(y,\tau)$.
Similarly, for a fixed observation point $(x,t)$,
the backward light cone
$\partial\Sigma^{-}(x,t)$ 
consists of all source events $(y,\tau)$ satisfying $ t = \Tarr (x;y,\tau)$.

\subsection{Subsonic interface motion}

In order to have a well-posed problem \cite{Cooper1979}, we assume 
that the motion of the interface is \emph{subsonic}. 
\begin{definition}[Subsonic condition]
\label{def:subsonic}
Denote the normal velocity of the interface by
\begin{equation}\label{normal-velocity}
  V_\NN(\beta,t)
  \;\coloneqq\;
  \NN(\beta,t)\cdot \partial_t z(\beta,t).
\end{equation}
We say that the \emph{subsonic condition} holds if
\begin{equation}\label{subsonic-condition}
  \bigl|V_\NN(\beta,t)\bigr|
  <
  c(z(\beta,t),t),
  \qquad
  (\beta,t)\in\mathbb{S}^2\times\mathbb{R}^+. 
\end{equation}
\end{definition}
The subsonic condition guarantees that the 
non-degeneracy property \eqref{monotone-intersection} holds locally near the 
space-time diagonal $(x,t)=(y,\tau)$, with $y = z(\beta,\tau)$. The expansions 
\eqref{eta-local-diag-grady} and \eqref{eta-local-diag-partial-tau-trace} imply that
\[
\frac{\mathrm d}{\mathrm d\tau}\eta(x,t;y,\tau)
=
-\kappa(y,\tau)\,\partial_t z(\beta,\tau)\cdot \omega
+
\mathcal O\!\left(r+|t-\tau|\right), \qquad \text{as } (x,t) \to (y,\tau), 
\]
where $x-y = r \omega$. 
Taking the limit along the normal direction $\omega=\NN(\beta,\tau)$ and using \eqref{normal-velocity}, we obtain
\[
\lim_{(x,t)\to(y,\tau)}
\frac{\mathrm d}{\mathrm d\tau}\eta(x,t;z(\beta,\tau),\tau)
=
-\kappa(y,\tau)\,V_\NN(\beta,\tau).
\]
The subsonic condition \eqref{subsonic-condition} implies
\[
\left|
\lim_{(x,t)\to(y,\tau)}
\frac{\mathrm d}{\mathrm d\tau}\eta(x,t;z(\beta,\tau),\tau)
\right|
<
1, 
\]
which, by continuity, implies a local version of \eqref{monotone-intersection}.

\subsection{Prescribed incident wave and reflected causal region}

In the scattering setting considered throughout this work,
the Dirichlet boundary data $f$ is induced by a prescribed, analytically known
\emph{incident wave} $\phiinc : \mathbb{R}^3 \times \mathbb{R}^+ \to \mathbb{R}$ via the trace
\begin{equation}
  f(x,t)
  =
  -\phiinc (x,t),
  \qquad (x,t) \in \Gamma(t) \times \mathbb{R}^+, 
\end{equation}
where $\phiinc$ is assumed to satisfy the homogeneous equation
\begin{subequations}\label{incident}
\begin{equation}
  \left(  \tfrac{1}{c_0^2} \p_t^2 - \Delta_x  \right) \phiinc = 0, 
  \qquad \text{in } \mathbb{R}^3 \times \mathbb{R}^+, 
\end{equation}
with $c_0 > 0$ a constant.  
We further assume that the incident wave is normalized so that
\begin{equation}
  \max_{(x,t)\in\mathbb{R}^3\times\mathbb{R}^+} \phiinc(x,t)=1.
\end{equation}
\end{subequations}
In the simplest case, $\phiinc$ consists of a single localized pulse in time,
so that for each spatial point $x$ the function $t \mapsto \phiinc(x,t)$
attains its maximum at a unique time. More generally, we also allow
\emph{wave trains}, for which multiple such maxima may occur.

Associated with the incident wave \eqref{incident} is the 
\emph{reflected wavefront} in the exterior domain $\Omega^+(t)$. For the case of a moving boundary, a rigorous mathematical 
analysis of the reflected wavefront from a delta-function incident plane wave was provided in \cite{CoSt1984}. 
Here, we provide a simple description of the reflection(s) associated with \eqref{incident}. 
For each $\beta \in \mathbb{S}^2$, define the \emph{reflection source time}
$\thit(\beta)$ to be the first time at which the peak of the incident wave
reaches the moving boundary point $z(\beta,t)$, namely
\begin{equation}\label{thit}
   \thit (\beta)
  \coloneqq
  \inf \big\{
    t>0 :
    \phiinc (z(\beta,t),t) = 1
  \big\}.
\end{equation}
For problems with an incident wave train, the reflection source times form
a collection $\{\thit^\ell(\beta)\}_{\ell\ge 1}$, defined recursively by
\begin{equation}\label{thit-train}
  \thit^\ell(\beta)
  \coloneqq
  \inf\big\{
    t>\thit^{\ell-1}(\beta)
    :
    \phiinc(z(\beta,t),t)=1
  \big\},
  \qquad \ell\ge 2,
\end{equation}
with $\thit^1(\beta)\equiv \thit(\beta)$.

The reflection source time $\thit(\beta)$ defined in \eqref{thit}
marks the first instant at which the incident wave activates
the boundary point $z(\beta,\thit(\beta)) \in \Gamma(\thit(\beta))$, and each activated boundary
point $z(\beta,\thit(\beta))$ subsequently acts as a secondary source.
The reflected wavefront at time $t=T$ is therefore obtained as the envelope
of all forward light cones emitted from the boundary.
Any reflected signal observed at $(x,t)\in\Omega^+(t) \times \mathbb{R}^+$
must therefore be causally connected to at least one boundary
event $(z(\beta,\thit(\beta)),\thit(\beta))$.
We define the \emph{reflected causal region} as the union of forward
causal regions emitted from the boundary at their hitting times,
\begin{subequations}\label{Sigma-rfl}
\begin{equation}
  \Sigma_{\mathsf{rfl}}
  \coloneqq
  \bigcup_{\beta \in \mathbb{S}^2}
  \Sigma^{+} (z(\beta,\thit(\beta)),\thit(\beta) ).
\end{equation}
Using \eqref{Sigma-plus-def}, this may be written equivalently as
\begin{equation}\label{Sigma-rfl-characterization}
  \Sigma_{\mathsf{rfl}}
  =
  \Big\{
  (x,t)
  :
  x \in\Omega^+(t), \ t \ge 0, \ 
  \exists\, \beta\in\mathbb{S}^2
  \ \text{such that}\
  \theta (t;\thit(\beta) )
  \ge
  \eta (x,t;z(\beta,\thit(\beta)),\thit(\beta) )
  \Big\}.
\end{equation}
\end{subequations}
The \emph{reflected wavefront} is the boundary
$\partial\Sigma_{\mathsf{rfl}}$ of this set: 
\begin{equation}\label{reflected-wavefront}
  \partial \Sigma_{\mathsf{rfl}}
  =
  \Big\{
  (x,t) : 
  x \in\Omega^+(t), \ t \ge 0, \ 
  \exists\, \beta\in\mathbb{S}^2
  \ \text{such that}\
  \theta (t;\thit(\beta) )
  =
  \eta (x,t;z(\beta,\thit(\beta)),\thit(\beta) )
  \Big\}.
\end{equation}
Equivalently, in terms of the arrival time function
$\Tarr(x;y,\tau)$ defined in \eqref{arrival-time-def},
the earliest reflected arrival time at $x$ is
\begin{equation}\label{Tref-def}
  \Trfl (x)
  \coloneqq
  \inf_{\beta \in \mathbb{S}^2} \,
  \Tarr (x; z(\beta,\thit(\beta)),\thit(\beta) ).
\end{equation}
Then
\[
  (x,t)\in\partial\Sigma_{\mathsf{rfl}}
  \ \iff \
  t = \Trfl (x). 
\]

In the case of an incident wave train, the reflected field is generated by
a collection of boundary emission events
$(z(\beta,\thit^\ell(\beta)),\thit^\ell(\beta))$, where
$\{\thit^\ell(\beta)\}_{\ell\ge 1}$ is the family of reflection source times
defined in \eqref{thit-train}.
The reflected causal region is then obtained by taking the union over all such
secondary emission events,
\begin{subequations}\label{Sigma-rfl-train}
\begin{equation}
  \Sigma_{\mathsf{rfl}}^{\mathsf{train}}
  \coloneqq
  \bigcup_{\ell\ge 1}\ \bigcup_{\beta \in \mathbb{S}^2}
  \Sigma^{+}(z(\beta,\thit^\ell(\beta)),\thit^\ell(\beta)).
\end{equation}
Equivalently,
\begin{equation}
  \Sigma_{\mathsf{rfl}}^{\mathsf{train}}
  =
  \Big\{
  (x,t)
  :
  x\in\Omega^+(t),\ t\ge 0,\ 
  \exists\,\beta\in\mathbb{S}^2,\ \exists\,\ell\ge 1
  \ \text{such that}\
  \theta(t;\thit^\ell(\beta))
  \ge
  \eta(x,t;z(\beta,\thit^\ell(\beta)),\thit^\ell(\beta))
  \Big\}.
\end{equation}
\end{subequations}
Its boundary $\partial\Sigma_{\mathsf{rfl}}^{\mathsf{train}}$ is the union of
the corresponding reflected wavefronts, and the earliest reflected arrival
time is
\begin{equation}
  \Trfl^{\mathsf{train}}(x)
  \coloneqq
  \inf_{\ell\ge 1}\ \inf_{\beta\in\mathbb{S}^2}
  \Tarr(x;z(\beta,\thit^\ell(\beta)),\thit^\ell(\beta)).
\end{equation}

\begin{remark}[The role of the incident wave]
In the classical scattering problem, the total field
$\phi_{\mathsf{tot}}=\phiinc+\phi$ and the incident field $\phiinc$
both satisfy the same homogeneous wave equation in the exterior domain,
and hence so does the scattered field $\phi$.
In the present heterogeneous setting, this decomposition is not available:
the superposition $\phi_{\mathsf{tot}}=\phiinc+\phi$ does \emph{not} produce a
total field satisfying the exterior equation
$\Lop \phi_{\mathsf{tot}}=0$.
Instead, we formulate the exterior problem directly for $\phi$,
and the incident field serves only to generate the boundary forcing. 
The terminology ``incident'' should therefore be understood in this limited sense.
\end{remark}

\subsection{Geometric--optics parametrix representation}

We conclude this section by summarizing the 
geometric--optics parametrix.  
The formal structure outlined below arises in the 
classical high-frequency WKB analysis of \eqref{L-general-pde} and 
has been rigorously justified in that setting \cite{Lax1957,Hormander,Eskin2011}.

Assume that the fundamental solution for $\Lop$
admits a (causal) parametrix representation of the form
\begin{equation}
\label{G-pmtx}
  \Gop(x,t; y,\tau)
  =
  \amp (x,t; y,\tau)\,
  \Gopo (\eta(x,t; y,\tau), \theta(t ; \tau) ), 
\end{equation}
for $(x,t)\neq(y,\tau)$, and impose the homogeneous equation
\begin{equation}\label{homog-away-st}
  \Lop\, \Gop(x,t;y,\tau) = 0,
\end{equation}
in the sense that \eqref{G-pmtx} satisfies 
\eqref{homog-away-st} up to lower-order smooth terms, with 
singular support confined to the characteristic surface $\theta=\eta$.
Then the travel-time function $\eta$ and amplitude $\amp$ are 
determined by the coupled eikonal--transport 
system \eqref{eikonal-delay-st}--\eqref{transport-st} below.

\subsubsection{The travel-time eikonal equation for $\eta(x,t;y,\tau)$.}

For \eqref{homog-away-st} to hold at leading order on the wavefront $\theta = \eta$, the 
function $\eta$ must satisfy the \emph{travel-time eikonal equation}
\begin{subequations}\label{eikonal-delay-st}
\begin{equation}\label{eikonal-delay-st-a}
  \frac{\big( 1 - \partial_t\eta(x,t;y,\tau) \big)^2}{c(x,t)^2}
  -
  |\nabla_x \eta(x,t;y,\tau) |^2
  =
  0,
  \qquad (x,t)\neq(y,\tau). 
\end{equation}
It is supplemented with the initial condition
\begin{equation}\label{eikonal-delay-st-ic}
  \eta(y,\tau;y,\tau)=0.
\end{equation}
\end{subequations}

\subsubsection{Eikonal equation for the arrival time function $\Tarr$.}

Differentiating \eqref{arrival-time-def} with respect to $x$,
applying the chain rule, and using \eqref{eikonal-delay-st-a}, 
we obtain the \emph{arrival-time (implicit) eikonal equation}
\begin{subequations}\label{arrival-eikonal}
\begin{equation}
  |\nabla_x \Tarr(x;y,\tau)|^2
  =
  \frac{1}{c (x,\Tarr(x;y,\tau) )^2},
  \qquad x\neq y,
\end{equation}
with initial condition
\begin{equation}\label{arrival-eikonal-ic}
  \Tarr(y;y,\tau)=\tau.
\end{equation}
\end{subequations}

\subsubsection{Transport equation for the amplitude $\amp(x,t;y,\tau)$.}

Imposing cancellation of \eqref{homog-away-st} at next order on the wavefront
$\theta=\eta$ yields the transport equation
\begin{subequations}\label{transport-st}
\begin{align}\label{transport-eq}
  &2\,\nabla_x \amp(x,t;y,\tau)\cdot\nabla_x\eta(x,t;y,\tau)
  + \amp(x,t;y,\tau)\,\Delta_x\eta(x,t;y,\tau)
  \notag\\
  &
  =
  \frac{\amp(x,t;y,\tau)}{c(x,t)^2}
  \Bigg(
    \frac{2\,\big(1-\partial_t\eta(x,t;y,\tau)\big)}
         {\eta(x,t;y,\tau)}
    \;+\; \partial_{t}^2\eta(x,t;y,\tau)
  \Bigg)
  \;-\;
  \frac{2\big(1-\partial_t\eta(x,t;y,\tau)\big)}{c(x,t)^2}\,
  \partial_t \amp(x,t;y,\tau), 
\end{align}
valid in regions where $\eta$ is smooth. 
The transport equation determines $\amp$ only up to its value on the space-time diagonal. 
This diagonal value is fixed by the Green's-function normalization: substituting the 
parametrix ansatz into $\Lop\Gop$ and matching the coefficient of $\delta(x-y)\delta(t-\tau)$ gives
\begin{equation}\label{A-normalization-st}
  \amp(y,\tau;y,\tau)
  =
  \kappa(y,\tau).
\end{equation}
\end{subequations}

\subsubsection{Structure of the geometric--optics parametrix}

The following result records the resulting formal structure of the parametrix 
(see, e.g., \cite{Lax1957,Hormander,Eskin2011} for rigorous statements and proofs).

\begin{theorem}[Formal structure of the geometric--optics parametrix]
\label{prop:tt-parametrix-structural}
Assume that $\Gop$ admits the representation \eqref{G-pmtx}, where
$\eta(x,t;y,\tau)$ and $\theta(t;\tau)$ are smooth away from $x=y$, and suppose
that $\eta$ and $\amp$ solve the travel-time eikonal equation
\eqref{eikonal-delay-st} and the transport equation \eqref{transport-st}, respectively.
Then the following properties hold.

\begin{enumerate}

\item \textbf{Wavefront geometry.}
For a fixed source point $(y,\tau)$, the distribution $\Gop(\cdot,\cdot;y,\tau)$
is supported on the forward light cone $\partial\Sigma^+(y,\tau)$ 
defined in \eqref{Sigma-plus-bdy}.

\item \textbf{Formal parametrix identity.}
There exists a distribution $\mathcal R(x,t;y,\tau)$, supported in the forward
causal region $\Sigma^+(y,\tau)$, such that
\begin{equation}\label{general-parametrix-identity}
  \Lop \Gop(x,t;y,\tau)
  =
  \delta(x-y)\delta(t-\tau)
  +
  \mathcal R(x,t;y,\tau).
\end{equation}
Moreover, $\mathcal R$ is of lower singular order than the leading wavefront
contribution carried by $\Gop$.
\end{enumerate}
\end{theorem}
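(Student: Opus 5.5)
The plan is to compute $\Lop\Gop$ directly from the ansatz \eqref{G-pmtx} by the chain rule, treating $\Gopo(\eta,\theta)=\delta(\theta-\eta)/(4\pi\eta)$ as a distribution in $(x,t)$ obtained by composing the one-dimensional distribution $\delta$ with the smooth function $\theta-\eta$. Part (1) should follow at once from this composition. Away from $x=y$, the non-degeneracy \eqref{monotone-intersection} and the eikonal equation \eqref{eikonal-delay-st-a} give a nonvanishing gradient of $\theta-\eta$ in $(x,t)$. Indeed, $(1-\partial_t\eta)^2=c^2|\nabla_x\eta|^2$, and near the diagonal $|\nabla_x\eta|\approx\kappa\neq0$ by \eqref{eta-local-diag-gradx}. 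So $\{\theta=\eta\}$ is a smooth hypersurface, and $\delta(\theta-\eta)$ is a well-defined distribution supported there. Multiplying by the smooth factor $\amp/(4\pi\eta)$ preserves the support, so $\operatorname{supp}\Gop(\cdot,\cdot;y,\tau)\subset\partial\Sigma^+(y,\tau)$. The diagonal point $(y,\tau)$ itself lies on this cone by \eqref{eikonal-delay-st-ic}.

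For part (2), I would first work on $(x,t)\neq(y,\tau)$. Write $\Gop=F(x,t)\,\delta(\psi)$ with $\psi=\theta-\eta$ and $F=\amp/(4\pi\eta)$. Expanding gives
\[
\Lop(F\delta(\psi)) = F\Bigl(\tfrac{(\partial_t\psi)^2}{c^2}-|\nabla_x\psi|^2\Bigr)\delta''(\psi) + \Bigl[\tfrac{2\partial_tF\,\partial_t\psi+F\partial_t^2\psi}{c^2}-2\nabla_xF\cdot\nabla_x\psi-F\Delta_x\psi\Bigr]\delta'(\psi) + (\Lop F)\,\delta(\psi).
\]
The $\delta''$ coefficient vanishes identically by \eqref{eikonal-delay-st-a}, since $\partial_t\psi=1-\partial_t\eta$ and $\nabla_x\psi=-\nabla_x\eta$.

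Next I would substitute $F=\amp/(4\pi\eta)$ into the $\delta'$ coefficient. Derivatives of $1/\eta$ produce the terms $2\amp(1-\partial_t\eta)/(c^2\eta)$ and $|\nabla\eta|^2/\eta$. The latter, using the eikonal equation once more, combine exactly into the transport equation \eqref{transport-eq}. This is the bookkeeping step where \eqref{transport-eq} was designed to make the $\delta'$ coefficient vanish, possibly up to a multiple of $\psi$. Such a multiple is harmless, because $\psi\delta'(\psi)=-\delta(\psi)$ and so it only lowers the order. What remains away from the diagonal is $\mathcal R=(\Lop F+\text{corrections})\,\delta(\psi)$. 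This is a measure on the cone, one order less singular than the $\delta'$ and $\delta''$ structures produced by differentiating $\Gop$, and it is supported in $\partial\Sigma^+\subset\Sigma^+$.

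The main obstacle is the diagonal, where $\eta$ is singular like $\kappa|x-y|$ and the above calculus breaks down. To identify the $\delta(x-y)\delta(t-\tau)$ term, I would test $\Lop\Gop$ against $\varphi$ and excise a small space-time neighbourhood $\{|x-y|<\varepsilon\}$. I would then integrate by parts and use \eqref{eta-local-diag-expansion}--\eqref{eta-local-diag-laplacian} to compare with the source-frozen kernel $\kappa(y,\tau)\,\delta(t-\tau-\kappa|x-y|)/(4\pi\kappa|x-y|)$. That kernel is exactly the Green's function of $\kappa^2\partial_t^2-\Delta$ with constant $\kappa$, obtained by rescaling $\Gopo$. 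The boundary flux through the small sphere then yields $\amp(y,\tau;y,\tau)/\kappa(y,\tau)\cdot\varphi(y,\tau)$, which equals $\varphi(y,\tau)$ by \eqref{A-normalization-st}. The differences between the true and frozen kernels are $\mathcal O(r+|t-\tau|)$ relative corrections. I expect these to contribute only integrable, lower-order terms to $\mathcal R$, and because the statement is formal, I would argue this at the level of orders rather than with sharp estimates.
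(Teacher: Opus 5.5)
Your proposal is correct at the formal level at which the theorem is stated, and it follows the paper's route. The paper gives no standalone proof and defers to Lax, H\"ormander and Eskin, but the derivation preceding the theorem is the same as yours: the eikonal equation \eqref{eikonal-delay-st-a} kills the $\delta''(\theta-\eta)$ coefficient, the transport equation \eqref{transport-eq} kills the $\delta'$ coefficient, and the normalization \eqref{A-normalization-st} produces the unit point source by comparison with the frozen constant-speed kernel. Your $\delta'$ bookkeeping checks out: with $F=\amp/(4\pi\eta)$, the $\delta'$ coefficient is exactly $\tfrac{1}{4\pi\eta}$ times the difference of the two sides of \eqref{transport-eq}, so it vanishes identically. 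One small correction: \eqref{monotone-intersection} is a condition along the worldsheet in the source time $\tau$ and plays no role in the nonvanishing of $\nabla_{x,t}(\theta-\eta)$. Near the diagonal that nonvanishing comes from the eikonal equation together with \eqref{eta-local-diag-gradx}. Elsewhere it is implicit in the hypothesis that \eqref{G-pmtx} defines a distribution, which by the eikonal equation is equivalent to $1-\partial_t\eta\neq 0$ on the cone, i.e.\ to the arrival time $\Tarr$ being well defined.
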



\section{Geometric--optics parametrix boundary integral formulation}\label{sec:BIE}

Let $\phi$ solve the exterior Dirichlet problem 
\begin{equation}\label{phi-exact}
\Lop \phi (x,t) = 0, \qquad (x,t) \in \Omega^+(t) \times \mathbb{R}^+,
\end{equation}
with the initial and boundary conditions \eqref{L-bcs}.  
Motivated by the parametrix representation
\eqref{G-pmtx}, we approximate $\phi$ by layer potentials 
built from the parametrix kernel $\Gop$.

Since we consider the exterior problem, all traces on the moving
boundary are taken from the exterior domain $\Omega^+(t)$.

\begin{definition}[Exterior trace operator]
\label{def:exterior-trace-operator}
Let $w(x,t)$ be defined for $(x,t)\in\Omega^+(t)\times\mathbb{R}^+$ and
assume that, for each $(\alpha,t)\in\mathbb{S}^2\times\mathbb{R}^+$, the
one-sided limit from the exterior exists.  The \emph{exterior trace} of $w$
on $\Gamma(t)$ is the function $\gamma^+ w$ defined by
\begin{equation}\label{def-gamma-plus}
  (\gamma^+ w)(\alpha,t)
  \coloneqq
  \lim_{\varepsilon\downarrow0}
  w\bigl(z(\alpha,t)+\varepsilon \NN(\alpha,t),\,t\bigr).
\end{equation}
When $w=w(x,t;y,\tau)$ also depends on source variables
$(y,\tau)\in\Gamma(\tau)\times\mathbb{R}^+$, the notation
$\gamma^+ w$ always refers to the trace in the observation variables $(x,t)$
with $(y,\tau)$ held fixed.
\end{definition}

\begin{definition}[Boundary pullbacks of kernel quantities]
\label{def:boundary-pullbacks}
For $\alpha,\beta\in\mathbb{S}^2$ and $t\ge \tau$, define
\begin{subequations}\label{eta-pullback}
\begin{align}
  \upeta(\alpha,t;\beta,\tau)
  &\coloneqq
  \gamma^+\eta(\alpha,t;z(\beta,\tau),\tau),
  \\
  \partial_{\NN_y}\upeta(\alpha,t;\beta,\tau)
  &\coloneqq
  \NN(\beta,\tau)\cdot
  \gamma^+\nabla_y\eta(\alpha,t;y,\tau)\big|_{y=z(\beta,\tau)},
  \\
  \partial_\tau \upeta(\alpha,t;\beta,\tau)
  &\coloneqq
  \gamma^+\partial_\tau\eta(\alpha,t;z(\beta,\tau),\tau),
  \\
  \frac{\mathrm d}{\mathrm d\tau}\upeta(\alpha,t;\beta,\tau)
  &\coloneqq
  \partial_\tau\upeta(\alpha,t;\beta,\tau)
  +
  \partial_t z(\beta,\tau)\cdot
  \gamma^+\nabla_y\eta(\alpha,t;y,\tau)\big|_{y=z(\beta,\tau)}.
\end{align}
\end{subequations}
The boundary pullbacks of the amplitude and its normal derivative are
\begin{subequations}\label{A-pullback}
\begin{align}
  A(\alpha,t;\beta,\tau)
  &\coloneqq
  \gamma^+\amp(\alpha,t;z(\beta,\tau),\tau),
  \label{A-pullback-a}\\
  \partial_{\NN_y}A(\alpha,t;\beta,\tau)
  &\coloneqq
  \NN(\beta,\tau)\cdot
  \gamma^+\nabla_y\amp(\alpha,t;y,\tau)\big|_{y=z(\beta,\tau)}.
  \label{A-pullback-b}
\end{align}
\end{subequations}
The boundary pullbacks of the Dirichlet data and sound speed are
\begin{equation}\label{F-pullback}
  F(\alpha,t)
  \coloneqq
  \gamma^+ f(\alpha,t), 
\end{equation}
and 
\begin{equation}\label{C-pullback}
  C(\alpha,t)
  \coloneqq
  \gamma^+ c(\alpha,t). 
\end{equation}
\end{definition}

\subsection{Layer-potential formulations}

Let
\[
  \sigma,\mu:\mathbb S^2\times\mathbb{R}^+\to\mathbb R
\]
be causal boundary densities. 
For $(x,t)\in\Omega^+(t)\times\mathbb{R}^+$, we define the 
\emph{geometric--optics parametrix single- and double-layer potentials} by
\begin{subequations}\label{layer-potentials}
\begin{align}
  \SopSL[\sigma](x,t)
  &\coloneqq
  \int_0^t\int_{\mathbb S^2}
    \Gop(x,t;z(\beta,\tau),\tau)\,
    \sigma(\beta,\tau)\,
    \mathrm dS_y(\beta,\tau)\,\mathrm d\tau,
  \label{SopSL}\\
  \DopDL[\mu](x,t)
  &\coloneqq
  \int_0^t\int_{\mathbb S^2}
    \partial_{\nu_y}\Gop(x,t;z(\beta,\tau),\tau)\,
    \mu(\beta,\tau)\,
    \mathrm dS_y(\beta,\tau)\,\mathrm d\tau,
  \label{DopDL}
\end{align}
where
\begin{equation}
  \partial_{\nu_y}\Gop(x,t;z(\beta,\tau),\tau)
  \coloneqq
  \nu(\beta,\tau)\cdot
  \nabla_y\Gop(x,t;y,\tau)\big|_{y=z(\beta,\tau)}.
\end{equation}
Using the parametrix representation \eqref{G-pmtx}, these become
\begin{align}
  \SopSL[\sigma](x,t)
  &=
  \int_0^t\int_{\mathbb S^2}
    \amp(x,t;z(\beta,\tau),\tau)\,
    \Gopo\bigl(
      \eta(x,t;z(\beta,\tau),\tau),
      \theta(t;\tau)
    \bigr)\,
    \sigma(\beta,\tau)\,
    \mathrm dS_y(\beta,\tau)\,\mathrm d\tau,
  \label{SopSL-explicit}\\
  \DopDL[\mu](x,t)
  &=
  \int_0^t\int_{\mathbb S^2}
    \partial_{\nu_y}
    \Big[
      \amp(x,t;z(\beta,\tau),\tau)\,
      \Gopo\bigl(
        \eta(x,t;z(\beta,\tau),\tau),
        \theta(t;\tau)
      \bigr)
    \Big]\,
    \mu(\beta,\tau)\,
    \mathrm dS_y(\beta,\tau)\,\mathrm d\tau .
  \label{DopDL-explicit}
\end{align}
\end{subequations}
Since $\Gop$ is supported on the wavefront
$\theta(t;\tau)=\eta(x,t;z(\beta,\tau),\tau)$, both layer 
potentials localize to points on the worldsheet 
$(z(\beta,\tau),\tau)$ satisfying this relation, and 
hence involve only retarded interactions along the distorted light cone.
Motivated by this structure, we approximate the 
solution of \eqref{phi-exact} by any of the following:
\begin{subequations}\label{layer-ansatze}
\begin{alignat}{5}
  \tilde\phi(x,t)
  &\,=\;\,&& \SopSL[\sigma](x,t),
  &\qquad& &\text{\emph{(single-layer)}},
  \label{layer-ansatze-a}\\
  \tilde\phi(x,t)
  &\,=\;\,&& \DopDL[\mu](x,t),
  &\qquad& &\text{\emph{(double-layer)}},
  \label{layer-ansatze-b}\\
  \tilde\phi(x,t)
  &\,=\;\,&& \DopDL[\mu](x,t) - \SopSL[\sigma](x,t),
  &\qquad& &\text{\emph{(Kirchhoff)}}. 
  \label{layer-ansatze-c}
\end{alignat}
\end{subequations}

\subsection{Single-layer boundary integral equation}
For each fixed \(t\), the kernel \(\Gop(\cdot,t;\cdot,\tau)\) has the same
spatial singularity structure as \(\Gopo(\cdot,\theta(t;\tau))\), modulo the
smooth amplitude factor \(\amp\). Consequently, the single-layer potential is
continuous across \(\Gamma(t)\), and its exterior trace defines the boundary
operator
\begin{subequations}\label{Sop}
\begin{align}
  \Sop[\sigma](\alpha,t)
  &\coloneqq
  \gamma^+\SopSL[\sigma](\alpha,t)
  \label{Sop-a}\\
  &=
  \int_0^t\int_{\mathbb S^2}
    A(\alpha,t;\beta,\tau)\,
    \Gopo\bigl(
      \upeta(\alpha,t;\beta,\tau),
      \theta(t;\tau)
    \bigr)\,
    \sigma(\beta,\tau)\,
    \mathrm dS_y(\beta,\tau)\,\mathrm d\tau .
  \label{Sop-b}
\end{align}
\end{subequations}
Imposing the Dirichlet boundary condition yields the single-layer boundary
integral equation
\begin{equation}\label{BIE-SL}
  \Sop[\sigma](\alpha,t)
  =
  F(\alpha,t),
  \qquad
  (\alpha,t)\in\mathbb S^2\times\mathbb{R}^+.
\end{equation}

\subsection{Double-layer boundary integral equation}
The double-layer potential is discontinuous across $\Gamma(t)$ and
its exterior trace satisfies the jump relation
\begin{subequations}\label{DL-jump}
\begin{equation}
  \gamma^+\DopDL[\mu](\alpha,t)
  =
  \tfrac12 \lambda(\alpha,t)\,\mu(\alpha,t)
  +
  \Dop[\mu](\alpha,t),
\end{equation}
where
\begin{equation}\label{lambda-main}
  \lambda(\alpha,t)
  \coloneqq
  1 + 
  \frac{V_\NN(\alpha,t)^2}
  {C(\alpha,t)^2-V_\NN(\alpha,t)^2},
\end{equation}
and $V_\NN(\alpha,t)$ denotes the normal velocity \eqref{normal-velocity}.
\end{subequations}
The regular boundary operator is
\begin{subequations}\label{Dop}
\begin{align}
  \Dop[\mu](\alpha,t)
  &\coloneqq
  \mathrm{p.v.}\!\int_0^t\int_{\mathbb S^2}
    \gamma^+\partial_{\nu_y}\Gop(\alpha,t;z(\beta,\tau),\tau)\,
    \mu(\beta,\tau)\,
    \mathrm dS_y(\beta,\tau)\,\mathrm d\tau
  \label{Dop-a}\\
  &=
  \mathrm{p.v.}\!\int_0^t\int_{\mathbb S^2}
    \mu(\beta,\tau)\,
    \Big[
      \partial_{\nu_y}A(\alpha,t;\beta,\tau)\,
      \Gopo\bigl(
        \upeta(\alpha,t;\beta,\tau),
        \theta(t;\tau)
      \bigr)
  \notag\\
  &\hspace{8em} \ 
      +
      A(\alpha,t;\beta,\tau)\,
      \partial_r\Gopo\bigl(
        \upeta(\alpha,t;\beta,\tau),
        \theta(t;\tau)
      \bigr)\,
      \partial_{\nu_y}\upeta(\alpha,t;\beta,\tau)
    \Big]\,
    \mathrm dS_y(\beta,\tau)\,\mathrm d\tau .
  \label{Dop-b}
\end{align}
\end{subequations}
Here $\mathrm{p.v.}$ denotes the Cauchy principal value with respect to the
singularity at $(\beta,\tau)=(\alpha,t)$. The proof of
\eqref{DL-jump} is given in the Appendix.

Imposing the Dirichlet boundary condition yields the double-layer boundary
integral equation
\begin{equation}\label{BIE-DL}
  \tfrac12 \lambda(\alpha,t)\,\mu(\alpha,t)
  +
  \Dop[\mu](\alpha,t)
  =
  F(\alpha,t),
  \qquad
  (\alpha,t)\in\mathbb S^2\times\mathbb{R}^+.
\end{equation}

\subsection{Kirchhoff representation formula}

Using Green's theorem, the boundary densities $\mu$ and $\sigma$ in the
Kirchhoff formulation\eqref{layer-ansatze-c} 
are identified with the boundary trace and the
corresponding exterior normal derivative trace, respectively. Thus,
\begin{equation}\label{Kirchhoff-densities}
  \mu(\alpha,t)=\gamma^+ \tilde{\phi}(\alpha,t) = F(\alpha,t)
  \qquad
  \text{and}
  \qquad 
  \sigma(\alpha,t)= \left( \gamma^+ \partial_{\NN_x} \tilde\phi \right) (\alpha,t) .  
\end{equation}
and the boundary integral equation is 
\begin{equation}\label{BIE-K}
  \Sop[\sigma](\alpha,t)
  =
  \left(-1+\tfrac12 \lambda(\alpha,t) \right)F(\alpha,t)
  +
  \Dop[F](\alpha,t)  ,
  \qquad
  (\alpha,t)\in\mathbb S^2\times\mathbb{R}^+.
\end{equation}

\subsection{Integration-by-parts double-layer formulation}
\label{subsec:DL-ibp-universal}

To obtain a form suitable for numerical discretization, we rewrite the
double-layer potential by converting the normal derivative on the kernel
into a temporal derivative acting on the density. 
Using the chain rule and integration by
parts in $\tau$, it can be shown that the double-layer 
operator \eqref{DopDL-explicit} admits the decomposition
\begin{equation}\label{DopDL-ibp}
\DopDL[\mu](x,t)
=
\DopDLpr[\mu](x,t)
+
\DopDL^{\amp}[\mu](x,t)
+
\DopDL^{\tau}[\mu](x,t). 
\end{equation}
Here, $\DopDLpr$ denotes the principal contribution 
obtained after integration by parts: it is the only term that survives in the 
case that the sound speed is constant and the interface is fixed.  
The correction terms $\DopDL^{\amp}$ and $\DopDL^{\tau}$ arise 
from differentiation of the amplitude and from residual time derivatives, respectively.
Taking the exterior trace $x\to z(\alpha,t)$ defines the boundary operator
\begin{equation}\label{Dop-IBP}
\Dop[\mu](\alpha,t)
=
\Doppr[\mu](\alpha,t)
+
\Dop^{\amp}[\mu](\alpha,t)
+
\Dop^{\tau}[\mu](\alpha,t),
\qquad
(\alpha,t)\in\mathbb S^2\times\mathbb{R}^+.
\end{equation}

Introduce the \emph{modulated unit-speed Green's function}
\begin{equation}\label{K0}
\Kopo(\eta,\theta)
\coloneqq
\frac{\Gopo(\eta,\theta)}{\eta}, 
\end{equation}
and let 
\begin{subequations}\label{QP-def}
\begin{align}
D(\alpha,t;\beta,\tau) 
&=
1 
+ 
\frac{\mathrm d}{\mathrm d\tau}
\upeta(\alpha,t;\beta,\tau), 
\\[0.5em]
Q(\alpha,t;\beta,\tau)
&=
-
\frac{
A(\alpha,t;\beta,\tau)\,
\partial_{\NN_y}\upeta(\alpha,t;\beta,\tau)
}{
D(\alpha,t;\beta,\tau)
},
\\[0.5em]
P(\alpha,t;\beta,\tau)
&=
J(\beta,\tau)\,Q(\alpha,t;\beta,\tau).
\end{align}
\end{subequations}
The principal contribution is
\begin{equation}\label{Dop-ibp-principal}
\begin{split}
\Doppr[\mu](\alpha,t)
&=
\int_0^t
\int_{\mathbb{S}^2}
Q(\alpha,t;\beta,\tau)\,
\Big[
\Gopo(\upeta,\theta)\,\partial_\tau\mu(\beta,\tau)
+
\Kopo(\upeta,\theta)\,\mu(\beta,\tau)
\Big]
\,\mathrm d S_y\,\mathrm d\tau, 
\end{split}
\end{equation}
and the correction terms are given by
\begin{align}
\Dop^{\amp}[\mu](\alpha,t)
&=
\int_0^t\int_{\mathbb{S}^2}
\partial_{\NN_y}A(\alpha,t;\beta,\tau)\,
\Gopo(\upeta,\theta)\,
\mu(\beta,\tau)\,
\mathrm d S_y\,\mathrm d\tau,
\\
\Dop^{\tau}[\mu](\alpha,t)
&=
\int_0^t\int_{\mathbb S^2}
\frac{\mathrm d P}{\mathrm d\tau}(\alpha,t;\beta,\tau)\,
\Gopo(\upeta,\theta)\,
\mu(\beta,\tau)\,
\mathrm d\beta\,\mathrm d\tau. \label{Dop-ibp-tcorr}
\end{align}

\begin{remark}[Induced interior extension]
In the constant-speed case, the parametrix \eqref{G-pmtx} is exact,
and the single-layer, double-layer, and Kirchhoff representations
correspond to different extensions of the same exterior solution
into $\Omega^-(t)$. Uniqueness ensures that all such
representations recover the same scattered field in $\Omega^+(t)$;
the induced interior field is therefore merely representational bookkeeping.
In the heterogeneous setting, the induced interior extension is no longer
mere bookkeeping but becomes part of the approximation itself, since the
chosen travel-time geometry and amplitude determine how waves propagate
and refract within the interior region.
In this sense, different layer-potential representations amount to
different modeling choices, and part of the residual of the
geometric--optics parametrix may be viewed as reflecting the
interior modeling implicit in that choice.
\end{remark}


\section{Space-time collocation scheme} \label{sec:collocation}

In this section we present a simple space-time collocation scheme for the
boundary integral equations derived in \Cref{sec:universal}. 
The method can be viewed as a generalization of the collocation schemes 
in \cite{mansur1982,Mansur1983,Rynne1990,DaDu1997}.  
The main novelty is the construction of discrete light cones and worldsheets via
the slab-frozen approximation, which is described in \Cref{subsec:slab-frozen}. 
Throughout, we assume that the amplitude $\amp$ and travel-time function $\eta$ are given. 
In later sections, we introduce ray-based approximations for 
$\eta$ and two-point models for $\amp$.

\subsection{Temporal discretization}

\begin{subequations} \label{t-discrete}
Fix a final time $\tmax > 0$, partition the time interval $(0,\tmax]$ into $K$ uniform subintervals,
\begin{equation}
0 = t_0 < t_1 < \cdots < t_K = \tmax, \qquad
\Delta t = \frac{\tmax}{K},
\end{equation}
and denote
\begin{equation}
T_\ell = (t_{\ell-1}, t_\ell], \qquad \ell = 1,\ldots,K.
\end{equation}
\end{subequations}

\subsubsection{$\mathbb{P}^1_t$ temporal approximations}

The boundary densities $\sigma(\alpha,t)$ and 
$\mu(\alpha,t)$ are approximated by continuous,
piecewise linear functions in time.
Let $\{\sigma_\ell(\alpha)\}_{\ell=0}^K$ and 
$\{\mu_\ell(\alpha)\}_{\ell=0}^K$ denote the nodal values of 
the single- and double-layer densities, respectively, at the temporal
grid points $\{t_\ell\}$, with $\sigma_0 \equiv 0$ and $\mu_0 \equiv 0$.
On each time slab $T_\ell$ we write
\begin{equation}\label{P1-time-sigma}
  \sigma_\Delta(\alpha,t)\big|_{T_\ell}
  =
  \frac{t_\ell - t}{\Delta t}\,\sigma_{\ell-1}(\alpha)
  +
  \frac{t - t_{\ell-1}}{\Delta t}\,\sigma_\ell(\alpha),
  \qquad \ell=1,\dots,K,
\end{equation}
and
\begin{equation}\label{P1-time-mu}
  \mu_\Delta(\alpha,t)\big|_{T_\ell}
  =
  \frac{t_\ell - t}{\Delta t}\,\mu_{\ell-1}(\alpha)
  +
  \frac{t - t_{\ell-1}}{\Delta t}\,\mu_\ell(\alpha),
  \qquad \ell=1,\dots,K.
\end{equation}
The time derivative of the double-layer density is constant on each slab and given by
\begin{equation}\label{P1-time-mu-derivative}
  \partial_t \mu_\Delta(\alpha,t)\big|_{T_\ell}
  =
  \frac{\mu_\ell(\alpha)-\mu_{\ell-1}(\alpha)}{\Delta t},
  \qquad \ell = 1, \ldots, K.
\end{equation}

\subsubsection{Auxiliary temporal weights for the single-layer operator}

Collocation of the single-layer boundary integral 
operator \eqref{Sop} at time $t=t_k$ yields
contributions from the two linear temporal basis functions supported on each
time slab.
We therefore introduce auxiliary temporal weights
$\sltw^{(-)}_{k,\ell}(\alpha;\beta)$ and
$\sltw^{(+)}_{k,\ell}(\alpha;\beta)$,
corresponding to the left and right temporal shape functions on $T_\ell$.
For a fixed observation index $k$ and source slab $\ell$, these auxiliary
weights are defined by
\begin{subequations}\label{sltw}
\begin{align}
  \sltw^{(-)}_{k,\ell}(\alpha ; \beta)
  &\coloneqq 
  \frac{1}{\Delta t}
  \int_{T_\ell} 
  (t_\ell-\tau)\,
  A (\alpha, t_k ; \beta, \tau)\,
  \Gopo  ( \upeta(\alpha, t_k ; \beta, \tau), \theta(t_k ;\tau)  )\,
  J(\beta,\tau)\,
  \dd \tau, 
  \\
  \sltw^{(+)}_{k,\ell}(\alpha ; \beta)
  &\coloneqq 
  \frac{1}{\Delta t}
  \int_{T_\ell} 
  (\tau-t_{\ell-1})\,
  A (\alpha, t_k ; \beta, \tau)\,
  \Gopo ( \upeta(\alpha, t_k ; \beta, \tau), \theta(t_k ;\tau)  )\,
  J(\beta,\tau)\,
  \dd \tau. 
\end{align}
\end{subequations}
where $\Gopo$ is defined by \eqref{G0-explicit}. 
The auxiliary weights \eqref{sltw} are combined into effective temporal weights
$\sltw_{k,\ell}(\alpha;\beta)$ by the assembly rule
\begin{equation}\label{sltw-eff}
  \sltw_{k,\ell}(\alpha ; \beta)
  \;\coloneqq\;
  \sltw^{(+)}_{k,\ell}(\alpha ; \beta)
  \;+\;
  \begin{cases}
    \sltw^{(-)}_{k,\ell+1}(\alpha ; \beta), & \ell=1,\ldots,k-1,\\[0.3em]
    0, & \ell=k.
  \end{cases}
\end{equation}

\subsubsection{Auxiliary temporal weights for the double-layer operator}

Similarly, collocation of the principal component \eqref{Dop-ibp-principal} of the
integration-by-parts double-layer boundary operator at time $t=t_k$
produces contributions from the two linear temporal basis functions on each
slab $T_\ell$, together with an additional term arising from the
piecewise-constant approximation \eqref{P1-time-mu-derivative} of
$\partial_t\mu$.
For a fixed observation index $k$ and source slab $\ell$, define the 
auxiliary weights
\begin{subequations}\label{dltw}
\begin{align}
  \dltw^{\partial}_{k,\ell}(\alpha;\beta)
  &\coloneqq 
  \frac{1}{\Delta t}
  \int_{T_\ell}
    Q(\alpha, t_k ;\beta,\tau) \, 
    \Gopo( \upeta(\alpha, t_k ; \beta, \tau), \theta(t_k ; \tau) ) \, 
     J(\beta,\tau) \, 
  \dd\tau,
  \\[0.5em]
\begin{split}
  \dltw^{(-)}_{k,\ell}(\alpha;\beta)
  &\coloneqq 
  \frac{1}{\Delta t} 
 \int_{T_\ell}
    (t_\ell-\tau) \, Q(\alpha, t_k ;\beta,\tau) \, 
    \Kopo(\upeta(\alpha, t_k ; \beta, \tau), \theta(t_k ;\tau))
    J(\beta,\tau)
  \dd\tau,
\end{split}
\\[0.5em]
\begin{split}
  \dltw^{(+)}_{k,\ell}(\alpha;\beta)
  &\coloneqq
  \frac{1}{\Delta t}
  \int_{T_\ell}
    { (\tau-t_{\ell-1})} \,  Q(\alpha, t_k ;\beta,\tau) \, 
    \Kopo(\upeta(\alpha, t_k ; \beta, \tau), \theta(t_k ;\tau))
    J(\beta,\tau)
  \dd\tau.
\end{split}
\end{align}
\end{subequations}
Here $\Kopo$ is the modulated unit-speed Green's function \eqref{K0} and $Q$ is defined in \eqref{QP-def}.
The auxilary temporal weights are then assembled to form the effective weight
\begin{equation}\label{dltw-effective}
\begin{aligned}
\dltw_{k,\ell}(\alpha;\beta)
\;\coloneqq\;
&\dltw^{(+)}_{k,\ell}(\alpha;\beta)
\;+\;
\dltw^{\partial}_{k,\ell}(\alpha;\beta)
\;+\;
\begin{cases}
\dltw^{(-)}_{k,\ell+1}(\alpha;\beta)
-\dltw^{\partial}_{k,\ell+1}(\alpha;\beta), 
& \ell=1,\ldots,k-1,\\[0.4em]
0,
& \ell=k.
\end{cases}
\end{aligned}
\end{equation}

\subsubsection{Semi-discrete marching-on-time formulation}

Using the effective temporal weights introduced above, we define the
semi-discrete operators
\begin{equation} \label{op-semi-discrete}
  \Sop_{k,\ell}[\sigma](\alpha)
  \coloneqq 
  \int_{\mathbb{S}^2}
  \sltw_{k,\ell}(\alpha;\beta) \,
  \sigma (\beta)  \dd \beta
  \qquad\text{and}\qquad
  \Dop_{k,\ell}[\mu](\alpha)
   \coloneqq 
  \int_{\mathbb{S}^2}
  \dltw_{k,\ell}(\alpha;\beta) \,
  \mu(\beta) \dd \beta. 
\end{equation}
Set $F_k (\alpha) \coloneqq F( \alpha, t_k)$, for $\alpha \in \mathbb{S}^2$. 
Collocating the single-layer equation \eqref{BIE-SL} at time $t=t_k$ and
separating the contribution of the current time level gives the recursive
marching-on-time (MOT) relation
\begin{equation} \label{mot-SL}
  \Sop_{k,k}[\sigma_k]
  =
  F_k
  -
  \sum_{\ell=1}^{k-1} \Sop_{k,\ell}[\sigma_\ell],
  \qquad k=1,\dots,K.
\end{equation}
Similarly, the MOT recursions associated with the double-layer  
\eqref{BIE-DL} and Kirchhoff \eqref{BIE-K} boundary integral equations are 
\begin{equation}\label{DL-mot}
  \bigl(\tfrac12 \Lambda_k + \Dop_{k,k}\bigr)[\mu_k]
  =
  F_k
  -
  \sum_{\ell=1}^{k-1} \Dop_{k,\ell}[\mu_\ell],
  \qquad k=1,\dots,K,
\end{equation}
and
\begin{equation}\label{K-mot}
  \Sop_{k,k}[\sigma_k]
  =
  \left(-\mathcal{I}+\tfrac12 \Lambda_k\right)F_k
  +
  \Dop_{k,k}[F_k]
  +
  \sum_{\ell=1}^{k-1} 
  \Big(
  \Dop_{k,\ell}[F_\ell]
  -
  \Sop_{k,\ell}[\sigma_\ell]
  \Big),
  \qquad k=1,\dots,K,
\end{equation}
respectively, where $\mathcal{I}$ is the identity operator and 
$\Lambda_k$ is the multiplication operator
$(\Lambda_k v)(\alpha) \coloneqq \lambda_k(\alpha)\,v(\alpha)$.

\subsection{Slab-frozen approximation and closed-form expressions for the temporal weights}
\label{subsec:slab-frozen}

In this subsection, we introduce the \emph{slab-frozen approximation} to 
the causal geometry. Using piecewise constant temporal approximations 
for the geometry, travel-time, and amplitude, we reduce the retarded integrals defining 
the temporal weights \eqref{sltw} and \eqref{dltw} to integrals that can be 
evaluated analytically to yield closed-form expressions.

\subsubsection{Change-of-variables to $\theta$-space}

Fix an observation time $t=t_k$ and a source slab $T_\ell=(t_{\ell-1},t_\ell]$.  Define 
\begin{equation}
  \theta_{k,\ell} \coloneqq \theta(t_k;t_\ell) = t_k - t_\ell, 
  \qquad \ell=1,\ldots,k.
\end{equation}
Denote the inverse map by $\thetainv(t_k;\theta) = t_k - \theta$. 
We perform the change of variables 
$\tau=\thetainv(t_k;\theta)$,
and write the auxiliary single-layer \eqref{sltw} and 
double-layer \eqref{dltw} weights in the form
\begin{subequations}\label{sltw-theta}
\begin{align}
  \sltw^{(-)}_{k,\ell}(\alpha;\beta)
  &=
  \frac{1}{\Delta t}
  \int_{\theta_{k,\ell}}^{\theta_{k,\ell-1}}
  \bigl(t_\ell-\thetainv(t_k;\theta)\bigr)\,
  A(\alpha,t_k;\beta,\thetainv(t_k;\theta))\,
  \nonumber\\[-0.5em]
  &\hspace{12em}\times
  \Gopo (
      \upeta(\alpha,t_k;\beta,\thetainv(t_k;\theta)),
      \theta
    )\,
   J(\beta,\thetainv(t_k;\theta))\,
  \dd\theta,
  \\[0.7em]
  \sltw^{(+)}_{k,\ell}(\alpha;\beta)
  &=
  \frac{1}{\Delta t}
  \int_{\theta_{k,\ell}}^{\theta_{k,\ell-1}}
  \bigl(\thetainv(t_k;\theta)-t_{\ell-1}\bigr)\,
  A(\alpha,t_k;\beta,\thetainv(t_k;\theta))\,
  \nonumber\\[-0.5em]
  &\hspace{12em}\times
  \Gopo (
      \upeta(\alpha,t_k;\beta,\thetainv(t_k;\theta)),
      \theta
    )\,
   J(\beta,\thetainv(t_k;\theta))\,
  \dd\theta, 
\end{align}
\end{subequations}
and
\begin{subequations}\label{dltw-theta}
\begin{align}
  \dltw^{\partial}_{k,\ell}(\alpha;\beta)
  &=
  \frac{1}{\Delta t}
  \int_{\theta_{k,\ell}}^{\theta_{k,\ell-1}}
  Q(\alpha, t_k ; \beta, \thetainv(t_k;\theta)) 
   \nonumber\\[-0.5em]
  &\hspace{11em}\times \ 
  \Gopo  (
      \upeta(\alpha,t_k;\beta,\thetainv(t_k;\theta)),
      \theta
   ) \,
   J(\beta, \thetainv(t_k ; \theta) )  \,
  \dd\theta,
  \\[0.5em]
  \dltw^{(-)}_{k,\ell}(\alpha;\beta)
  &=
  \frac{1}{\Delta t}
  \int_{\theta_{k,\ell}}^{\theta_{k,\ell-1}}
  \bigl(
    t_\ell-\thetainv(t_k;\theta)
  \bigr) \,
  Q(\alpha,t_k;\beta,\thetainv(t_k;\theta))\,
  \nonumber\\[-0.5em]
  &\hspace{11em}\times
  \Kopo (
      \upeta(\alpha,t_k;\beta,\thetainv(t_k;\theta)),
      \theta
   ) \,
  J(\beta, \thetainv(t_k ; \theta) )  \,
  \dd\theta,
  \\[0.5em]
  \dltw^{(+)}_{k,\ell}(\alpha;\beta)
  &=
  \frac{1}{\Delta t}
  \int_{\theta_{k,\ell}}^{\theta_{k,\ell-1}}
  \bigl(
    \thetainv(t_k;\theta)-t_{\ell-1}
  \bigr) \,
  Q(\alpha,t_k;\beta,\thetainv(t_k;\theta))\,
  \nonumber\\[-0.5em]
  &\hspace{11em}\times
  \Kopo (
      \upeta(\alpha,t_k;\beta,\thetainv(t_k;\theta)),
      \theta
   ) \,
   J (\beta, \thetainv(t_k ; \theta) )  \,
  \dd\theta.
\end{align}
\end{subequations}
The mapped slab endpoints are
\begin{equation}\label{theta-mapped-limits}
  \theta_{k,\ell} = t_k - t_\ell
  \qquad
  \text{and}
  \qquad
  \theta_{k,\ell-1} = t_k - t_{\ell-1}.
\end{equation}
The time integral on $T_\ell$ contributes only when the \emph{wavefront equation}
\begin{subequations}\label{theta-limits}
\begin{equation}\label{theta-star-def}
  \theta
  -
  \upeta(\alpha,t_k;\beta,\thetainv(t_k;\theta))
  =0
\end{equation}
has a solution in the interval $[\theta_{k,\ell},\theta_{k,\ell-1})$, see \Cref{fig:frozen-slab-a}.
We denote this solution, when it exists, by $\theta^{*,\mathsf{ex}}_{k,\ell}(\alpha;\beta)$:
\begin{equation}\label{theta-star}
  \theta^{*,\mathsf{ex}}_{k,\ell}(\alpha;\beta)
  =
  \upeta (
    \alpha,t_k;\beta,\thetainv(t_k;\theta^{*,\mathsf{ex}}_{k,\ell}(\alpha;\beta))
   ) 
   \in
  [\theta_{k,\ell},\theta_{k,\ell-1}).
\end{equation}
The corresponding physical intersection time is 
\begin{equation}
\tau^{*,\mathsf{ex}}_{k,\ell}(\alpha;\beta) = t_k -  \theta^{*,\mathsf{ex}}_{k,\ell}(\alpha;\beta) \in (t_{\ell-1},t_\ell].
\end{equation}
By non-degeneracy \eqref{monotone-intersection}, any such
solution is unique and simple. Hence either no contribution arises from $T_\ell$,
or the slab contribution reduces to evaluation at $\theta=\theta^{*,\mathsf{ex}}_{k,\ell}$ via 
the distributional identity
\begin{equation}\label{delta-wavefront}
  \delta (
    \theta-\upeta(\alpha,t_k;\beta,\thetainv(t_k;\theta))
   )
  =
  \frac{
    \delta\bigl(\theta-\theta^{*,\mathsf{ex}}_{k,\ell}(\alpha;\beta)\bigr)
  }{
    \left|
      1
      -
      \frac{\mathrm{d}}{\mathrm{d}\theta}
      \upeta(\alpha,t_k;\beta,\thetainv(t_k;\theta))
      \big|_{\theta=\theta^{*,\mathsf{ex}}_{k,\ell}(\alpha;\beta)}
    \right|
  }.
\end{equation}
\end{subequations}

\subsubsection{Slab-frozen approximation}

Fix an observation time $t=t_k$ and a source slab $T_\ell=(t_{\ell-1},t_\ell]$.
We adopt a simple slab endpoint freezing strategy in which all
$\tau$-dependent quantities appearing in the kernel are evaluated at the
right endpoint $\tau=t_\ell$.
For $\tau\in T_\ell$, we make the following approximations. 

\begin{itemize}

\begin{subequations}\label{frozen-slab}
\item \textbf{Slab-frozen geometry:}
\begin{alignat}{3}
  z(\beta,\tau)
  &\;\approx\;
  z_{\ell}(\beta)
  &&\;\coloneqq\;
  z(\beta,t_\ell),
  \\
  J(\beta,\tau)
  &\;\approx\;
  J_{\ell}(\beta)
  &&\;\coloneqq\;
  J (\beta,t_\ell). 
\end{alignat}

\item \textbf{Slab-frozen travel-time:}
\begin{alignat}{3}\label{eta-frozen}
  \upeta(\alpha,t_k;\beta,\tau)
  &\;\approx\;
  \upeta_{k,\ell}(\alpha;\beta)
  &&\;\coloneqq\;
  \upeta(\alpha,t_k;\beta,t_\ell), \\
  \p_{\NN_y} \upeta(\alpha,t_k;\beta,\tau)
  &\;\approx\;
  \p_{\NN_y} \upeta_{k,\ell}(\alpha;\beta)
  &&\;\coloneqq\;
  \p_{\NN_y} \upeta(\alpha,t_k;\beta,t_\ell), \\
  \p_{\tau} \upeta(\alpha,t_k;\beta,\tau)
  &\;\approx\;
  \p_{\tau} \upeta_{k,\ell}(\alpha;\beta)
  &&\;\coloneqq\;
  \p_{\tau} \upeta(\alpha,t_k;\beta,t_\ell). 
\end{alignat}

\item \textbf{Slab-frozen amplitude:}
\begin{align}
  A(\alpha,t_k;\beta,\tau)
  &\;\approx\;
  A_{k,\ell}(\alpha;\beta)
  \;\coloneqq\;
  A(\alpha,t_k;\beta,t_\ell).
\end{align}
\end{subequations}

\end{itemize}
In the approximations above, $\approx$ denotes an $\mathcal{O}(\Delta t)$ 
approximation in time on each slab.
Under the slab-frozen approximation, the only remaining 
$\tau$-dependence in the temporal integrals arises
through the separation variable $\theta(t_k;\tau)$.

\subsubsection{Causal intersection under slabwise freezing}
\label{subsec:theta-limits-linearized}

\begin{subequations}\label{theta-limits-linearized}
Under the slab-frozen approximation \eqref{frozen-slab}, the approximate wavefront
intersection time is
\begin{equation}\label{theta-star-linearized-explicit}
  \theta^*_{k,\ell}(\alpha;\beta)
  =
  \upeta_{k,\ell}(\alpha;\beta).
\end{equation}
The corresponding physical intersection time is
\begin{equation}\label{tau-star-frozen}
  \tau^*_{k,\ell}(\alpha;\beta)
  =
  t_k
  - 
  \theta^*_{k,\ell}(\alpha;\beta)
  =
  t_k 
  - 
  \upeta_{k,\ell}(\alpha;\beta).
\end{equation}
\end{subequations}

Introduce the \emph{slab-frozen causal interaction set}
\begin{equation}\label{def:slab-causal-set}
  \mathcal C_{k,\ell}
  \coloneqq
  \left\{
    (\alpha,\beta)\in\mathbb{S}^2 \times \mathbb{S}^2
    :
    \theta_{k,\ell}
    \le
    \upeta_{k,\ell}(\alpha;\beta)
    \le
    \theta_{k,\ell-1}
  \right\}. 
\end{equation}
Because the parametrix kernel is supported on the
wavefront $\theta=\eta$, a slab contributes for a given pair
$(\alpha,\beta)$ if and only if
$(\alpha,\beta)\in\mathcal C_{k,\ell}$.
Two distinct geometric configurations arise:

\begin{enumerate}

\item \textbf{No intersection.}  
	If $(\alpha,\beta)\notin\mathcal C_{k,\ell}$,
      the vertical source-time segment over $T_\ell$
      does not intersect the discrete backward light cone.
	No contribution arises from this slab.

\item \textbf{Single intersection inside the slab.}  
If $(\alpha,\beta)\in\mathcal C_{k,\ell}$,
      the vertical source-time segment intersects the discrete light cone at the
      unique time $\tau^*_{k,\ell}(\alpha;\beta)$,
      and the slab contributes through evaluation at this
      intersection point via the identity 
      \begin{equation}\label{delta-wavefront-frozen}
      \delta (
        \theta-\upeta(\alpha,t_k;\beta,\thetainv(t_k;\theta))
       )
      =
        \delta\bigl(\theta-\theta^{*}_{k,\ell}(\alpha;\beta)\bigr), 
      \end{equation}
      which is \eqref{delta-wavefront} in the slab-frozen approximation.
\end{enumerate}

\begin{figure}[ht]
    \centering
    \begin{subfigure}[t]{.32\linewidth}
    \centering
    \includegraphics[width=0.99\linewidth]{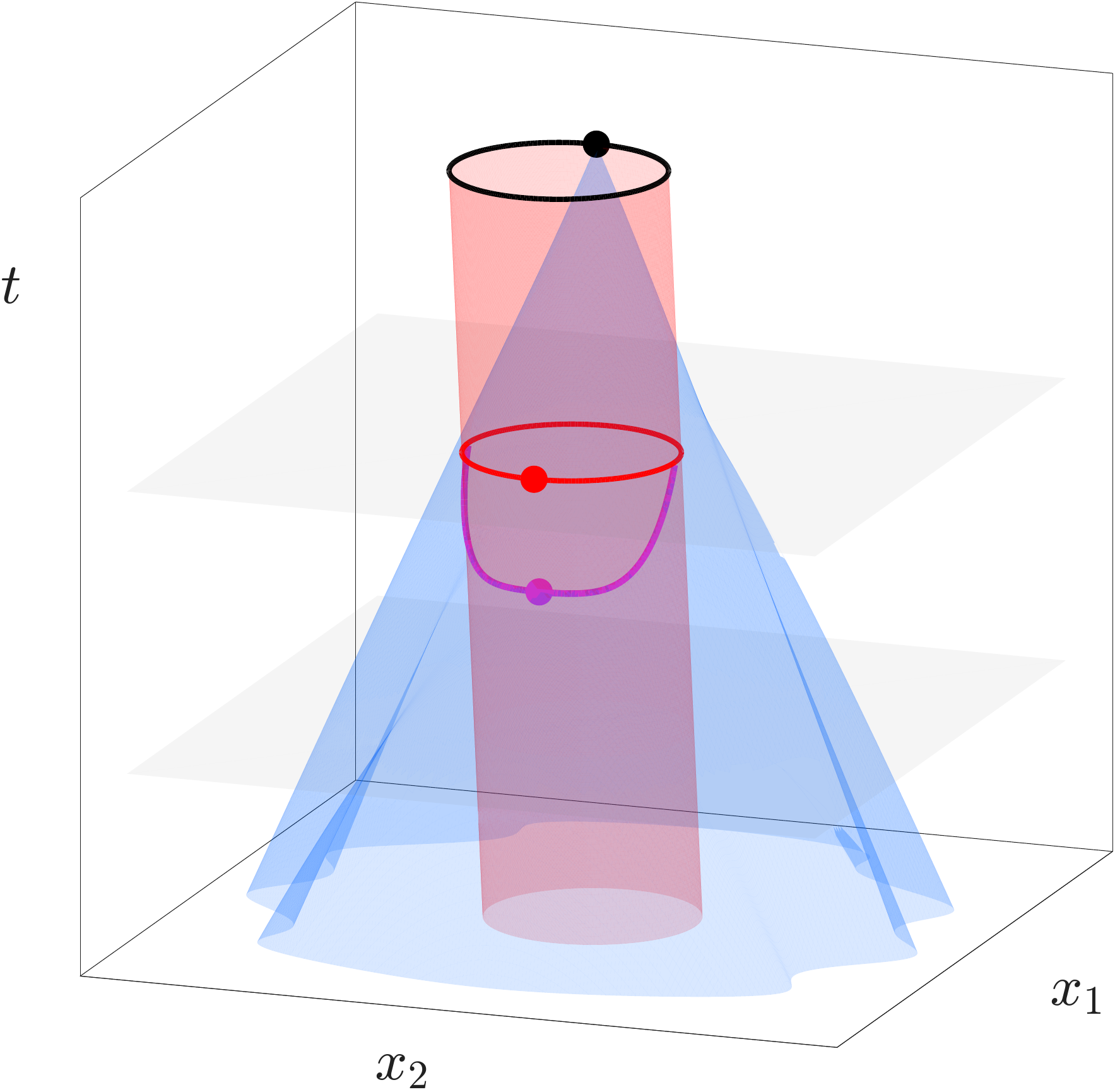}
    \caption{}
    \label{fig:worldsheet}
    \end{subfigure}
    \begin{subfigure}[t]{.32\linewidth}
    \centering
    \includegraphics[width=0.98\linewidth]{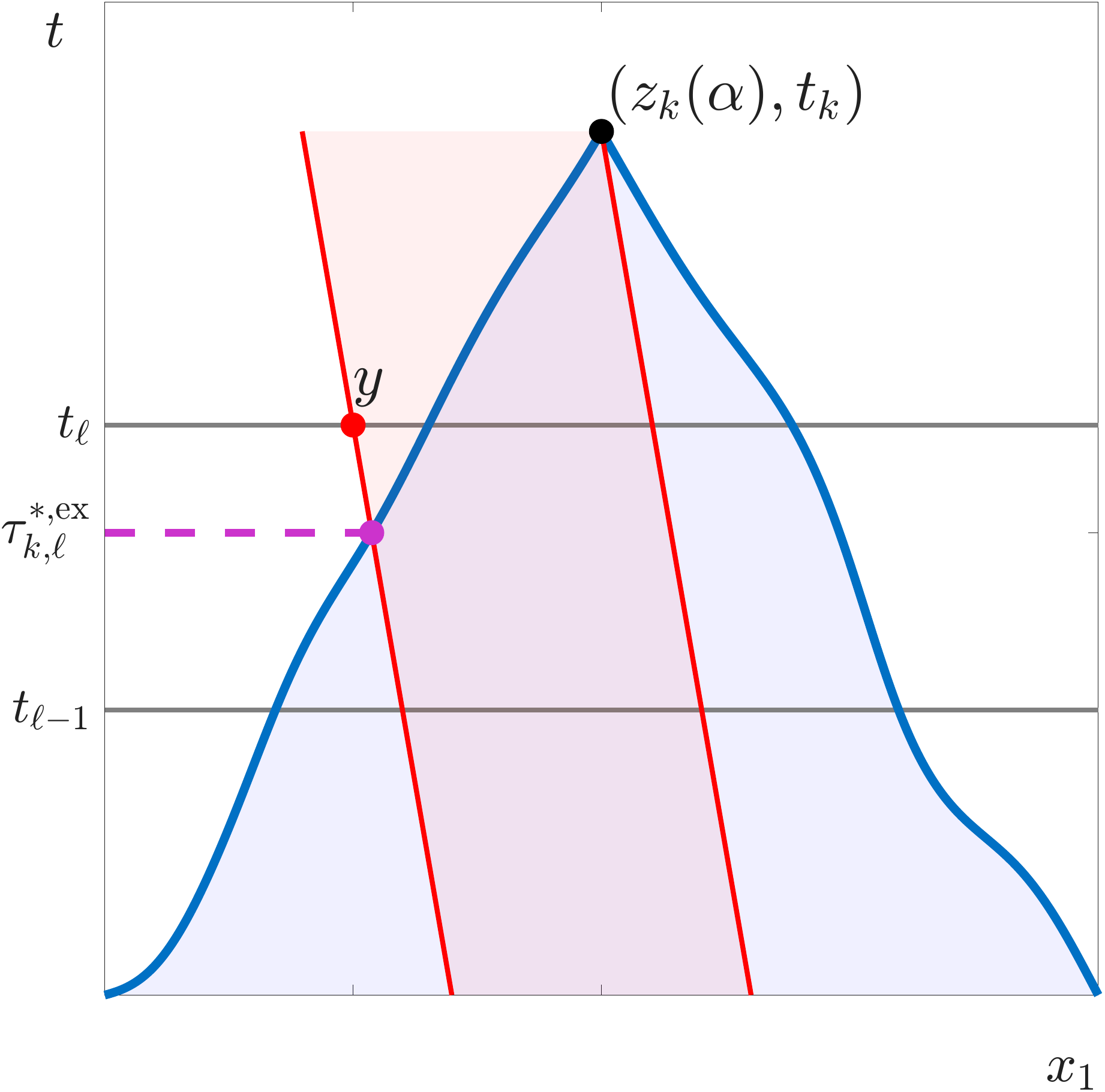}
    \caption{True causal geometry}
    \label{fig:frozen-slab-a}
    \end{subfigure}
    \begin{subfigure}[t]{0.32\linewidth}
    \centering
    \includegraphics[width=0.971\linewidth]{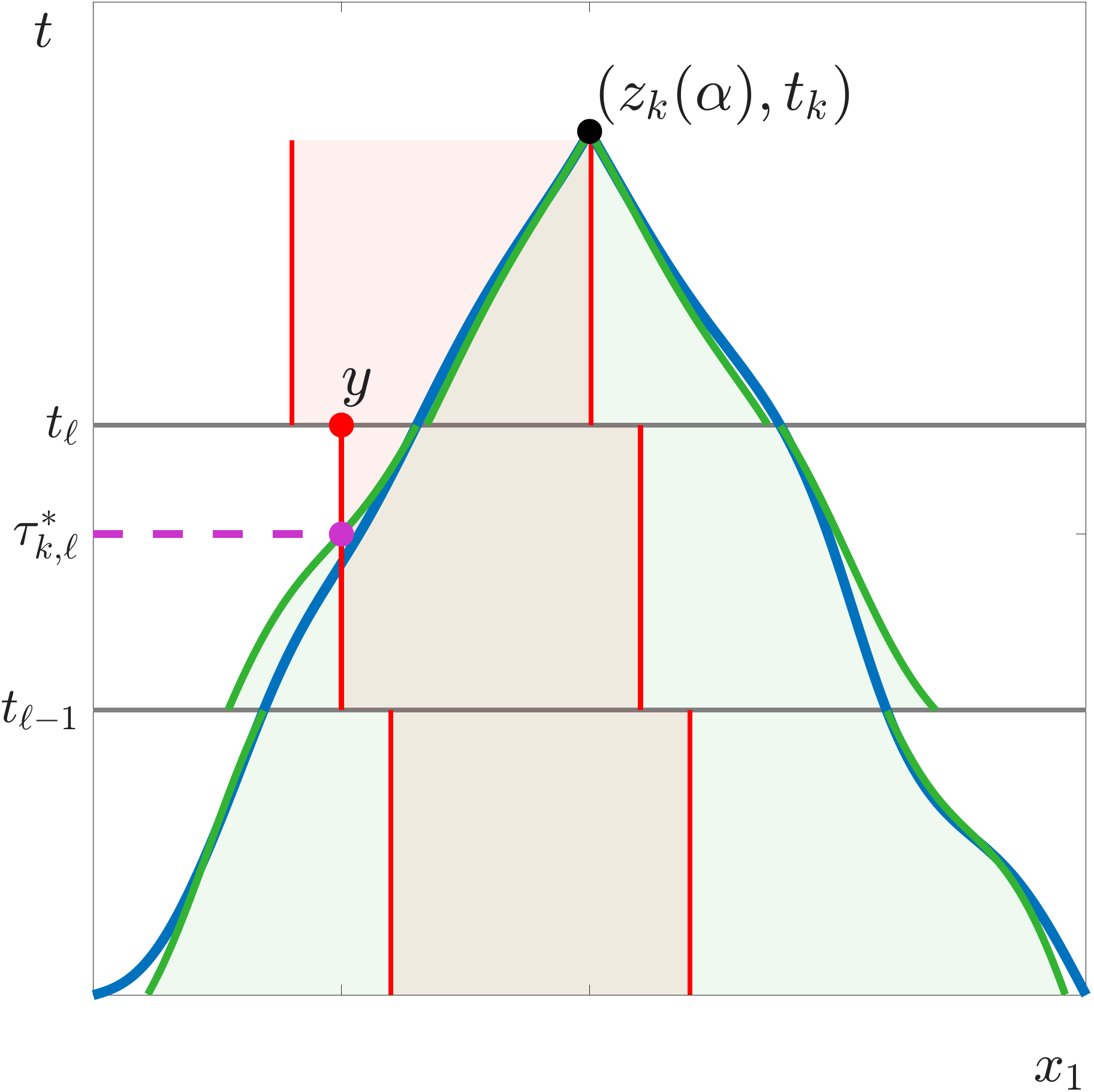}
    \caption{Slab-frozen causal geometry}
    \label{fig:frozen-slab-b}
    \end{subfigure}
\caption{
\textbf{Left:}
Backward light cone (blue surface) and worldsheet (red surface) plotted in $2\!+\!1$ space-time. 
The purple curve is the intersection of the light cone with the worldsheet. 
\textbf{Middle:}
Backward light cone emanating from $(z_k(\alpha),t_k)$ shown in a
one-dimensional spatial slice.
The blue shaded region represents the backward causal region, and the horizontal band
indicates the current source time slab $T_\ell=(t_{\ell-1},t_\ell]$.
The blue curve is the true backward light cone.
The red curve is the space-time trajectory of a material point on the interface (a slice of the worldsheet).
At time $t_\ell$, this trajectory passes through the source point 
$y = z_\ell(\beta)$; tracing the trajectory 
backwards in time, it intersects the cone at $\tau^{*,\mathsf{ex}}_{k,\ell}$.
\textbf{Right:}
Time-discrete slab-frozen approximation of the backward light cone on
$T_\ell=(t_{\ell-1},t_\ell]$.
The true cone (blue curve)
$t_k-\tau=\eta(z_k(\alpha),t_k;y,\tau)$
is replaced by its value at $\tau=t_\ell$, i.e.,
$t_k-\tau=\eta(z_k(\alpha),t_k;y,t_\ell)$ (green curve). 
Across slabs, the slab-frozen cone is generally discontinuous.
The true worldsheet is approximated on each slab by a
space-time cylinder, so that material trajectories become vertical segments.
The vertical segment $\tau\mapsto(y,\tau)\in T_\ell$ intersects the
approximate cone at $\tau^\ast_{k,\ell}$.
}
\label{fig:frozen-slab-geometry}
\end{figure}

We compare the exact and discrete causal geometry in \Cref{fig:frozen-slab-geometry}. 
The exact backward light cone emanating from $(z_k(\alpha),t_k)$ is the smooth surface
\[
  t_k - \tau = \eta(z_k(\alpha),t_k;y,\tau),
\]
displayed in blue in \Cref{fig:worldsheet}, with a 
one-dimensional slice shown in blue in \Cref{fig:frozen-slab-a,fig:frozen-slab-b}.
Recall the \emph{worldsheet} $\mathbf{\Gamma}$ of the 
interface, defined in \eqref{worldsheet-def}, and 
displayed in red in \Cref{fig:worldsheet,fig:frozen-slab-a}. 
The intersection time $\tau^{\ast,\mathsf{ex}}_{k,\ell}$ defined by \eqref{theta-star-def} is given by 
the intersection of the worldsheet with the backwards light cone, which is displayed as the 
purple curve and point in \Cref{fig:worldsheet,fig:frozen-slab-a}, respectively. 

In \Cref{fig:frozen-slab-b}, we illustrate the slab-frozen approximation 
of this causal geometry. 
The true backward light cone is
replaced on each $T_\ell$ by the slab-frozen surrogate
\[
  t_k - \tau = \eta_{k,\ell}(z_k(\alpha);y) \coloneqq \eta(z_k(\alpha), t_k ; y, t_\ell).  
\]
A one-dimensional slice of the slab-frozen cone 
is displayed as the green curve in \Cref{fig:frozen-slab-b}. 
Meanwhile, the \emph{slab-frozen worldsheet} is the space-time cylinder
\begin{equation}\label{frozen-worldsheet}
\mathbf{\Gamma}_\Delta = \bigcup_{\ell = 1}^k \{ (z_\ell(\beta), \tau) : \beta \in \mathbb{S}^2, \ \tau \in T_\ell \}. 
\end{equation}
The approximate intersection time
$\tau^\ast_{k,\ell}$, defined in \eqref{tau-star-frozen}, is
computed by intersecting the slab-frozen cone and worldsheet.
The slab-frozen causal interaction set
$\mathcal C_{k,\ell}$ consists of all source--target pairs
$(\alpha,\beta)$ whose corresponding intersection time
lies within the time slab $[t_{\ell-1},t_\ell]$.
Equivalently, it is induced by the intersection of the
slab-frozen cone and worldsheet restricted to the slab
$[t_{\ell-1},t_\ell]$.

\begin{remark}[Discrete approximations to the light cone]
The slab-frozen light cone is generally discontinuous across slab interfaces.
A continuous-in-time approximation is
obtained by replacing $\eta(z_k(\alpha),t_k;y,\tau)$ with the
piecewise linear approximation
\[
  \eta(z_k(\alpha),t_k;y,\tau)
  \approx
  \eta(z_k(\alpha), t_k ; y, t_{\ell-1})
  +
  \frac{\tau - t_{\ell-1}}{t_\ell - t_{\ell-1}}
  \bigl(
    \eta(z_k(\alpha), t_k ; y, t_{\ell})
    -
    \eta(z_k(\alpha), t_k ; y, t_{\ell-1})
  \bigr).
\]
Substitution yields a causal boundary that is affine in $\tau$ on each slab and continuous across interfaces. 
Similarly, a piecewise affine-in-time approximation to the worldsheet yields a surface that is continuous across slabs. 
Although a piecewise affine-in-time model provides a more faithful
geometric approximation of the light cone, slab endpoint freezing is adopted 
in the present formulation because it isolates all remaining $\tau$-dependence in
$\theta(t_k;\tau)$, leading to simple closed-form expressions for the slab integrals.
\end{remark}

\subsubsection{Closed-form expressions for the auxiliary temporal weights}

Applying the approximations
\eqref{frozen-slab} directly to the $\theta$-space representations
\eqref{sltw-theta} and \eqref{dltw-theta}, and evaluating 
the resulting integrals, we obtain the closed-form expressions
\begin{subequations}\label{sltw-theta-frozen-closed}
\begin{alignat}{3}
  \sltw^{(-)}_{k,\ell}(\alpha;\beta)
  & = 
  - 
   \frac{A_{k,\ell}(\alpha;\beta)\,J_\ell(\beta)}
         {4\pi \,\Delta t \,\upeta_{k,\ell}(\alpha;\beta) }
  \bigl(\theta_{k,\ell} - \upeta_{k,\ell}(\alpha;\beta) \bigr),
  \qquad
  &&(\alpha,\beta)\in\mathcal C_{k,\ell},
  \\
  \sltw^{(+)}_{k,\ell}(\alpha;\beta)
  &=
  \phantom{-}
  \frac{A_{k,\ell}(\alpha;\beta)\,J_\ell(\beta)}
       {4\pi \,\Delta t \,\upeta_{k,\ell}(\alpha;\beta) }
  \bigl(\theta_{k,\ell-1} - \upeta_{k,\ell}(\alpha;\beta) \bigr),
  \qquad
  &&(\alpha,\beta)\in\mathcal C_{k,\ell}, 
\end{alignat}
\end{subequations}
and
\begin{subequations}\label{dltw-theta-frozen-closed}
\begin{alignat}{3}
  \dltw^{\partial}_{k,\ell}(\alpha;\beta)
  &=
  \phantom{-}
  \frac{
    Q_{k,\ell}(\alpha;\beta)\,
    J_\ell(\beta)
  }{
    4\pi\,\Delta t \,\upeta_{k,\ell}(\alpha;\beta) 
    }\,
  , \
  &&(\alpha,\beta)\in\mathcal C_{k,\ell}, 
  \\
  \dltw^{(-)}_{k,\ell}(\alpha;\beta)
  &=
  -
  \frac{
    Q_{k,\ell}(\alpha;\beta)\,
    J_\ell(\beta)
  }{
    4\pi\,\Delta t\,\upeta^2_{k,\ell}(\alpha;\beta)
  }\,
  \bigl( \theta_{k,\ell} - \upeta_{k,\ell}(\alpha;\beta)  \bigr)\,
  , \
  &&(\alpha,\beta)\in\mathcal C_{k,\ell}, 
  \\
  \dltw^{(+)}_{k,\ell}(\alpha;\beta)
  &=
  \phantom{-}
  \frac{
    Q_{k,\ell}(\alpha;\beta)\,
    J_\ell(\beta)
  }{
    4\pi\,\Delta t  \,\upeta^2_{k,\ell}(\alpha;\beta)
    }\,
  \bigl(\theta_{k,\ell-1} - \upeta_{k,\ell}(\alpha;\beta) \bigr)\,
  ,\qquad
  &&(\alpha,\beta)\in\mathcal C_{k,\ell}, 
\end{alignat}
\end{subequations}
where both sets of auxiliary weights vanish for $(\alpha,\beta)\notin\mathcal C_{k,\ell}$.

\subsection{Spatial discretization}
\label{subsec:spatial-discretization}

For spatial discretization, we use a piecewise linear 
approximation for the boundary density, together with standard Gauss quadrature. 
More sophisticated spatial quadrature schemes \cite{SaSc2010,KlBaGr2013} 
provide improved accuracy and robustness, including spectrally accurate schemes 
specialized to smooth surfaces diffeomorphic to the sphere \cite{GaGrSi1998,GaGr2004,GiVe2013}.
In this work, the emphasis is placed on simplicity of implementation and on 
providing a baseline discretization for the parametrix boundary integral 
methodology.

We begin by discretizing the unit sphere $\mathbb{S}^2$
by a conforming triangular mesh, with
\begin{subequations}\label{spatial-mesh}
\begin{equation}\label{triangulation}
\mathcal{T}_h
=
\{ E \}_{E\in\mathcal{E}_h}
\end{equation}
denoting a shape-regular triangulation of $\mathbb{S}^2$, 
where each element $E \subset \mathbb{S}^2$ is a spherical triangle
and $\mathcal{E}_h$ denotes the set of all triangles. 
Let \begin{equation}\label{mesh-vertices}
\mathcal{T}_h^v = \{ \alpha_i \}_{i=1}^{N_h}
\end{equation}
\end{subequations}
be the set of mesh vertices,
and let $\mathcal{T}_h^e$ denote the set of mesh edges.
The mesh size $h > 0$ is defined as the maximum edge length in the triangulation
$h=\max_{e\in\mathcal{T}_h^e} |e|$.

For each fixed time $t$,
the discrete interface is obtained as the image of the unit sphere
under the parametrization $z(\cdot,t) : \mathbb{S}^2 \to \Gamma(t)$.
Define the physical triangles
\begin{subequations}\label{interface-discrete}
\begin{equation}
\Gamma_E(t)
=
z(E,t),
\qquad E\in\mathcal{E}_h,
\end{equation}
so that
\begin{equation}
\Gamma(t)
=
\bigcup_{E\in\mathcal{E}_h} \Gamma_E(t).
\end{equation}
\end{subequations}

%

\subsubsection{Reference element mapping and $\Pbb^1_\alpha$ basis}

Let $\hat E$ denote the reference element
\begin{equation}\label{ref-triangle}
\hat E
=
\bigl\{
(\xi_1,\xi_2)\in\mathbb{R}^2
:\;
\xi_1\ge0,\;
\xi_2\ge0,\;
\xi_1+\xi_2\le1
\bigr\}.
\end{equation}
For each element $E\in\mathcal{E}_h$
with vertices $\alpha_{i_1},\alpha_{i_2},\alpha_{i_3}\in\mathbb{S}^2$,
define the affine  map $\hat\alpha_E : \hat E \to E$
\begin{equation}\label{affine-triangle}
\hat\alpha_E(\xi_1,\xi_2)
=
\alpha_{i_1}
+
\xi_1(\alpha_{i_2}-\alpha_{i_1})
+
\xi_2(\alpha_{i_3}-\alpha_{i_1}).
\end{equation}
The Jacobian matrix of the $\hat\alpha_E$ is constant on $E$ 
and its determinant satisfies
\begin{equation}\label{triangle-jacobian}
\left|
\det D\hat\alpha_E
\right|
=
2 |E|,
\end{equation}
where $|E|$ denotes the area of the spherical triangle $E$.

On $\hat E$ we introduce the linear $\Pbb^1$ nodal basis
(barycentric basis functions)
\begin{subequations}\label{P1-basis}
\begin{align}
\hat\varphi_1(\xi_1,\xi_2)
&= 1-\xi_1-\xi_2, \label{P1-basis-a}\\
\hat\varphi_2(\xi_1,\xi_2)
&= \xi_1, \label{P1-basis-b}\\
\hat\varphi_3(\xi_1,\xi_2)
&= \xi_2, \label{P1-basis-c} \\
\hat\varphi_m(\hat\alpha_n) 
&= \delta_{mn}, \qquad m,n=1,2,3,  \label{P1-nodal-property}
\end{align}
\end{subequations}
where $\hat\alpha_n$ denotes the $n$-th vertex of $\hat E$.

\begin{figure}[ht]
    \centering

    \begin{subfigure}{0.32\textwidth}
        \centering
        \begin{tikzpicture}[scale=2.2, line cap=round, line join=round]
            \coordinate (A) at (0,0);
            \coordinate (B) at (1,0);
            \coordinate (C) at (0,1);

            \draw[-{Latex[length=2mm]}, thin] (0,0) -- (1.12,0);
            \draw[-{Latex[length=2mm]}, thin] (0,0) -- (0,1.12);

            \draw[thick] (A) -- (B) -- (C) -- cycle;

            \fill (A) circle (0.020);
            \fill (B) circle (0.020);
            \fill (C) circle (0.020);

            \node[below left] at (A) {$\hat\alpha_1$};
            \node[below right] at (B) {$\hat\alpha_2$};
            \node[above left] at (C) {$\hat\alpha_3$};

            \node[below] at (0.5,-0.01) {$\xi_1$};
            \node[left]  at (-0.01,0.5) {$\xi_2$};

            \node at (0.33,0.33) {$\hat E$};

            \node[rotate=-45] at (0.62,0.62) {\scriptsize $\xi_1+\xi_2=1$};
        \end{tikzpicture}
        \caption{Reference element $\hat E$.}
        \label{fig:ref-element}
    \end{subfigure}
    \hfill
    \begin{subfigure}{0.32\textwidth}
        \centering
        \begin{tikzpicture}[scale=1.0, line cap=round, line join=round]
    
            \draw[thick] (0,0) circle (1.6);
    
            \draw[dashed, thin] (-1.6,0) arc (180:360:1.6 and 0.5);
    
            \coordinate (B1) at (0.5,0.8);
            \coordinate (B2) at (-0.8,0.3);
            \coordinate (B3) at (0.2,-0.8);
    
            \draw[thick] (B2) to[bend left=12]  (B1);
            \draw[thick] (B3) to[bend left=10]  (B2);
            \draw[thick] (B1) to[bend left=14]  (B3);
    
            \fill (B1) circle (2pt);
            \fill (B2) circle (2pt);
            \fill (B3) circle (2pt);
    
            \node[above right] at (B1) {$\alpha_{i_1}$};
            \node[left]        at (B2) {$\alpha_{i_2}$};
            \node[below right] at (B3) {$\alpha_{i_3}$};
    
            \node at (0.0,0.1) {$E$};
    
            \node at (0,-1.9) {$\mathbb{S}^2$};
    
        \end{tikzpicture}
        \caption{Spherical triangle $E\subset\mathbb{S}^2$.}
        \label{fig:sphere-element}
    \end{subfigure}
    \hfill
    \begin{subfigure}{0.32\textwidth}
        \centering
        \begin{tikzpicture}[scale=1.0, line cap=round, line join=round]
    
            \coordinate (S1) at (-2.4,-1.8);
            \coordinate (S2) at ( 2.4,-1.4);
            \coordinate (S3) at ( 1.7, 2.0);
            \coordinate (S4) at (-2.8, 1.8);
    
            \draw[thick] (S1) -- (S2) -- (S3) -- (S4) -- cycle;
    
            \coordinate (P1) at (0.20,0.82);
            \coordinate (P2) at (-0.80,0.18);
            \coordinate (P3) at (0.5,-0.5);
    
            \coordinate (Pc) at ($(P1)!1/3!(P2)!1/3!(P3)$);
    
            \def\s{1.25}
    
            \coordinate (Q1) at ($(Pc)!\s!(P1)$);
            \coordinate (Q2) at ($(Pc)!\s!(P2)$);
            \coordinate (Q3) at ($(Pc)!\s!(P3)$);
    
            \draw[thick] (Q2) to[bend left=6] (Q1);
            \draw[thick] (Q3) to[bend left=6] (Q2);
            \draw[thick] (Q1) to[bend left=6] (Q3);
    
            \fill (Q1) circle (2pt);
            \fill (Q2) circle (2pt);
            \fill (Q3) circle (2pt);
    
            \node[above right] at (Q1) {$z(\alpha_{i_1},t)$};
            \node[above left]    at (Q2) {$z(\alpha_{i_2},t)$};
            \node[below right] at (Q3) {$z(\alpha_{i_3},t)$};
    
            \node at (-2.1,2.1) {$\Gamma(t)$};
            \node at (-0.15,0.25) {$\Gamma_E(t)$};
    
        \end{tikzpicture}
        \caption{Physical triangle $\Gamma_E(t)\subset\Gamma(t)$.}
        \label{fig:phys-element}
    \end{subfigure}
    \caption{
    Element maps used in the $\Pbb^1$ discretization.
    \textbf{Left:} Reference element $\hat E$.
    \textbf{Middle:} Image under the affine map $\hat\alpha_E$ onto
    $E\subset\mathbb{S}^2$.
    \textbf{Right:} Physical surface triangle
    $\Gamma_E(t)=z(E,t)$ on $\Gamma(t)$.
    }
    \label{fig:element-maps}
\end{figure}

\subsubsection{$\Pbb^1$ approximation of boundary densities}

We approximate the boundary densities
$\sigma_\ell(\alpha)$ and $\mu_\ell(\alpha)$,
$\alpha\in\mathbb{S}^2$,
by continuous piecewise linear functions
on $\mathcal{T}_h$.
Let $\{\alpha_i\}_{i=1}^{N_h} \in \mathcal{T}_h^v$ denote the mesh vertices.
Define the nodal values
\begin{equation}\label{nodal-dofs}
\sigma_\ell^i
\coloneqq \sigma_\ell(\alpha_i), 
\qquad 
\text{and} 
\qquad
\mu_\ell^i
\coloneqq \mu_\ell(\alpha_i). 
\end{equation}
For each element $E$ with vertices
$\alpha_{i_1},\alpha_{i_2},\alpha_{i_3}$,
the densities are represented locally as
\begin{align}\label{P1-sigma-mu}
\sigma_\ell(\alpha) 
=
\sum_{m=1}^3
\sigma_\ell^{\,i_m}\,
\varphi_{i_m}(\alpha),
\qquad
\text{and} \qquad 
\mu_\ell(\alpha) 
=
\sum_{m=1}^3
\mu_\ell^{\,i_m}\,
\varphi_{i_m}(\alpha),
\qquad
\alpha \in E.
\end{align}
where the local shape functions are defined by
\begin{equation}\label{P1-shape-functions}
\varphi_{i_m}(\alpha)
=
\hat\varphi_m
\left(
\hat\alpha_E^{-1}(\alpha)
\right).
\end{equation}
These local functions patch together to define the global $\mathbb P^1$ nodal
basis. More precisely, for each global vertex $\alpha_i\in\mathcal T_h^v$, the
associated global basis function $\varphi_i$ is defined by
\begin{equation}\label{global-P1-basis}
  \varphi_i|_E
  =
  \begin{cases}
    \varphi_{i_m}, & \text{if } \alpha_i \text{ is the $m$-th vertex of } E,\\
    0, & \text{if } \alpha_i\notin E,
  \end{cases}
\end{equation}
for every element $E\in\mathcal E_h$.
In particular,
\begin{equation}\label{global-P1-nodal-property}
  \varphi_i(\alpha_j)=\delta_{ij},
  \qquad i,j=1,\ldots,N_h.
\end{equation}

\subsubsection{Space-time weights}

Collocating the single-layer semi-discrete operator \eqref{op-semi-discrete} at the
target vertex $\alpha_r \in \mathcal{T}_h^v$ gives
\[
\Sop_{k,\ell}[\sigma_\ell](\alpha_r)
=
\int_{\mathbb{S}^2}
  \sigma_\ell(\beta)\,
  \sltw_{k,\ell}(\alpha_r;\beta)
  \,\dd \beta,
\qquad r=1,\ldots,N_h.
\]
Decomposing the integral into spherical triangles $E\in\mathcal{E}_h$ yields
\[
\Sop_{k,\ell}[\sigma_\ell](\alpha_r)
=
\sum_{E\in\mathcal{E}_h}
\int_{E}
  \sigma_\ell(\beta)\,
  \sltw_{k,\ell}(\alpha_r;\beta)
  \,\dd \beta .
\]
Substituting the $\Pbb^1$ approximation \eqref{P1-sigma-mu}, we isolate the
dependence on the nodal values $\{\sigma_\ell^i\}_{i=1}^{N_h}$ by introducing
the elementwise auxiliary space-time weights:
for each element $E$ with vertices $\alpha_{i_1},\alpha_{i_2},\alpha_{i_3}$, define
\begin{equation}\label{slstw}
\hat\slstw_{k,\ell}^{\,r ; \, m} (E)
\coloneqq
\int_{E}
  \varphi_{i_m}(\beta)\,
  \sltw_{k,\ell}(\alpha_r;\beta)
  \,\dd \beta,
\qquad m=1,2,3.
\end{equation}
Then
\[
\Sop_{k,\ell}[\sigma_\ell](\alpha_r)
=
\sum_{E\in\mathcal{E}_h}
\sum_{m=1}^3
\hat\slstw_{k,\ell}^{\,r;\,m} (E) \,
\sigma_\ell^{\,i_m}.
\]
To obtain a single weight for each global vertex index $i=1,\ldots,N_h$, we
assemble all element contributions associated with that vertex.  If
$i_m(E)$ denotes the global index of the $m$-th local vertex of $E$, we define
the effective space-time weight by
\begin{equation}\label{slstw-effective}
\slstw_{k,\ell}^{\,r,i}
\coloneqq
\sum_{E\in\mathcal{E}_h}
\sum_{m=1}^3
\delta_{i,i_m(E)}\,
\hat\slstw_{k,\ell}^{\,r;\,m} (E) ,
\qquad i=1,\ldots,N_h,
\end{equation}
where $\delta_{ij}$ denotes the Kronecker delta.

Similarly, define
\begin{equation}\label{dlstw}
\hat\dlstw_{k,\ell}^{\,r;\,m} (E)
\coloneqq
\int_{E}
  \varphi_{i_m}(\beta)\,
  \dltw_{k,\ell}(\alpha_r;\beta)
  \,\dd \beta,
\qquad m=1,2,3, 
\end{equation}
and the effective space-time weight
\begin{equation}\label{dlstw-effective}
\dlstw_{k,\ell}^{\,r,i}
\coloneqq
\sum_{K\in\mathcal{E}_h}
\sum_{m=1}^3
\delta_{\,i,\,i_m(E)}\,
\hat\dlstw_{k,\ell}^{\,r;\,m}(E). 
\qquad i=1,\ldots,N_h,
\end{equation}

The discrete single-layer and double-layer operators then take the nodal form
\begin{align}
\Sop_{k,\ell}[\sigma_\ell](\alpha_r)
&=
\sum_{i=1}^{N_h}
\slstw_{k,\ell}^{\,r,i}\,
\sigma_\ell^{i},
\qquad \text{and} \qquad
\Dop_{k,\ell}[\mu_\ell](\alpha_r)
=
\sum_{i=1}^{N_h}
\dlstw_{k,\ell}^{\,r,i}\,
\mu_\ell^{i},
\qquad \text{for } r=1,\ldots,N_h.
\end{align}

\subsection{Quadrature and discrete space-time weights}

To evaluate the space-time weights
\eqref{slstw} and \eqref{dlstw},
we work elementwise on each triangle
$E\in\mathcal{E}_h$
using the affine mapping
\eqref{affine-triangle}
from the reference triangle $\hat E$.
Let $\{ (\xi_a^1,\xi_a^2), \, \omega_a \}_{a=1}^{Q}$
denote a $Q$-point Gaussian quadrature rule on $\hat E$.
The corresponding quadrature nodes on $E$ are
$
\beta_E^a
=
\hat\alpha_E(\xi_a^1,\xi_a^2)$, for $a=1,\ldots,Q$.
Under the change of variables $\beta=\hat\alpha_E(\xi)$,
the surface element transforms as
$\dd \beta
=
\left|
\det D\hat\alpha_E
\right|
\dd \xi
=
2|E| \, \dd \xi$.

We approximate the elementwise single-layer and double-layer
weights by standard triangular Gaussian quadrature:
\begin{equation}\label{slstw-discrete}
\hat\slstw_{k,\ell}^{\,r;\,m} (E)
=
2|E|
\sum_{a=1}^{Q}
\omega_a
\,
\hat\varphi_m(\xi_a^1,\xi_a^2)
\,
\sltw_{k,\ell}
\bigl(
\alpha_r; \beta_E^a
\bigr)
\;+\;
\mathbb{E}_{k,\ell}^{\,r;E;m},
\end{equation}
and
\begin{equation}\label{dlstw-discrete}
\hat\dlstw_{k,\ell}^{\,r;\,m} (E)
=
2|E|
\sum_{a=1}^{Q}
\omega_a
\,
\hat\varphi_m(\xi_a^1,\xi_a^2)
\,
\dltw_{k,\ell}
\bigl(
\alpha_r; \beta_E^a
\bigr)
\;+\;
\widetilde{\mathbb{E}}_{k,\ell}^{\,r;E;m}.
\end{equation}
If the corresponding integrands are smooth on $E$
(in particular, $C^{2Q}$ on $E$),
then the Gaussian rule is high-order accurate and
$
\mathbb{E}_{k,\ell}^{(\cdot)}
=
\mathcal{O}\!\left(h^{2Q}\right)
$
and
$
\widetilde{\mathbb{E}}_{k,\ell}^{(\cdot)}
=
\mathcal{O}\!\left(h^{2Q}\right).
$
If the integrand loses regularity on $E$,
for example due to a causal cutoff across the triangle,
then the quadrature error degrades to the algebraic order
permitted by the available regularity.

\subsection{Fully discrete marching-on-time system}

We now assemble the fully discrete marching-on-time (MOT) systems resulting
from the space-time discretizations introduced above.
In both the single-layer and double-layer formulations, collocation at time
$t=t_k$ yields a linear system relating the unknown boundary density at the
current time level to previously computed values.

Throughout this subsection, denote by
\begin{subequations}\label{F-nodal}
\begin{equation}
F_k^{r} = F(\alpha_{r},t_k),
\quad r=1,\ldots,N_h,
\end{equation}
the discrete boundary data at time $t_k$ evaluated at the mesh vertices
$\{\alpha_r\}_{r=1}^{N_h}=\mathcal{T}_h^v$ and let
\begin{equation}\label{F-vector}
F_k
=
\bigl(
F_k^{1}, F_k^{2}, \ldots, F_k^{N_h}
\bigr)^{\top}
\in \mathbb{R}^{N_h}.
\end{equation}
\end{subequations}

\subsubsection{Single-layer formulation}

Denote the nodal density values by
\begin{subequations}\label{sigma-nodal}
\begin{equation}
\sigma_k^{r}
=
\sigma(\alpha_r,t_k),
\quad r=1,\ldots,N_h,
\end{equation}
and define the vector
\begin{equation}\label{sigma-vector}
\sigma_k
=
\bigl(
\sigma_k^{1}, \sigma_k^{2}, \ldots, \sigma_k^{N_h}
\bigr)^{\top}
\in \mathbb{R}^{N_h}.
\end{equation}
\end{subequations}
For each observation vertex $\alpha_r$, the discrete collocation equation reads
\begin{equation}\label{SL-mot-discrete-expanded}
\sum_{\ell=1}^{k}
\sum_{i=1}^{N_h}
\slstw_{k,\ell}^{\,r,i}\,\sigma_\ell^{i}
\;=\;
F_k^{r}.
\end{equation}
Introducing the matrices
\begin{equation}\label{Skell-def}
(\mathbf S_{k,\ell})_{r,i}
=
\slstw_{k,\ell}^{\,r,i},
\qquad
r,i=1,\ldots,N_h,
\end{equation}
and separating the contribution at the current time level, 
the system \eqref{SL-mot-discrete-expanded} becomes 
the marching recursion
\begin{equation}\label{SL-mot-recursion}
\mathbf S_{k,k}\,\sigma_k
=
F_k
-
\sum_{\ell=1}^{k-1}
\mathbf S_{k,\ell}\,\sigma_\ell,
\qquad k=1,\ldots,K.
\end{equation}

\subsubsection{Double-layer formulation}

Denote the nodal double-layer density values by
\begin{subequations}\label{mu-nodal}
\begin{equation}
\mu_k^{r}
=
\mu(\alpha_r,t_k),
\quad r=1,\ldots,N_h,
\end{equation}
and define the vector
\begin{equation}\label{mu-vector}
\mu_k
=
\bigl(
\mu_k^{1}, \mu_k^{2}, \ldots, \mu_k^{N_h}
\bigr)^{\top}
\in \mathbb{R}^{N_h}.
\end{equation}
\end{subequations}
For each observation vertex $\alpha_r$, the discrete collocation equation
corresponding to \eqref{BIE-DL} reads
\begin{equation}\label{DL-mot-discrete-expanded}
\tfrac12\,\lambda_k^r \,\mu_k^{r}
\;+\;
\sum_{\ell=1}^{k}
\sum_{i=1}^{N_h}
\dlstw_{k,\ell}^{\,r,i}\,\mu_\ell^{i}
\;=\;
F_k^{r},
\end{equation}
where
\[
\lambda_k^r = \lambda(\alpha_r,t_k).
\]
Introducing the matrices
\begin{equation}\label{Dkell-def}
(\mathbf D_{k,\ell})_{r,i}
=
\dlstw_{k,\ell}^{\,r,i},
\qquad
r,i=1,\ldots,N_h,
\end{equation}
and
\begin{equation}\label{Lambda-k-def}
\mathbf \Lambda_k
=
\operatorname{diag}\bigl(\lambda_k^1,\lambda_k^2,\ldots,\lambda_k^{N_h}\bigr),
\end{equation}
the system becomes
\begin{equation}\label{DL-mot-recursion}
\bigl(\tfrac12 \mathbf \Lambda_k + \mathbf D_{k,k}\bigr)\mu_k
=
F_k
-
\sum_{\ell=1}^{k-1}
\mathbf D_{k,\ell}\,\mu_\ell,
\qquad k=1,\ldots,K.
\end{equation}

\subsubsection{Kirchhoff formulation}

Using the notation introduced in
\eqref{sigma-nodal}, \eqref{Skell-def}, \eqref{Dkell-def}, and \eqref{Lambda-k-def},
we write the discrete marching recursion for the Kirchhoff boundary integral equation \eqref{BIE-K} as
\begin{equation}\label{K-mot-recursion}
\mathbf S_{k,k}\,\sigma_k
=
\left(-\mathbf I+\tfrac12 \mathbf \Lambda_k\right)F_k
+
\sum_{\ell=1}^{k}
\mathbf D_{k,\ell}\,F_\ell
-
\sum_{\ell=1}^{k-1}
\mathbf S_{k,\ell}\,\sigma_\ell,
\qquad k=1,\ldots,K.
\end{equation}

\subsection{Numerical stabilization}\label{rmk:stable}
To control spurious high-frequency oscillations, we employ 
two complementary stabilization procedures.
First, we use the Rynne time-averaging technique \cite{Rynne1990,DaDu1997},
in which the density at the previous time step is replaced by a local
temporal average, thereby damping high-frequency components in the
marching history. 
Second, we apply spatial smoothing on the surface $\Gamma(t_k)$.
Given the density $\sigma$, a smoothed density $\tilde\sigma$ is
obtained by performing $n_{\mathrm{pass}}$ implicit steps of a surface
heat equation,
\begin{equation}\label{rynne-params}
(M + \nu L)\,\sigma^{(\ell+1)} = M\,\sigma^{(\ell)}, 
\quad \ell=0,\dots,n_{\mathrm{pass}}-1,
\quad \sigma^{(0)}=\sigma,
\quad \tilde\sigma=\sigma^{(n_{\mathrm{pass}})}.
\end{equation}
Here $\nu > 0$ is a viscosity parameter, $M$ is the lumped mass matrix, and
$L$ is the cotangent stiffness matrix associated with the
Laplace--Beltrami operator on the triangulated surface. 
High-frequency spatial modes are damped in the density $\tilde{\sigma}$, which is 
subsequently used in the marching recursion. 
For all the numerical experiments considered in this work we use the parameter values
$\nu=\frac12 h^2$ and $n_{\mathrm{pass}}=10$.

\subsection{Evaluation of acoustic fields}
\label{subsec:field-eval}

Once the boundary densities have been computed, acoustic fields may be
evaluated \emph{a posteriori} on a background mesh covering the computational
domain. At a fixed observation time $t_k$, the reflected wavefront is given by the
level set
\begin{equation}
t_k = \Trfl(x),
\end{equation}
where the reflected arrival time $\Trfl(x)$ is defined in
\eqref{Tref-def}. In practice, the arrival time $\Tarr(x;y,\tau)$ is computed
using the same ray-based travel-time approximation used in the boundary
discretization, and $\Trfl(x)$ is obtained by minimizing over all active
source events.

The scattered field is supported in the reflected 
causal region $\{ x \in \Omega^+(t_k) : \Trfl(x) \le t_k \}$,
with the wavefront itself given by the level set $t_k = \Trfl(x)$.
The computation of the scattered field $\tilde \phi$ by \eqref{layer-ansatze} 
is restricted either to the interior of the reflected wavefront or to a narrow band about the front,
\begin{equation}
  \bigl| \Trfl(x) - t_k \bigr| \le \delta,
\end{equation}
for a prescribed band width $\delta > 0$.
The acoustic potential $\tilde\phi$ is then evaluated by applying the same space--time
quadrature used in the boundary collocation scheme,
with the boundary-traced kernel replaced by its full space--time counterpart.

\begin{remark}[Two-dimensional setting]\label{remark-2d}
Although the travel-time parametrix construction was presented in three
dimensions, the same framework applies in two dimensions with only minor
modifications. In particular, the three-dimensional reference kernel
$\Gop_0$ is replaced by its two-dimensional counterpart
$\Gop_0^{\mathsf{2d}}$, and the associated temporal weight formulas are
modified accordingly.
In three dimensions, the fundamental solution is 
supported on the boundary of the causal cone,
whereas in two dimensions it has support throughout the interior of the
cone. Consequently, the temporal weights arise from integrating over
causal intervals rather than evaluating contributions on a sharp
wavefront.
\end{remark}

\section{Motion-induced scattering in a homogeneous medium}
\label{sec:const}

In this section we consider the operator \eqref{L-general-pde} with constant sound speed
\begin{equation}\label{c-const}
  c(x,t)
  \;\equiv\;
  C > 0. 
\end{equation}
The unique solution to the eikonal equation \eqref{eikonal-delay-st-a} is
\begin{subequations}\label{const-eikonal-transport}
\begin{equation}\label{eta-const}
  \eta(x;y) = C^{-1} | x - y | . 
\end{equation}
Substituting \eqref{eta-const} into the transport equation
\eqref{transport-eq} yields
\[
  (x-y)\cdot \nabla_x \amp(x,t;y,\tau)
  =
  -\frac{|x-y|}{C}\,\partial_t \amp(x,t;y,\tau),
  \qquad x\neq y.
\]
In the constant-speed setting, $\amp$ is independent of $t$, and hence
\[
  (x-y)\cdot \nabla_x \amp(x,t;y,\tau) = 0,
  \qquad x\neq y.
\]
The normalization \eqref{A-normalization-st} then implies
\begin{equation}
  \amp(x,t;y,\tau)\equiv C^{-1}. 
\end{equation}
\end{subequations}
With the choices \eqref{const-eikonal-transport}, the representation 
\eqref{G-pmtx} is exact, so that the potentials $\tilde\phi$ defined 
by \eqref{layer-ansatze} are exact solutions to the constant-speed 
wave operator with boundary conditions \eqref{L-bcs}.

\subsection{Numerical instability and stabilization for moving interfaces}

The aim of this section is to demonstrate the onset of numerical instability in the
time-marching scheme for moving interfaces, and the need for an
explicit stabilization mechanism.
To this end, we employ \emph{manufactured solutions}: 
we prescribe a target boundary density (either $\sigma(\alpha,t)$ for the
single-layer formulation or $\mu(\alpha,t)$ for the double-layer
formulation) and determine boundary data $F(\alpha,t)$ so that the
corresponding boundary integral equation is satisfied.

We perform these tests in two dimensions to enable systematic accuracy
studies with manufactured solutions in a setting where exact solutions
can be constructed efficiently.
Set the sound speed $c \equiv 1$ and take the interface
$\Gamma(t)$ to be a circle of radius $R(t)$. In this setting, the boundary
integral operators diagonalize with respect to the Fourier modes
$e^{i n \alpha}$, reducing the construction to independent scalar
relations in time that can be solved explicitly.
In all tests, we take the target boundary density to be the radially symmetric function
\begin{equation}\label{exact-sol}
\sigma(\alpha,t) = t \cos(\omega t)\, H(t), \qquad \text{(and similarly for $\mu$)}, 
\end{equation}
with $\omega = 6\pi$.

\subsubsection{Fixed interface: baseline accuracy}

We consider a fixed interface with $R(t) \equiv 1$.
Both single- and double-layer solvers are employed \emph{without} the
stabilization procedures introduced in \Cref{rmk:stable}, and convergence is assessed using
$N = 50, 100, 200, 400$ cells with time step $\Delta t = 5/(2N)$.  
\Cref{fig:fixed_convergence} displays the numerical results at the final time $\tmax = 1$.
The left panel shows the time history of the spatially averaged
numerical densities $\sigma_\Delta$ and $\mu_\Delta$ for $N=200$,
compared with the exact solution $f(t)$; both exhibit good qualitative
agreement with the exact solution.
The middle panel shows the
numerical single-layer potential $\phi_\Delta(x,t)$ at the final time. 
The right panel shows the convergence of the $L^2_\alpha L^2_t$
and $L^\infty_\alpha L^\infty_t$ errors as $N$ is refined.
Because no special quadrature techniques are employed
here to treat the (weakly) singular integrals, the overall accuracy is limited to first order. The
single-layer solver exhibits this behavior, while the double-layer solver
shows larger errors and a reduced convergence rate, reflecting its
greater sensitivity to quadrature error. More accurate quadrature
techniques \cite{Steinbach2008} can be used to improve these results, but this is not pursued
in the present work.

\begin{figure}[ht]
    \centering
    \begin{subfigure}{0.32\textwidth}
        \centering
        \includegraphics[width=0.86\textwidth]{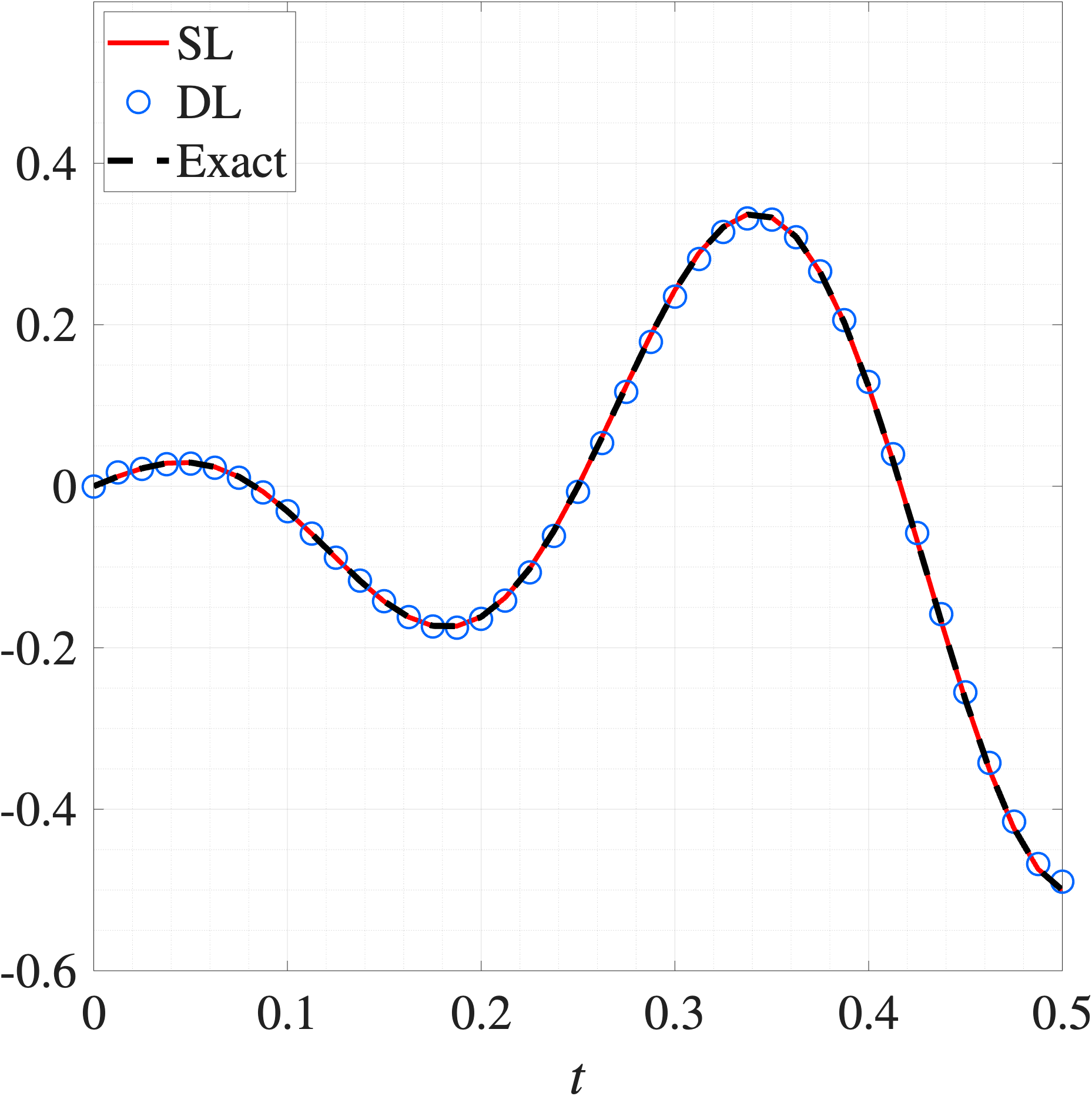}
        \caption{Numerical vs exact density}
    \end{subfigure}
    \hfill
    \begin{subfigure}{0.32\textwidth}
        \centering
        \includegraphics[width=\textwidth]{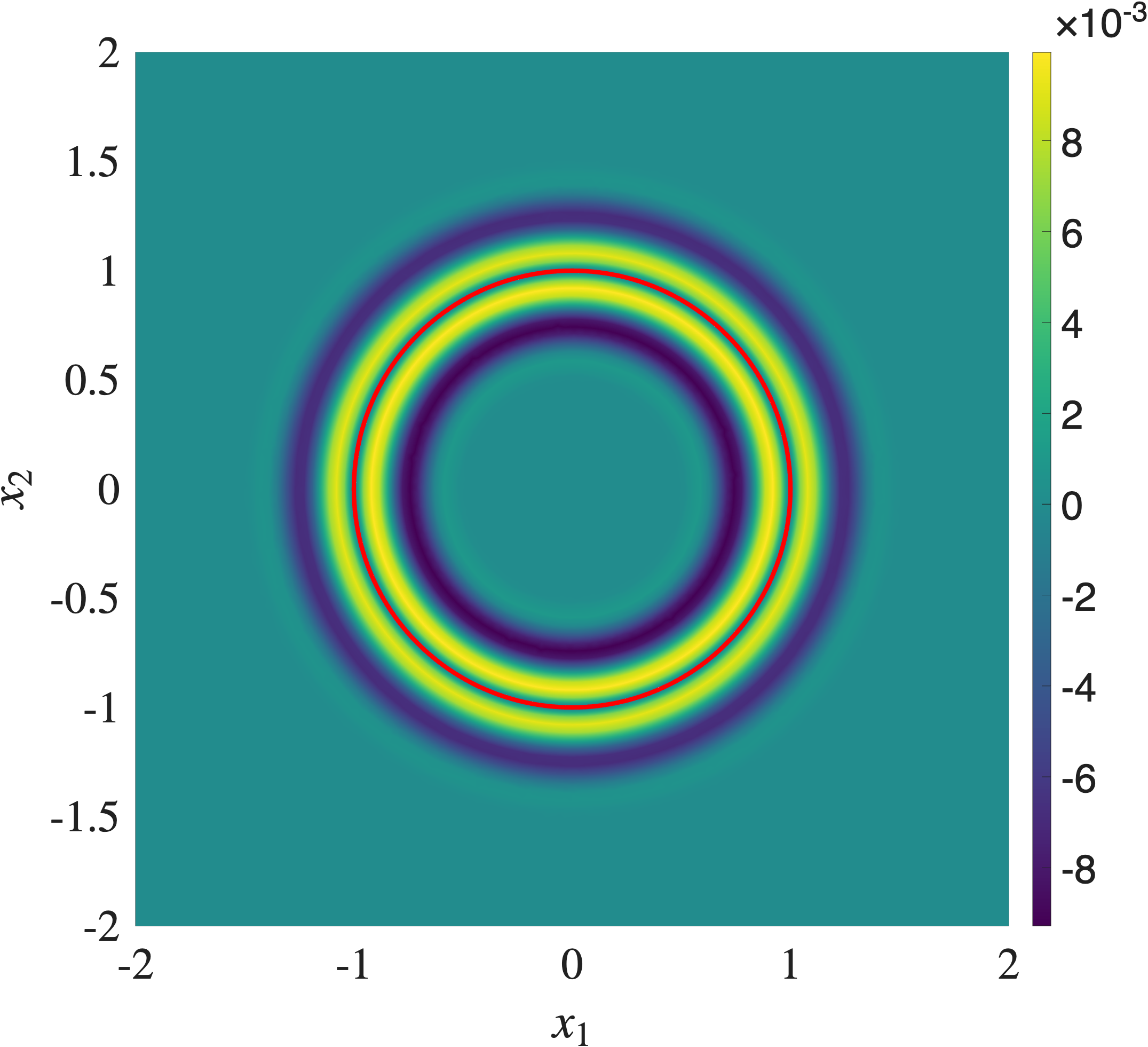}
        \caption{SL numerical potential}
    \end{subfigure}
    \hfill
    \begin{subfigure}{0.32\textwidth}
        \centering
        \includegraphics[width=0.88\textwidth]{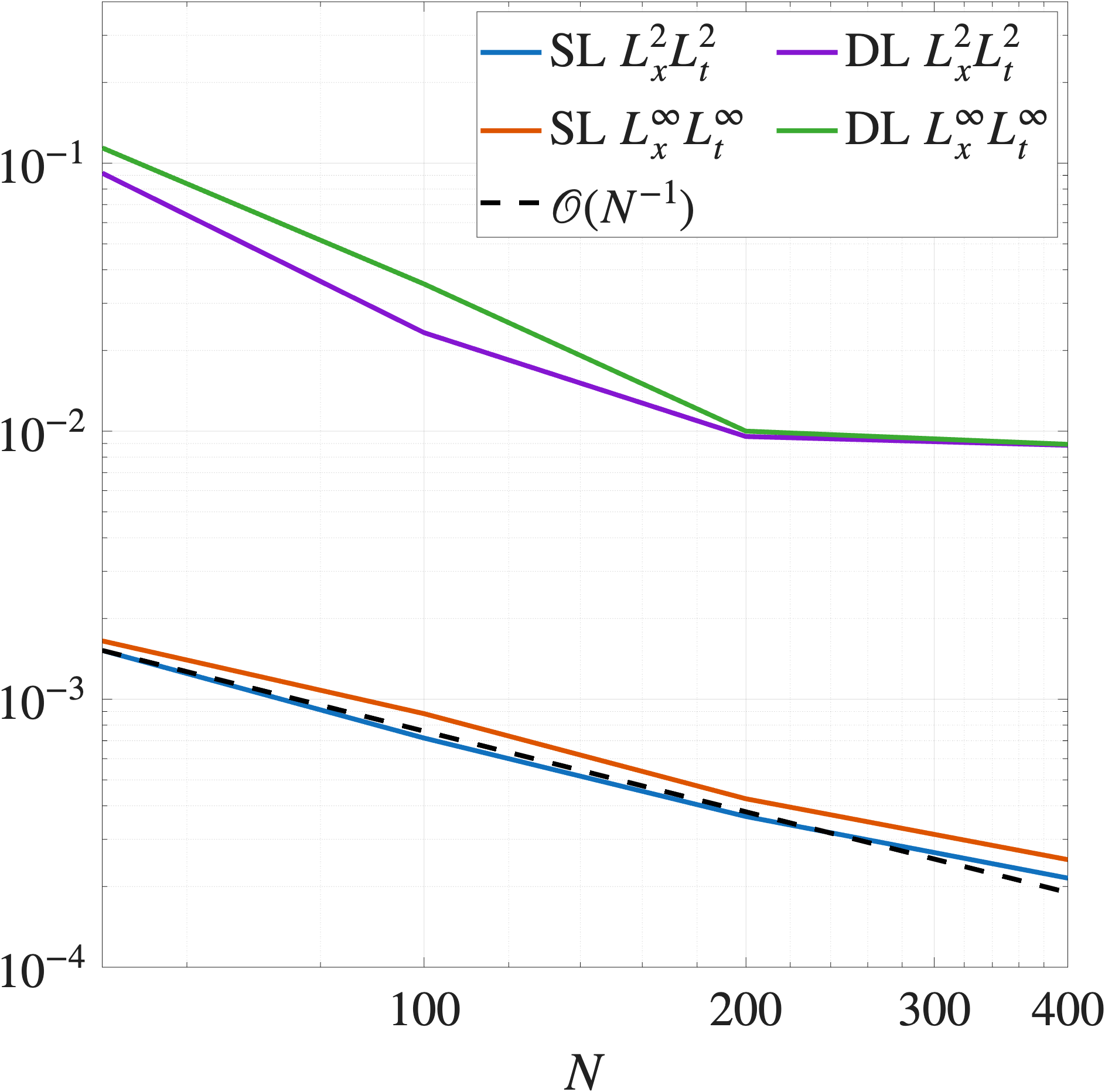}
        \caption{Convergence of relative error}
    \end{subfigure}
    \caption{%
    Fixed circle $R(t) = 1$ test.
    \textbf{Left:} Time history of the spatially averaged boundary density,
    comparing the numerical solutions $\sigma_\Delta$ and $\mu_\Delta$
    for $N=200$ with the exact solution.
    \textbf{Center:} Numerical single-layer potential $\phi_\Delta(x,t)$ obtained from the $N=200$ solution at the final time.
    \textbf{Right:} Convergence of the error in the $L^2_x L^2_t$ and $L^\infty_x L^\infty_t$ norms 
    for both single-layer and double-layer formulations.
    }
    \label{fig:fixed_convergence}
\end{figure}

\subsubsection{Moving interface: the need for stabilization}

We now consider the case of a moving interface in order to assess the
robustness of the scheme in a dynamic setting. Specifically, we take
$R(t) = 1 + U t$, where $U = 0.5$ denotes the (dimensionless) interface speed, i.e., the Mach number.
The problem is simulated using $N=200$ cells up to the final time
$\tmax = 1$, with time step $\Delta t = 0.01$. Both the single- and
double-layer solvers were used.

The left panel in \Cref{fig:test2} shows the single-layer density
computed without any explicit stabilization. Although the numerical profile initially agrees
well with the exact solution, spurious oscillations eventually emerge and the
approximation becomes unstable as time evolves. 
This behavior demonstrates the need for an explicit stabilization
mechanism, which in our solver is provided by the Rynne time-averaging
procedure (c.f. \Cref{rmk:stable}). With this stabilization in place, both the SL and DL
solutions remain stable, as shown in the middle panel. 
We observe that the double-layer solution exhibits a larger error than
the single-layer solution, primarily due to a small phase shift relative
to the exact solution.
The right panel shows the scattered potential $\phi_\Delta(x,t)$
corresponding to the stabilized solution at the final time. The resulting
field is smooth and captures the expected outward propagation induced by
the expanding interface.

\begin{figure}[ht]
    \centering
    \begin{subfigure}{0.32\textwidth}
        \centering
        \includegraphics[width=0.86\textwidth]{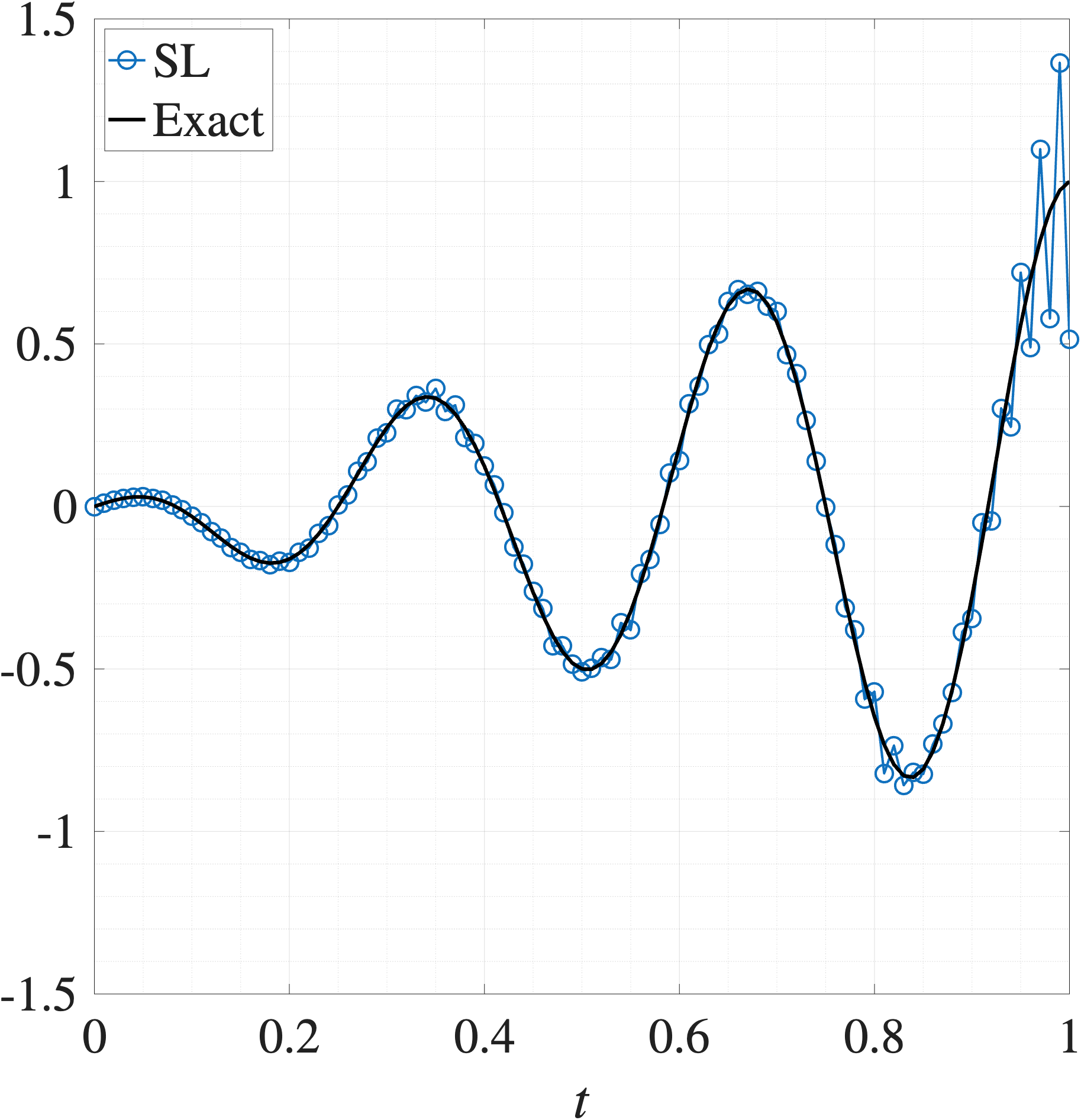}
        \caption{SL without stabilization}
    \end{subfigure}
    \hfill
    \begin{subfigure}{0.32\textwidth}
        \centering
        \includegraphics[width=0.86\textwidth]{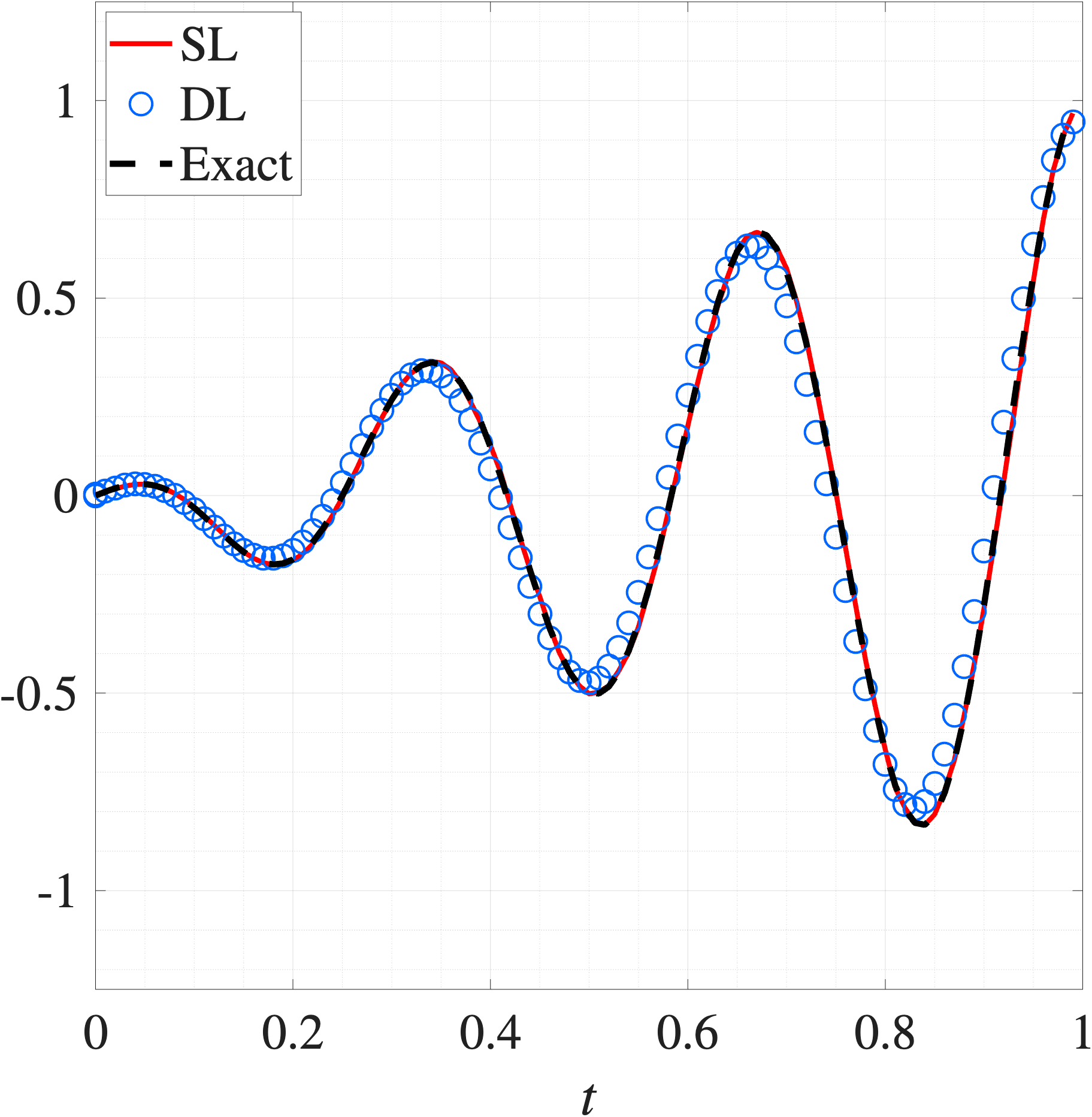}
        \caption{Stabilized SL and DL solutions}
    \end{subfigure}
    \hfill
    \begin{subfigure}{0.32\textwidth}
        \centering
        \includegraphics[width=1.04\textwidth]{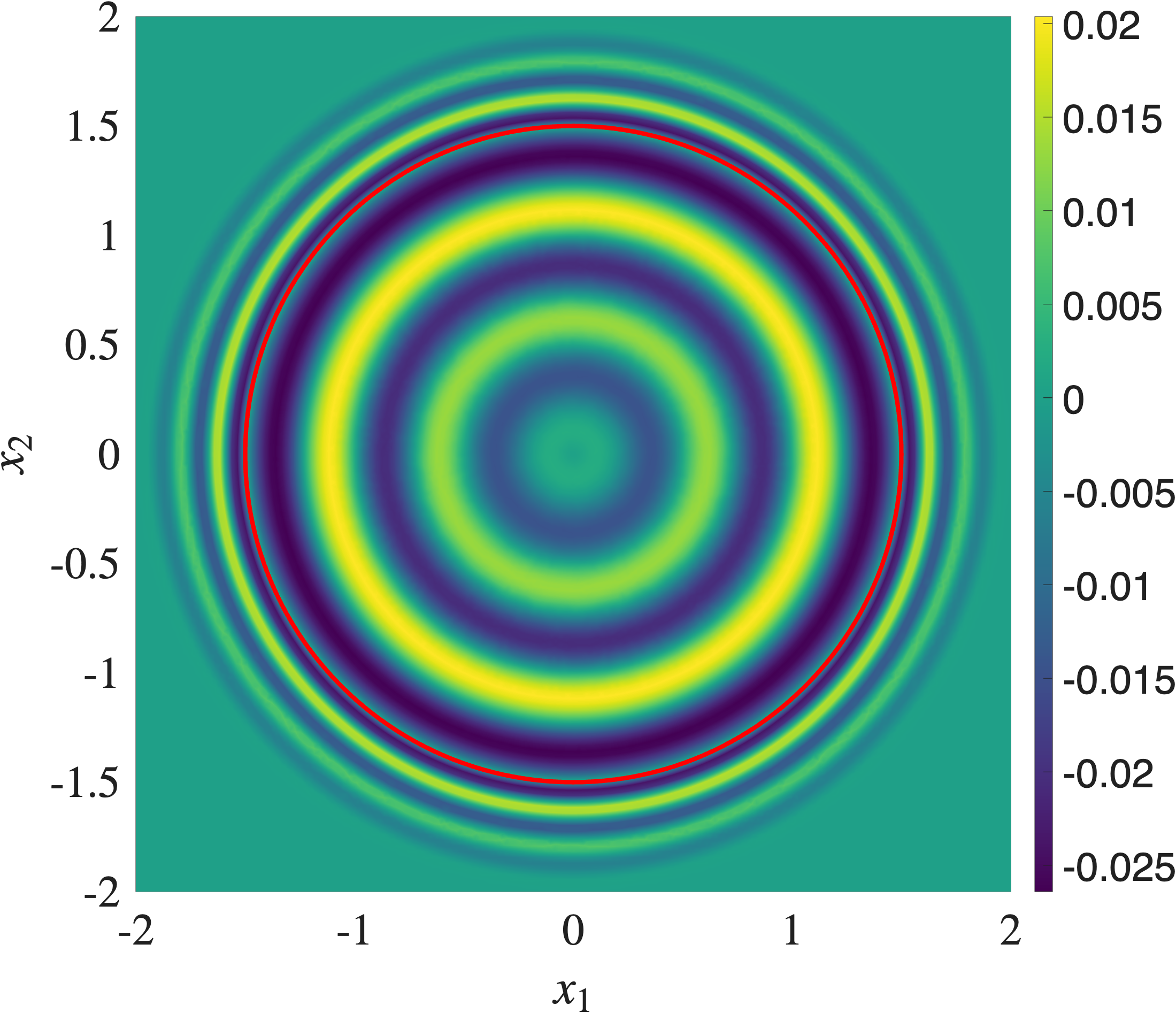}
        \caption{Numerical SL potential}
    \end{subfigure}
    \caption{%
    Expanding circle $R(t)=1+Ut$ test with $U=0.5$.
    \textbf{Left:} Time history of the spatially averaged single-layer density
    $\sigma_\Delta$ computed without stabilization, showing the onset of
    spurious oscillations and eventual instability.
    \textbf{Center:} Stabilized boundary densities $\sigma_\Delta$ and
    $\mu_\Delta$ obtained using the Rynne time-averaging procedure,
    which remain stable and in good agreement with the exact solution.
    \textbf{Right:} Numerical single-layer potential $\phi_\Delta(x,t)$ at the
    final time corresponding to the stabilized single-layer solution.
    }
    \label{fig:test2}
\end{figure}

The results of the two tests presented in this section show that the
single-layer formulation yields stable and more accurate results than the
double-layer solver. 
In the present implementation, only the principal integration-by-parts 
contribution \eqref{Dop-ibp-principal} is discretized in the double-layer solver, while the 
temporal-derivative correction term \eqref{Dop-ibp-tcorr} is neglected. 
This is consistent with the assumption that geometric quantities 
vary slowly with respect to the emission time $\tau$ in the subsonic regime.
As the interface speed increases, however, this assumption 
breaks down, and the correction term becomes non-negligible.
As such, we focus primarily on the single-layer formulation in the
numerical experiments that follow.

\subsection{Numerical example: Doppler scattering by a rigidly moving sphere}

We consider an example in which the sound speed is constant,
$c(x,t)\equiv 1$, and the scatterer is the unit sphere. 
The incident field is taken to be the Gaussian plane-wave packet
\begin{equation}\label{doppler-inc}
  \phiinc(x_1,x_2,x_3,t)
  =
  \exp\!\left(
    -\left(\tfrac{x_1-t+x_0}{a}\right)^2
  \right),
  \qquad
  x_0=1.5,
  \qquad
  a=0.2.
\end{equation}
Four test cases are studied: \emph{fixed}, \emph{left}, \emph{right}, and \emph{rising}.
In the \emph{fixed} case, the sphere is stationary. In the \emph{right}
(\emph{left}) case, the sphere translates in the positive (negative)
$x_1$-direction with speed $U = 0.5$. In the \emph{rising} case, the sphere
translates vertically upward in the positive $x_3$-direction with the same
speed $U = 0.5$. Since $c(x,t)\equiv 1$, the maximum Mach number is
$\max_{\beta,t} |V_\nu(\beta,t)|$, which in these rigid-translation examples is
equal to $U = 0.5$.

We simulate the four test cases using the single-layer solver on a triangulated
sphere with $713$ vertices and $1422$ faces, corresponding to a target
surface edge length of $h=0.2$. To compare the solutions at a common stage of
the interaction, we evaluate each case at the time $\tmax$ for which the
peak of the incident packet reaches the eastmost point of the sphere. 
The final time and time step are as follows: for the \emph{fixed} and
\emph{rising} cases, $\tmax=2.5$ and $\Delta t=0.1$; for the
\emph{left}-moving case, $\tmax=5/3$ and $\Delta t=5/48$; and for the
\emph{right}-moving case, $\tmax=5.0$ and $\Delta t=0.1$.

The solutions are shown in \Cref{fig:doppler-phi}. Each panel displays the sphere as a gray surface,
the reflected wavefront as a black curve, and the scattered potential on the
plane $x_2=0$ localized near the wavefront. A direct comparison of the
reflected wavefronts is provided in
\Cref{fig:doppler-comparison-wavefront}.

\begin{figure}[ht]
    \centering

    \begin{subfigure}{0.24\textwidth}
        \centering
        \includegraphics[width=1.0\textwidth]{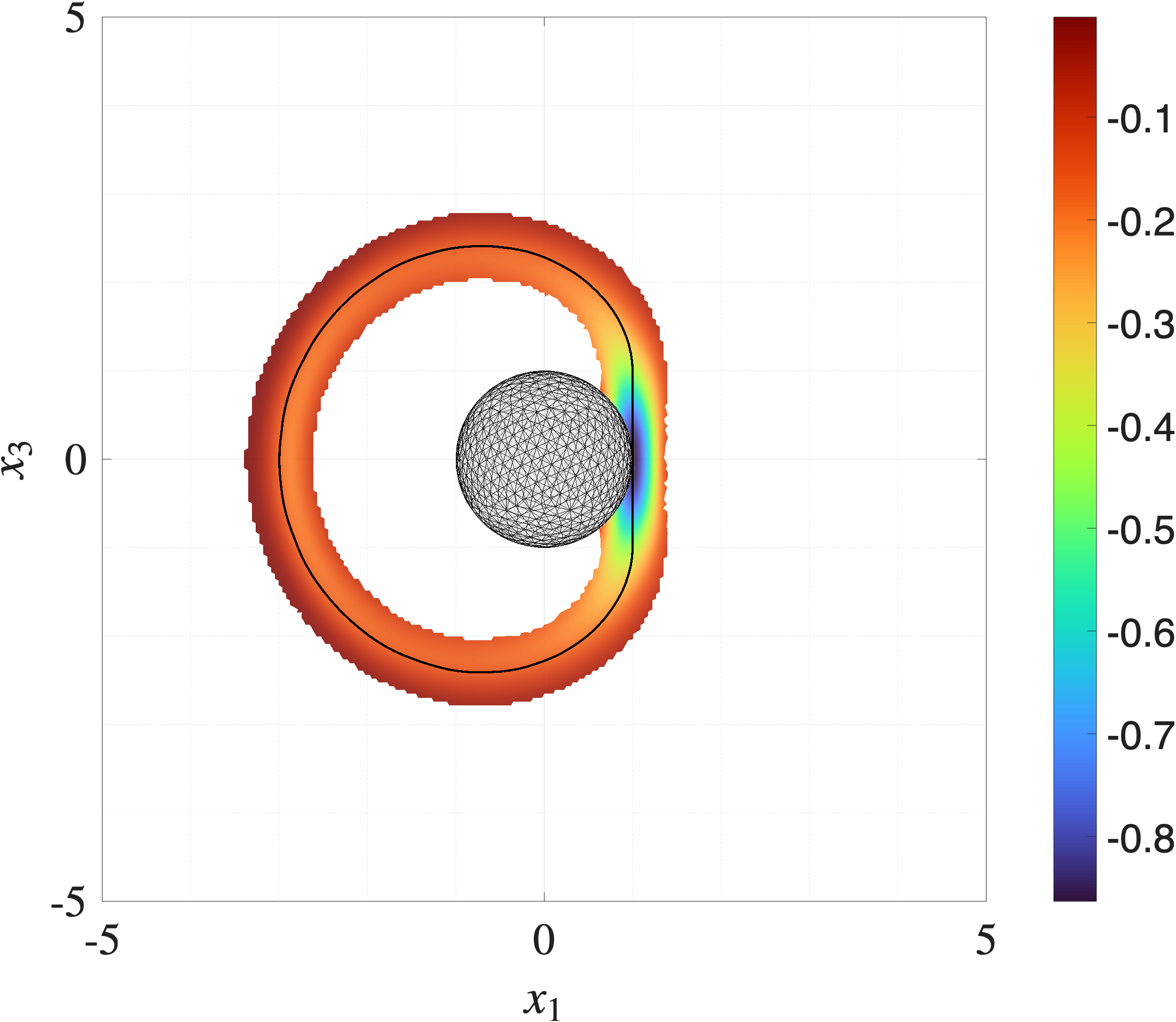}
        \caption{\emph{Fixed}: $t=2.5$.}
    \end{subfigure}
    \begin{subfigure}{0.24\textwidth}
        \centering
        \includegraphics[width=1.0\textwidth]{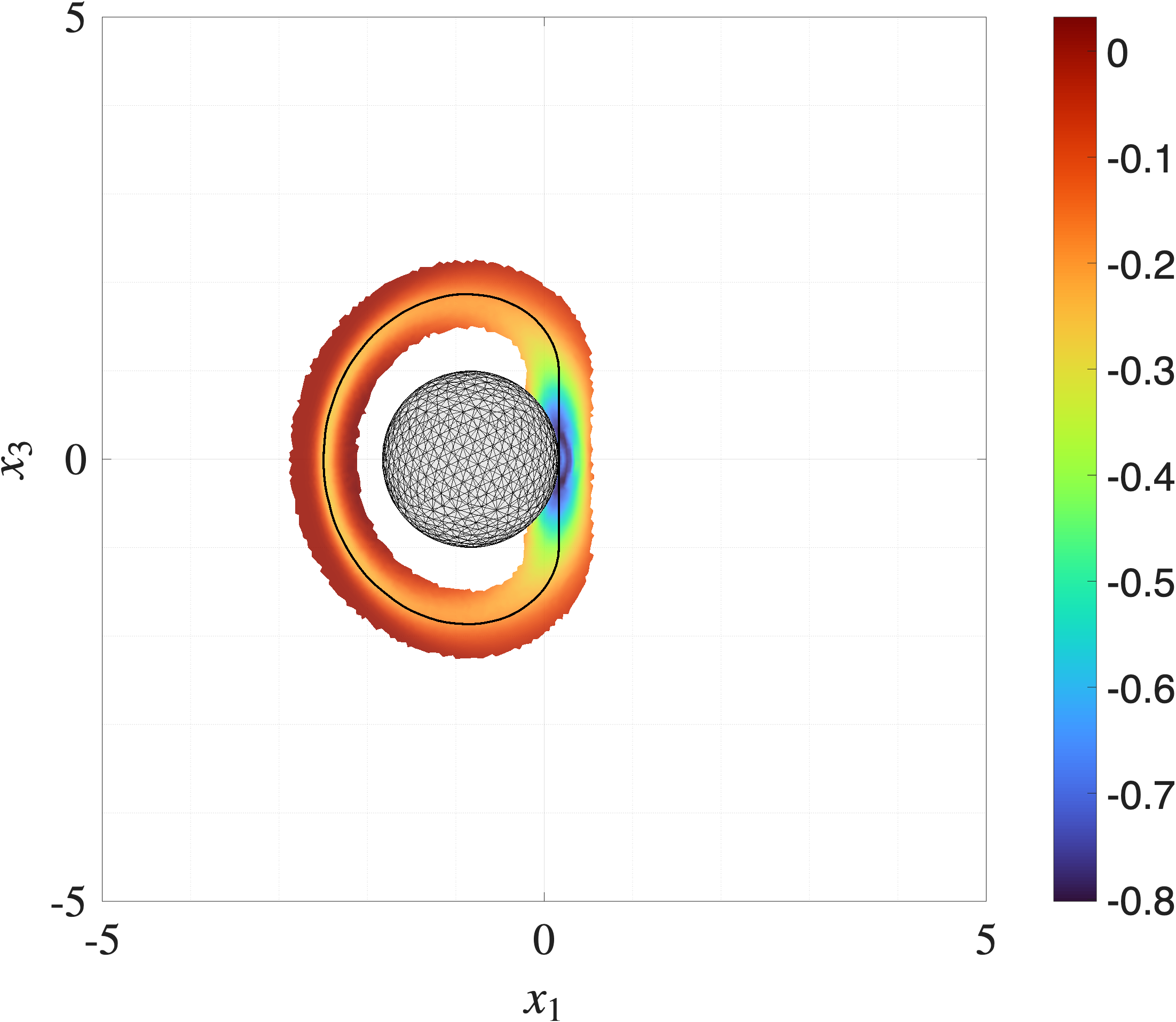}
        \caption{\emph{Left}: $t=5/3$.}
    \end{subfigure}
    \begin{subfigure}{0.24\textwidth}
        \centering
        \includegraphics[width=1.0\textwidth]{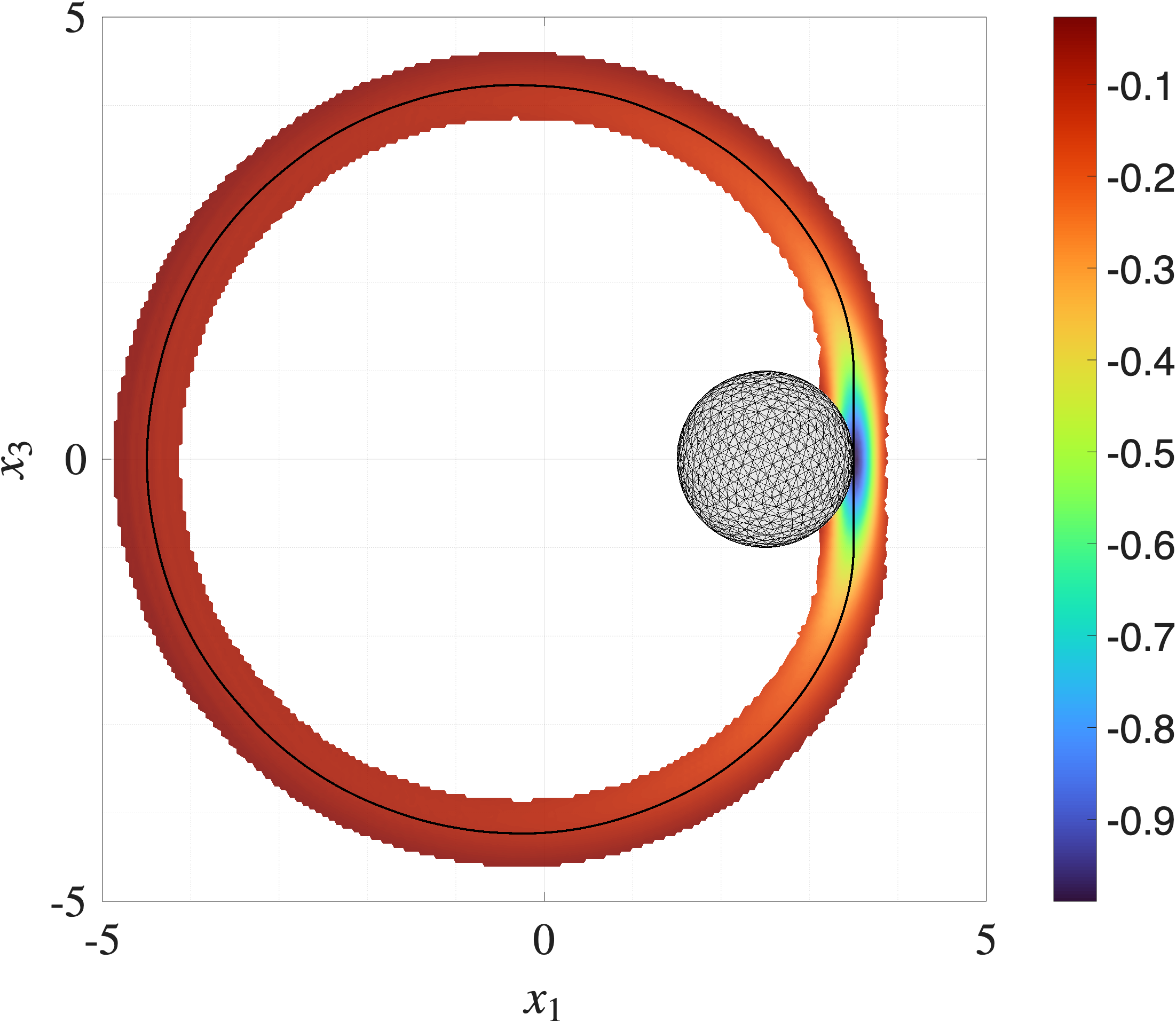}
        \caption{\emph{Right}: $t=5.0$.}
    \end{subfigure}
    \begin{subfigure}{0.24\textwidth}
        \centering
        \includegraphics[width=1.0\textwidth]{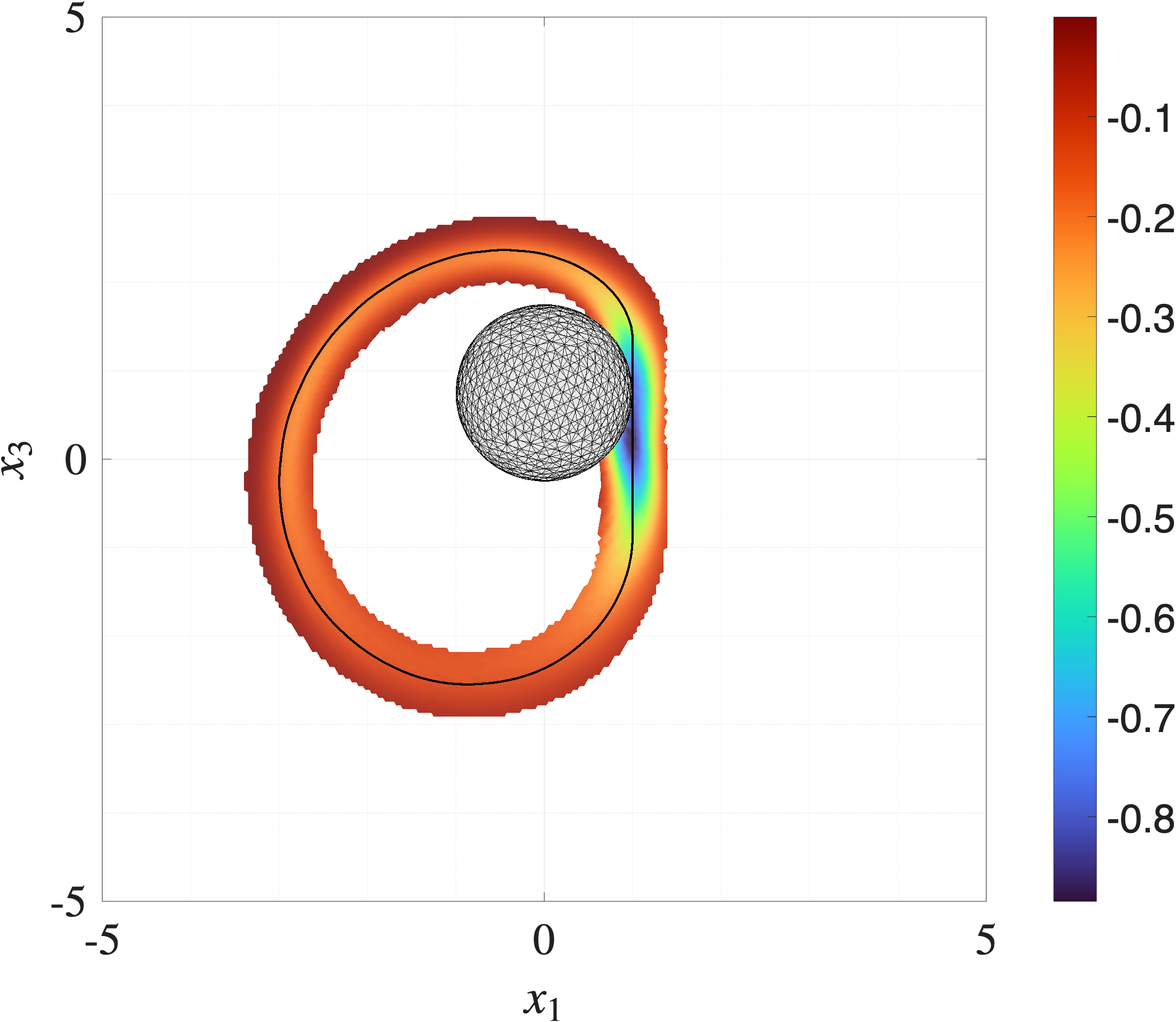}
        \caption{\emph{Rising}: $t=2.5$.}
    \end{subfigure}

    \caption{
    Numerical solutions for the four Doppler tests with Mach number $U = 0.5$.
    In each panel, the interface is shown as the gray surface and the reflected
    wavefront as the black curve. The scattered potential in the plane
    $x_2=0$, localized near the wavefront, is also displayed.
    }
    \label{fig:doppler-phi}
\end{figure}

\begin{figure}[ht]
    \centering

    \begin{subfigure}{0.45\textwidth}
        \centering
        \includegraphics[width=0.75\textwidth]{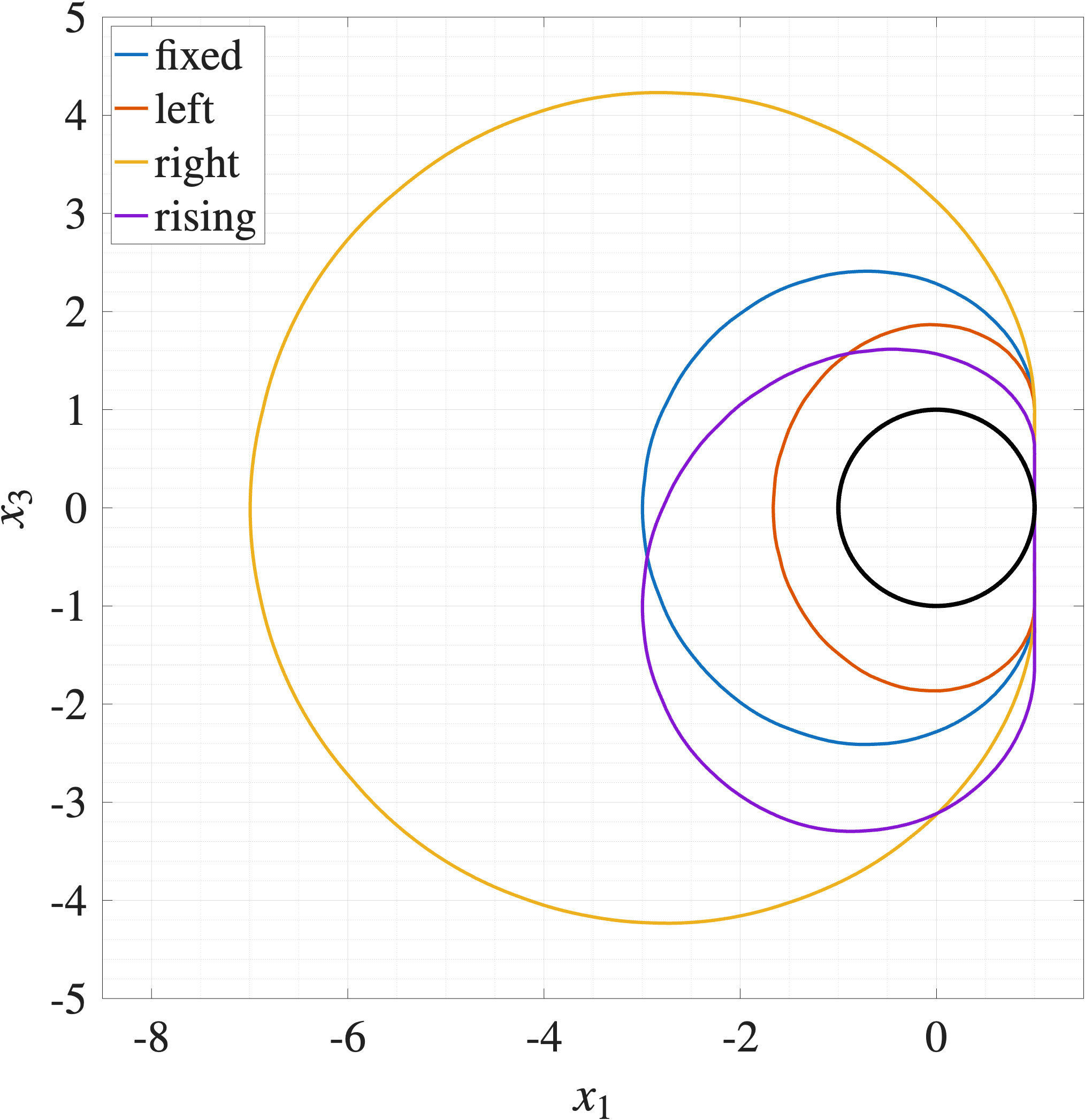}
        \caption{Reflected wavefront comparison.}
        \label{fig:doppler-comparison-wavefront}
    \end{subfigure}
    \hspace{0em}
    \begin{subfigure}{0.45\textwidth}
        \centering
        \includegraphics[width=0.75\textwidth]{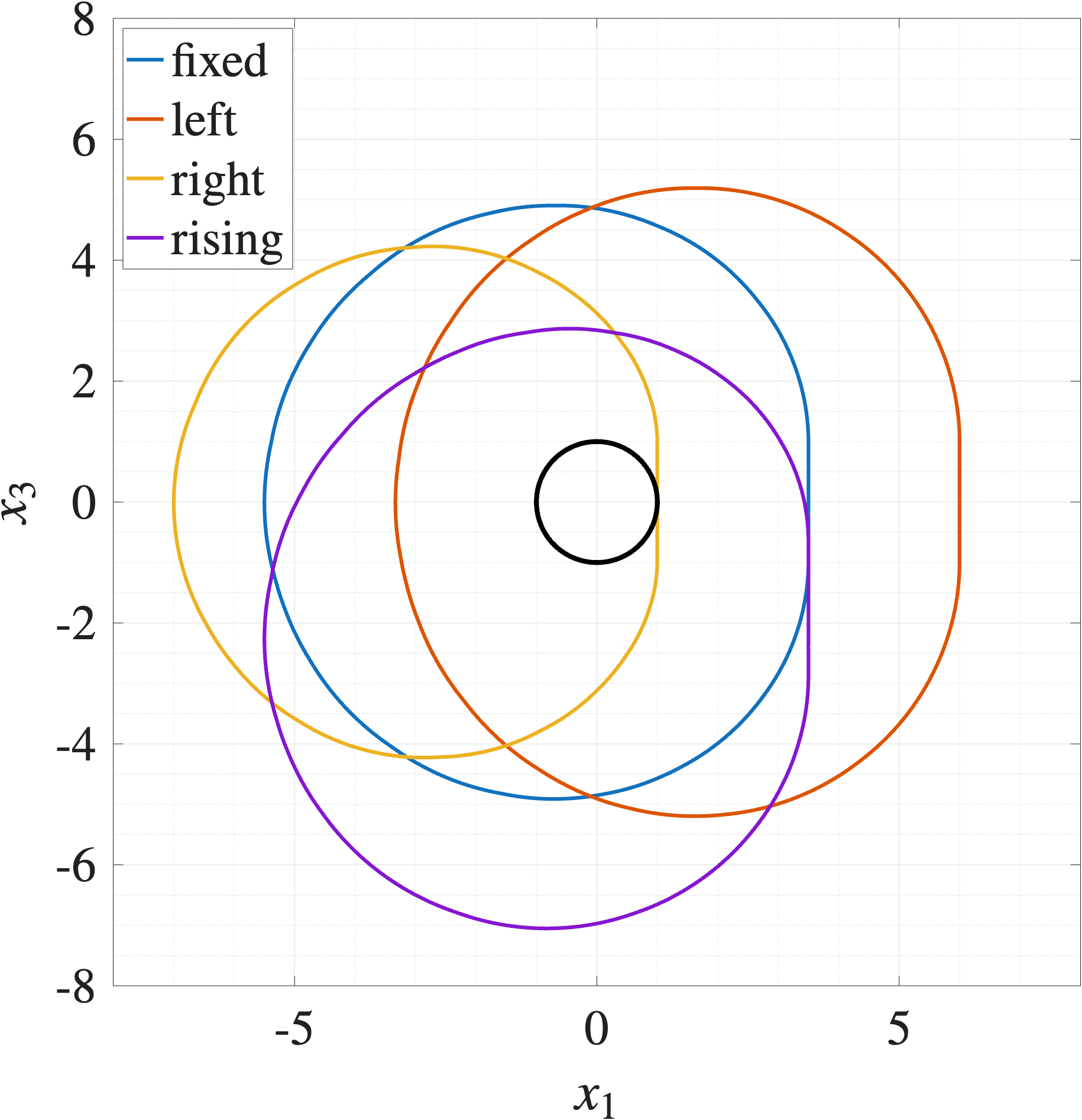}
        \caption{Reflected wavefronts at $t = 5.0$.}
        \label{fig:doppler-comparison-final}
    \end{subfigure}
    \caption{
    Comparison of the reflected wavefronts in the plane $x_2=0$ for the
    four Doppler tests at Mach number $U = 0.5$. \textbf{Left}: reflected wavefronts at the
    comparison times used in \Cref{fig:doppler-phi}. \textbf{Right}: reflected
    wavefronts at the common time $t=5.0$. In both panels, each wavefront is
    translated so that the sphere is centered at the origin.
    }
    \label{fig:doppler-comparison}
\end{figure}

These plots reveal how the geometry of the reflected wavefront depends on the
prescribed motion of the sphere.
When the sphere moves in the $x_1$-direction, the reflected wavefront is
compressed in the \emph{left} case and expanded in the \emph{right} case. 
In the \emph{rising} case, the reflected wavefront develops a
clear north--south asymmetry, with the reflected front
compressed above the sphere and expanded below it. The location of the peak
scattered potential follows this asymmetry: in the \emph{fixed}, \emph{left},
and \emph{right} cases it is centered about the equator, whereas in the
\emph{rising} case it is displaced downward.

The reflected wavefronts for the four cases are displayed at the common time
$t=5.0$ in \Cref{fig:doppler-comparison-final}.
With the sphere recentered at the origin, the geometric effect of the motion is
more clearly separated from the rigid translation of the interface. The
\emph{left} and \emph{right} cases differ primarily through a displacement in
the $x_1$-direction, while the \emph{rising} case exhibits the same
north--south asymmetry apparent in \Cref{fig:doppler-phi}.

\begin{figure}[ht]
    \centering

    \begin{subfigure}{0.45\textwidth}
        \centering
        \includegraphics[width=0.765\textwidth]{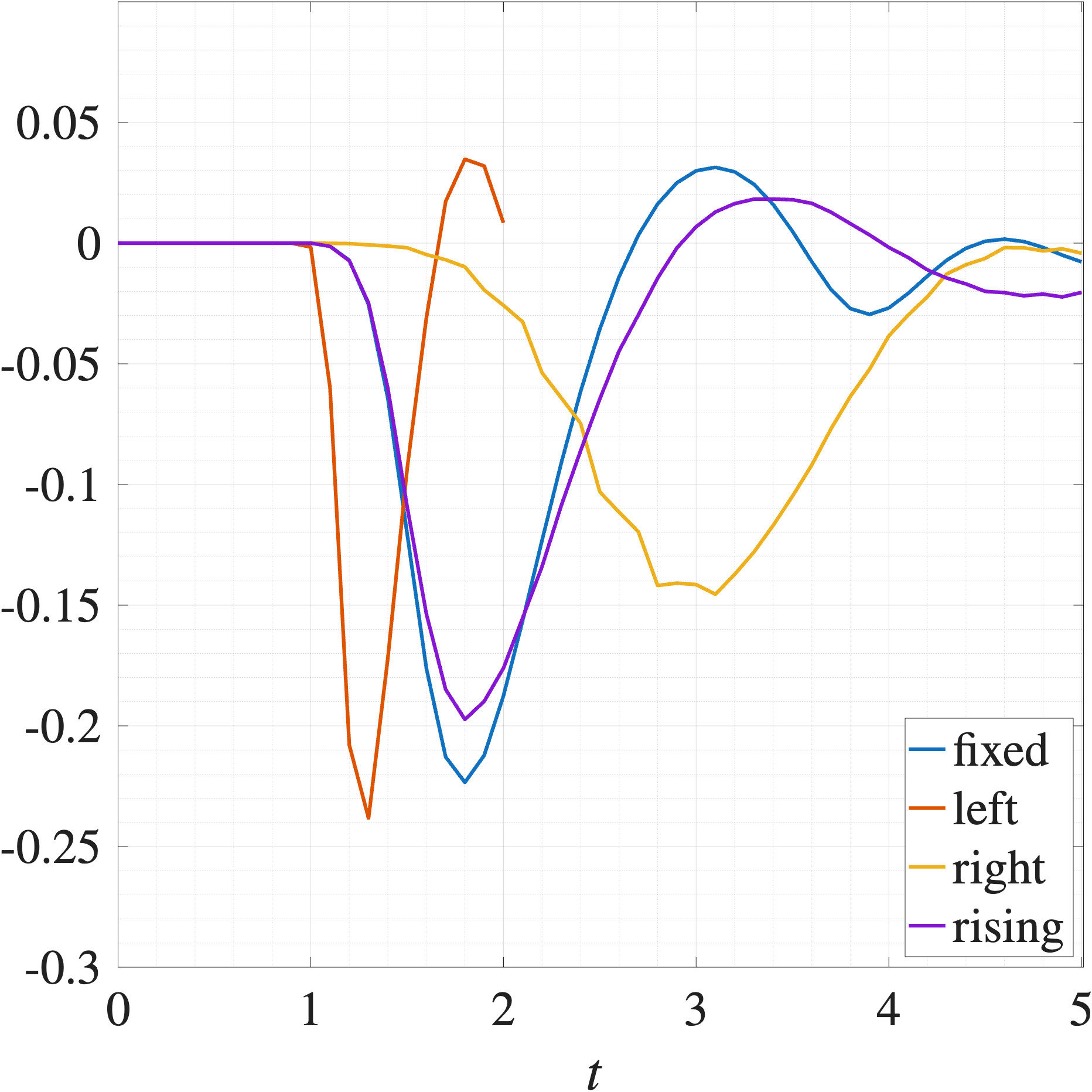}
        \caption{Fixed sensor at $(-2,0,0)$.}
        \label{fig:doppler-sensor-fixed}
    \end{subfigure}
    \begin{subfigure}{0.45\textwidth}
        \centering
        \includegraphics[width=0.75\textwidth]{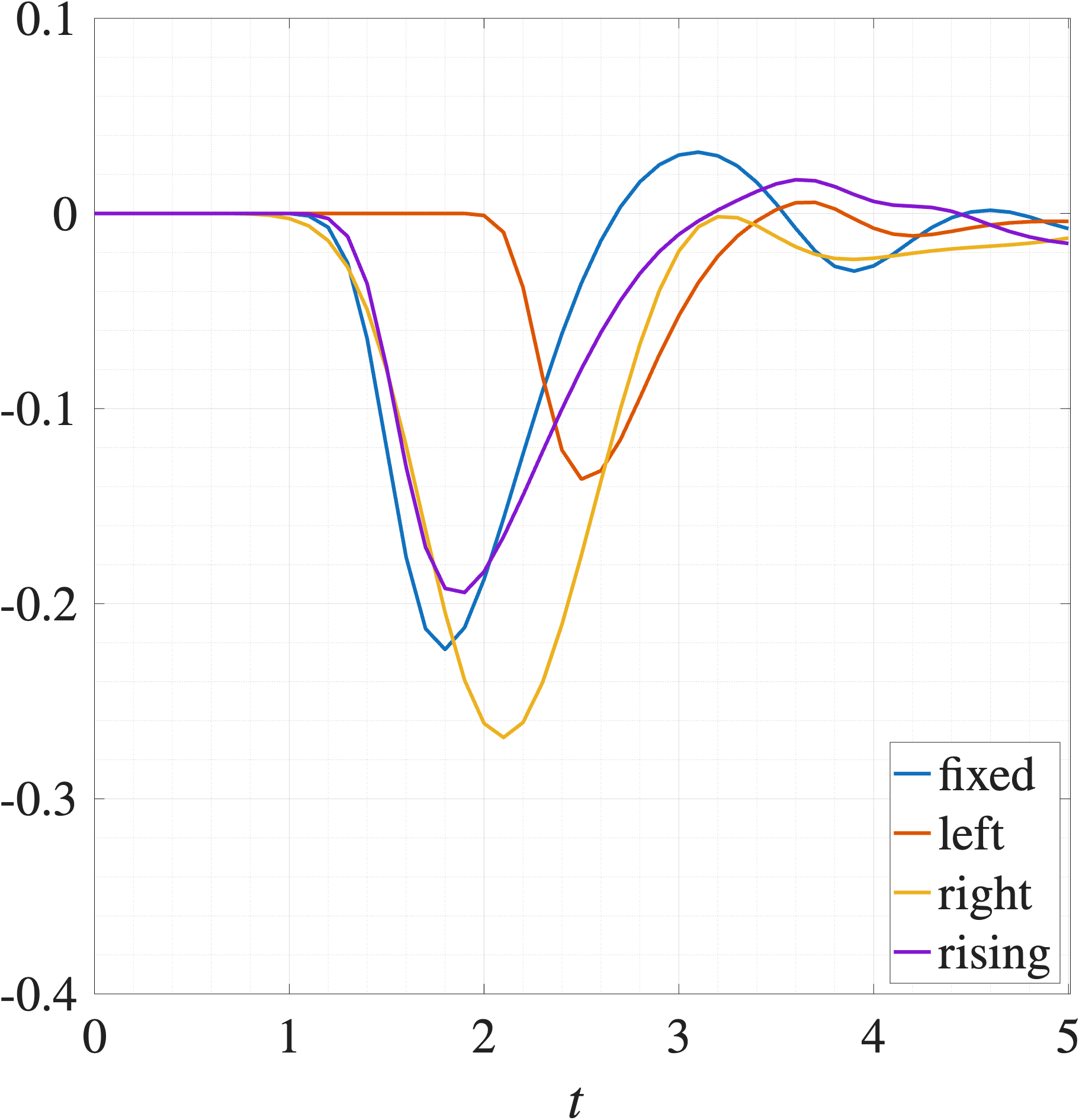}
        \caption{Co-moving sensor.}
        \label{fig:doppler-sensor-moving}
    \end{subfigure}

    \caption{
    Scattered-potential sensor histories for the four Doppler tests at Mach number $U = 0.5$.
    \textbf{Left}: history at the fixed sensor $(-2,0,0)$. \textbf{Right}: history
    at a co-moving sensor, initialized at $(-2,0,0)$ and transported with the sphere.
    }
    \label{fig:doppler-sensor}
\end{figure}

To further quantify these differences, we compare scattered-potential histories
recorded at two sensors. The first is the fixed sensor at $(-2,0,0)$; the corresponding 
sensor history is displayed in \Cref{fig:doppler-sensor-fixed}.  
As expected, the dominant scattered pulse arrives earliest in the
\emph{left} case and latest in the \emph{right} case, with the \emph{fixed}
and \emph{rising} cases lying in between. Relative to the fixed sensor, the sphere moves toward the sensor in the
\emph{left} case and away from it in the \emph{right} case, producing the
expected Doppler-type compression and stretching of the reflected waveform.

The second is a co-moving sensor initialized  at the same point $(-2,0,0)$ and
thereafter transported along with the rigid motion of the sphere.  The associated 
sensor history is displayed in \Cref{fig:doppler-sensor-moving}. 
The four peak arrival times are more closely grouped: because the sensor 
moves with the sphere, the main travel-time shift caused by
the rigid motion is removed, and what remains is the change in the reflected
pulse itself. Since the incident packet sweeps past the sphere more
slowly in the \emph{right} case and more rapidly in the \emph{left} case,  the reflected pulse 
is broader and larger in the former, and more
compressed and weaker in the latter.

\subsubsection{Mach number parameter study}

To examine the dependence on interface speed systematically, we
conduct a Mach number parameter study for the \emph{rising} configuration.
The physical setup, spatial discretization, and incident field are taken to be
the same as in the preceding Doppler tests; only the upward 
translation speed (Mach number) $U \in \{0.0, 0.1, 0.3, 0.5, 0.7, 0.9 \}$ is varied.  
The single-layer solutions computed with a time-step $\Delta t = 1/30$ are displayed 
at the final time $\tmax = 2.5$ in \Cref{fig:mach-study}. Each panel shows the 
reflected wavefront and the slice of the local scattered potential in the $x_2=0$ plane. 
As the Mach number increases, both the reflected wavefront and the scattered
potential develop an increasingly pronounced north--south asymmetry.
The reflected front is compressed above the sphere and expanded
below it, while the peak of the scattered potential shifts downward and becomes
more vertically elongated. Across the range of Mach numbers tested, the computed 
solutions remain stable, including in the largest case $U=0.9$.

\begin{figure}[ht]
    \centering

    \begin{subfigure}{0.32\textwidth}
        \centering
        \includegraphics[width=\textwidth]{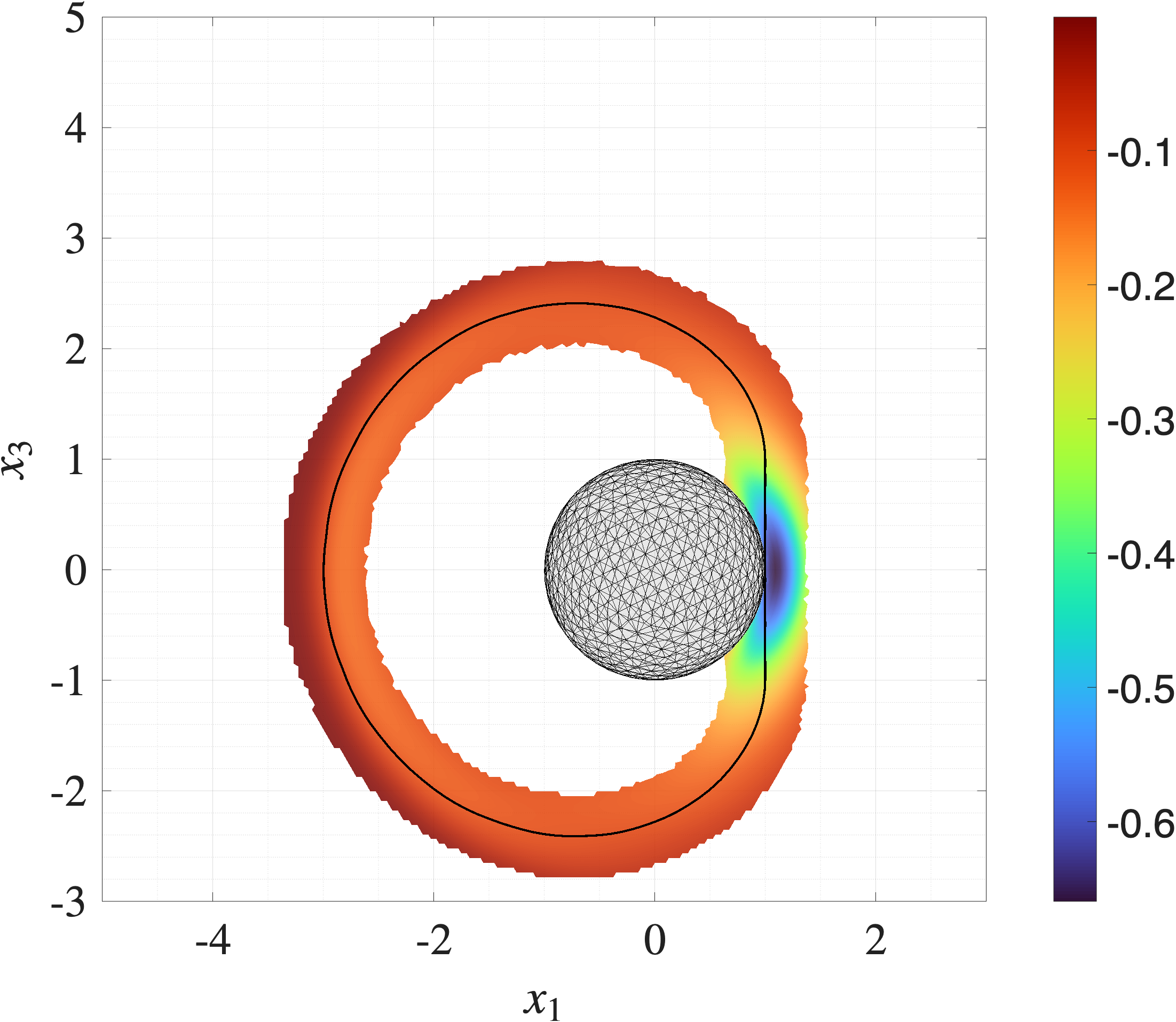}
        \caption{$U=0.0$}
    \end{subfigure}
    \hfill
    \begin{subfigure}{0.32\textwidth}
        \centering
        \includegraphics[width=\textwidth]{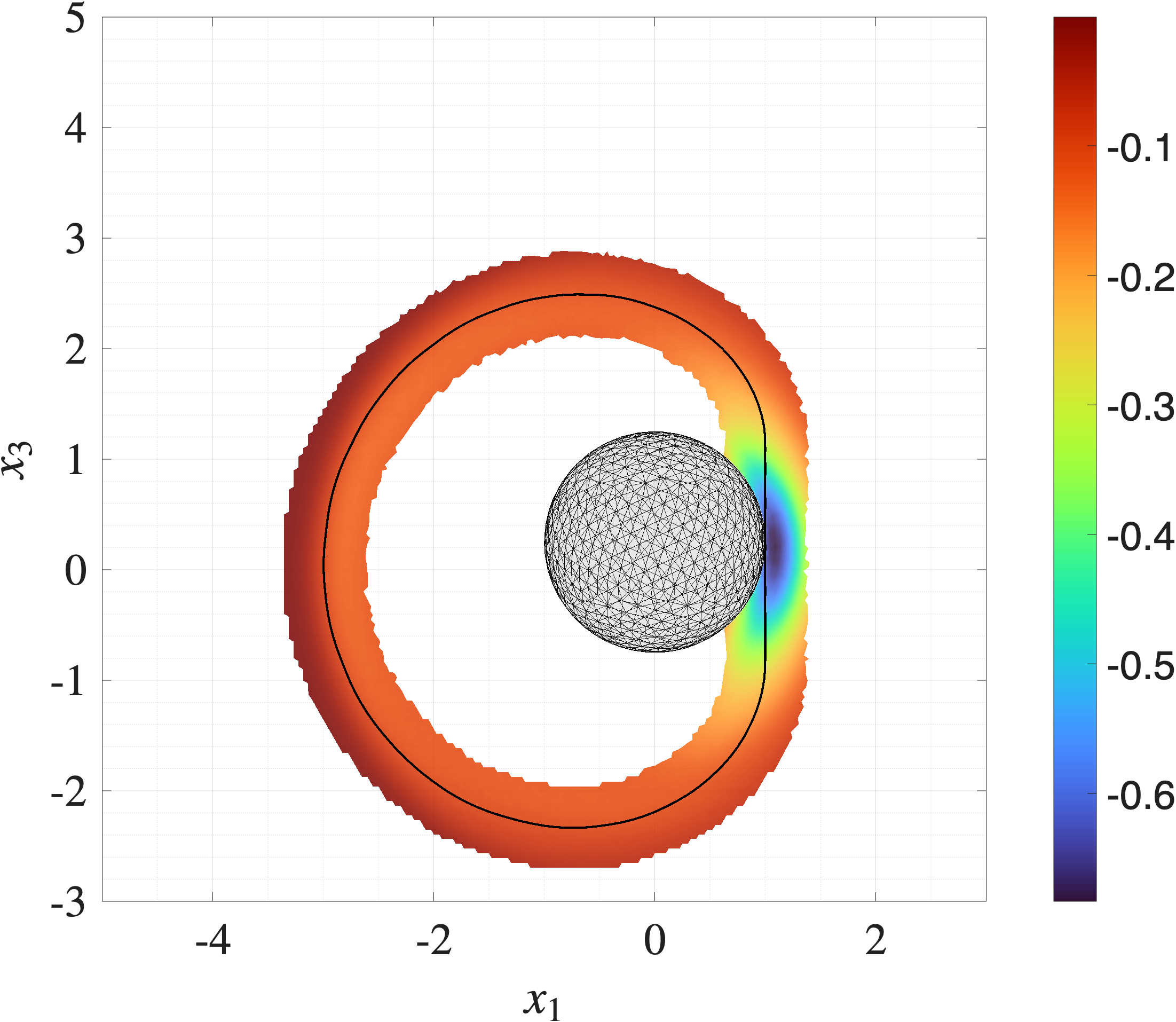}
        \caption{$U=0.1$}
    \end{subfigure}
    \hfill
    \begin{subfigure}{0.32\textwidth}
        \centering
        \includegraphics[width=\textwidth]{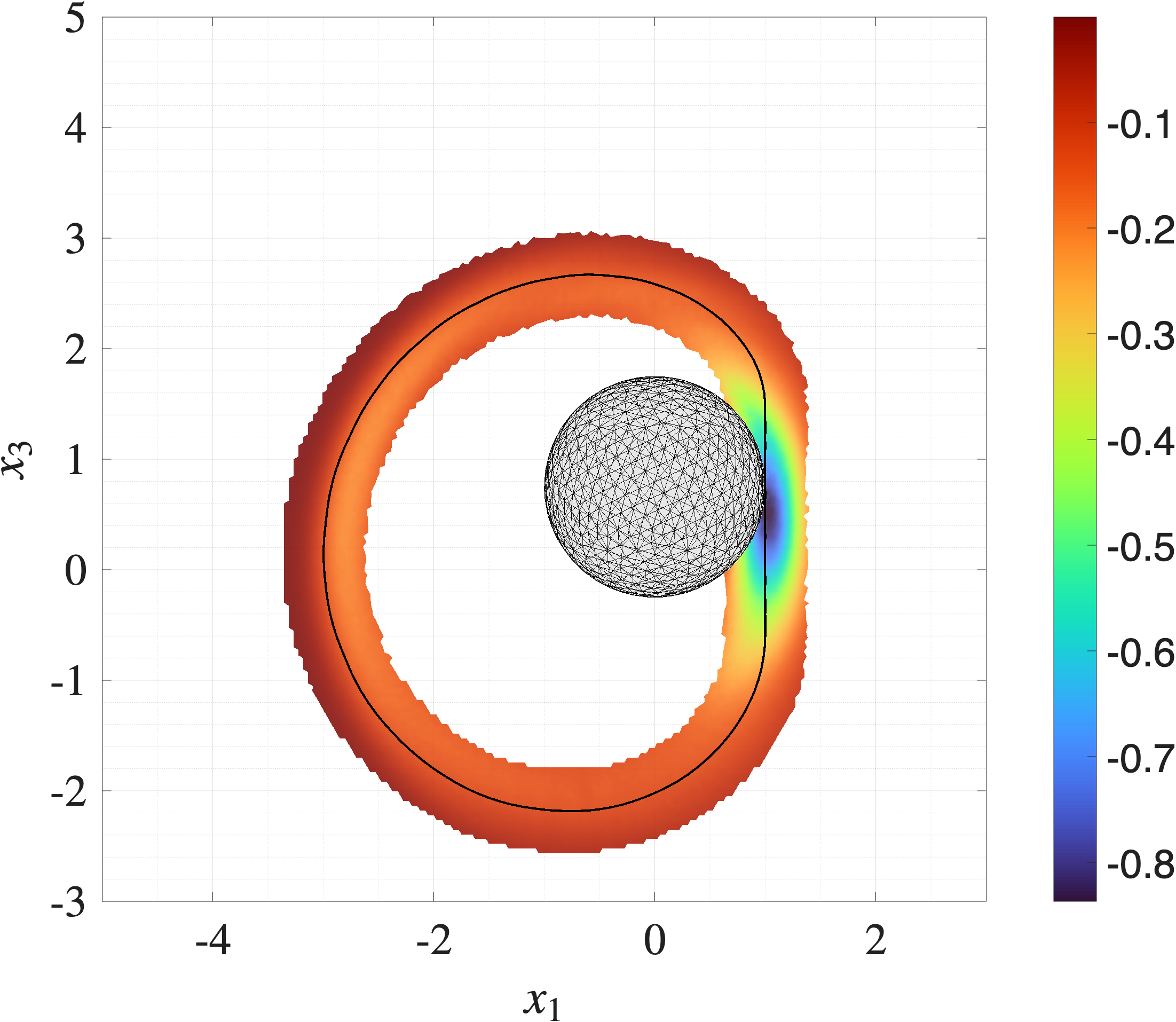}
        \caption{$U=0.3$}
    \end{subfigure}

    \vspace{0.5em}

    \begin{subfigure}{0.32\textwidth}
        \centering
        \includegraphics[width=\textwidth]{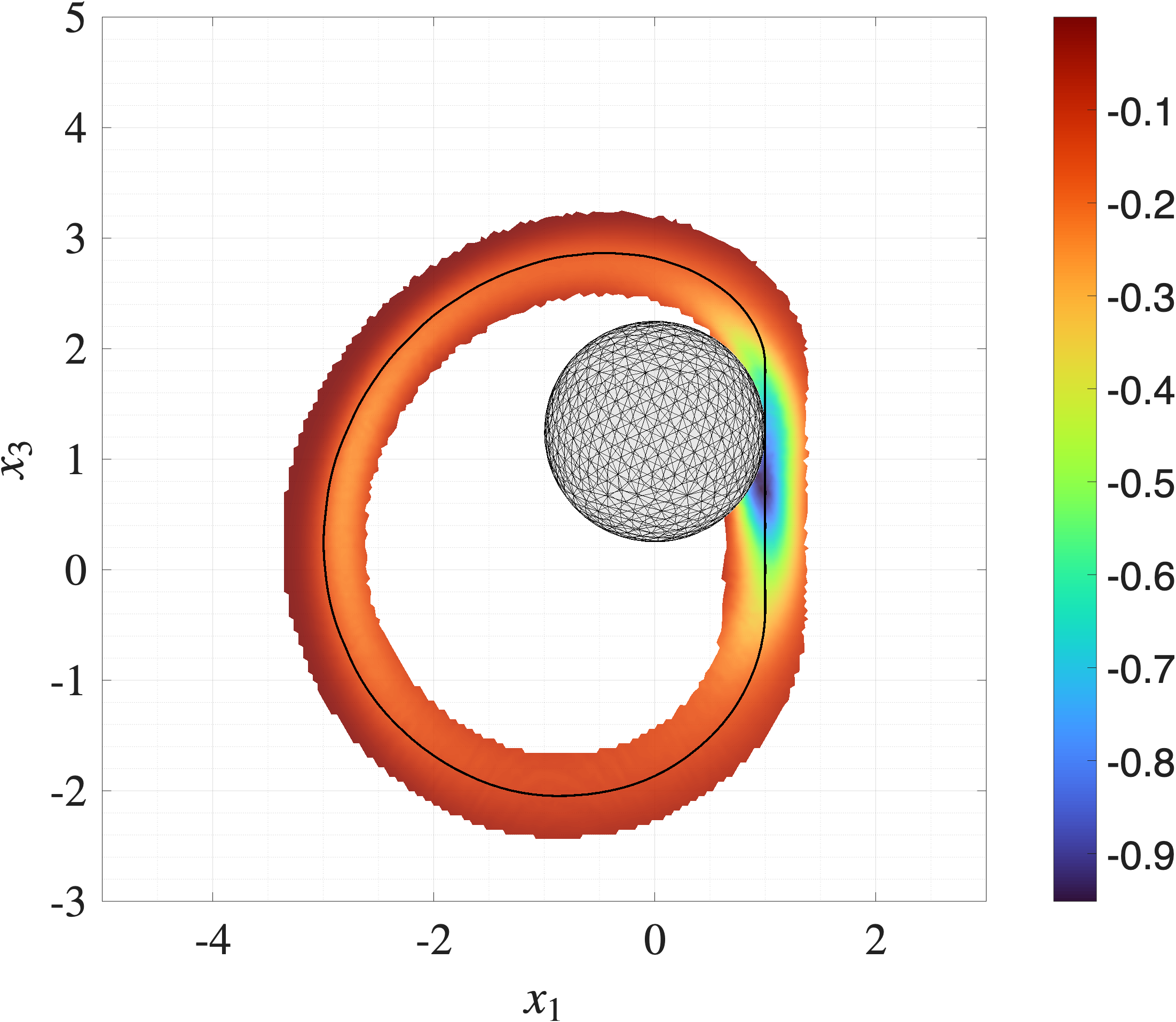}
        \caption{$U=0.5$}
    \end{subfigure}
    \hfill
    \begin{subfigure}{0.32\textwidth}
        \centering
        \includegraphics[width=\textwidth]{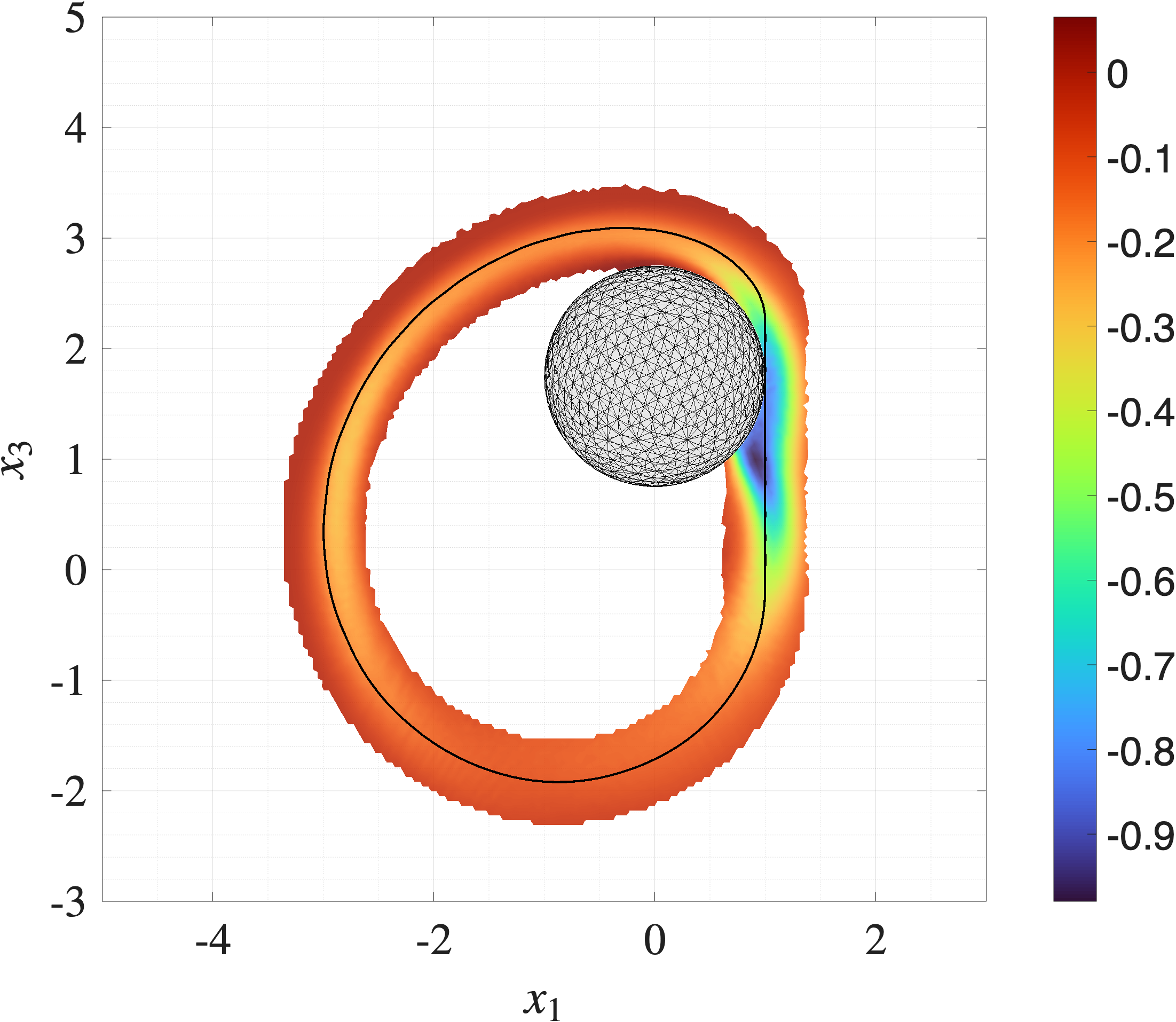}
        \caption{$U=0.7$}
    \end{subfigure}
    \hfill
    \begin{subfigure}{0.32\textwidth}
        \centering
        \includegraphics[width=\textwidth]{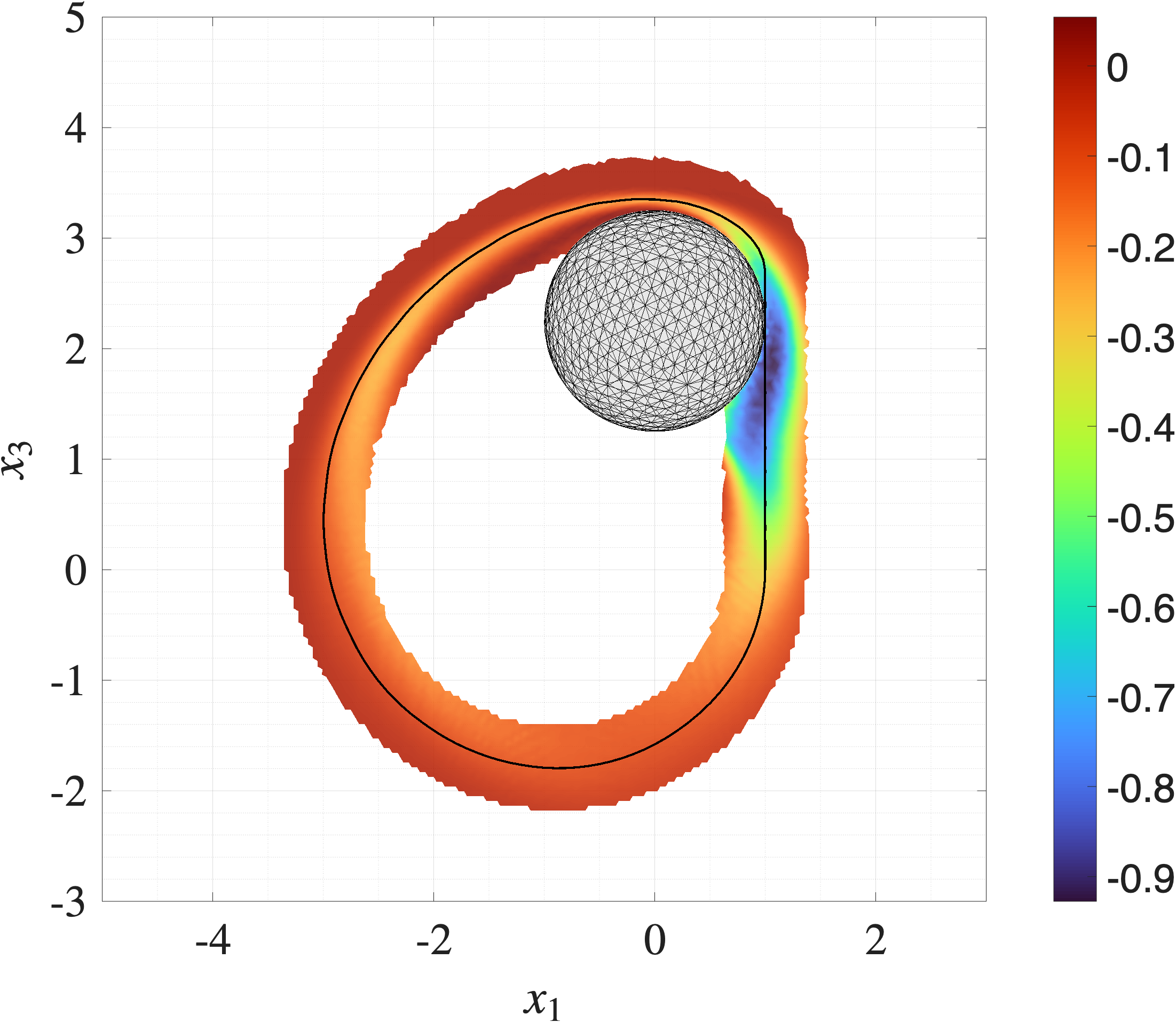}
        \caption{$U=0.9$}
    \end{subfigure}

    \caption{
    Mach number $U$ parameter study for the \emph{rising} configuration.
    In each panel, the reflected wavefront and the slice of the local scattered
    potential in the plane $x_2=0$ are displayed at the final time $\tmax = 2.5$.
    }
    \label{fig:mach-study}
\end{figure}

\subsection{Numerical example: scattering by a smooth rotating turbine}

We consider a two-dimensional example in which the sound speed is constant,
$c(x,t)\equiv 1$, and the scatterer is a smooth turbine-shaped obstacle centered
at the origin. Define the reference interface
\[
  z^{\mathsf{ref}}(\alpha)
  =
   (
    r(\alpha)\cos\alpha,\;
    r(\alpha)\sin\alpha
   ),
  \qquad
  \alpha\in[0,2\pi),
\]
with radial profile
\begin{equation}\label{fan-shape}
  r(\alpha)
  =
  \tfrac{
    1 + 0.3 \exp ( 3\sin(3\alpha) - 1  )
  }{
    1 + 0.3 e^2
  }.
\end{equation}
The time-dependent interface is then obtained by rigid counterclockwise
rotation of this reference curve:
\begin{equation}
  z(\alpha,t)=R(\omega t)\,z^{\mathsf{ref}}(\alpha), 
  \qquad \text{with} \quad 
  R(\vartheta)
  =
  \begin{pmatrix}
    \cos\vartheta & -\sin\vartheta\\
    \sin\vartheta & \cos\vartheta
  \end{pmatrix}.
\end{equation}
The angular speed $\omega$ is chosen so that the maximum normal Mach number is
equal to a prescribed value $U$. Since $c(x,t)\equiv 1$, we set
\begin{equation}\label{fan-omega}
  \omega = \tfrac{U}{S_{\max}},
  \qquad
  S_{\max}
  \coloneqq
  \max_{\alpha\in[0,2\pi]}
  \left|
     (-z^{\mathsf{ref}}_2(\alpha),\,z^{\mathsf{ref}}_1(\alpha) )
    \cdot
    n^{\mathsf{ref}}(\alpha)
  \right|,
\end{equation}
with $n^{\mathsf{ref}}$ the outward unit normal to $z^{\mathsf{ref}}$.
The prescribed incident field is a train of Gaussian plane-wave packets,
\begin{equation}\label{fan-inc}
  \phiinc(x_1,x_2,t)
  =
  \sum_{m=0}^{\infty}
  \exp\!\left(
    -\left(\tfrac{x_1 - t + x_0 + \frac12 m}{a}\right)^2
  \right),
  \qquad
  x_0=1.5,
  \qquad
  a=0.1. 
\end{equation}

We study two test cases: \emph{fixed} with $U=0.0$ and \emph{rotating} with
$U=0.5$. In both cases, we use the single-layer solver with a uniform
$100$-cell discretization of the interface. The solutions are computed up to
time $\tmax=4$ with time step $\Delta t=0.04$, and displayed at a selected 
snapshot of times in \Cref{fig:fan}.  
In the \emph{fixed} case, successive reflected wavefronts develop pronounced indentations 
near the lower blade, and the scattered potential remains concentrated in localized regions 
adjacent to that part of the obstacle. In the \emph{rotating} case, both the indentations in the 
reflected fronts and the localized scattered structures are carried upward and around the turbine as the blades turn.
The repeated interaction of the incoming wave train with the moving interface generates additional fine-scale structure 
in the scattered field, producing a visibly more intricate pattern than in the fixed configuration.

\begin{figure}[ht]
    \centering

    \begin{subfigure}{0.32\textwidth}
        \centering
        \includegraphics[width=\textwidth]{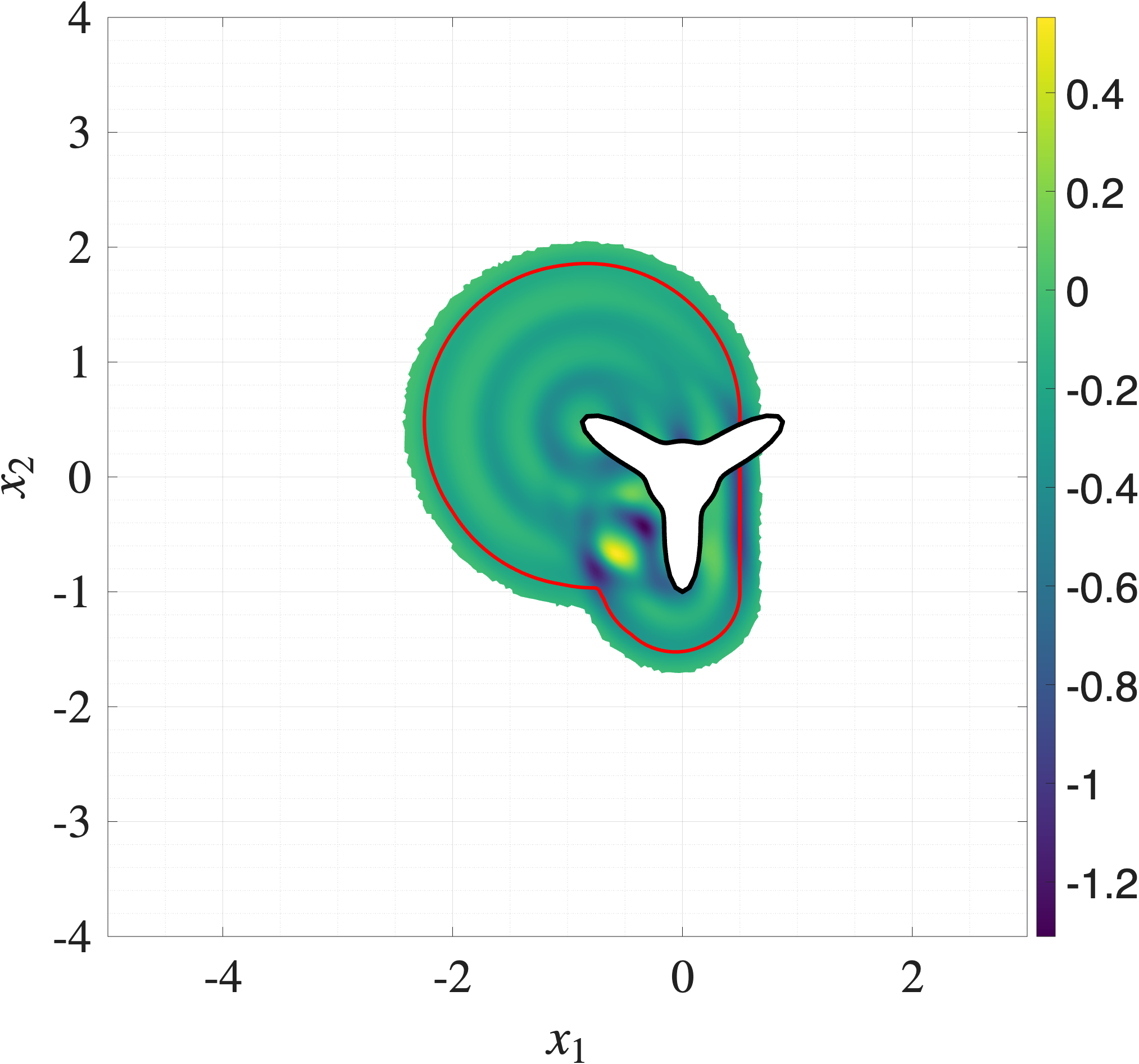}
        \caption{\emph{Fixed}: $t=2$.}
    \end{subfigure}
    \hfill
    \begin{subfigure}{0.32\textwidth}
        \centering
        \includegraphics[width=\textwidth]{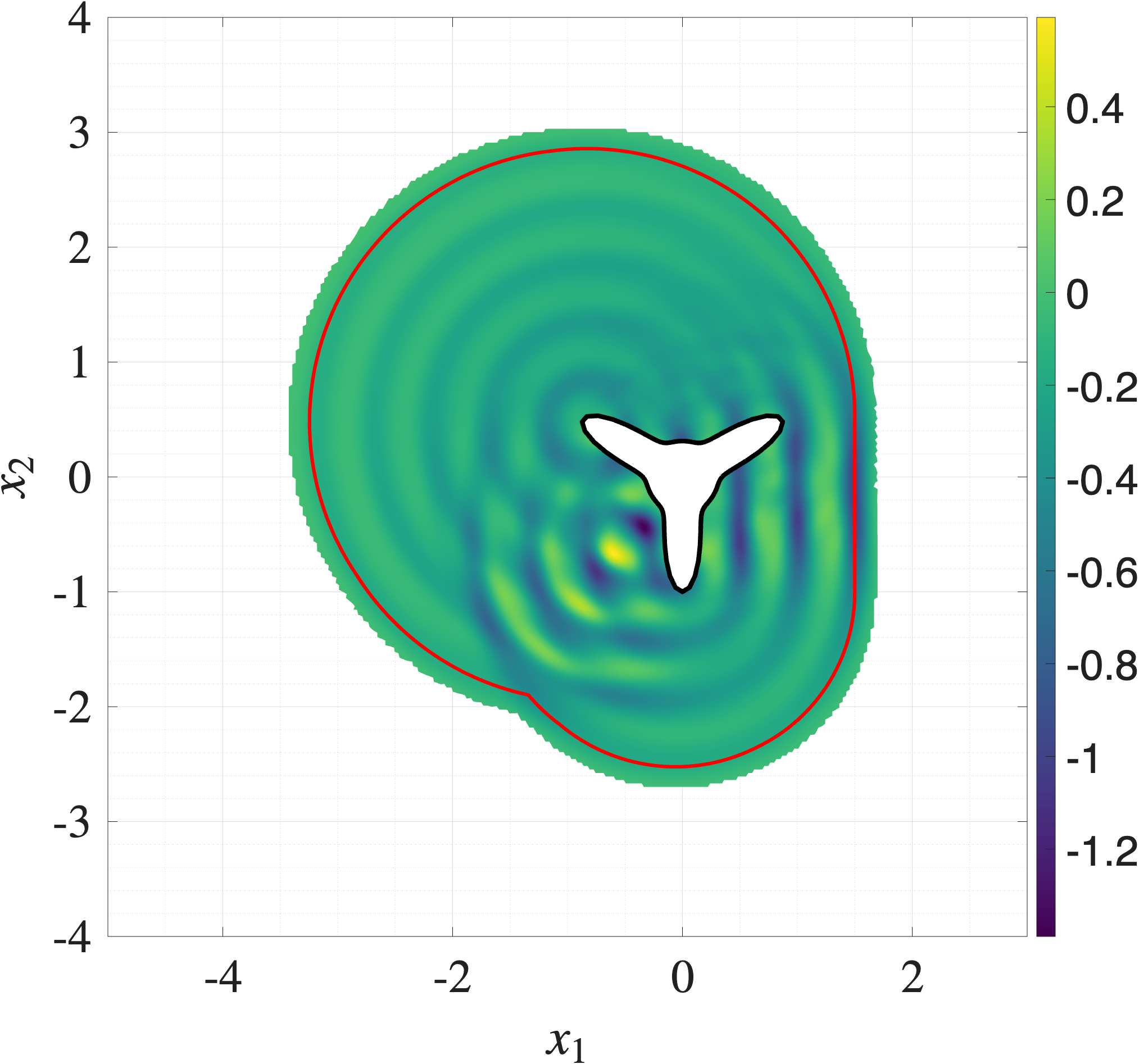}
        \caption{\emph{Fixed}: $t=3$.}
    \end{subfigure}
    \hfill
    \begin{subfigure}{0.32\textwidth}
        \centering
        \includegraphics[width=\textwidth]{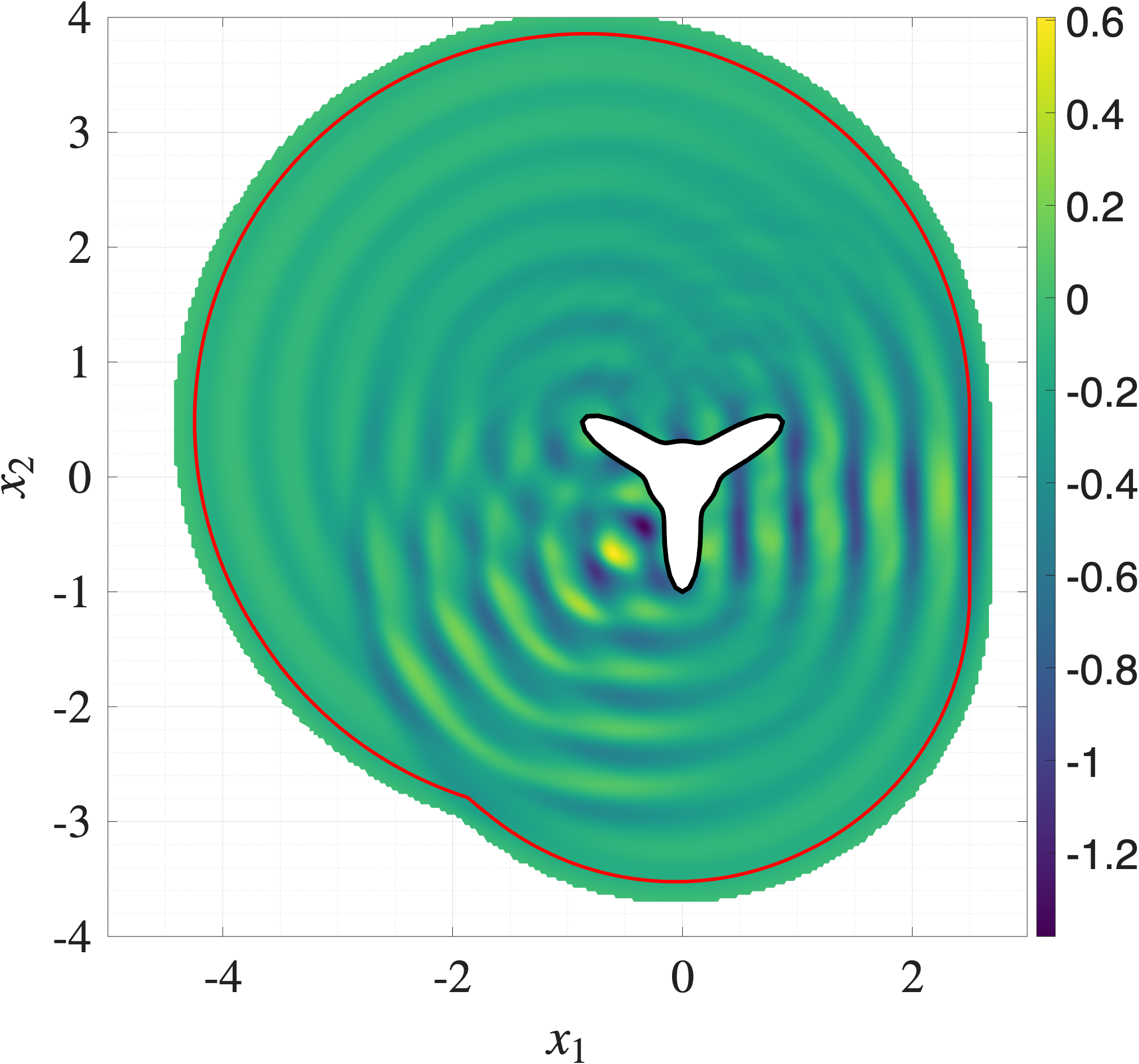}
        \caption{\emph{Fixed}: $t=4$.}
    \end{subfigure}

    \vspace{0.5em}

    \begin{subfigure}{0.32\textwidth}
        \centering
        \includegraphics[width=\textwidth]{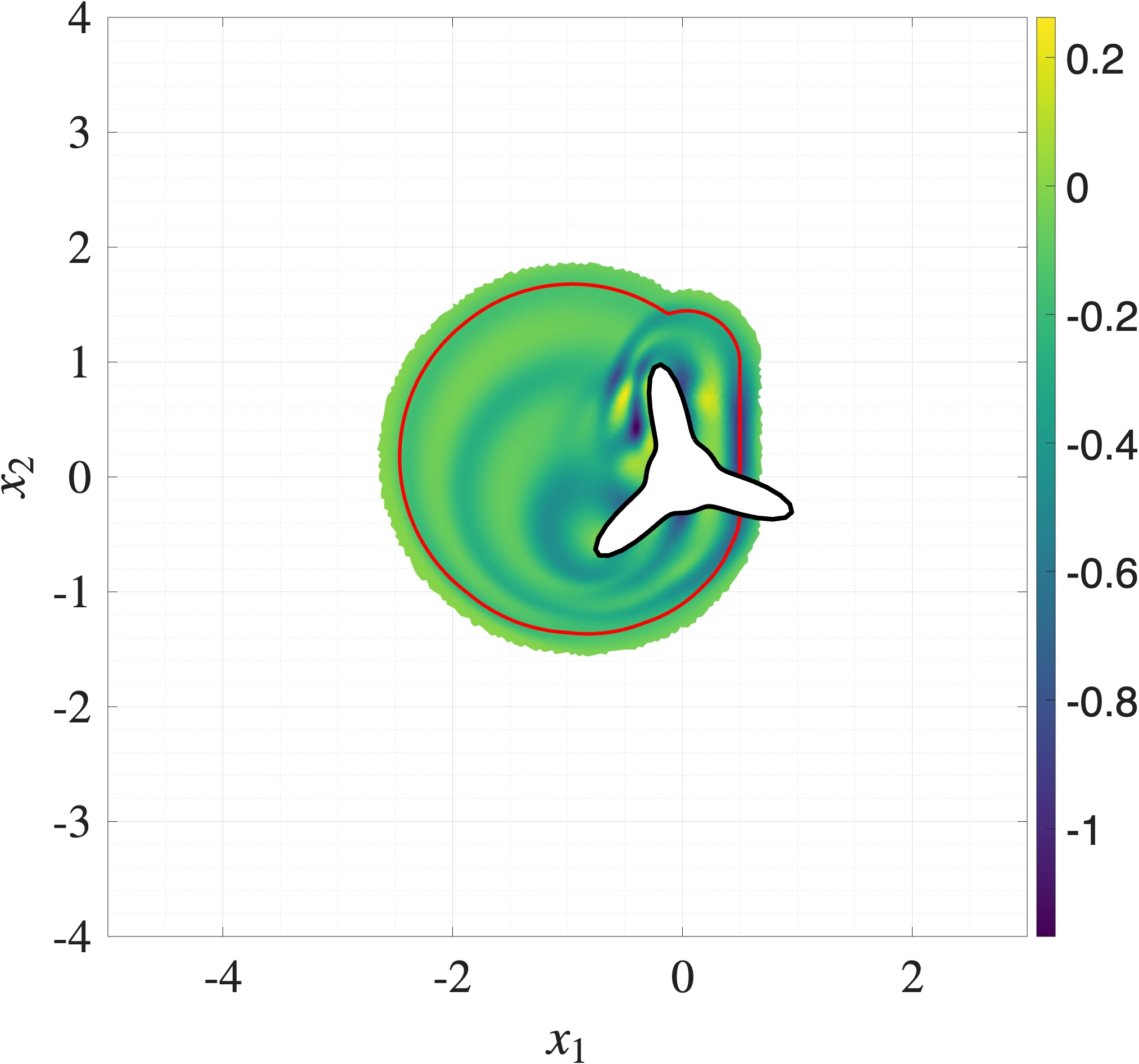}
        \caption{\emph{Rotating}: $t=2$.}
    \end{subfigure}
    \hfill
    \begin{subfigure}{0.32\textwidth}
        \centering
        \includegraphics[width=\textwidth]{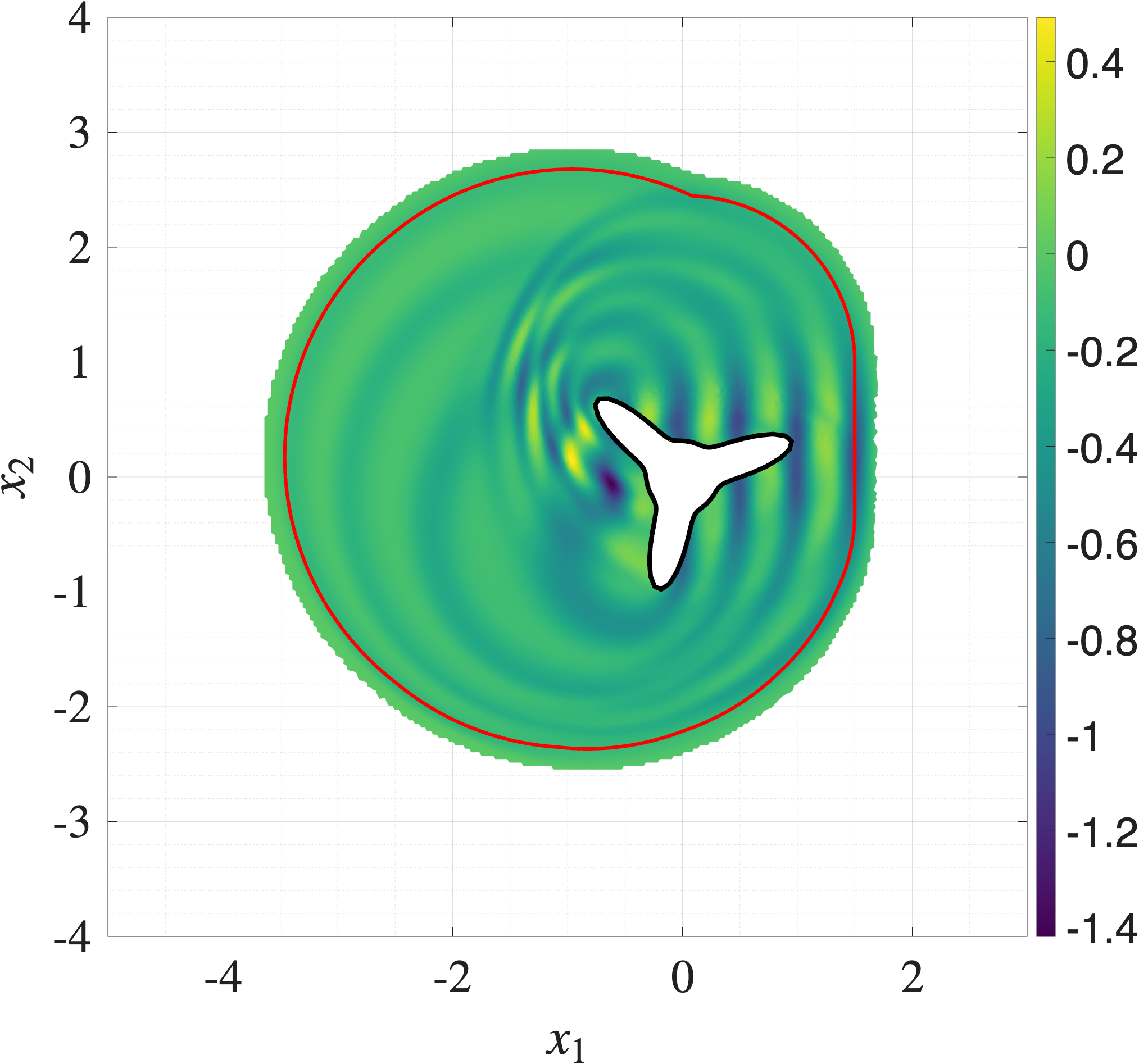}
        \caption{\emph{Rotating}: $t=3$.}
    \end{subfigure}
    \hfill
    \begin{subfigure}{0.32\textwidth}
        \centering
        \includegraphics[width=\textwidth]{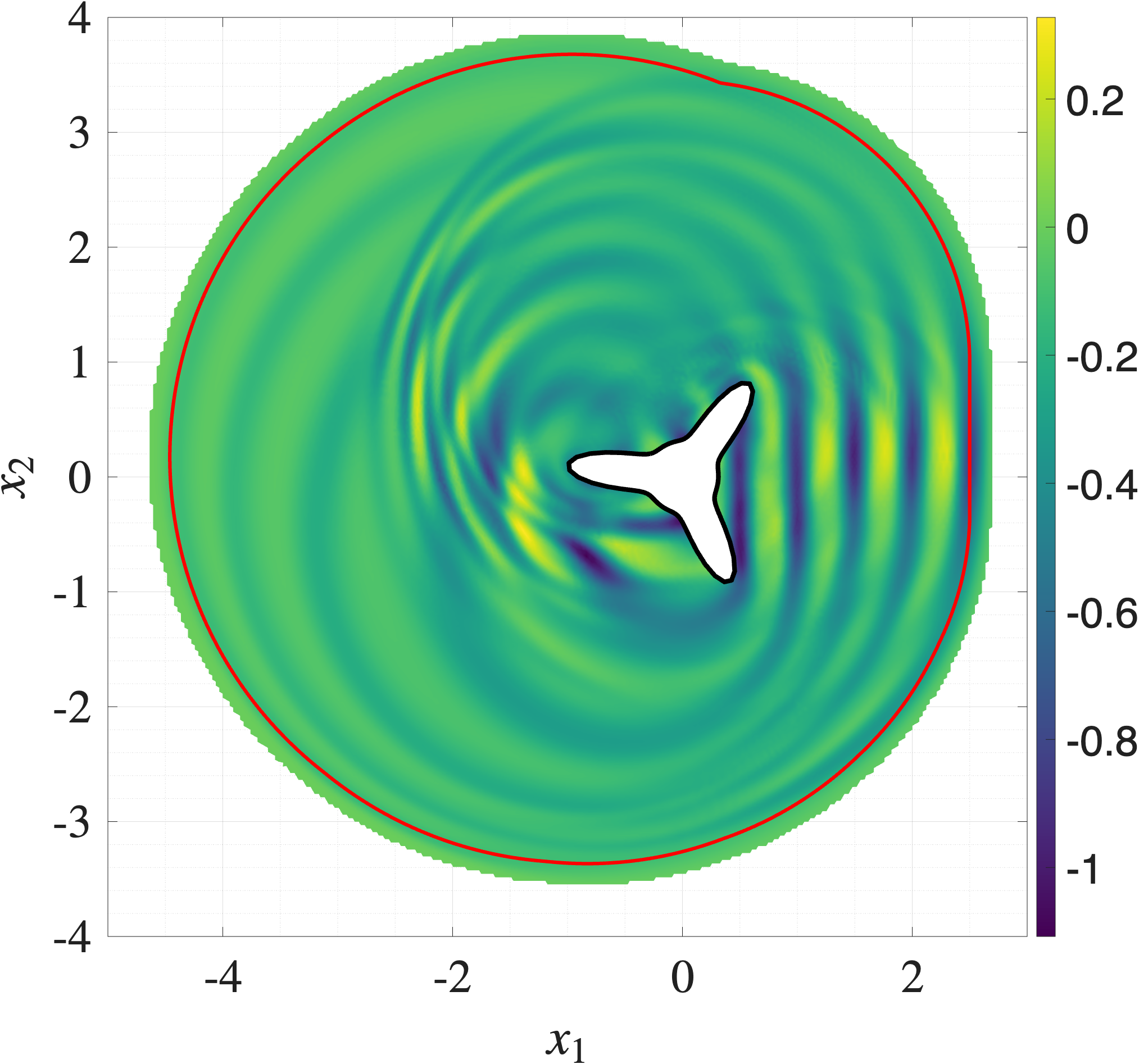}
        \caption{\emph{Rotating}: $t=4$.}
    \end{subfigure}

    \caption{
    Numerical solutions for the smooth turbine test. The top row shows the
    \emph{fixed} case with $U=0.0$, and the bottom row shows the
    \emph{rotating} case with $U=0.5$. In each panel, the reflected
    wavefront is shown as the red curve, and the scattered potential is
    shown as the heatmap. The scattered potential is computed only in the
    causal region behind the reflected front.
    }
    \label{fig:fan}
\end{figure}


\section{Scattering by spatial heterogeneities}
\label{sec:space-dependent}

We now specialize the abstract geometric--optics parametrix framework of
\Cref{sec:universal} to the purely space-dependent setting
\begin{equation} \label{c-x}
  c(x,t) \equiv c(x),
  \qquad
  \text{with }
  0 < c_{\min} \le c(x) \le c_{\max} < \infty  \ \text{for }  x \in \mathbb{R}^3, 
\end{equation}
and assume that the interface $\Gamma$ is fixed.  The sound-speed profile is 
assumed to encode the interior--exterior contrast \eqref{c-piecewise-smooth}, with 
constant sound speed profiles $c^-(x) \equiv C^-$ and $c^+(x) \equiv C^+$.

\subsection{Variational characterization of the travel-time function $\eta$}

Recall that the travel-time function
$\eta(x,t;y,\tau)$ satisfies the eikonal equation \eqref{eikonal-delay-st}.
If $c=c(x)$ and $\eta=\eta(x;y)$ are independent of $t$ and $\tau$, then 
\begin{subequations}\label{eikonal-space-only}
\begin{equation}
  |\nabla_x \eta(x;y)|
  =
  \frac{1}{c(x)},
  \qquad x\neq y,
\end{equation}
with the point-source condition
\begin{equation}\label{eikonal-space-only-ic}
  \eta(y;y)=0.
\end{equation}
\end{subequations}
Volumetric discretizations of \eqref{eikonal-space-only}, such as
fast marching or fast sweeping methods \cite{Sethian1996,Zhao2005},
are incompatible with our boundary-only framework.
Instead, we characterize $\eta(x;y)$ through 
the variational principle
\begin{subequations} \label{eikonal-var-space-only}
\begin{equation}\label{travel-time-xy}
  \eta( x ; y)
  =
  \inf_{\gamma}
  \int_0^1 \frac{| \gamma' (r)|}{c(\gamma(r))}\,\dd r,
\end{equation}
where the infimum is taken over all curves $\gamma:[0,1]\to\mathbb{R}^3$ with
\begin{equation} \label{rays-bc}
\gamma(0)=y \quad \text{and} \quad  \gamma(1)=x. 
\end{equation}
\end{subequations}

\subsubsection{Hamiltonian ray equations}

The stationary curves of the variational problem
\eqref{eikonal-var-space-only} satisfy the Euler--Lagrange equations,
which are equivalently expressed as the Hamiltonian ray system
\begin{subequations}\label{hamilton-odes}
\begin{align}
  \gamma'(r) &= \nabla_p H(\gamma(r),p(r)), \\
  p'(r) &= -\nabla_x H(\gamma(r),p(r)),
\end{align}
with Hamiltonian
\begin{equation}\label{hamiltonian}
  H(x,p)
  =
  \tfrac12\bigl(|p|^2-c(x)^{-2}\bigr).
\end{equation}
\end{subequations}
For the travel time from the source point $y$ to the observation point $x$,
the ray must satisfy the two-point boundary conditions \eqref{rays-bc}.
Along a minimizing ray, the constraint
\begin{equation}\label{hamiltonian-constraint}
  H(\gamma(r),p(r))=0
\end{equation}
holds, which is equivalent to the eikonal relation $|p(r)|=c(\gamma(r))^{-1}$.
In principle, one may therefore compute $\eta(x;y)$ by solving the
two-point boundary value problem \eqref{hamilton-odes}, \eqref{rays-bc},
and \eqref{hamiltonian-constraint}. In the boundary-integral
setting, however, this is not practical: the problem is nonlinear,
must be solved separately for each source--observation pair, and becomes
prohibitively expensive when travel times are required at many boundary
points. This motivates the use of simpler approximations to the minimizing
ray, which we develop next.

\subsection{Approximation of geometric acoustic rays}

A simple approximation to the geometric ray from $y$ to $x$ is obtained by
replacing the minimizing curve in \eqref{eikonal-var-space-only} by the straight
Euclidean chord joining the endpoints. Define the chord
\begin{subequations}\label{eta-chord}
\begin{equation}\label{rays-chord}
  \gammach(r)
  \coloneqq
  y+r(x-y),
  \qquad r\in[0,1],
\end{equation}
and approximate the travel time by
\begin{equation}\label{T-chord}
  \eta(x;y)
  =
  \int_0^1 \frac{|\gammach'(r)|}{c(\gammach(r))} \dd r
  =
  |x-y|
  \int_0^1 \frac{1}{c (y+r(x-y) )} \dd r.
\end{equation}
\end{subequations}
The chord approximation \eqref{eta-chord} provides a simple model travel-time function
that is consistent with the basic structural requirements of
Definition \ref{def:characteristic-coordinates}.

For wave scattering by a convex domain $\Omega^-$ with the interior--exterior contrast
profiles \eqref{c-piecewise-smooth}, the chord approximation is particularly
effective in the fast-inclusion case $C^- > C^+$.
In this regime, the minimizing ray typically passes through the inclusion, and since the
sound speed is constant away from the thin transition layer
$\Omega_\delta$, the corresponding ray path is straight except for its
passage through the interfacial layer.
Hence the straight chord furnishes an accurate 
approximation to the true geometric acoustic ray.

The situation changes substantially in the slow-inclusion case $C^- < C^+$.
In this regime, the minimizing ray may avoid the inclusion altogether, so the
straight chord can produce substantial travel-time error and fail to capture
the correct ray geometry. To recover the appropriate stationary path while
retaining a boundary-based formulation, we refine the chord initialization by a
Newton iteration for the ray equations (see \Cref{subsec:newton-ray-continuous}).

\subsection{Amplitude transport and a two-point model}

Since $\eta$ is independent of $t$, it follows from 
\eqref{transport-eq} that the amplitude is independent of time, and 
the transport equation \eqref{transport-eq} becomes
\begin{subequations}\label{amp-x}
\begin{equation}\label{transport-eq-space-only}
  2\,\nabla_x \amp(x;y)\cdot\nabla_x\eta(x;y)
  + \amp(x;y)\,\Delta_x\eta(x;y)
  =
  \frac{\amp(x;y)}{c(x)^2}\,\frac{2}{\eta(x;y)},
  \qquad x\neq y.
\end{equation}
The normalization condition \eqref{A-normalization-st} becomes 
\begin{equation}\label{A-normalization-space-only}
  \amp(y;y)=\kappa(y).
\end{equation}
\end{subequations}

\subsubsection{Transport ODE along travel-time rays.}

Let $\gamma(s)$ be a travel-time ray emanating from $y$, parameterized by
travel time so that $\eta(\gamma(r);y)=r$. Since
$|\nabla_x\eta(x;y)|=c(x)^{-1}$, differentiating
$\eta(\gamma(r);y)=s$ gives 
$\nabla_x\eta(\gamma(r);y)\cdot \gamma'(r)=1$,
and hence
\begin{equation}\label{ray-ode}
  \gamma'(r)
  =
  c(\gamma(r))^2\,\nabla_x\eta(\gamma(r);y),
  \qquad
  \gamma(0)=y.
\end{equation}
Define the amplitude along the ray by
\[
  \mathfrak{a}(r)\coloneqq A(\gamma(r);y).
\]
By the chain rule and \eqref{ray-ode},
\[
  \frac{\mathrm{d} \mathfrak{a}}{\mathrm{d}r}(r)
  =
  \nabla_x \amp (\gamma(r),y)\cdot\frac{\mathrm{d}\gamma}{\mathrm{d}r}(r)
  =
  c(\gamma(r))^2\,\nabla_x \amp(\gamma(r),y)\cdot
  \nabla_x\eta(\gamma(r);y).
\]
Evaluating the transport equation \eqref{amp-x}
along the ray and using $\eta(\gamma(r);y)=r$ yields the scalar ODE
\begin{subequations}\label{transport-ode}
\begin{equation}
  \frac{\mathrm{d} \mathfrak{a} }{\mathrm{d}r}(r)
  +
  \tfrac12\,c(\gamma(r))^2\,\Delta_x\eta(\gamma(r);y)\, \mathfrak{a}(r)
  =
  \tfrac{1}{r}\, \mathfrak{a} r,
  \qquad r>0.
\end{equation}
The local normalization consistent with \eqref{A-normalization-st} is
\begin{equation}\label{a0}
  \lim_{r\downarrow 0} \mathfrak{a} (r)= \amp(y,y)=\kappa(y). 
\end{equation}
\end{subequations}

\subsubsection{Two-point model.}

Solving \eqref{transport-eq-space-only} exactly requires the quantity
$\Delta_x\eta(x;y)$, which encodes the divergence of nearby rays. 
This is a second-order geometric quantity which is
singular at the source point $x=y$, and is substantially more delicate to
approximate accurately. For this reason, we do not solve
\eqref{transport-eq-space-only} exactly, and instead adopt a simpler model for
the amplitude.

A local geometric--optics argument shows that the 
prefactor $\amp(x;y)$ scales like $c(x)^{-1/2}$ along the
ray.\footnote{%
A heuristic justification of \eqref{A-sqrt-model} follows from
energy-flux conservation in geometric optics.
For the variable-speed wave equation
$\frac{1}{c(x)^2}\partial_t^2\phi - \Delta \phi = 0$,
the conserved energy density and flux are
$e=\tfrac12\!\left(\tfrac{1}{c(x)^2}|\partial_t\phi|^2 +|\nabla\phi|^2\right)$ 
and  $\mathbf{S}=-\partial_t\phi\,\nabla\phi$.
Inserting a high-frequency ansatz $\phi(x,t)=a(x)e^{i\omega(\eta(x)-t)}$,
using the eikonal equation \eqref{eikonal-space-only},
and retaining leading-order terms gives
$e \sim \frac{\omega^2 a(x)^2}{c(x)^2}$ and 
$|\mathbf{S}| \sim \frac{\omega^2 a(x)^2}{c(x)}$.
Energy conservation along a ray tube implies $|\mathbf{S}|\,J(x)=\text{const}$,
where $J(x)$ is the cross-sectional area of the tube. Hence
$a(x)^2 \propto \frac{c(x)}{J(x)}$.
A wavefront at travel-time distance $\eta(x)$ has local
physical radius of order $c(x)\eta(x)$, so $J(x)\propto c(x)^2\eta(x)^2$.
Therefore $a(x)\propto c(x)^{-1/2}\eta(x)^{-1}$.
Since the reference kernel $\Gopo$ 
already contains the universal free-space spreading factor $\eta^{-1}$, the
remaining prefactor scales like $\amp(x;y) \propto c(x)^{-1/2}$.}
The normalization condition \eqref{A-normalization-space-only} then implies the two-point model
\begin{equation}\label{A-sqrt-model}
  \amp(x;y)
  =
  \tfrac{1}{\sqrt{c(x)c(y)}}.
\end{equation}
The model \eqref{A-sqrt-model} does not, in general, satisfy the
exact transport equation \eqref{amp-x}. Substituting \eqref{A-sqrt-model} into
\eqref{amp-x} shows that the transport equation is satisfied exactly if and only if
\begin{equation}\label{geom-spreading-balance}
  \tfrac{2}{c(x)^2\,\eta(x;y)}-\Delta_x\eta(x;y)
  =
  -\nabla_x\log c(x)\cdot\nabla_x\eta(x;y).
\end{equation}
When \eqref{geom-spreading-balance} fails, the discrepancy measures the 
contribution of ray-tube divergence not captured by the two-point model.
 Thus \eqref{A-sqrt-model} should be viewed as a
simplified closure that retains the leading local sound-speed dependence of the
amplitude while neglecting the remaining correction due to geometric spreading.
The model is symmetric in $(x,y)$, satisfies the diagonal normalization
\eqref{A-normalization-space-only}, and reduces to the constant-speed
amplitude $\amp\equiv c_0^{-1}$ when $c(x)\equiv c_0$.

\subsection{Summary of slabwise formulas for evaluation of temporal weights}

We summarize the quantities required for the evaluation of the 
weights \eqref{sltw-theta-frozen-closed}--\eqref{dltw-theta-frozen-closed}. 
\begin{itemize}
\item The travel-time function $\upeta(\alpha;\beta)$ is obtained from the variational
problem \eqref{eikonal-var-space-only}. In the fast-inclusion regime, a useful
first approximation is the chord formula \eqref{eta-chord}; in the slow-inclusion
regime, this approximation may be refined by the Newton iteration described in
Appendix~\ref{subsec:newton-ray-continuous}. 
\item The amplitude is approximated by the two-point model
\eqref{A-sqrt-model}. Restricting to boundary points
$x=z(\alpha)$ and $y=z(\beta)$ gives
\begin{equation}
  A(\alpha;\beta)
  =
  \tfrac{1}{\sqrt{C(\alpha)\,C(\beta)}}.
\end{equation}
\end{itemize}

\subsection{Numerical example: a spherical gas bubble in water}

We consider a model of a spherical gas bubble in water.
The wave equation \eqref{L-general-pde} is interpreted 
in nondimensional variables obtained from the dimensional acoustic equation
through the scaling
\begin{equation}\label{slow-inclusion-scales}
  x_{\mathsf{phys}} = L_0\,x,
  \qquad
  t_{\mathsf{phys}} = T_0\,t,
  \qquad
  c_{\mathsf{phys}}(x_{\mathsf{phys}})
  = c_0\,c(x),
  \qquad
  T_0 = \tfrac{L_0}{c_0}.
\end{equation}
The ambient medium is water and the
inclusion represents an air-filled bubble or cavity. This provides a
physically motivated model of a strong slow-inclusion regime.
The reference scales are chosen as
\begin{equation}\label{slow-inclusion-reference-scales}
  L_0 = 10^{-3} \, \unit{\meter},
  \qquad
  c_0 = 1500 \, \unit{\meter\per\second},
  \qquad
  T_0 = \tfrac{L_0}{c_0} = 6.67\times 10^{-7}\, \unit{\second}. 
\end{equation}
With water as the reference medium, the dimensionless sound speeds are
\begin{equation}\label{slow-inclusion-contrast}
  C^+ = 1
  \qquad \text{and} \qquad
  C^- = \tfrac{c_{\mathsf{air}}}{c_{\mathsf{water}}}
  \approx 0.227.
\end{equation}
The interface is the fixed unit sphere
\begin{equation}\label{slow-inclusion-interface}
  z(\alpha)=\alpha,
  \quad
  \alpha\in\mathbb{S}^2,
\end{equation}
which corresponds in physical units to a bubble of radius $1 \, \unit{\milli\meter}$. 
Let $d(x)$ denote the signed distance to the interface. 
We prescribe a smooth sound-speed profile by
\begin{equation}\label{slow-inclusion-c}
  c(x)
  =
  C^+
  +
  \tfrac{C^- - C^+}{2}
  \left(
    1+\tanh\!\left(-\tfrac{d(x)}{\delta}\right)
  \right),
\end{equation}
where $\delta = 0.2$ is the transition layer width.
The incident field is given by a Gaussian plane-wave packet propagating in
the positive $x_1$-direction,
\begin{equation}\label{slow-inclusion-inc}
  \phiinc(x_1,x_2,x_3,t)
  =
  \exp\left(
    -\left(\tfrac{x_1-t+x_0}{a}\right)^2
  \right),
  \qquad
  x_0=1.5,
  \qquad
  a=0.2.
\end{equation}

\subsubsection{Numerical simulation using the single-layer solver}

We simulate the scattering of the incident wave \eqref{slow-inclusion-inc}
from the gas bubble using the single-layer solver on a mesh
with 713 triangles. The computation is run with time-step $\Delta t = 0.1$ until the final nondimensional time
$\tmax=3$, and the travel-time function $\eta$ is approximated by the Newton ray
construction described in Appendix~\ref{subsec:newton-ray-continuous}.

\Cref{fig:slow-inclusion-newton} displays the computed solution at the final
time $\tmax=3$. The left panel shows the $x_2=0$ slice of the sound-speed field,
together with a representative family of three-dimensional rays emanating from
the source point $(-1,0,0)$ and terminating at a selection of target points on
the interface. The right panel shows the corresponding approximate scattered
field $\tilde{\phi}(x,t)$ defined by \eqref{layer-ansatze-a}, restricted to the
same plane.

The rays avoid the slow interior and instead propagate through the exterior
domain around the boundary of the bubble. The scattered field in the plane
$x_2=0$ therefore develops an asymmetric reflected pattern. A pronounced
positive crescent appears along the right-hand side of the bubble, while a
broader and weaker positive band extends around the left-hand side of the
pattern. Outside the bright downstream crescent, the field transitions into a
narrow negative trough across the reflected wavefront. 
The boundary response generated on the illuminated side of the bubble is carried
around the obstacle by the bypassing ray family and then concentrated most
strongly on the downstream side of the slow inclusion.

\begin{figure}[ht]
    \centering

    \begin{subfigure}{0.45\textwidth}
        \centering
        \includegraphics[width=1.0\textwidth]{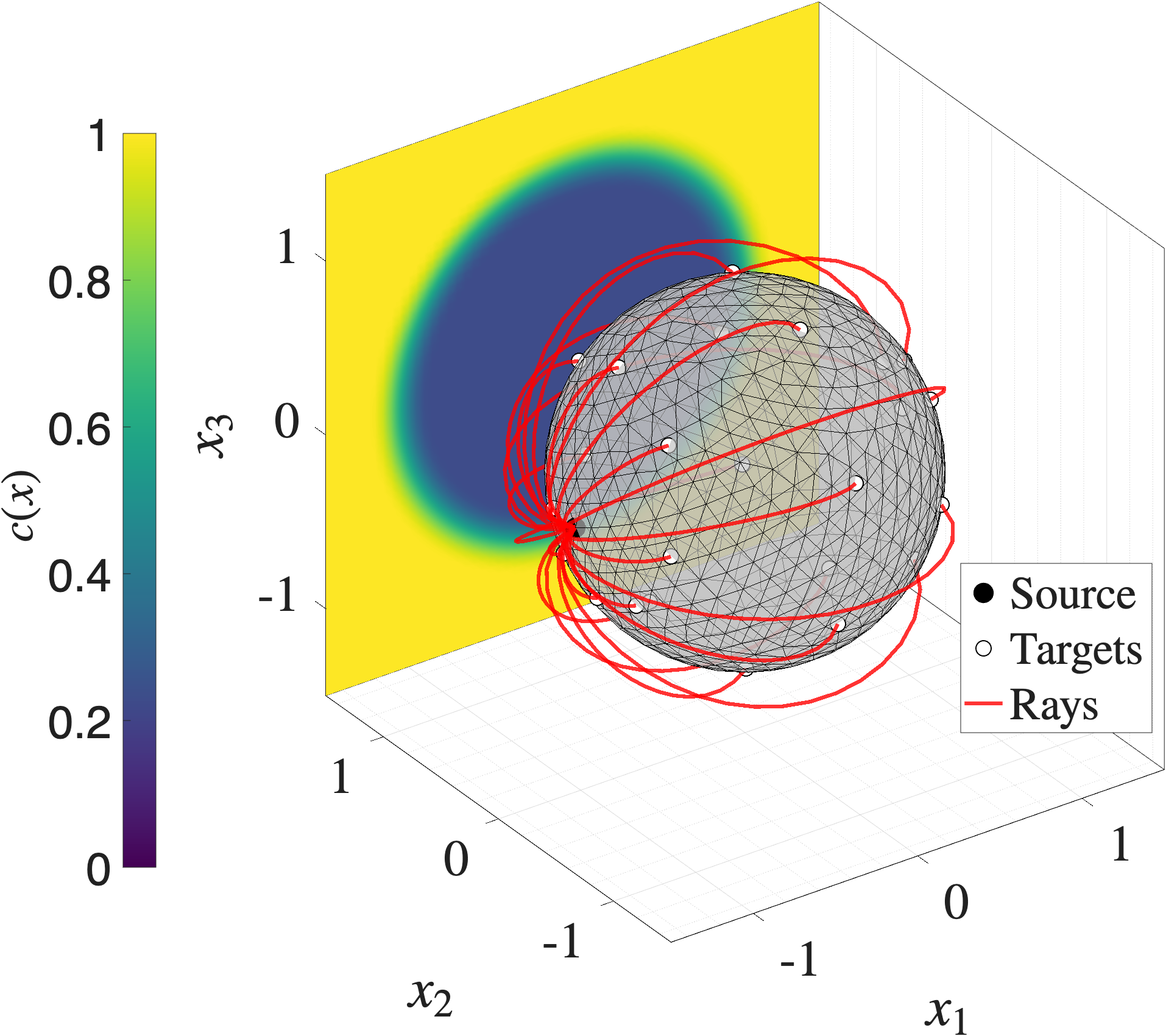}
        \caption{Sound-speed field with representative rays.}
    \end{subfigure}
    \hfill
    \begin{subfigure}{0.45\textwidth}
        \centering
        \includegraphics[width=0.95\textwidth]{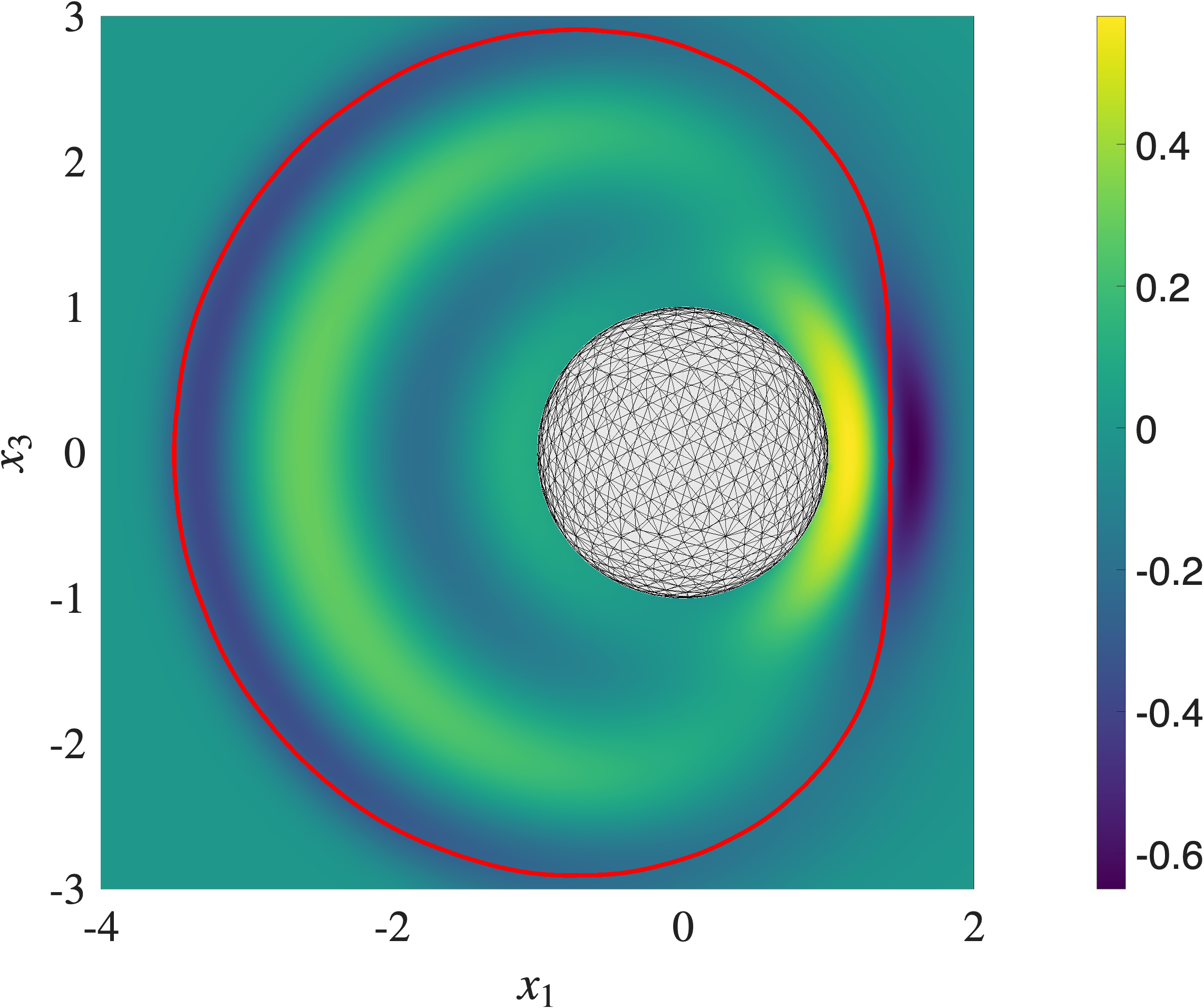}
        \caption{Approximate scattered field $\tilde{\phi}(x,t)$ at $t=3$.}
    \end{subfigure}

   \caption{
    Numerical solution for the spherical gas bubble in water at the final
    nondimensional time $t=3$. \textbf{Left:} $x_2=0$ slice of the sound-speed field,
    together with a representative family of three-dimensional rays from the source
    point $(-1,0,0)$ to selected target points on the interface. \textbf{Right:}
    corresponding slice of the approximate scattered field $\tilde{\phi}(x,t)$ in
    the plane $x_2=0$, with the reflected wavefront in that plane overlaid in red.
    }
    \label{fig:slow-inclusion-newton}
\end{figure}

For comparison, we also compute the corresponding solution in the ambient case,
obtained by setting the sound speed equal to the exterior value $C^+=1$
throughout the domain. The left panel of
\Cref{fig:slow-inclusion-comparison} shows the corresponding slice of the
scattered potential $\tilde{\phi}(x,t)$ in the plane $x_2=0$. The reflected
wavefront itself is nearly unchanged from the gas-bubble case, since in the
latter the sound speed differs from $C^+=1$ only within a thin tubular
neighborhood of the interface. As a result, the travel time of the principal
reflected arrival is only weakly perturbed by the bubble. In contrast to the
gas-bubble case, however, the ambient solution retains a much simpler reflected
pattern, lacking the strong downstream positive crescent and the broader
asymmetric structure produced by the slow inclusion.

\begin{figure}[ht]
    \centering

    \begin{subfigure}{0.45\textwidth}
        \centering
        \includegraphics[width=0.95\textwidth]{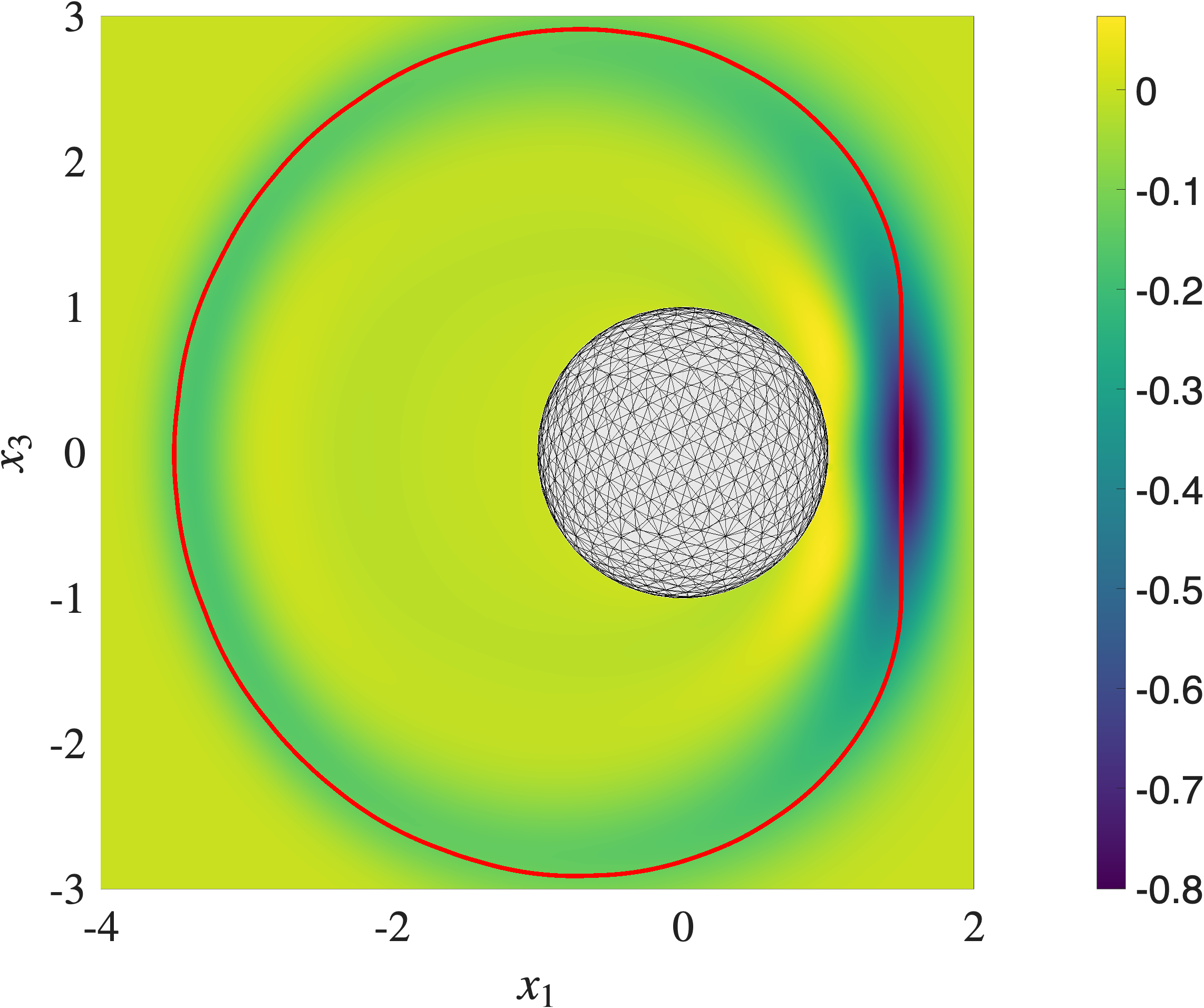}
        \caption{Ambient case.}
    \end{subfigure}
    \hspace{2em}
    \begin{subfigure}{0.45\textwidth}
        \centering
        \includegraphics[width=0.95\textwidth]{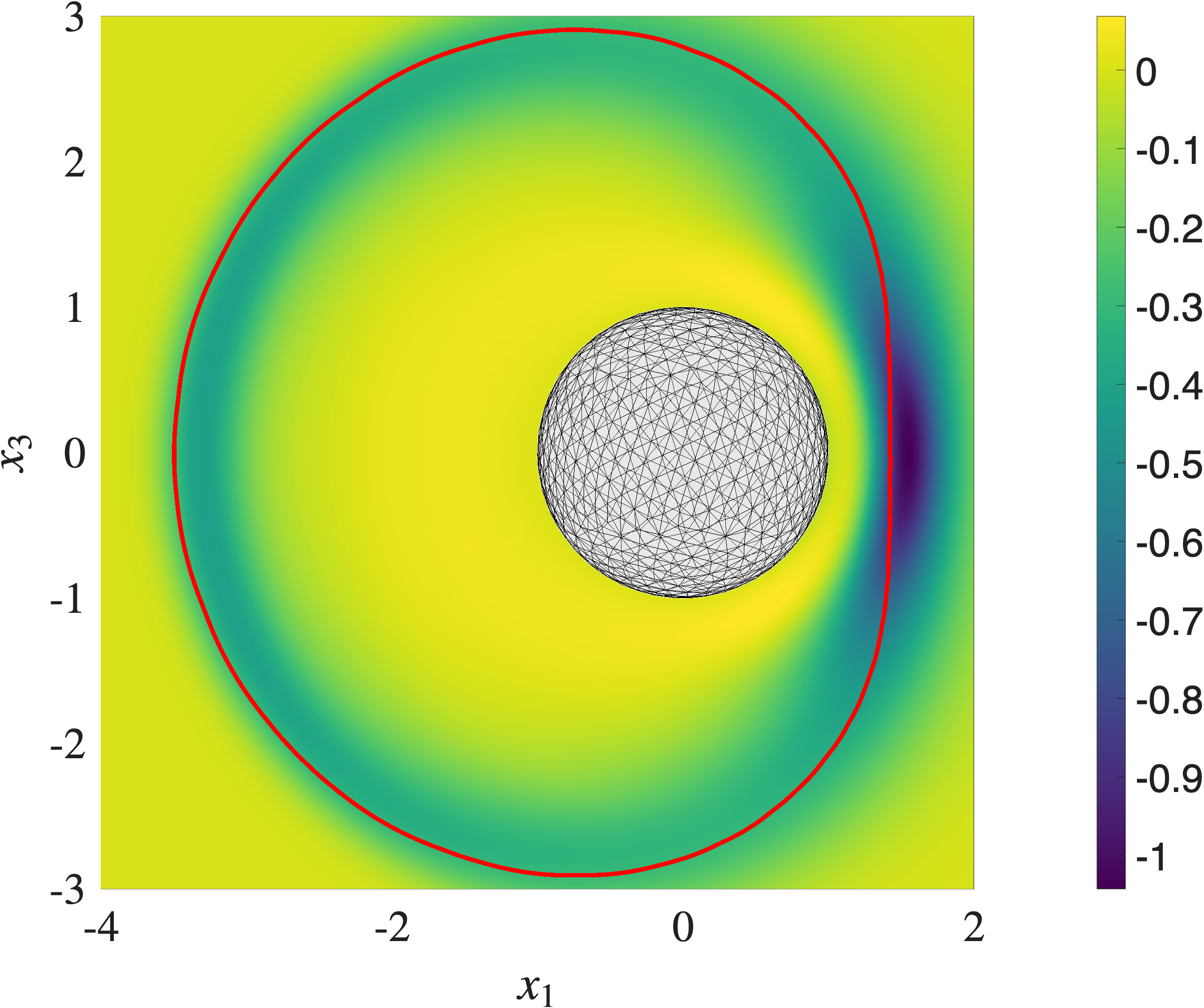}
        \caption{Gas-bubble case with chord approximation.}
    \end{subfigure}

    \caption{
    Reference comparison for the spherical gas-bubble test at the final
    nondimensional time $t=3$. The plots show the slice of the scattered field
    $\tilde{\phi}(x,t)$ in the plane $x_2=0$, with the reflected wavefront in that
    plane overlaid in red. \textbf{Left}: ambient reference case, obtained by setting
    $c(x)\equiv C^+=1$. \textbf{Right}: gas-bubble case with the travel-time function
    approximated by the chord model \eqref{eta-chord}.
    }
    \label{fig:slow-inclusion-comparison}
\end{figure}

We also repeat the gas-bubble computation using the chord approximation
\eqref{eta-chord} in place of the Newton ray construction for the travel-time
function $\eta$. The resulting slice of the scattered potential
$\tilde{\phi}(x,t)$ in the plane $x_2=0$ is shown in the right panel of
\Cref{fig:slow-inclusion-comparison}. Although the chord-based simulation
captures a perturbation of the reflected wave due to the bubble, it does not
reproduce the same scattered-field geometry as the Newton-based simulation. In
particular, the pronounced downstream positive crescent and the broader
asymmetric reflected pattern seen in \Cref{fig:slow-inclusion-newton} are not
recovered. Taken together,
\Cref{fig:slow-inclusion-newton,fig:slow-inclusion-comparison} show that the
asymmetric reflected structure is a genuine signature of the slow inclusion and
is recovered only when the travel-time function reflects the correct bypass ray
geometry.


\section{Scattering by spatio-temporal heterogeneities}
\label{sec:space-time-dependent}

We now return to the general space-time--dependent sound speed 
operator \eqref{L-general-pde} with sound speed $c = c(x,t)$
satisfying \eqref{c-bounds}.  

\subsection{A chord model for the travel-time function}
\label{subsec:eikonal-st}

Recall that the exact travel-time
function $\eta(x,t;y,\tau)$ is governed by the space-time eikonal equation
\eqref{eikonal-delay-st}. In general, solving \eqref{eikonal-delay-st}
amounts to computing genuinely space-time characteristics, which is
considerably more complicated than in the space-dependent setting.

As in \Cref{sec:space-dependent}, we therefore introduce a simple closure
model for $\eta$. The key point is that when $c=c(x,t)$, a propagation path
must specify not only a spatial curve joining $y$ to $x$, but also the
physical time at which that curve is traversed. 
Equivalently, if $\gamma:[0,1]\to\mathbb{R}^3$
is a spatial path from $y$ to $x$, then one must also specify a monotone
\emph{time-lift}
\[
  s:[0,1]\to[\tau,t],
  \qquad s(0)=\tau, \qquad s(1)=t,
\]
which records the time at which the path passes through the point
$\gamma(r)$. The corresponding travel-time functional is
\begin{equation}\label{eta-functional-st}
  \int_0^1 \frac{|\gamma'(r)|}{c(\gamma(r),s(r))}\,\dd r.
\end{equation}

Our closure model is obtained by making the simplest possible choices for
these two ingredients. We replace the unknown ray geometry by the Euclidean
chord
\begin{subequations}\label{eta-chord-st}
\begin{equation}\label{chord-geometry-st}
  \gammach(r) = y + r(x-y),
  \qquad 0\le r\le 1,
\end{equation}
and we replace the unknown time history along the ray by the linear
time-lift
\begin{equation}\label{time-lift-linear-st}
  s_{\mathsf{lin}}(r) = \tau + r(t-\tau).
\end{equation}
Substituting \eqref{chord-geometry-st} and \eqref{time-lift-linear-st}
into \eqref{eta-functional-st} yields the space-time chord model
\begin{equation}\label{eta-chord-functional}
  \eta(x,t;y,\tau)
  =
  |x-y|
  \int_0^1
  \frac{1}{c(\gamma(r),\,\tau+r(t-\tau))}\,\dd r.
\end{equation}
\end{subequations}

The approximation \eqref{eta-chord-st} is the natural generalization 
of the space-only chord approximation \eqref{eta-chord} 
used in the space-dependent case. 
Under mild regularity and temporal-growth assumptions on the sound speed, it can be 
shown that the chord model \eqref{eta-chord-st} defines a travel-time 
function in the sense of Definition \ref{def:characteristic-coordinates}. 
It becomes exact when $c$ is constant, and reduces to
the earlier space-only chord model \eqref{eta-chord} when $c=c(x)$ is independent of time.

\subsection{A two-point model for the amplitude}

The amplitude $\amp(x,t;y,\tau)$ is determined, in principle, by the
transport equation \eqref{transport-st}. As in the space-dependent case,
we do not attempt to solve this equation exactly. 
Instead, we adopt the natural space-time analogue of the two-point
model \eqref{A-sqrt-model} used in \Cref{sec:space-dependent}, namely
\begin{equation}\label{A-impedance-st}
  \amp(x,t;y,\tau)
  =
  \tfrac{1}{\sqrt{c(x,t)c(y,\tau)}}.
\end{equation}
This model retains the leading endpoint dependence of the amplitude on the
local sound speed while neglecting the remaining correction due to
geometric spreading. It is consistent with the diagonal normalization
\eqref{A-normalization-st}, reduces to the space-dependent two-point model
when $c=c(x)$ is independent of time, and reduces to the 
constant-speed amplitude in the homogeneous case.

\subsection{Benchmark example: scattering with time-dependent sound speed}

We consider a planar example in which the sound speed depends only on time,
$c=c(t)$, and the scattering interface is the fixed unit circle
\begin{equation}\label{unit-circle}
  z(\alpha)=\alpha,
  \quad
  \alpha\in\mathbb{S}^1.
\end{equation}
This provides a convenient benchmark problem, since
the corresponding solution can also be computed using a simple two-dimensional finite element
method\footnote{
For the exterior Dirichlet problem
\eqref{L-general-pde}--\eqref{L-bcs} with time-dependent sound 
speed $c(t)$, a finite element reference
solution may be computed by solving the wave equation only in
$\Omega^+(t)$ with Dirichlet data prescribed on $\Gamma(t)$.
If the sound speed varies spatially across the interface and
transmission effects are to be retained, an exterior solve alone
would not capture the influence of the interior medium, and a
coupled two-phase formulation on $\Omega^\pm(t)$ would be required.
Our finite-element discretization utilizes an unstructured triangular mesh with piecewise 
quadratic basis functions in space and a second-order explicit central difference (leapfrog) 
scheme in time, with the time step chosen to satisfy the CFL condition $\Delta t \lesssim h/c_{\max}$.
} for the exterior scattering problem
\eqref{L-general-pde}--\eqref{L-bcs}.

The sound speed is prescribed by the
smooth time-dependent profile
\begin{equation}\label{time-only-c}
  c(x,t)=c(t)
  =
  1 + \tfrac{c_{\mathsf{fin}}-1}{2}\left(1+\tanh(5(t-1.5))\right), \qquad c_{\mathsf{fin}} = \tfrac12.
\end{equation}
Thus $c(t)$ decreases smoothly from $1$ toward $1/2$ as $t$ passes
through $1.5$.
The incident field is taken to be the Gaussian plane-wave packet
\begin{equation}\label{time-only-inc}
  \phiinc(x_1,x_2,t)
  =
  \exp\!\left(
    -\left(\tfrac{x_1-t+x_0}{a}\right)^2
  \right),
  \qquad
  x_0=1.5,
  \qquad
  a=\tfrac15.
\end{equation}

We simulate scattering of the incident wave \eqref{time-only-inc} 
with the time-dependent sound speed \eqref{time-only-c} using the two-dimensional 
single-layer and double-layer solvers.  
The numerical solutions are computed on an 80-cell mesh with time step $\Delta t = 0.05$, and the associated scattered potentials are then 
evaluated on an unstructured exterior mesh 
with 3823 vertices,  7343 triangles, and maximum edge length $h = 0.2$.
For comparison, we also compute the finite element solution on the same exterior mesh.  
\Cref{fig:time-only-comparison} shows the three solutions at the final time $\tmax = 3.0$.

\begin{figure}[ht]
    \centering

    \begin{subfigure}{0.32\textwidth}
        \centering
        \includegraphics[width=\textwidth]{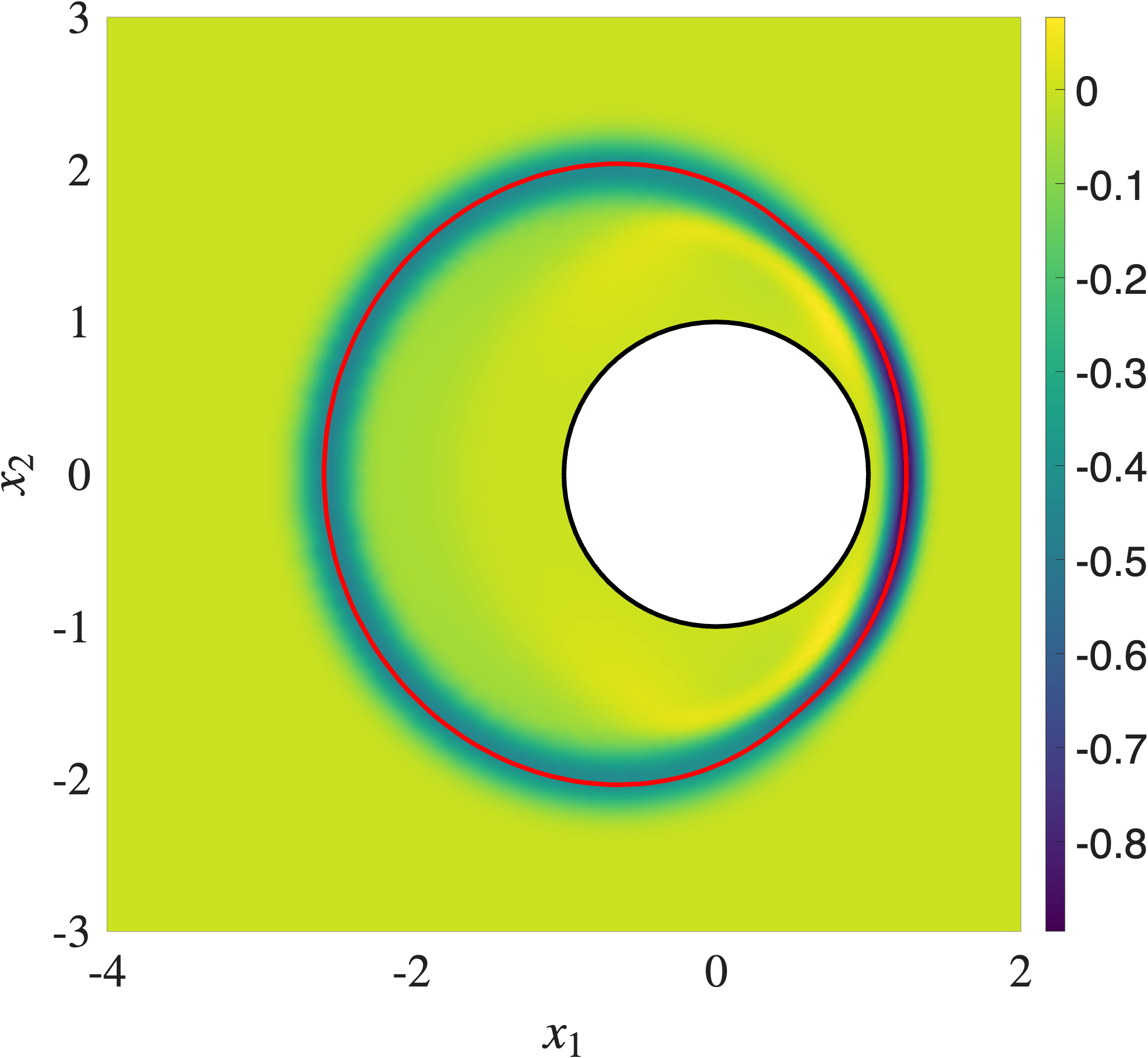}
        \caption{Single-layer solution.}
    \end{subfigure}
    \hfill
    \begin{subfigure}{0.32\textwidth}
        \centering
        \includegraphics[width=\textwidth]{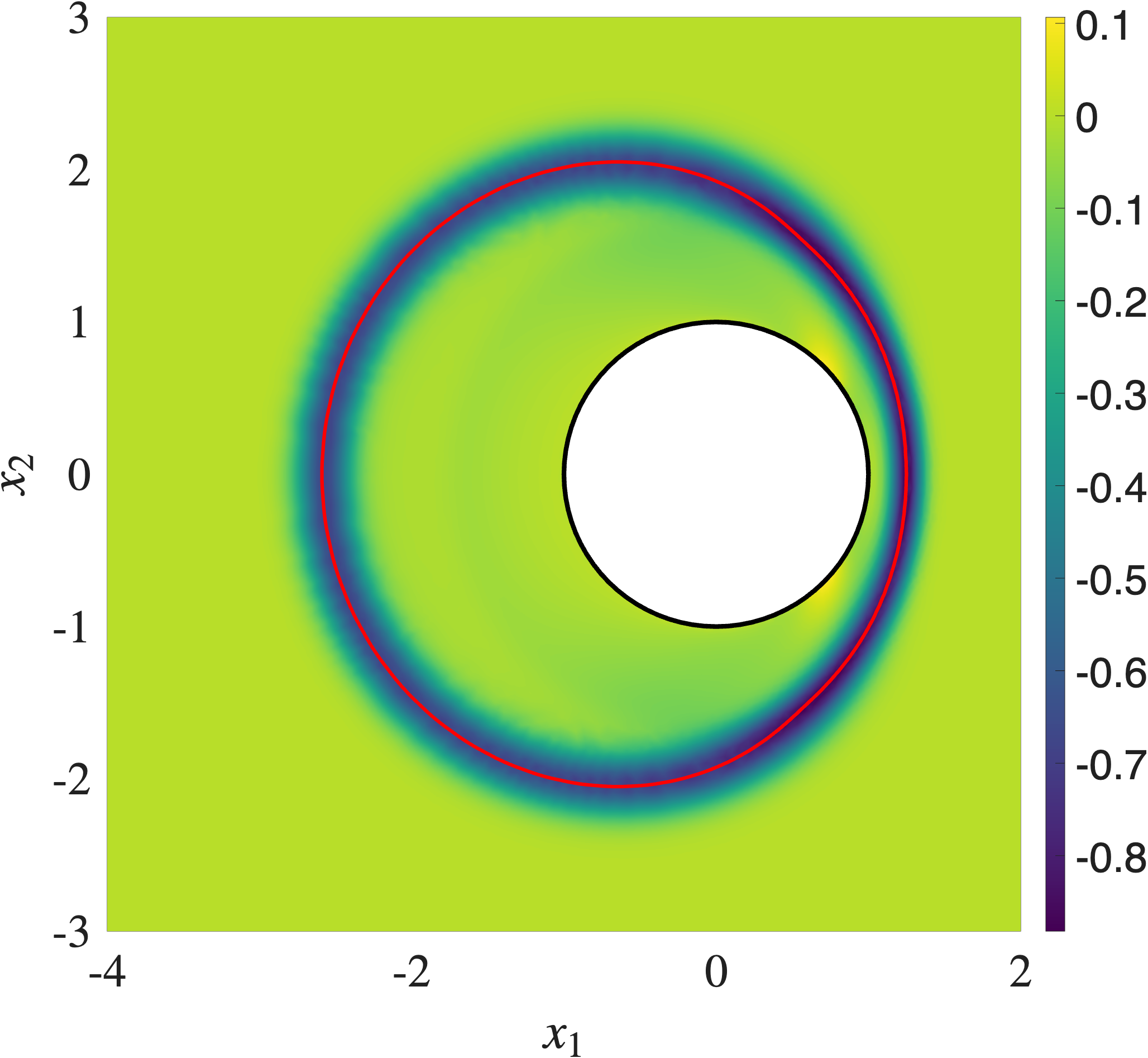}
        \caption{Double-layer solution.}
    \end{subfigure}
    \hfill
     \begin{subfigure}{0.32\textwidth}
        \centering
        \includegraphics[width=\textwidth]{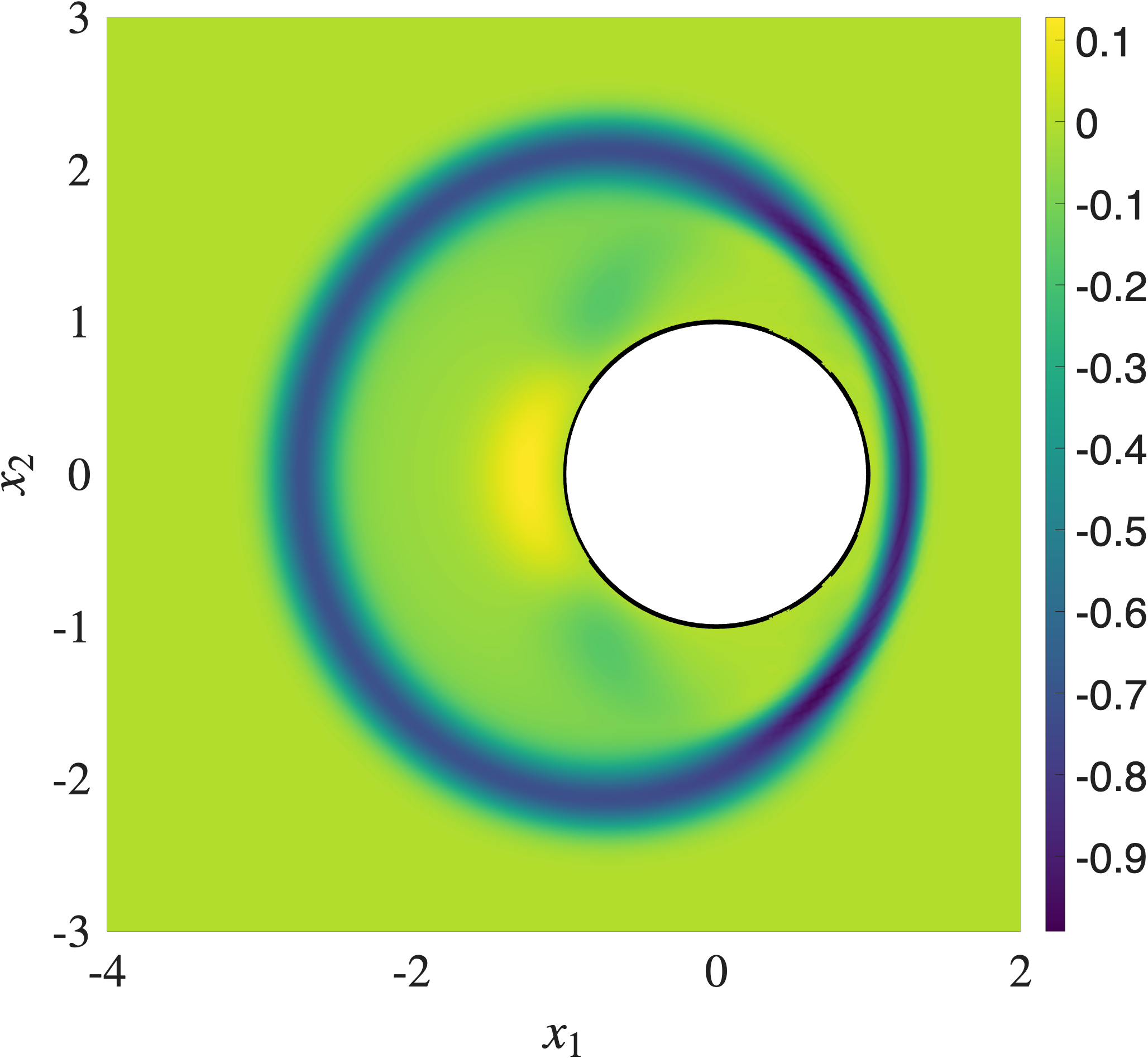}
        \caption{Finite element solution.}
    \end{subfigure}
    \caption{
    Comparison at $\tmax = 3.0$ for the benchmark problem with time-dependent sound speed \eqref{unit-circle}--\eqref{time-only-inc}.
    \textbf{Left}: two-dimensional single-layer solution with the reflected wavefront (red curve) overlaid.
    \textbf{Middle}: two-dimensional double-layer solution with the reflected wavefront (red curve) overlaid. 
    \textbf{Right}: finite element solution of the same exterior scattering problem. 
    In each panel, the obstacle boundary is shown in black.
    }
    \label{fig:time-only-comparison}
\end{figure}

The single-layer, double-layer, and finite element solutions shown in
\Cref{fig:time-only-comparison} are in good qualitative agreement. In
particular, all three computations recover the same overall reflected-wave
structure and place the principal reflected front in nearly the same location,
although differences remain in the detailed field amplitude behind the wavefront.
Such discrepancies are expected, since the single-layer and double-layer
parametrix formulations carry an intrinsic
approximation error. Even so, the comparison shows that both
layer-potential formulations capture the main scattering behavior in this
time-dependent benchmark and agree well with the independent finite element
solution.

\subsection{Numerical example: wave refraction by a rising fireball}

We interpret the wave equation \eqref{L-general-pde}
in nondimensional variables obtained from the dimensional acoustic equation
by the scaling
\begin{equation}\label{explosion-scales}
  x_{\mathsf{phys}} = L_0\,x,
  \qquad
  t_{\mathsf{phys}} = T_0\,t,
  \qquad
  c_{\mathsf{phys}}(x_{\mathsf{phys}},t_{\mathsf{phys}})
  = c_0\,c(x,t),
  \qquad
  T_0 = \tfrac{L_0}{c_0}.
\end{equation}
We consider an idealized model of an explosion-driven hot
bubble rising through the troposphere. The purpose is not to reproduce the
full compressible dynamics of a nuclear cloud, but rather to prescribe a
moving refractive inclusion with the interior--exterior contrast profile \eqref{c-piecewise-smooth} for the sound speed.

Our parameter choices are guided by the \emph{Teapot--Wasp} test case in \citet{Arthur2021}. 
In that study, a $1$ kT detonation is initiated at a burst height of
$232.3\, \unit{\meter}$ above ground level, producing an initially spherical hot fireball
with radius $R=155\, \unit{\meter}$ and temperature perturbation
$\Delta\theta=1000\, \unit{\kelvin}$.
For this example we set
\begin{equation}\label{explosion-parameter-values}
  L_0 = 100 \, \unit{\meter},
  \qquad
  c_0 = 340 \, \unit{\meter\per\second},
  \qquad
  T_0 =  0.294 \, \unit{\second},
  \qquad
  \Delta \theta_{\mathsf{bub}} = 1000 \, \unit{\kelvin},
  \qquad
  \varepsilon_0 = 50 \, \unit{\meter}.
\end{equation}
In the troposphere, the background temperature and sound-speed profiles are
\begin{equation}\label{explosion-theta-atm}
  \theta_{\mathsf{atm}}(x)
  =
  290 - \tfrac{70}{15000}\,x_3,
  \qquad 0 \le x_3 \le 15000 \, \unit{\meter},
\end{equation}
and
\begin{equation}\label{explosion-c-atm}
  c_{\mathsf{atm}}(x)
  =
  \sqrt{\gamma R_g \theta_{\mathsf{atm}}(x)}
  =
  \sqrt{\gamma R_g \left(290 - \tfrac{70}{15000}\,x_3\right)},
  \qquad 0 \le x_3 \le 15000 \, \unit{\meter},
\end{equation}
where $\gamma = 1.4$ is the adiabatic index and
$R_g = 287 \, \unit{\joule\per\kilogram\per\kelvin}$ is the specific gas
constant for dry air.

We model the hot bubble as the moving interior domain $\Omega^-(t)$.
Let $d(x,t)$ denote the signed distance to the moving interface, and
define the (smooth) temperature difference
\begin{equation}\label{explosion-theta-bub}
  \theta_{\mathsf{bub}}(x,t)
  =
  \tfrac{\Delta \theta_{\mathsf{bub}}}{2}
  \left(
    1+\tanh\!\left(-\tfrac{d(x,t)}{\varepsilon_0}\right), 
  \right).
\end{equation}
and the total temperature field
\begin{equation}\label{explosion-theta-total}
  \theta_{\mathsf{phys}}(x,t)
  =
  \theta_{\mathsf{atm}}(x)
  +
  \theta_{\mathsf{bub}}(x,t). 
\end{equation}
The corresponding physical sound speed is modeled by
\begin{equation}\label{explosion-c-phys}
  c_{\mathsf{phys}}(x,t)
  =
  \sqrt{\gamma R_g\,\theta_{\mathsf{phys}}(x,t)}.
\end{equation}
The interface is the rising sphere
\begin{equation}\label{fireball-param-nd}
  z(\alpha,t)
  =
  (0, 0, z_c(t) )^{\top}
  +
  R\,\alpha,
  \qquad
  \alpha\in\mathbb{S}^2,
\end{equation}
with fixed radius $R=1.55$ and initial center height $z_c(0)=2.323$. 
As a simplified early-time model, we prescribe
\begin{equation}\label{fireball-center-nd}
  z_c(t)=2.323+7.69\times 10^{-2}\,t,
  \qquad
  0\le t\le 110,
\end{equation}
which corresponds to a rise from about $232\, \unit{\meter}$ to about
$1.08\, \unit{\kilo\meter}$ over approximately $32$ seconds. 
At the initial burst height, the maximum sound speed 
contrast is $\frac{c_{\max}}{c_{\min}} \approx 2.1$, while 
the characteristic early-time Mach number is $U \approx 0.08$.

To probe the moving refractive inclusion, we prescribe an upward-propagating
Gaussian-modulated plane-wave pulse train,
\begin{equation}\label{fireball-incident-wave}
  \phiinc(x,t)
  =
  \sum_{m=0}^{N_{\mathsf{pulse}}-1}
  \exp\!\left(
    -\tfrac{\bigl(a\cdot x-(t-T_{\mathsf{rep}} m )+x_0\bigr)^2}{\sigma^2}
  \right)
  \cos\!\left(
    2\pi f_0 \bigl(a\cdot x-(t-T_{\mathsf{rep}} m)+x_0\bigr)
  \right),
\end{equation}
where
\[
  a=(0,0,1)^\top,
  \qquad
  x_0=5,
  \qquad
  \sigma=3,
  \qquad
  f_0 = 0.05,
  \qquad
  T_{\mathsf{rep}}=15,
  \qquad
  N_{\mathsf{pulse}}=20.
\]
Thus the incident field is a train of twenty identical planar pulses propagating upward 
from the ground, with a repetition period of approximately 4.4 seconds in dimensional units.

\subsubsection{Numerical simulation using the single-layer solver}

We simulate the rising-fireball problem using the single-layer solver on a 
triangulated spherical interface with  713 triangles. The solution is computed using a time step
$\Delta t = 1$ up to the final nondimensional time $\tmax=110$.
\Cref{fig:fireball-sigma-phi} shows representative snapshots of the computation 
at the two times $t_{\mathsf{phys}}=17.4 \, \unit{\second}$ and $t_{\mathsf{phys}}=32.1 \, \unit{\second}$. 
The scattered wave is visibly refracted by the hot fireball, which acts as a localized 
high-speed spherical inclusion and, in the paraxial geometric--optics approximation, behaves like a defocusing lens.

\begin{figure}[ht]
    \centering

    \begin{subfigure}{0.5\textwidth}
        \centering
        \includegraphics[width=\textwidth]{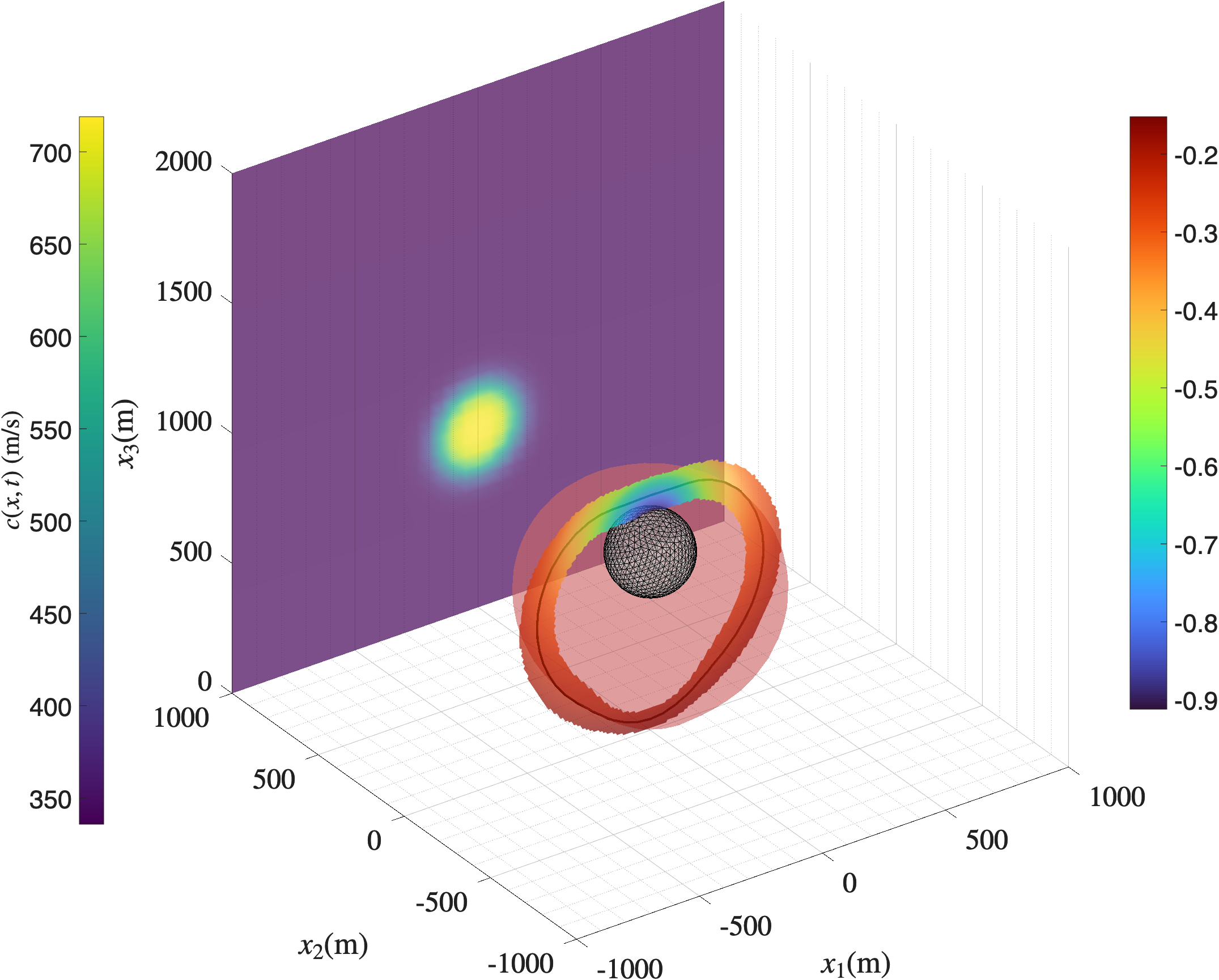}
        \caption{$t_{\mathsf{phys}}=17.4 \  \unit{\second}$}
    \end{subfigure}
    \hfill
    \begin{subfigure}{0.46\textwidth}
        \centering
        \includegraphics[width=\textwidth]{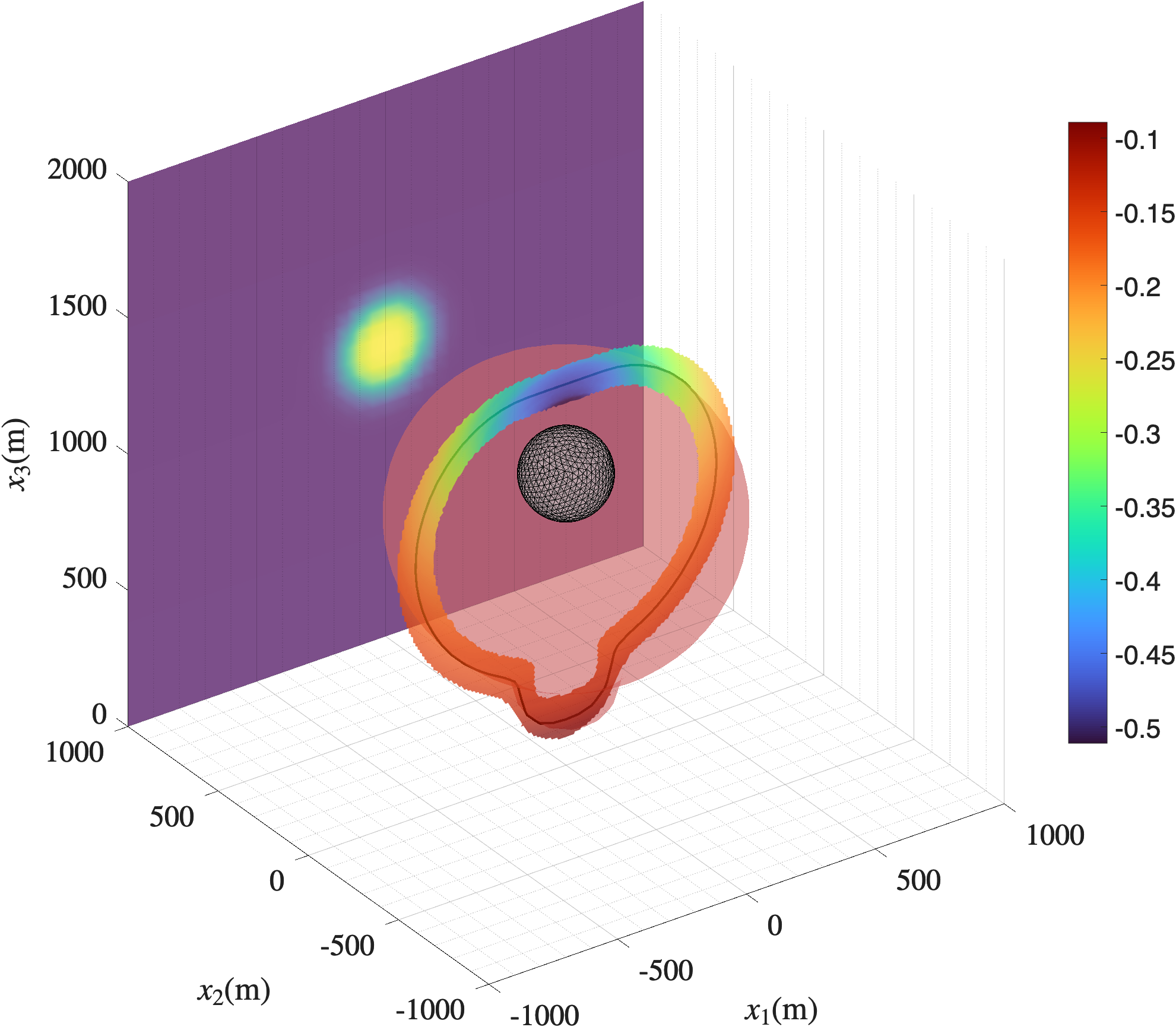}
        \caption{$t_{\mathsf{phys}}=32.1 \  \unit{\second}$}
    \end{subfigure}

\caption{
Representative snapshots of the rising-fireball simulation at two selected times. 
In each panel, the fireball interface $\Gamma(t)$ is shown as the triangulated gray surface. 
The computed reflected wavefront is shown as the translucent red surface, and 
its intersection with the plane $x_2=0$ is shown as the black curve. 
Also displayed is the approximate scattered potential $\tilde{\phi}(x,t)$, defined in \eqref{layer-ansatze-a}, restricted 
to the plane $x_2=0$, plotted only in a local neighborhood of the reflected wavefront. 
For reference, the $x_2=0$ slice of the sound-speed field is rendered on the back face of each panel. 
}
    \label{fig:fireball-sigma-phi}
\end{figure}

To assess this interpretation more quantitatively, we compare the hot-fireball
simulation against a reference ambient-air case in which the same moving spherical
interface is retained but the large temperature 
difference is removed, $\Delta \theta_{\mathsf{bub}} = 0$.
\Cref{fig:fireball-sensor-history} summarizes this comparison in two complementary ways.
The left panel shows the ambient-air analog of the late-time 
snapshot in \Cref{fig:fireball-sigma-phi}, where the reflected front remains 
essentially undistorted in the absence of the hot refractive inclusion.
Meanwhile, the sensor trace in the right panel of \Cref{fig:fireball-sensor-history}
shows both a time shift and a reduction in peak amplitude for the hot-fireball case.
The reflected arrival occurs earlier than in the ambient-air reference case due to
the reduced travel time through the hotter (faster) fireball. At the same time, the
peak signal is smaller due to the spreading of the reflected wave energy.

\begin{figure}[ht]
    \centering

    \begin{subfigure}{0.48\textwidth}
        \centering
        \includegraphics[width=\textwidth]{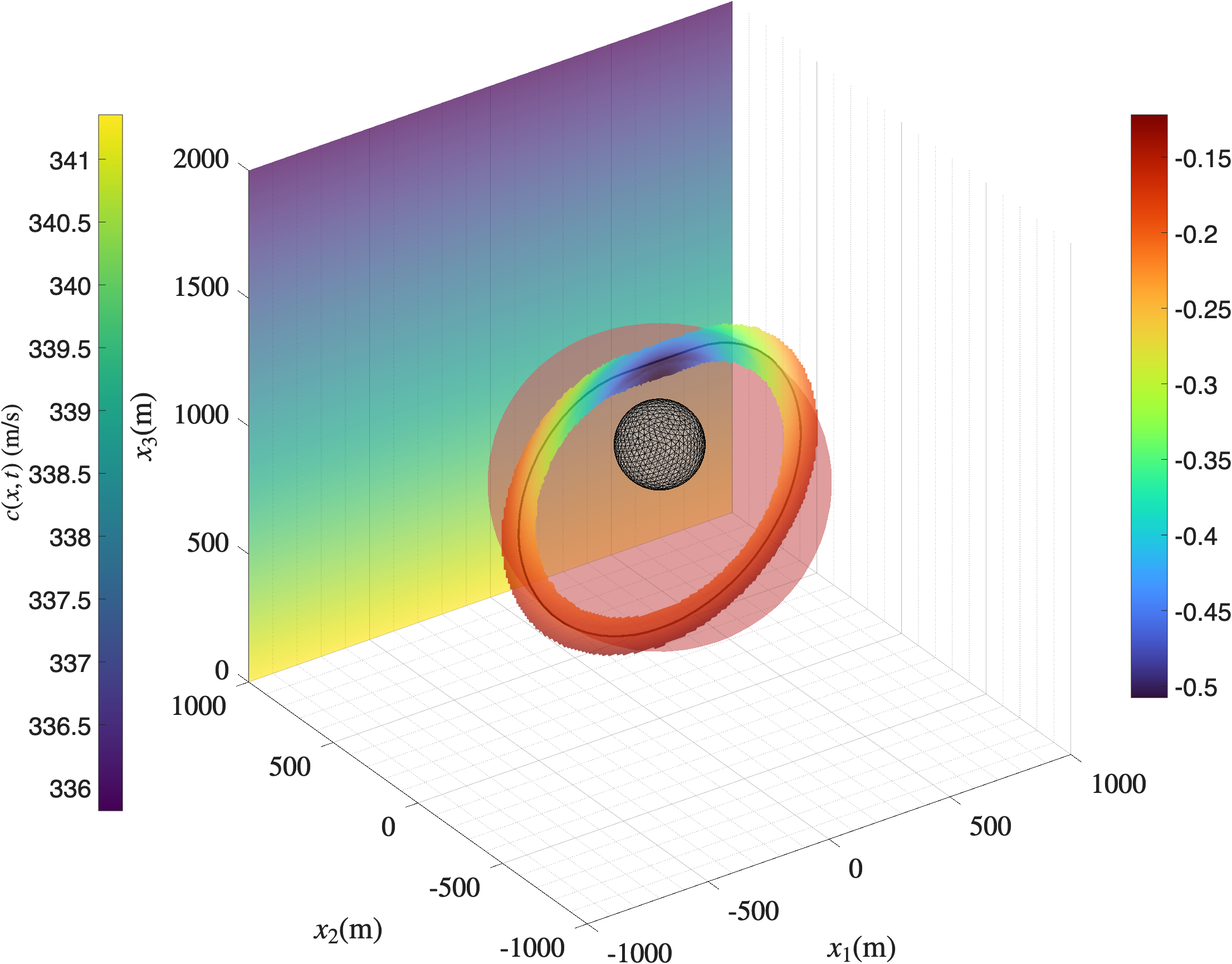}
        \caption{Ambient-air reference case at $t_{\mathsf{phys}}=32.1\, \unit{\second}$.}
    \end{subfigure}
    \hfill
    \begin{subfigure}{0.48\textwidth}
        \centering
        \includegraphics[width=0.7\textwidth]{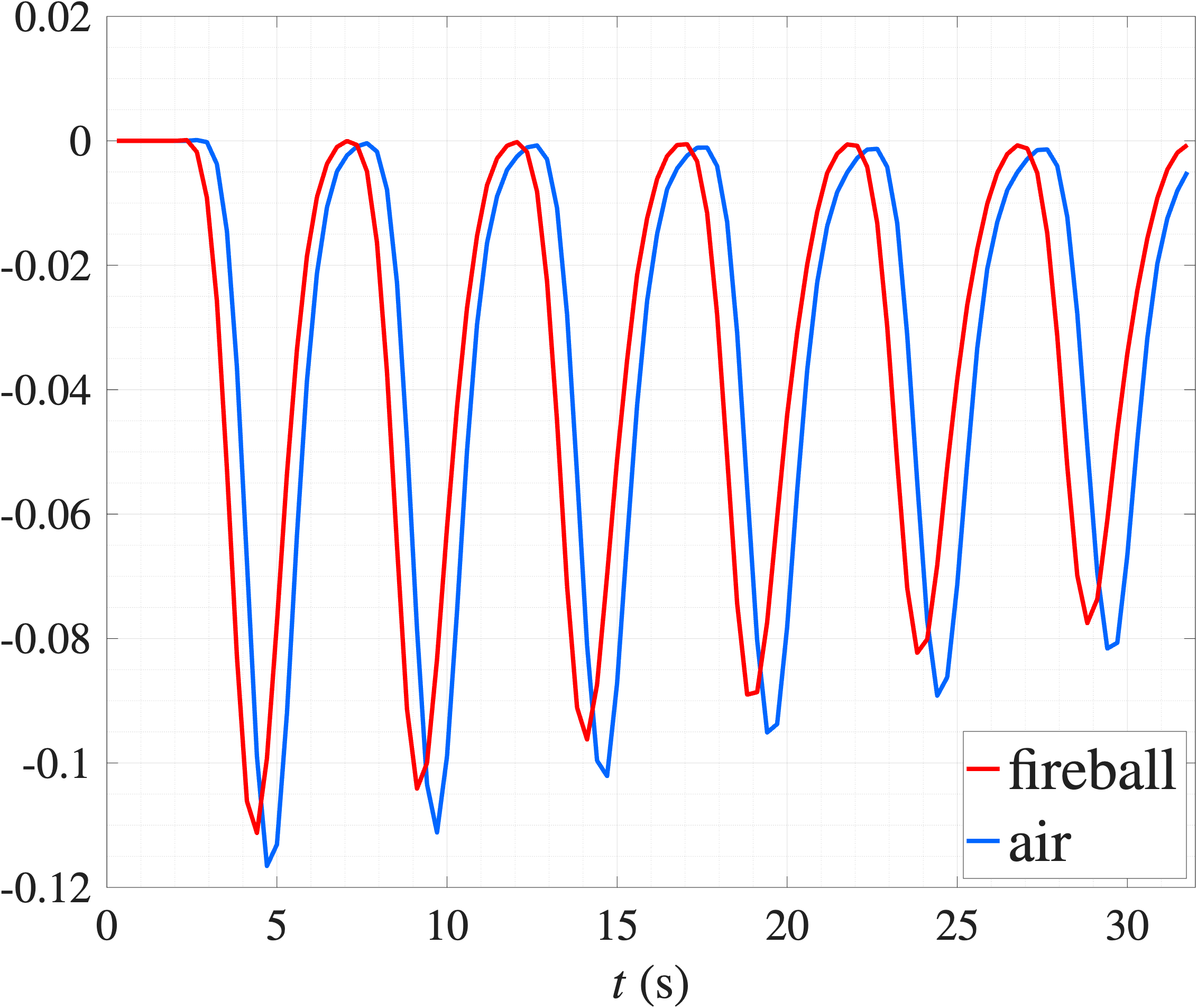}
        \caption{Sensor history at $x_{1,\mathsf{phys}}=1000\, \unit{\meter}$.}
    \end{subfigure}

    \caption{
    Comparison between the hot-fireball simulation and the ambient-air reference case.
    \textbf{Left}: late-time snapshot of the ambient-air reference case, shown in the same
    style as \Cref{fig:fireball-sigma-phi}.    
    \textbf{Right}: sensor history at $x_{1,\mathsf{phys}}=1000\, \unit{\meter}$ for the hot-fireball and ambient-air
    cases. 
    }
    \label{fig:fireball-sensor-history}
\end{figure}

\section{Conclusion}\label{sec:conclusion}

We have developed a boundary integral framework for 
linear wave propagation in heterogeneous media with moving obstacles.
The two main mathematical tools employed---namely, the geometric--optics 
parametrix and 
the variational characterization of the travel-time function---are classical, but their 
combined use here yields, to our knowledge, a novel boundary 
integral method that \emph{avoids volumetric discretization}. 
The parametrix supplies an explicit kernel underlying 
the construction of layer potentials and the derivation 
of the associated boundary integral equations.
Meanwhile, the travel-time function encodes the geometry 
of wave propagation, capturing both refraction due to heterogeneities 
in the medium as well as Doppler effects due to interface motion.

At the discrete level, we construct slab-frozen approximations of the 
backwards light cone and the worldsheet of the interface, and use their 
intersection to approximate the causal interaction set.
This slab-frozen approximation reduces the problem to one with the same 
structural form as the homogeneous fixed-interface case, enabling the use 
of standard time-domain boundary integral methods.
Numerical experiments demonstrate that the method 
captures motion-induced Doppler effects and remains stable up to 
Mach 0.9, as well as refraction in heterogeneous media, including 
propagation around a slow inclusion (gas bubble) and refractive 
defocusing due to a rising hot fireball.

The natural next step is to further exploit the extensive geometric--optics
and boundary integral machinery in the existing literature, including: 
provably stable Galerkin discretizations \cite{Ha2003,Sayas2013}, 
high-order quadrature schemes \cite{SaSc2010,KlBaGr2013,GaGrSi1998,GaGr2004,GiVe2013}, 
efficient algorithms for large-scale computations \cite{BaSa2009,ScLoLu2006,BrKu2001,Rokhlin1990,ErShMi1998}, and
improved computation of travel times and amplitudes beyond caustics \cite{UmTh1987,SaKe1990,ViIvGj1993}. 
Given the broad range of applications in which wave propagation models arise, it is 
essential to develop accurate, robust, and efficient implementations for practical use.

From an application standpoint, one possibility is the
implementation of the proposed method within a more general multiscale
simulation framework.  In this framework, a hybrid Eulerian--Lagrangian scheme may be employed, in which 
coarse scales are treated using Eulerian discretizations while 
fine scales are handled by a Lagrangian component. In \cite{RaSh2020}, a two-scale 
solution strategy of this type was proposed for the compressible Euler equations. 
The fine-scale boundary integral solver in \cite{RaSh2020} relied upon an 
incompressible approximation to the dynamics, producing a model with 
inherent non-locality. 
To maintain consistency with the coarse-scale model, the incompressible 
approximation could be replaced by an adaptation of the present method, 
producing a coupled algorithm that respects finite-speed 
propagation of waves across scales.

\section*{Acknowledgements}
This work was supported by the Mark Kac 
Applied Mathematics Postdoctoral Fellowship at the 
Center for Nonlinear Studies at Los Alamos National Laboratory.


\appendix

\section{Trace of the double-layer potential} \label{app:traceDL}

We compute the one-sided Dirichlet traces of the double-layer potential.
The key issue is the existence and uniqueness of the retarded root
near the endpoint $\tau=t$ when the observation point approaches the
moving interface from either side.
We first record this preliminary fact, and then state the trace formula.

\begin{lemma}[Existence and uniqueness of the near-endpoint retarded root]
\label{lem:near-endpoint-root}

Let $(\theta,\eta)$ be travel-time coordinates in the sense of
Definition \ref{def:characteristic-coordinates}, let
$z\in C^2(\mathbb{S}^2\times[0,\tmax))$, and fix $(\alpha,t)\in\mathbb{S}^2\times(0,\tmax)$.
Assume that the subsonic condition
\eqref{subsonic-condition} holds.

For $\varepsilon>0$, define the exterior offset point
\[
  x_\varepsilon \coloneqq z(\alpha,t)+\varepsilon \NN(\alpha,t),
\]
and the phase function
\begin{equation}\label{g-def-root}
  \Phi_\varepsilon(\tau;\alpha,t)
  \coloneqq
  \theta(t;\tau)-\eta(x_\varepsilon,t;z(\alpha,\tau),\tau).
\end{equation}
Then, for all sufficiently small $\varepsilon>0$, the equation
\[
  \Phi_\varepsilon(\tau;\alpha,t)=0
\]
has a unique simple solution
\[
  \tau_*(\varepsilon;\alpha,t)\in(0,t)
\]
with 
\[
  \tau_*(\varepsilon;\alpha,t)\to t
  \qquad\text{as}\qquad
  \varepsilon\downarrow0.
\]
\end{lemma}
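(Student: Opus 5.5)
The argument is a one-dimensional intermediate-value-plus-monotonicity argument for $\tau\mapsto\Phi_\varepsilon(\tau;\alpha,t)$ on $[0,t]$, with uniqueness coming from the non-degeneracy condition (b) of Definition~\ref{def:characteristic-coordinates}.

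\emph{Sign at the right endpoint.} At $\tau=t$ we have $\theta(t;t)=0$. The point $x_\varepsilon\neq z(\alpha,t)$ for $\varepsilon>0$, so by positivity (a),
\[
\Phi_\varepsilon(t)=-\eta(x_\varepsilon,t;z(\alpha,t),t)<0 .
\]
By \eqref{eta-local-diag-expansion}, in fact $\Phi_\varepsilon(t)=-\kappa(z(\alpha,t),t)\,\varepsilon+\mathcal O(\varepsilon^2)$.

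\emph{Sign at a slightly earlier time.} Fix a small $\delta>0$, independent of $\varepsilon$, and evaluate at $\tau=t-\delta$. Write $y_\delta=z(\alpha,t-\delta)$. Then $|x_\varepsilon-y_\delta|\le\varepsilon+|z(\alpha,t)-z(\alpha,t-\delta)|$. The displacement $z(\alpha,t)-z(\alpha,t-\delta)=\delta\,\partial_tz+\mathcal O(\delta^2)$ has normal component $\delta V_\NN$. Using the expansion \eqref{eta-local-diag-expansion}, together with the fact that the offset is normal, gives
\[
\eta(x_\varepsilon,t;y_\delta,t-\delta)\approx\kappa\,|\varepsilon\NN+\delta\,\partial_tz|+\mathcal O\bigl((\varepsilon+\delta)^2\bigr).
\]
Here I expect trouble: the tangential velocity also enters $|\cdot|$. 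The subsonic condition \eqref{subsonic-condition} only controls $V_\NN$, so the bound $\kappa|\partial_t z|<1$ is not automatic.

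I therefore would not rely on this estimate. Instead I would work directly with the derivative in the non-degeneracy condition. As computed in the subsection on subsonic motion, \eqref{eta-local-diag-grady} and \eqref{eta-local-diag-partial-tau-trace} give
\[
\frac{\mathrm d}{\mathrm d\tau}\eta(x_\varepsilon,t;z(\alpha,\tau),\tau)=-\kappa\,\partial_tz\cdot\omega+\mathcal O(r+|t-\tau|).
\]
For $\tau$ close to $t$ and $\varepsilon$ small, the direction $\omega$ of $x_\varepsilon-z(\alpha,\tau)$ is close to $\NN(\alpha,t)$ whenever $\varepsilon\gg t-\tau$. Then $\partial_\tau\Phi_\varepsilon=-1-\tfrac{\mathrm d}{\mathrm d\tau}\eta\approx-1+\kappa V_\NN<0$ by subsonicity. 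This is the local version of \eqref{monotone-intersection}. In the opposite regime $t-\tau\gtrsim\varepsilon$, I would simply invoke the global non-degeneracy (b), which is assumed for travel-time coordinates. Either way, $\Phi_\varepsilon$ is strictly decreasing in $\tau$, and this gives uniqueness and simplicity of any root.

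\emph{Existence.} Since $\Phi_\varepsilon$ is strictly decreasing and negative at $\tau=t$, a root in $(0,t)$ requires $\Phi_\varepsilon(\tau_0)>0$ for some earlier $\tau_0$. Take $\tau_0=t-\delta$ with $\delta$ fixed and small, and let $\varepsilon\to0$. By continuity of $\eta$ away from the diagonal, $\Phi_\varepsilon(t-\delta)\to\delta-\eta(z(\alpha,t),t;z(\alpha,t-\delta),t-\delta)$. I must show this limit is positive, i.e.\ that $(z(\alpha,t),t)$ lies strictly inside the forward causal region of $(z(\alpha,t-\delta),t-\delta)$. I would obtain this by integrating: the map $\delta\mapsto\delta-\eta(\ldots)$ vanishes at $\delta=0$ and has derivative $1+\tfrac{\mathrm d}{\mathrm d\tau}\eta$ along the material trajectory of the same point $\alpha$, which is positive by non-degeneracy (b) with $x=z(\alpha,t)$. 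The diagonal singularity at $\delta=0$ is handled by continuity of $\eta$ and $\eta\ge0$. Hence $\Phi_\varepsilon(t-\delta)>0$ for all small $\varepsilon$, and the intermediate value theorem produces $\tau_*\in(t-\delta,t)$.

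\emph{Convergence.} Since $\delta$ was arbitrary, $\tau_*(\varepsilon)\to t$ as $\varepsilon\downarrow0$.

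The main obstacle is justifying strict monotonicity uniformly near the diagonal. There, $\eta$ is only $C^2$ away from $x=y$, and the direction $\omega$ depends on the relative sizes of $\varepsilon$ and $t-\tau$. I would handle this by relying on the assumed global non-degeneracy (b) rather than deriving it from subsonicity.
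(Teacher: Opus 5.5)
Your argument is correct. The uniqueness and simplicity step is the same as the paper's: strict decrease of $\tau\mapsto\Phi_\varepsilon$ from condition (b) of Definition~\ref{def:characteristic-coordinates}, applied at the exterior point $x_\varepsilon$. The existence step takes a different route.

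\emph{The paper's route.} It relabels the boundary so that $\partial_t z(\alpha,t)=V_\NN\NN$. It then expands via \eqref{eta-local-diag-expansion} to get $\Phi_\varepsilon(t-\theta)=-K\varepsilon+(1-KV_\NN)\theta+\mathcal O(\varepsilon^2+\theta^2+\varepsilon\theta)$ on $0\le\theta\le M\varepsilon$. Subsonicity ($1-KV_\NN>0$) then shows $\Phi_\varepsilon(t-M\varepsilon)>0$, which gives the quantitative localization $t-\tau_*<M\varepsilon$.

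\emph{Your route.} You apply (b) at the boundary observation point $x=z(\alpha,t)$. Then $\Phi_0(\tau)\coloneqq(t-\tau)-\eta(z(\alpha,t),t;z(\alpha,\tau),\tau)$ is strictly decreasing with $\Phi_0(t)=0$, so $\Phi_0(t-\delta)>0$, and you pass to small $\varepsilon$ by continuity. This yields only the qualitative statement $\tau_*\to t$, but that is all the lemma asserts.

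\emph{What your route buys.} It is independent of the labeling. As you noticed, subsonicity controls only $V_\NN$. The paper's relabeling removes the tangential velocity, but strictly speaking it changes the trajectory $\tau\mapsto z(\alpha,\tau)$ that defines $\Phi_\varepsilon$, and hence the root. Your argument works for the given $z$, with the tangential velocity controlled implicitly by (b).

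\emph{What it costs.} You invoke (b) at a point where $\eta$ is not differentiable at $\tau=t$. You therefore need the monotone form of (b) on $[0,t)$ together with continuity of $\eta$ up to the diagonal, rather than the derivative form \eqref{monotone-intersection}. The limit $\Phi_\varepsilon(t-\delta)\to\Phi_0(t-\delta)$ needs continuity of $\eta$ in $x$. This is automatic from the $C^2$ regularity unless $z(\alpha,t-\delta)=z(\alpha,t)$; in that case take $\delta$ small and use \eqref{eta-local-diag-expansion}.

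\emph{Further remarks.} Your preliminary local-monotonicity discussion in the regime $\varepsilon\gg t-\tau$ is unnecessary once (b) is invoked globally. In your version the subsonic hypothesis is in effect subsumed by (b) and never used directly. If you later need the rate $t-\tau_*=\mathcal O(\varepsilon)$, as in the trace computation, you would still need an expansion like the paper's.
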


\begin{proof}
\hfill

\vspace{1em}
\noindent
\textbf{Step 1: local expansion near the endpoint.}
Write $\theta = \theta(t;\tau)=t-\tau$ 
and define $K(\alpha,\tau)\coloneqq \kappa(z(\alpha,\tau),\tau)$.
Since $z\in C^2(\mathbb{S}^2\times[0,\tmax))$, Taylor expansion in time about
$(\alpha,t)$ gives
\begin{subequations}\label{z-taylor-group}
\begin{equation}\label{z-taylor-endpoint}
  z(\alpha,\tau)
  =
  z(\alpha,t)-\partial_t z(\alpha,t)\, \theta +\mathcal O(\theta^2),
  \qquad \theta\downarrow0.
\end{equation}
Hence
\begin{equation}\label{xeps-minus-z}
  x_\varepsilon-z(\alpha,\tau)
  =
  \varepsilon \NN(\alpha,t)
  +
  \partial_t z(\alpha,t)\,\theta
  +
  \mathcal O(\theta^2).
\end{equation}
\end{subequations}
The tangential component of $\partial_t z$ changes only the labeling of points
along the surface and does not affect the geometric motion of the interface.
We therefore choose the boundary parameter locally so that, near $(\alpha,t)$,
the trajectory $\tau\mapsto z(\alpha,\tau)$ has no tangential component. Thus
\begin{equation}\label{velocity-normal}
  \partial_t z(\alpha,t)
  =
  V_\NN(\alpha,t)\,\NN(\alpha,t),
  \qquad
  V_\NN(\alpha,t)\coloneqq \partial_t z(\alpha,t)\cdot \NN(\alpha,t).
\end{equation}
Substituting \eqref{velocity-normal} into \eqref{xeps-minus-z}, we obtain
\begin{subequations}\label{xeps-z-expanded-group}
\begin{equation}\label{xeps-minus-z-expanded}
  x_\varepsilon-z(\alpha,\tau)
  =
  \bigl(\varepsilon+V_\NN(\alpha,t)\,\theta\bigr)\NN(\alpha,t)
  +
  \mathcal O(\theta^2).
\end{equation}
Therefore,
\begin{equation}\label{norm-xeps-minus-z}
  |x_\varepsilon-z(\alpha,\tau)|
  =
  \bigl|\varepsilon+V_\NN(\alpha,t)\,\theta\bigr|
  +
  \mathcal O(\theta^2).
\end{equation}
\end{subequations}

In particular, if we restrict to the near-endpoint regime 
$0\le \theta \le M\varepsilon$, with $M>0$ chosen so that
$1+M\,V_\NN(\alpha,t)>0$, then $\varepsilon+V_\NN(\alpha,t)\,\theta>0$, and hence
\begin{equation}\label{xeps-exp}
  |x_\varepsilon-z(\alpha,\tau)|
  =
  \varepsilon+V_\NN(\alpha,t)\,\theta
  +
  \mathcal O(\theta^2).
\end{equation}
Using the local diagonal expansion \eqref{eta-local-diag-expansion}, we obtain
\begin{equation}\label{eta-xeps-local}
  \eta(x_\varepsilon,t;z(\alpha,\tau),\tau)
  =
  K(\alpha,\tau)\,|x_\varepsilon-z(\alpha,\tau)|
  +
  \mathcal O\!\left(\varepsilon^2+\theta^2+\varepsilon\theta\right).
\end{equation}
Substituting \eqref{eta-xeps-local} into \eqref{xeps-exp} yields
\begin{equation}\label{eta-eps-expansion}
  \eta(x_\varepsilon,t;z(\alpha,\tau),\tau)
  =
  K(\alpha,\tau)\,\varepsilon
  +
  K(\alpha,\tau)\,V_\NN(\alpha,t)\,\theta
  +
  \mathcal O\!\left(\varepsilon^2+\theta^2+\varepsilon\theta\right).
\end{equation}

Combining \eqref{g-def-root} and \eqref{eta-eps-expansion}, we obtain
\begin{equation}\label{phi-eps-endpoint-expansion}
  \Phi_\varepsilon(\tau;\alpha,t)
  =
  -K(\alpha,\tau)\,\varepsilon
  +
  \left(
    1
    -
    K(\alpha,\tau)\,V_\NN(\alpha,t)
  \right)\theta
  +
  \mathcal O\!\left(\varepsilon^2+\theta^2+\varepsilon\theta\right).
\end{equation}
Since $z$ and $\kappa$ are smooth, we also have
$K(\alpha,\tau)=K(\alpha,t)+\mathcal O(\theta)$, and therefore
\begin{subequations}\label{phi-Lambda-group}
\begin{equation}\label{phi-eps-endpoint-expansion-frozen}
  \Phi_\varepsilon(\tau;\alpha,t)
  =
  -K(\alpha,t)\,\varepsilon
  +
  \Lambda(\alpha,t)\,\theta
  +
  \mathcal O\!\left(\varepsilon^2+\theta^2+\varepsilon\theta\right).
\end{equation}
where
\begin{equation}\label{Lambda-def}
  \Lambda(\alpha,t)
  \coloneqq
  1
  -
  K(\alpha,t)\,V_\NN(\alpha,t).
\end{equation}
\end{subequations}

\vspace{1em}
\noindent
\textbf{Step 2: existence of a near-endpoint root.}
The subsonic condition \eqref{subsonic-condition} implies
$K(\alpha,t)\,|V_\NN(\alpha,t)|<1$, and therefore
\begin{equation}\label{Lambda-positive}
  \Lambda(\alpha,t)>0.
\end{equation}
Evaluating \eqref{phi-eps-endpoint-expansion-frozen} at $\tau=t$ gives
\begin{equation}\label{phi-eps-at-endpoint}
  \Phi_\varepsilon(t;\alpha,t)
  =
  -\eta(x_\varepsilon,t;z(\alpha,t),t)
  =
  -K(\alpha,t)\,\varepsilon+\mathcal O(\varepsilon^2)
  <0
\end{equation}
for all sufficiently small $\varepsilon>0$.

Next choose $M>0$ such that
\begin{equation}\label{M-choice-growth}
  M\,\Lambda(\alpha,t)>K(\alpha,t)
\qquad\text{and}\qquad
  1+M\,V_\NN(\alpha,t)>0.
\end{equation}
Setting $\theta=M\varepsilon$
in \eqref{phi-eps-endpoint-expansion-frozen} yields
\begin{equation}\label{phi-eps-shifted}
  \Phi_\varepsilon(t-M\varepsilon;\alpha,t)
  =
  \varepsilon\bigl(-K(\alpha,t)+M\Lambda(\alpha,t)\bigr)
  +
  \mathcal O(\varepsilon^2).
\end{equation}
By \eqref{M-choice-growth}, the coefficient of $\varepsilon$ is positive, and hence
\begin{equation}\label{phi-eps-positive}
  \Phi_\varepsilon(t-M\varepsilon;\alpha,t)>0
\end{equation}
for all sufficiently small $\varepsilon>0$.

Combining \eqref{phi-eps-at-endpoint} and \eqref{phi-eps-positive}, we conclude
that $\Phi_\varepsilon(\tau;\alpha,t)$ changes sign on the interval
$[\,t-M\varepsilon,\;t\,]$.
Since $\Phi_\varepsilon$ is continuous in $\tau$, the intermediate value theorem
yields the existence of at least one root
$\tau_*(\varepsilon;\alpha,t)\in(t-M\varepsilon,t)$
such that
$\Phi_\varepsilon(\tau_*(\varepsilon;\alpha,t);\alpha,t)=0$.
In particular,
$\tau_*(\varepsilon;\alpha,t)\to t$ as $\varepsilon\downarrow0$.

\noindent
\textbf{Step 3: uniqueness and simplicity of the root.}
For each fixed $(\alpha,t)$ and $\varepsilon>0$, consider the function
$ \tau \mapsto \Phi_\varepsilon(\tau;\alpha,t)$. 
Since $x_\varepsilon\in\Omega^+(t)$ is fixed and the source point is
$z(\alpha,\tau)\in\Gamma(\tau)$, the non-degeneracy condition
\eqref{monotone-intersection} gives
\begin{equation}\label{Phi-eps-monotone}
  \frac{\mathrm d}{\mathrm d\tau}\Phi_\varepsilon(\tau;\alpha,t)
  =
  -1
  -
  \frac{\mathrm d}{\mathrm d\tau}\eta(x_\varepsilon,t;z(\alpha,\tau),\tau)
  < 0,
  \qquad 0\le \tau \le t.
\end{equation}
Hence $\tau\mapsto \Phi_\varepsilon(\tau;\alpha,t)$ is strictly decreasing on
$[0,t]$.
By Step 2, for all sufficiently small $\varepsilon>0$ there exists at least one
root $\tau_*(\varepsilon;\alpha,t)\in(t-M\varepsilon,t)$
such that $\Phi_\varepsilon(\tau_*(\varepsilon;\alpha,t);\alpha,t)=0$.
The strict monotonicity in \eqref{Phi-eps-monotone} implies that this root is unique.
Moreover, the root is simple since evaluation at 
$\tau=\tau_*(\varepsilon;\alpha,t)$ yields
\[
  \frac{\mathrm d}{\mathrm d\tau}
  \Phi_\varepsilon(\tau_*(\varepsilon;\alpha,t);\alpha,t)<0. 
\]
\end{proof}

The preceding lemma identifies the unique retarded source time that controls
the singular part of the kernel as the observation point approaches the moving
boundary from the exterior. We now use this fact to isolate the jump term in
the double-layer potential and obtain the exterior trace formula.

\begin{proposition}[Exterior trace of the moving-surface double-layer potential with constant speed]
\label{prop:DL-trace-const-speed}
\hfill
Let $c_0>0$ be constant, let
$z\in C^2(\mathbb{S}^2\times[0,\tmax))$, and assume that the normal velocity 
$V_\NN(\alpha,t) = \NN(\alpha,t) \cdot \partial_t z(\alpha,t)$
satisfies the subsonic condition
\[
  |V_\NN(\alpha,t)|<c_0,
  \qquad
  (\alpha,t)\in\mathbb{S}^2\times(0,\tmax).
\]
Let
\[
  \Gop_{c_0}(x,t;y,\tau)
  \coloneqq
  \frac{
    \delta\!\left(
      t-\tau-c_0^{-1} {|x-y|}
    \right)
  }{
    4\pi |x-y|
  }
\]
denote the retarded Green's function for the operator $\tfrac{1}{c_0^2}\partial_t^2-\Delta_x$.
For a causal density $\mu:\mathbb{S}^2\times(0,\tmax)\to\mathbb{R}$,
define the double-layer potential
\[
  \DopDL[\mu](x,t)
  \coloneqq
  \int_0^t\int_{\mathbb{S}^2}
    \partial_{\NN_y}
    \Gop_{c_0}(x,t;z(\beta,\tau),\tau)\,
    \mu(\beta,\tau)\,
    \dd S_y(\beta,\tau)\,\dd\tau,
  \qquad
  (x,t)\in\Omega^+(t)\times(0,\tmax).
\]
Then, for each $(\alpha,t)\in\mathbb{S}^2\times(0,\tmax)$, the exterior trace
\[
  \gamma^+\DopDL[\mu](\alpha,t)
  \coloneqq
  \lim_{\varepsilon\downarrow0}
  \DopDL[\mu]\bigl(z(\alpha,t)+\varepsilon\NN(\alpha,t),t\bigr)
\]
exists and satisfies
\begin{subequations}\label{DL-trace-const-speed}
\begin{equation}\label{DL-trace-const-speed-main}
  \gamma^+\DopDL[\mu](\alpha,t)
  =
  \tfrac{1}{2} \left( 1 + \tfrac{V_\NN(\alpha,t)^2}{c_0^2 - V_\NN(\alpha,t)^2} \right)
   \, \mu(\alpha,t)
  +
  \Dop[\mu](\alpha,t),
\end{equation}
where
\begin{equation}\label{DL-trace-const-speed-reg}
  \Dop[\mu](\alpha,t)
  \coloneqq
  \mathrm{p.v.}\!\int_0^t\int_{\mathbb{S}^2}
    \partial_{\NN_y}
    \Gop_{c_0}(z(\alpha,t)^+,t;z(\beta,\tau),\tau)\,
    \mu(\beta,\tau)\,
    \dd S_y(\beta,\tau)\,\dd\tau.
\end{equation}
\end{subequations}
\end{proposition}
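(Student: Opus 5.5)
The plan is to split the potential into a part that stays regular as $\varepsilon\downarrow0$ and a localized singular part that produces the jump, and then to compute that jump exactly on a flattened, uniformly moving model interface. Fix $(\alpha,t)$ and a small parameter $\rho>0$, and let $\chi$ be a smooth cutoff on $\mathbb{S}^2\times(0,t]$ equal to one on a space-time neighbourhood of $(\alpha,t)$ of size $\rho$. Write
\[
\DopDL[\mu](x_\varepsilon,t)=\DopDL[\chi\mu](x_\varepsilon,t)+\DopDL[(1-\chi)\mu](x_\varepsilon,t),
\qquad x_\varepsilon=z(\alpha,t)+\varepsilon\NN(\alpha,t).
\]
\begin{enumerate}
\item \emph{Far part.} On the support of $1-\chi$, the backward cone from $(x_\varepsilon,t)$ meets the worldsheet away from the diagonal. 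There the kernel $\partial_{\NN_y}\Gop_{c_0}$ is a smooth layer of $\delta$ and $\delta'$ on a smooth transversal intersection; transversality follows from the subsonic condition. Its integral against $\mu$ is therefore continuous in $\varepsilon$, and converges to the corresponding piece of the principal-value integral $\Dop[\mu]$.
\item \emph{Freezing the density.} In the near part I replace $\chi\mu$ by $\mu(\alpha,t)\chi$. The error $\chi(\mu-\mu(\alpha,t))$ vanishes linearly at $(\alpha,t)$. This cancels one order of the kernel's singularity, so its contribution is dominated uniformly in $\varepsilon$. By dominated convergence it tends to the near-part principal value of $\Dop[\mu-\mu(\alpha,t)]$. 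The jump is thus reduced to the constant-density quantity $\lim_{\varepsilon\downarrow0}\DopDL[\chi](x_\varepsilon,t)$, modulo its own principal-value part.
\end{enumerate}

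To evaluate this constant-density term I use the same local reparametrization as in the proof of Lemma~\ref{lem:near-endpoint-root}: $\partial_t z(\alpha,t)=V_\NN\NN$, with $\theta=t-\tau$. The surface near $z(\alpha,t)$ is replaced by its tangent plane translating with normal speed $V_\NN$, with local tangential coordinates $\xi\in\mathbb{R}^2$. Then $x_\varepsilon-y=\xi+(\varepsilon+V_\NN\theta)\NN$ up to $\mathcal O(|\xi|^2+\theta^2)$, and these errors contribute $o(1)$ by the same domination argument. On the model geometry:
\begin{enumerate}
\item Write $\partial_{\NN_y}$ of $\delta(\theta-r/c_0)/(4\pi r)$, with $r=|x_\varepsilon-y|$ and $\partial_{\NN_y}r=-(\varepsilon+V_\NN\theta)/r$. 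This gives a $\delta/r^2$ term and a $\delta'/r$ term.
\item For each $\xi$, the $\theta$-integral localizes at the unique simple root $\theta_*(\xi,\varepsilon)$ of $c_0\theta=\sqrt{|\xi|^2+(\varepsilon+V_\NN\theta)^2}$. Lemma~\ref{lem:near-endpoint-root}, applied pointwise in the flattened geometry, gives existence, uniqueness and simplicity of this root.
\item The $\delta$ term yields the Jacobian factor $1/\bigl(1-\tfrac{V_\NN(\varepsilon+V_\NN\theta_*)}{c_0 r}\bigr)$.
\item The $\delta'$ term I integrate by parts in $\theta$. This moves the derivative onto the smooth factor and onto the implicit dependence of $r$ on $\theta$; the boundary term at $\theta=0$ ($\tau=t$) vanishes because the root is strictly interior.
\end{enumerate}
The remaining integral over $\xi\in\mathbb{R}^2$ is radially symmetric. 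The root is explicit, since it solves a quadratic: $\theta_*=\bigl(\varepsilon V_\NN+\sqrt{c_0^2\varepsilon^2+(c_0^2-V_\NN^2)|\xi|^2}\bigr)/(c_0^2-V_\NN^2)$. The substitution $|\xi|=\varepsilon s$ makes the integrand $\varepsilon$-independent, so the limit becomes a one-dimensional integral in $s$ that I expect to evaluate in closed form to $\tfrac12 c_0^2/(c_0^2-V_\NN^2)=\tfrac12\bigl(1+\tfrac{V_\NN^2}{c_0^2-V_\NN^2}\bigr)$. As a check, the case $V_\NN=0$ should reduce to the classical solid-angle value $\tfrac12$.

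The main obstacle is this last step, namely the $\delta'$ contribution. After the integration by parts it produces derivatives of $\theta_*$ and of the Jacobian with respect to $\theta$, and these must be shown to combine into the stated coefficient rather than an extra velocity-dependent piece. I would first verify the model computation symbolically in the scaled variable $s$. Only then would I justify, uniformly in $\varepsilon$, that the curvature and acceleration remainders contribute $o(1)$, using the $C^2$ regularity of $z$ together with the uniform lower bound on $1-|V_\NN|/c_0$ supplied by the subsonic condition.
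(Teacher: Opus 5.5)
Your skeleton is the same as the paper's proof. Both use a cutoff to split off the regular part, the reparametrization $\partial_t z(\alpha,t)=V_\NN\NN$, and flattening to a tangent plane translating with speed $V_\NN$. Both then take the explicit root $\theta_*$ of the quadratic, rescale $|u|\sim\varepsilon$, and pass to the limit by dominated convergence. The difference is the one step you flag as the main obstacle. There the proposal is incomplete: the coefficient $\tfrac12 c_0^2/(c_0^2-V_\NN^2)$ is asserted as an expectation, not derived.

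The paper avoids the $\delta'$ term entirely. In the flat model the kernel depends on $y_3$ and $\varepsilon$ only through $\varepsilon-y_3=\varepsilon+V_\NN\theta$. Hence $\partial_{\NN_y}=-\partial_\varepsilon$ pointwise, and $\partial_\varepsilon$ can be pulled outside the $\theta$-integral. Integrating the $\delta$ first, and using the identity $R_\varepsilon\,\partial_\theta\Phi_\varepsilon=c_0^{-1}\sqrt{c_0^2\varepsilon^2+(c_0^2-V_\NN^2)|u|^2}$ at the root (immediate from your explicit $\theta_*$), the $\theta$-integrated kernel is
\[
-\partial_\varepsilon\,\frac{c_0}{4\pi\sqrt{c_0^2\varepsilon^2+(c_0^2-V_\NN^2)|u|^2}}
=\frac{c_0^3\,\varepsilon}{4\pi\bigl(c_0^2\varepsilon^2+(c_0^2-V_\NN^2)|u|^2\bigr)^{3/2}} .
\]
This is an anisotropically rescaled Poisson kernel. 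Its integral over $\mathbb{R}^2$ is $c_0^2/\bigl(2(c_0^2-V_\NN^2)\bigr)$, and it reduces to the classical Laplace double-layer kernel when $V_\NN=0$.

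Because $\partial_{\NN_y}=-\partial_\varepsilon$ holds pointwise in $(\xi,\theta)$, your $\delta/r^2$ term plus the integrated-by-parts $\delta'/r$ term must sum, for each fixed $\xi$, to exactly this integrand. So the ``extra velocity-dependent piece'' you worry about cannot appear. Your route is correct but the long way round; this observation is the quickest way to close it.

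Your setup has one small advantage. You take $\chi\equiv1$ on a neighbourhood of $(\alpha,t)$, so every term where a derivative falls on the cutoff is supported away from the diagonal and belongs to the regular part. This covers your integration by parts, and also the term $\partial_\varepsilon\bigl[\chi_0(u,\theta_*(u,\varepsilon))\bigr]$ in the paper's version. The paper assumes only $\chi(\alpha,t)=1$ and keeps $\chi_0(u,\theta_*)$ outside the $\varepsilon$-derivative, which glosses over that term.
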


\begin{proof}
Fix $(\alpha,t)\in\mathbb{S}^2\times(0,\tmax)$ and, for $\varepsilon>0$, set
\[
  x_\varepsilon \coloneqq z(\alpha,t)+\varepsilon \NN(\alpha,t).
\]
We study the limit as $\varepsilon\downarrow0$ of
\[
  \DopDL[\mu](x_\varepsilon,t)
  =
  \int_0^t\int_{\mathbb{S}^2}
    \partial_{\NN_y}
    \Gop_{c_0}(x_\varepsilon,t;z(\beta,\tau),\tau)\,
    \mu(\beta,\tau)\,
    \dd S_y(\beta,\tau)\,\dd\tau.
\]

\noindent
\textbf{Step 1: decomposition into local and regular parts.}
Choose a smooth cutoff $\chi(\beta,\tau)\in C_c^\infty(\mathbb{S}^2\times[0,t])$
supported in a sufficiently small neighborhood of $(\alpha,t)$, with
$\chi(\alpha,t)=1$.
We decompose
\begin{subequations}\label{DL-local-reg-split-const}
\begin{equation}\label{DL-local-reg-split-const-a}
  \DopDL[\mu](x_\varepsilon,t)
  =
  I_\varepsilon^{\loc}(\alpha,t)
  +
  I_\varepsilon^{\reg}(\alpha,t),
\end{equation}
where
\begin{equation}\label{DL-local-reg-split-const-b}
  I_\varepsilon^{\loc}(\alpha,t)
  \coloneqq
  \int_0^t\int_{\mathbb{S}^2}
    \chi(\beta,\tau)\,
    \partial_{\NN_y}
    \Gop_{c_0}(x_\varepsilon,t;z(\beta,\tau),\tau)\,
    \mu(\beta,\tau)\,
    \dd S_y(\beta,\tau)\,\dd\tau,
\end{equation}
and
\begin{equation}\label{DL-local-reg-split-const-c}
  I_\varepsilon^{\reg}(\alpha,t)
  \coloneqq
  \int_0^t\int_{\mathbb{S}^2}
    \bigl(1-\chi(\beta,\tau)\bigr)\,
    \partial_{\NN_y}
    \Gop_{c_0}(x_\varepsilon,t;z(\beta,\tau),\tau)\,
    \mu(\beta,\tau)\,
    \dd S_y(\beta,\tau)\,\dd\tau.
\end{equation}
\end{subequations}

The cutoff localizes all singular behavior into
$I_\varepsilon^{\loc}(\alpha,t)$.
Since $1-\chi$ vanishes in a neighborhood of $(\alpha,t)$, the kernel in
$I_\varepsilon^{\reg}(\alpha,t)$ is smooth for all sufficiently small $\varepsilon>0$, and therefore
\begin{equation}\label{Ireg-limit-const}
  \lim_{\varepsilon\downarrow0} I_\varepsilon^{\reg}(\alpha,t)
  =
  \int_0^t\int_{\mathbb{S}^2}
    \bigl(1-\chi(\beta,\tau)\bigr)\,
    \partial_{\NN_y}
    \Gop_{c_0}(z(\alpha,t)^+,t;z(\beta,\tau),\tau)\,
    \mu(\beta,\tau)\,
    \dd S_y(\beta,\tau)\,\dd\tau.
\end{equation}
Thus the trace computation reduces to the analysis of the localized term
$I_\varepsilon^{\loc}(\alpha,t)$ near $(\beta,\tau)=(\alpha,t)$.

\vspace{1em}
\noindent
\textbf{Step 2: local coordinates near the retarded root.}
Choose local coordinates $u=(u_1,u_2)\in\mathbb{R}^2$ on the tangent plane to
$\Gamma(t)$ at $z(\alpha,t)$, and choose the local parametrization so that the
surface velocity at $(\alpha,t)$ is purely normal:
\[
  \partial_t z(\alpha,t)=V_\NN(\alpha,t)\,\NN(\alpha,t).
\]
After a rigid translation and rotation, we may assume
\[
  z(\alpha,t)=0,
  \qquad
  \NN(\alpha,t)=e_3,
  \qquad
  x_\varepsilon=(0,0,\varepsilon).
\]

Writing $\theta = t-\tau$, 
the moving surface admits the local expansion
\begin{subequations}\label{const-speed-local-geometry}
\begin{equation}\label{const-speed-surface-local}
  y(u,\tau)
  =
  \bigl(u_1,u_2,-V_\NN(\alpha,t)\,\theta\bigr)
  +
  \mathcal O\!\left(|u|^2+\theta^2\right),
\end{equation}
and therefore
\begin{equation}\label{const-speed-xeps-minus-y}
  x_\varepsilon-y(u,\tau)
  =
  \bigl(-u_1,-u_2,\varepsilon+V_\NN(\alpha,t)\,\theta\bigr)
  +
  \mathcal O\!\left(|u|^2+\theta^2\right).
\end{equation}
Define
\begin{equation}\label{const-speed-Reps}
  R_\varepsilon(u,\theta)
  \coloneqq
  \sqrt{|u|^2+\bigl(\varepsilon+V_\NN(\alpha,t)\,\theta\bigr)^2}.
\end{equation}
Then
\begin{equation}\label{const-speed-distance-expansion}
  |x_\varepsilon-y(u,\tau)|
  =
  R_\varepsilon(u,\theta)
  +
  \mathcal O\!\left(|u|^2+\theta^2\right).
\end{equation}
\end{subequations}

In these local coordinates, the retarded phase for the constant-speed kernel is
\begin{equation}\label{const-speed-phase-local}
  \Phi_\varepsilon(u,\theta)
  \coloneqq
  \theta-c_0^{-1}R_\varepsilon(u,\theta).
\end{equation}
Thus the singular part of the localized kernel is governed by
\begin{equation}\label{const-speed-frozen-kernel}
  \partial_{\NN_y}
  \left[
    \frac{
      \delta\!\left(\Phi_\varepsilon(u,\theta)\right)
    }{
      4\pi\,R_\varepsilon(u,\theta)
    }
  \right].
\end{equation}

For each fixed $u$ sufficiently close to $0$, the same argument as in
\Cref{lem:near-endpoint-root} applies to the phase
\[
  \Phi_\varepsilon(u,\theta)
  \coloneqq
  \uptheta-c_0^{-1}R_\varepsilon(u,\theta). 
\]
In the constant-speed case, we have
\[
  \eta(x,t;y,\tau)=c_0^{-1}|x-y|,
  \qquad
  \kappa(y,\tau)=c_0^{-1},
\]
and for all sufficiently small
$\varepsilon>0$, the equation
\begin{equation}\label{const-speed-retarded-root-eqn}
  \Phi_\varepsilon(u,\theta)=0
\end{equation}
has a unique simple root
\[
  \theta=\theta_*(u,\varepsilon) > 0,
\]
with
\[
  \theta_*(u,\varepsilon)\to0
  \qquad\text{as}\qquad
  (u,\varepsilon)\to(0,0).
\]
Equivalently,
\begin{equation}\label{const-speed-root-relation}
  \theta_*(u,\varepsilon)
  =
  c_0^{-1}R_\varepsilon\bigl(u,\theta_*(u,\varepsilon)\bigr).
\end{equation}

Finally, the local surface Jacobian satisfies
\begin{equation}\label{const-speed-surface-jacobian}
  \dd S_y(\beta,\tau)\,\dd\tau
  =
  J(u,\theta)\,\dd u\,\dd\theta,
  \qquad
  J(u,\theta)=1+\mathcal O(|u|+\theta),
\end{equation}
after normalizing the local coordinates at $(\alpha,t)$.

\vspace{1em}
\noindent
\textbf{Step 3: reduction to the frozen integral.}
By \eqref{const-speed-local-geometry} and \eqref{const-speed-surface-jacobian},
the localized term $I_\varepsilon^{\loc}(\alpha,t)$ may be written in the form
\[
  I_\varepsilon^{\loc}(\alpha,t)
  =
  \int_0^{\theta_0}\int_{U_0}
    \chi_0(u,\theta)\,
    \partial_{\NN_y}
    \left[
      \frac{
        \delta\!\left(\Phi_\varepsilon(u,\theta)\right)
      }{
        4\pi\,R_\varepsilon(u,\theta)
      }
    \right]\,
    \upmu(u,\theta)\,
    J(u,\theta)\,
    \dd u\,\dd\theta,
\]
where $U_0\subset\mathbb{R}^2$ is a sufficiently small neighborhood of $u=0$,
$\theta_0>0$ is sufficiently small, $\chi_0(u,\theta)$ is the cutoff $\chi$
expressed in the local coordinates, and
\[
  \upmu(u,\theta)\coloneqq \mu(\beta(u),t-\theta).
\]

Since $\upmu$ is continuous and $J(u,\theta)=1+\mathcal O(|u|+\theta)$, we have
\[
  \upmu(u,\theta)\,J(u,\theta)
  =
  \mu(\alpha,t)+\mathcal O(|u|+\theta).
\]
Moreover, in the local coordinates chosen above, the source normal derivative
agrees to leading order with differentiation in the $y_3$-direction, and since
the frozen kernel depends on $y_3$ only through the combination
$\varepsilon-y_3=\varepsilon+V_\NN(\alpha,t)\theta$, we have 
$ \partial_{\NN_y} = -\partial_\varepsilon$ at leading order on the singular part of the kernel.
It follows that
\begin{equation}\label{const-speed-local-reduction}
  I_\varepsilon^{\loc}(\alpha,t)
  =
  \mu(\alpha,t)\,L_\varepsilon(\alpha,t)
  +
  o(1),
\end{equation}
where
\begin{equation}\label{const-speed-Leps}
  L_\varepsilon(\alpha,t)
  \coloneqq
  -\int_0^{\theta_0}\int_{U_0}
    \chi_0(u,\theta)\,
    \partial_\varepsilon
    \left[
      \frac{
        \delta\!\left(\Phi_\varepsilon(u,\theta)\right)
      }{
        4\pi\,R_\varepsilon(u,\theta)
      }
    \right]\,
    \dd u\,\dd\theta .
\end{equation}
The jump term is determined by the limit of the scalar model integral
$L_\varepsilon(\alpha,t)$.

\vspace{1em}
\noindent
\textbf{Step 4: reduction to the retarded root.}
Using \eqref{const-speed-root-relation}, we may write
\[
  \delta\!\left(\Phi_\varepsilon(u,\theta)\right)
  =
  \frac{
    \delta\!\left(\theta-\theta_*(u,\varepsilon)\right)
  }{
    \left|
      \partial_\theta\Phi_\varepsilon\bigl(u,\theta_*(u,\varepsilon)\bigr)
    \right|
  }.
\]
Substituting this into \eqref{const-speed-Leps} and integrating in $\uptheta$
gives
\begin{equation}\label{const-speed-Leps-root}
  L_\varepsilon(\alpha,t)
  =
  -\int_{U_0}
    \chi_0\bigl(u,\theta_*(u,\varepsilon)\bigr)\,
    \partial_\varepsilon
    \left[
      \frac{
        1
      }{
        4\pi\,
        R_\varepsilon\bigl(u,\theta_*(u,\varepsilon)\bigr)\,
        \left|
          \partial_\theta \Phi_\varepsilon\bigl(u,\theta_*(u,\varepsilon)\bigr)
        \right|
      }
    \right]\,
    \dd u .
\end{equation}

We now compute the retarded root explicitly.  Squaring
\eqref{const-speed-root-relation} yields
\[
  c_0^2\;\theta_*^2
  =
  |u|^2+\bigl(\varepsilon+V_\NN(\alpha,t)\,\theta_*\bigr)^2,
\]
that is,
\[
  \bigl(c_0^2-V_\NN(\alpha,t)^2\bigr)\theta_*^2
  -2V_\NN(\alpha,t)\varepsilon\,\theta_*
  -\bigl(|u|^2+\varepsilon^2\bigr)
  =
  0.
\]
Since $|V_\NN(\alpha,t)|<c_0$, the positive root is
\begin{equation}\label{const-speed-root-explicit}
  \theta_*(u,\varepsilon)
  =
  \frac{
    V_\NN(\alpha,t)\,\varepsilon
    +
    \sqrt{
      c_0^2\varepsilon^2+
      \bigl(c_0^2-V_\NN(\alpha,t)^2\bigr)|u|^2
    }
  }{
    c_0^2-V_\NN(\alpha,t)^2
  }.
\end{equation}
In particular,
\begin{equation}\label{const-speed-root-size}
  \theta_*(u,\varepsilon)=\mathcal O(|u|+\varepsilon). 
\end{equation}

Next, differentiating \eqref{const-speed-phase-local} with respect to
$\uptheta$ gives
\begin{equation}\label{const-speed-phase-derivative}
  \partial_\theta \Phi_\varepsilon(u,\theta)
  =
  1
  -
  \frac{
    V_\NN(\alpha,t)\bigl(\varepsilon+V_\NN(\alpha,t)\theta\bigr)
  }{
    c_0\,R_\varepsilon(u,\theta)
  }.
\end{equation}
Evaluating at $\theta=\theta_*(u,\varepsilon)$ and using
\eqref{const-speed-root-relation}, we obtain
\begin{equation}\label{const-speed-phase-derivative-root}
  \partial_\theta \Phi_\varepsilon\bigl(u,\theta_*(u,\varepsilon)\bigr)
  =
  1
  -
  \frac{
    V_\NN(\alpha,t)\bigl(\varepsilon+V_\NN(\alpha,t)\theta_*(u,\varepsilon)\bigr)
  }{
    c_0^2\,\theta_*(u,\varepsilon)
  }.
\end{equation}
Evaluating \eqref{const-speed-root-relation} at $u=0$ 
and using \eqref{const-speed-Reps}, we obtain
\[
  \varepsilon+V_\NN(\alpha,t)\theta_*(0,\varepsilon)
  =
  c_0\,\theta_*(0,\varepsilon),
\]
and therefore
\[
  \partial_\theta \Phi_\varepsilon\bigl(0,\theta_*(0,\varepsilon)\bigr)
  =
  1-\frac{V_\NN(\alpha,t)}{c_0}.
\]

\vspace{1em}
\noindent
\textbf{Step 5: evaluation of the integral.}
We now simplify the quantity inside \eqref{const-speed-Leps-root} exactly.
Write $V \coloneqq V_\NN(\alpha,t)$.
Using \eqref{const-speed-phase-derivative} and
\eqref{const-speed-root-relation}, we have
\[
  R_\varepsilon\bigl(u,\theta_*(u,\varepsilon)\bigr)\,
  \partial_\theta \Phi_\varepsilon\bigl(u,\theta_*(u,\varepsilon)\bigr)
  =
  c_0\,\theta_*(u,\varepsilon)
  -
  \frac{
    V\bigl(\varepsilon+V\,\theta_*(u,\varepsilon)\bigr)
  }{
    c_0
  }.
\]
Substituting the explicit formula \eqref{const-speed-root-explicit} for
$\theta_*(u,\varepsilon)$, a direct simplification yields
\begin{equation}\label{const-speed-exact-product}
  R_\varepsilon\bigl(u,\theta_*(u,\varepsilon)\bigr)\,
  \partial_\theta \Phi_\varepsilon\bigl(u,\theta_*(u,\varepsilon)\bigr)
  =
  \frac{1}{c_0}
  \sqrt{
    c_0^2\varepsilon^2+
    \bigl(c_0^2-V^2\bigr)|u|^2
  }.
\end{equation}

Therefore \eqref{const-speed-Leps-root} becomes
\begin{align}
  L_\varepsilon(\alpha,t)
  &=
  -\frac{c_0}{4\pi}
  \int_{U_0}
    \chi_0\bigl(u,\theta_*(u,\varepsilon)\bigr)\,
    \partial_\varepsilon
    \left[
      \frac{
        1
      }{
        \sqrt{
          c_0^2\varepsilon^2+
          \bigl(c_0^2-V^2\bigr)|u|^2
        }
      }
    \right]\,
    \dd u \notag\\
  &=
  \frac{c_0^3}{4\pi}
  \int_{U_0}
    \chi_0\bigl(u,\theta_*(u,\varepsilon)\bigr)\,
    \frac{
      \varepsilon
    }{
      \bigl(
        c_0^2\varepsilon^2+
        \bigl(c_0^2-V^2\bigr)|u|^2
      \bigr)^{3/2}
    }\,
    \dd u .
    \label{const-speed-Leps-poisson}
\end{align}
Since $\theta_*(u,\varepsilon)\to0$ as $(u,\varepsilon)\to(0,0)$ and
$\chi_0(0,0)=1$, we have
\[
  \chi_0\bigl(u,\theta_*(u,\varepsilon)\bigr)\to 1
\]
on the region where the kernel concentrates.
Employing the change of variables
\[
  u=\frac{c_0\varepsilon}{\sqrt{c_0^2-V^2}}\,w, 
\]
\eqref{const-speed-Leps-poisson} becomes
\[
  L_\varepsilon(\alpha,t)
  =
  \frac{c_0^2}{4\pi(c_0^2-V^2)}
  \int_{W_\varepsilon}
    \chi_0\!\left(
      \tfrac{c_0\varepsilon}{\sqrt{c_0^2-V^2}}\,w,
      \theta_*\!\left(\tfrac{c_0\varepsilon}{\sqrt{c_0^2-V^2}}\,w,\varepsilon\right)
    \right)
    \frac{1}{(1+|w|^2)^{3/2}}
    \,\dd w,
\]
where
\[
  W_\varepsilon
  \coloneqq
  \left\{
    w\in\mathbb{R}^2:
    \frac{c_0\varepsilon}{\sqrt{c_0^2-V^2}}\,w\in U_0
  \right\}.
\]
As $\varepsilon\downarrow0$, the dominated convergence theorem gives
\[
  L_\varepsilon(\alpha,t)
  \to
  \frac{c_0^2}{4\pi(c_0^2-V^2)}
  \int_{\mathbb{R}^2}
    \frac{1}{(1+|w|^2)^{3/2}}
    \,\dd w .
\]
We conclude that
\begin{equation}\label{const-speed-Leps-limit}
  \lim_{\varepsilon\downarrow0}L_\varepsilon(\alpha,t)
  =
  \frac{1}{2\left(1-\frac{V_\NN(\alpha,t)^2}{c_0^2}\right)}.
\end{equation}
Combining this with \eqref{const-speed-local-reduction}, we obtain
\[
  \lim_{\varepsilon\downarrow0}I_\varepsilon^{\loc}(\alpha,t)
  =
  \frac{
    1
  }{
    2\left(1-\frac{V_\NN(\alpha,t)^2}{c_0^2}\right)
  }\,
  \mu(\alpha,t).
\]
This completes the proof.

\end{proof}

\Cref{prop:DL-trace-const-speed} shows that the jump coefficient is determined
entirely by the leading local cone geometry of the kernel near the space-time
diagonal.
For the general parametrix kernel, the local expansion \eqref{eta-local-diag-expansion} for $\eta$ and
the diagonal normalization \eqref{A-normalization-st} for the amplitude
show that, at the source point $(z(\alpha,t),t)$, the parametrix has the same leading singular 
structure as the constant-speed kernel with local sound speed $C(\alpha,t) = c(z(\alpha,t),t)$. 
Since the jump term depends only on this leading local cone model,
\Cref{prop:DL-trace-const-speed} applies with $c_0$ replaced by
$C(\alpha,t)$.

\begin{proposition}[Exterior trace of the double-layer potential]
\label{prop:DL-trace-moving}

Let $(\theta,\eta)$ be travel-time coordinates in the sense of
Definition \ref{def:characteristic-coordinates}, let
$z\in C^2(\mathbb{S}^2\times[0,\tmax))$, and assume that the
subsonic condition \eqref{subsonic-condition} holds.
Assume also that $\amp(x,t;y,\tau)$ is smooth away from the diagonal $x=y$.
Define
\[
  \DopDL[\mu](x,t)
  =
  \int_0^t \int_{\mathbb{S}^2}
    \partial_{\NN_y}
    \Big[
      \amp(x,t;z(\beta,\tau),\tau)\,
      \Gopo\big(
        \eta(x,t;z(\beta,\tau),\tau),
        \theta(t;\tau)
      \big)
    \Big]\,
    \mu(\beta,\tau)\,
    \dd S_y(\beta,\tau)\,\dd\tau,
\]
where
\[
  \partial_{\NN_y}
  =
  \NN(\beta,\tau)\cdot \nabla_y,
  \qquad
  \Gopo(r,t) = \frac{\delta(t-r)}{4 \pi r}.  
\]
Then, for each $(\alpha,t)\in\mathbb{S}^2\times(0,\tmax)$, the exterior trace
\[
  \gamma^+\DopDL[\mu](\alpha,t)
  \coloneqq
  \lim_{\varepsilon\downarrow0}
  \DopDL[\mu]\bigl(z(\alpha,t)+\varepsilon \NN(\alpha,t),t\bigr)
\]
exists and satisfies
\begin{subequations}\label{DL-trace}
\begin{equation}\label{DL-trace-general-form}
  \gamma^+\DopDL[\mu](\alpha,t)
  =
  \tfrac12 \lambda(\alpha,t) \,
  \mu(\alpha,t)
  +
  \Dop[\mu](\alpha,t),
\end{equation}
where
\begin{equation}\label{lambda-def}
  \lambda(\alpha,t)
  \coloneqq
  1 
  + 
  \frac{V_\NN(\alpha,t)^2}
  {
  C(\alpha,t)^2-V_\NN(\alpha,t)^2
  },
  \qquad
  C(\alpha,t)\coloneqq c(z(\alpha,t),t),
\end{equation}
and the regular boundary operator is
\begin{equation}\label{Dop-regular-trace}
  \Dop[\mu](\alpha,t)
  \coloneqq
  \mathrm{p.v.}\!\int_0^t\int_{\mathbb{S}^2}
    \partial_{\NN_y}
    \Big[
      A(\alpha,t;\beta,\tau)\,
      \Gopo\big(
        \upeta(\alpha,t;\beta,\tau),
        \theta(t;\tau)
      \big)
    \Big]\,
    \mu(\beta,\tau)\,
    \dd S_y(\beta,\tau)\,\dd\tau .
\end{equation}
\end{subequations}
\end{proposition}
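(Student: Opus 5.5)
The plan is to reduce the general parametrix kernel to the constant-speed model of \Cref{prop:DL-trace-const-speed}, frozen at the local sound speed $C(\alpha,t)$, and to show that the discrepancy contributes no jump.

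\textbf{Step 1 (localize).} Fix $(\alpha,t)$ and set $x_\varepsilon=z(\alpha,t)+\varepsilon\NN(\alpha,t)$. Using a cutoff $\chi$ supported near $(\alpha,t)$, split $\DopDL[\mu](x_\varepsilon,t)=I^{\loc}_\varepsilon+I^{\reg}_\varepsilon$ exactly as in Step 1 of \Cref{prop:DL-trace-const-speed}. Away from the diagonal, $\eta$ is $C^2$ and $\amp$ is smooth. By the non-degeneracy \eqref{monotone-intersection}, the delta in $\Gopo(\eta,\theta)$ localizes onto a unique simple root in $\tau$. Hence $I^{\reg}_\varepsilon$ converges to the corresponding piece of $\Dop[\mu]$.

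\textbf{Step 2 (split the kernel).} On the support of $\chi$, expand
\[
\partial_{\NN_y}\bigl[\amp\,\Gopo(\eta,\theta)\bigr]
=\partial_{\NN_y}\amp\,\Gopo(\eta,\theta)
+\amp\,\partial_r\Gopo(\eta,\theta)\,\partial_{\NN_y}\eta .
\]
The first term has only the weak single-layer singularity. By the continuity of the single layer asserted before \eqref{Sop}, it produces no jump.

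In the second term:
\begin{itemize}
\item replace $\amp$ by $\amp(y,\tau;y,\tau)=\kappa$ via \eqref{A-normalization-st};
\item replace $\eta$ by $\kappa(z(\alpha,t),t)\,|x-y|$ via \eqref{eta-local-diag-expansion};
\item replace $\partial_{\NN_y}\eta$ by its leading form $-\kappa\,\omega\cdot\NN$ via \eqref{eta-local-diag-grady}.
\end{itemize}
With these replacements, $\kappa\,\Gopo(\kappa|x-y|,t-\tau)$ is exactly $\Gop_{C}$ with $c_0=C(\alpha,t)$, because $\delta(\theta-\kappa r)/(4\pi\kappa r)\cdot\kappa=\delta(\theta-r/C)/(4\pi r)$. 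So the model local integral is precisely the $L_\varepsilon$ of \eqref{const-speed-Leps}. By \eqref{const-speed-Leps-limit} it tends to
\[
\frac{1}{2\bigl(1-V_\NN^2/C^2\bigr)}=\tfrac12\lambda(\alpha,t),
\]
which is the claimed jump coefficient. Lemma~\ref{lem:near-endpoint-root} supplies the unique near-endpoint root for the true phase, so the delta can be resolved in $\tau$ as in Step 4 of that proof.

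\textbf{Step 3 (control the remainder).} This is the main obstacle: show that the difference between the true localized integral and the frozen model tends to the principal-value integral with no additional $\mathcal O(1)$ jump. After resolving the delta at the retarded roots, compare the two $\theta_*$'s.
\begin{itemize}
\item The exact root and the model root \eqref{const-speed-root-explicit} differ by $\mathcal O((|u|+\varepsilon)^2)$.
\item The Jacobian factors $|\partial_\theta\Phi_\varepsilon|$ differ by $\mathcal O(|u|+\varepsilon)$, using \eqref{eta-local-diag-partial-tau-trace} and $z\in C^2$.
\end{itemize}
Consequently the remainder kernel is bounded by $(|u|+\varepsilon)^{-1}$ times the model's $\mathcal O((|u|+\varepsilon)^{-2})$ kernel, after integration in $\theta$. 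This bound is integrable in $u\in\mathbb{R}^2$ uniformly in $\varepsilon$.

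Dominated convergence then sends the remainder to its $\varepsilon=0$ value. That value is absorbed, together with $I^{\reg}$ and the model's off-diagonal part, into the principal value defining $\Dop[\mu]$, since the model's own limit splits as the jump plus its p.v. If the crude bound fails, the fallback is to subtract the model and use an odd-symmetry cancellation in $u$ of the leading $\mathcal O(|u|^{-2})$ correction, so that it contributes only to the p.v. The continuity of $\mu$, giving $\mu J=\mu(\alpha,t)+o(1)$, finishes the argument as in \eqref{const-speed-local-reduction}.
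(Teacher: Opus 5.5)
Your proposal takes the same route as the paper. The paper's argument is the paragraph just before the proposition: by \eqref{eta-local-diag-expansion} and \eqref{A-normalization-st}, the parametrix at $(z(\alpha,t),t)$ has the same leading singular structure as the constant-speed kernel with $c_0=C(\alpha,t)$, so \Cref{prop:DL-trace-const-speed} applies. This is exactly your Step~2.

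Your Step~3 is detail the paper omits altogether. As written, the remainder should be $(|u|+\varepsilon)$ times the model kernel, not $(|u|+\varepsilon)^{-1}$ times it, since only the former gives the uniformly integrable $\mathcal O((|u|+\varepsilon)^{-1})$ bound. Because $\partial_r\Gopo$ contains $\delta'$, making Step~3 rigorous also needs several assumptions: the rate $\partial_\tau\eta=\mathcal O(r+|t-\tau|)$, the estimate $\amp-\kappa=\mathcal O(r)$ with $\nabla_y\amp$ bounded, and uniform bounds on second derivatives of $\eta-\kappa|x-y|$ along the worldsheet. Definition~\ref{def:characteristic-coordinates} does not supply these, and the paper's reduction tacitly assumes them as well.
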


\section{Newton method for geometric ray computation}
\label{subsec:newton-ray-continuous}

In this section, we describe a Newton iteration scheme for computing an
approximate stationary ray of the travel-time functional. The resulting
iteration provides a practical correction to the simple chord approximation \eqref{eta-chord} in
settings where refraction cannot be neglected.

We seek a stationary curve of the travel-time functional
\begin{equation}\label{F-functional}
  \mathcal{F}[\gamma]
  =
  \int_0^1
  \kappa(\gamma(s))\,|\dot\gamma(s)|
  \,\dd s,
  \qquad
  \kappa=\frac{1}{c},
\end{equation}
subject to the endpoint constraints \eqref{rays-bc}. The corresponding
Euler--Lagrange equation is
\begin{equation}\label{EL-cont}
  \frac{\mathrm{d}}{\mathrm{d}s}
  \bigl(\kappa(\gamma)\,\tau_\gamma\bigr)
  =
  |\dot\gamma|\,\nabla\kappa(\gamma),
  \qquad
  \tau_\gamma=\frac{\dot\gamma}{|\dot\gamma|}.
\end{equation}

Since the functional $\mathcal F$ is invariant under reparameterization,
the Euler--Lagrange equation \eqref{EL-cont} is degenerate in the tangential
direction. We therefore restrict the Newton correction to normal variations.
Let
\[
  n_\gamma(s)=\frac{\dot\gamma(s)^\perp}{|\dot\gamma(s)|}
\]
be the unit normal along $\gamma$.  Define the vector residual
\begin{equation}\label{vector-residual}
  \mathcal R(\gamma)
  \coloneqq
  \frac{\mathrm d}{\mathrm ds}\bigl(\kappa(\gamma)\tau_\gamma\bigr)
  -
  |\dot\gamma|\,\nabla\kappa(\gamma),
  \qquad
  \tau_\gamma=\frac{\dot\gamma}{|\dot\gamma|}, 
\end{equation}
and its normal projection
\begin{equation}\label{scalar-residual-cont}
  \mathcal R_\perp(\gamma)\coloneqq n_\gamma\cdot \mathcal R(\gamma). 
\end{equation}
A minimizing ray is characterized by  $\mathcal{R}_\perp(\gamma)=0$. 

We seek a Newton correction of the form
\begin{equation}\label{normal-ansatz-cont}
  \gamma^{(k+1)}
  =
  \gamma^{(k)}+a^{(k)}(s)\,n_{\gamma^{(k)}}(s),
  \qquad
  a^{(k)}(0)=a^{(k)}(1)=0,
\end{equation}
where $a^{(k)}$ is a scalar function.
In the frozen-normal approximation, the normal field
$n_{\gamma^{(k)}}$ is held fixed during a single 
Newton step, i.e., $\delta \mathcal{R}_\perp(\gamma) \approx  n_{\gamma^{(k)}} \cdot \delta \mathcal{R}(\gamma)$. 

A direct linearization of \eqref{scalar-residual-cont} under the variation
$\delta\gamma=a\,n_\gamma$ yields
\begin{equation}\label{DRperp-explicit}
  D\mathcal R_\perp(\gamma)[a\,n_\gamma]
  =
  \frac{\mathrm d}{\mathrm ds}
  \left(
    \frac{\kappa(\gamma)}{|\dot\gamma|}\,a'
  \right)
  +
  \chi\,
  (n_\gamma\cdot\nabla\kappa(\gamma))\,a
  -
  |\dot\gamma|\,q_\gamma\,a,
\end{equation}
where
\[
  \chi(s)\coloneqq \tau_\gamma'(s)\cdot n_\gamma(s)
\]
is the signed curvature of the current curve, and
\[
  q_\gamma(s)\coloneqq
  n_\gamma(s)^\top \nabla^2\kappa(\gamma(s))\,n_\gamma(s).
\]
Here the frozen-normal approximation amounts to discarding additional
terms involving derivatives of $n_\gamma$, thereby retaining a scalar
second-order elliptic operator.

Hence the Newton equation
\[
  D\mathcal R_\perp(\gamma^{(k)})[a^{(k)}n_{\gamma^{(k)}}]
  =
  -\mathcal R_\perp(\gamma^{(k)})
\]
takes the form
\begin{equation}\label{newton-bvp-final}
  \frac{\mathrm d}{\mathrm ds}
  \left(
    A_\gamma(s)\,a'
  \right)
  -
  B_\gamma(s)\,a
  =
  -\mathcal R_\perp(\gamma),
  \qquad
  a(0)=a(1)=0,
\end{equation}
with
\[
  A_\gamma(s)=\frac{\kappa(\gamma(s))}{|\dot\gamma(s)|},
  \qquad
  B_\gamma(s)=
  |\dot\gamma(s)|\,q_\gamma(s)
  -
  \chi(s)\,(n_\gamma(s)\cdot\nabla\kappa(\gamma(s))).
\]
In practice, the curvature term proportional to $\chi$ may be dropped to
improve conditioning, giving the approximation
\[
  B_\gamma(s)\approx |\dot\gamma(s)|\,q_\gamma(s).
\]

Given the solution $a^{(k)}$ of \eqref{newton-bvp-final}, we update
\begin{equation}
  \gamma^{(k+1)}
  =
  \gamma^{(k)}
  +
  \omega^{(k)} a^{(k)} n_{\gamma^{(k)}},
\end{equation}
where $\omega^{(k)}\in(0,1]$ is chosen (e.g., by backtracking line search) so that
$\|\mathcal R_\perp(\gamma^{(k+1)})\|
  <
  \|\mathcal R_\perp(\gamma^{(k)})\|$.
The iteration terminates once the residual norm falls below a prescribed
absolute/relative tolerance.


\bibliographystyle{abbrvnatmod}
\bibliography{references}

\end{document}